\newcommand*{\addFileDependency}[1]{% argument=file name and extension
  \typeout{(#1)}
  \@addtofilelist{#1}
  \IfFileExists{#1}{}{\typeout{No file #1.}}
}
\newcommand{\atanh}{\operatorname{atanh}}
\definecolor{fondpaille}{cmyk}{0,0,0.1,0}
\newcommand{\maxdeg}{\operatorname{maxdeg}}
\newcommand*{\TV}{\operatorname{TV}}
\renewcommand*{\liminf}{\displaystyle \operatornamewithlimits{liminf}}
\renewcommand*{\limsup}{\displaystyle \operatornamewithlimits{limsup}}
\begin{document}

\begin{frontmatter}

\title{Property Testing in High Dimensional Ising Models}
\runtitle{Property Testing in Ising models}

\begin{aug}
 \author{\fnms{Matey} \snm{Neykov}\corref{}\ead[label=e1]{mneykov@stat.cmu.edu}}\thanksref{}
 \and
 \author{\fnms{Han} \snm{Liu}\corref{}\ead[label=e2]{hanliu@northwestern.edu}}\thanksref{t2}
   \address[a]{Department of Statistics \& Data Science,\\Carnegie Mellon University, \\ Pittsburgh, PA 15213, USA  \\ \printead{e1}\\}
  \address[b]{Department of Electrical Engineering\\ and Computer Science\\ Department of Statistics\\ Northwestern University, \\ Evanston, Il 60208, USA \\\printead{e2}\\}
\end{aug}

\affiliation{Carnegie Mellon University and Northwestern University}

 \thankstext{t2}{Research partially supported by NSF DMS1454377-CAREER; NSF IIS 1546482-BIGDATA; NIH R01MH102339; NSF IIS1408910; NIH R01GM083084.}
 
 \runauthor{Neykov and Liu}

\begin{abstract}
This paper explores the information-theoretic limitations of graph property testing in zero-field Ising models. Instead of learning the entire graph structure, sometimes testing a basic graph property such as connectivity, cycle presence or maximum clique size is a more relevant and attainable objective. Since property testing is more fundamental than graph recovery, any necessary conditions for property testing imply corresponding conditions for graph recovery, while custom property tests can be statistically and/or computationally more efficient than graph recovery based algorithms. Understanding the statistical complexity of property testing requires the distinction of \textit{ferromagnetic} (i.e., positive interactions only) and general Ising models. Using combinatorial constructs such as graph packing and strong monotonicity, we characterize how target properties affect the corresponding minimax upper and lower bounds within the realm of ferromagnets. On the other hand, by studying the detection of an \textit{antiferromagnetic} (i.e., negative interactions only) Curie-Weiss model buried in Rademacher noise, we show that property testing is strictly more challenging over general Ising models. In terms of methodological development, we propose two types of correlation based tests: computationally efficient screening for ferromagnets, and score type tests for general models, including a fast cycle presence test. Our correlation screening tests match the information-theoretic bounds for property testing in ferromagnets in certain regimes.
\end{abstract}
 
\begin{keyword}
\kwd{Ising models}
\kwd{minimax testing}
\kwd{Dobrushin's comparison theorem}
\kwd{antiferromagnetic Curie-Weiss detection}
\kwd{two-point function bounds}
\kwd{correlation tests}
\end{keyword}

\end{frontmatter}

\section{Introduction}

The Ising model is a pairwise binary model introduced by statistical physicists as a model for spin systems with the goal of understanding spontaneous magnetization and phase transitions \citep{ising1925beitrag}. More recently the model has found applications in diverse areas such as image analysis \citep{geman1984stochastic}, bioinformatics and social networks \citep{ahmed2009recovering}. In statistics the model is an archetypal example of an undirected graphical model. A central topic of interest in graphical models research is estimating the structure (also known as \textit{structure learning}) of, or inferring questions about, the underlying graph based on a sample of observations. Substantial progress has been made towards understanding structure learning. Popular procedures developed for high dimensional graph estimation include $\ell_1$-regularization methods \citep{yuan2007model, rothman08spice, Liu2009Nonparanormal, Ravikumar2011High, Cai2011Constrained}, neighborhood selection \citep{Meinshausen2006High, bresler2008reconstruction} and thresholding \citep{montanari2009graphical}. In this paper, instead of focusing on learning the structure of the entire graph, we study the weaker inferential problem of \textit{property testing}, i.e., testing whether the graph structure obeys certain properties based on a sample of $n$ observations. Specifically, we study the \textit{zero-field} Ising model. 

Formally, a zero-field Ising model is a collection of $d$ binary $\pm 1$ valued random variables $\bX = (X_1,X_2,\ldots X_d)$, hereto referred to as spins, which are distributed according to the law
\begin{align}\label{general:ising}
\PP_{\theta, G_\wb}(\bX = \xb) \varpropto \exp\bigg(\theta \sum_{(u,v) \in E(G)} w_{uv} x_u x_v\bigg),
\end{align}
where $\theta \geq 0$, $G_\wb = (G, \wb)$ where $G = ([d], E)$ is a simple graph, and $\wb \in \RR^{d \choose 2}$ are weights on the graph's edges (i.e., for each edge $(u,v) \in E(G)$, $\wb$ specifies the edge weight $w_{uv}$, and for any $(u,v) \not \in E(G)$: $w_{uv} = 0$). Using \eqref{general:ising}, it is easily seen that the vector $\bX$ is Markov to the graph $G$, or in other words, any two non-adjacent spins $X_u$ and $X_v$ ($(u,v) \not \in E(G)$) are independent given the values of all the remaining spins. 

Note that model (\ref{general:ising}) is overparametrized. However, when $w_{uv}$ are viewed as fixed constants, this specification allows one to study the behavior of $\bX$ for different values of $\theta$. In statistical physics the parameter $\theta = \frac{1}{T}$ where $T$ stands for temperature, and is often referred to as the \textit{inverse temperature} of the system. The temperature plays an important role in changing the ``balance'' of the distribution of the spins, and is the main cause for the system to undergo phase transitions. The complicated behavior of the Ising model at different temperatures suggests that the difficulty of property testing is related to $\theta$. The main focus of this paper is uncovering necessary and sufficient conditions on the temperature, sample size, dimensionality and graph properties, allowing one to conduct property tests even when the data is sampled from the most challenging models. Understanding such limitations is practically useful, since necessary conditions can provide a benchmark for algorithm comparisons, while mismatches between sufficient and necessary conditions can prompt to searching for better algorithms.

To elaborate on the type of problems we study, let $[d] = \{1,\ldots,d\}$ be a vertex set of cardinality $d$ and let $\cG_d$ be the set of all graphs over the vertex set $[d]$. A binary graph property $\cP$ is a map $\cP : \cG_d \mapsto \{0,1\}$. Given a sample of $n$ observations from a zero-field Ising model with an underlying simple graph $G$, the goal of property testing is to test the hypotheses 
\begin{align}\label{property:testing:intro:def}
\Hb_0: \cP(G) = 0 \mbox{  versus  } \Hb_1: \cP(G) = 1.
\end{align}
Below we give three specific instances of property tests. We furthermore give informal summaries of our findings, which are presented more rigorously in Sections \ref{generic:lower:bounds:sec} and \ref{ferromagnetic:algorithms}. \vspace{.2 cm}

\noindent{\bf Connectivity.}  A graph is connected if and only if each pair of its vertices is connected via a path. Define $\cP$ as $\cP(G) = 0$ if $G$ is disconnected and $\cP(G) = 1$ otherwise. Testing for connectivity is equivalent to testing whether the variables can be partitioned into two independent sets. It turns out that in \textit{simple ferromagnets} (that is, models whose spin-spin interactions satisfy $w_{uv} \in \{0,1\}$ for all $(u,v)$) connectivity testing is possible iff
$$
\sqrt{\frac{\log d}{n}} \lesssim \theta,
$$
where $\lesssim$ is inequality up to constants. Note that there is no upper bound on $\theta$ and as long as $\theta$ is large enough connectivity testing is always possible. 

\vspace{.2 cm}

\noindent{\bf Cycle Presence.} If a graph is a forest, i.e., a graph containing no cycles, its structure can be estimated efficiently using a graph selection procedure based on a maximum spanning tree construction proposed by \cite{chow68approximating}. It is therefore sometimes of interest to test whether the underlying graph is a forest. In this example $\cP$ satisfies $\cP(G) = 0$ if $G$ is a forest and $\cP(G) = 1$ otherwise. We will also refer to forest testing as cycle testing, since it is equivalent to testing whether the graph contains cycles. In simple ferromagnets, cycle testing is possible iff
$$
\sqrt{\frac{\log d}{n}} \lesssim \theta \lesssim \log \frac{n}{\log d}.
$$
In contrast to connectivity, there appears to be an upper bound on the temperature when one tests for cycle presence. 
\vspace{.2 cm}

\noindent{\bf Clique Size.} Another relevant question is to test whether the size of a maximum clique (i.e., a maximum complete subgraph) contained in the graph is less than or equal to some integer $m-1$ versus the alternative that a maximum clique is of size at least $m$. This is a relevant question since Hammersley-Clifford's theorem \citep{grimmett2018probability} ensures that the Ising distribution can be factorized over the cliques in the graph, and hence knowing the maximal size of any clique puts a restriction on this factorization. In this example set $\cP(G) = 0$ if $G$ contains no $m$-clique, and $\cP(G) = 1$ otherwise. Let the maximum degree\footnote{The largest number of neighbors of any vertex of $G$.} of the graph $G$ be $s$. It turns out that testing the clique size is impossible in simple ferromagnets unless 
$$
\sqrt{\frac{\log d}{n}} \lesssim \theta \lesssim \frac{\log \frac{n}{\log d}}{s}.
$$
Different from before, the maximum degree appears in the upper bound on $\theta$. We will show that testing the maximum clique size is possible when 
$$
  \sqrt{\frac{\log d}{n}} \lesssim\theta\lesssim \frac{1}{s}, \mbox{ and when } m = s+1 \mbox { and } \frac{\log s}{s} \ll \theta \lesssim  \frac{\log \frac{n}{\log d}}{s},
$$
where $\ll$ is used in the sense ``much larger than'' (for a precise definition see the notation section below). This matches the previous two bounds up to constants.

\vspace{.2 cm}

\noindent By definition, property testing is a statistically simpler task compared to learning the entire graph structure, since if a graph estimate is available, property testing can be done via a deterministic procedure (although possibly a computationally challenging one). An important implication of this observation is that any quantification on how hard testing a particular graph property is, immediately implies that estimating the entire graph is at least as hard. Conversely, any algorithm capable of learning the graph structure with high confidence can be applied to test any property while preserving the same confidence. Importantly however, there could exist tests geared towards particular graph properties which can statistically and/or computationally outperform generic graph learning methods. 

Under the assumption that the maximum degree of $G$ is at most $s$, foundational results on the limitations of structure learning of Ising models were given by \cite{santhanam2012information}. In view of the relationship between property tests and structure learning, our work can be seen as a generalization of necessary conditions for structure learning. Our results also help to paint a more complete picture of the statistical complexity of testing in Ising models. Unlike in structure learning, understanding property testing requires the distinction of ferromagnetic and general Ising models, of which the latter exhibit strictly stronger limitations. In terms of methodological development, we formalize correlation based property tests which can be customized to target any graph property. We now outline the three major contributions of this work.

\subsection{Summary of Contributions}

Our first contribution is to provide necessary conditions for property testing in ferromagnets. We give a generic lower bound on the inverse temperature (Theorem \ref{ising:single:edge}), demonstrating that property testing is difficult in  high temperature regimes. A key role in the proof is played by Dobrushin's comparison theorem \citep{follmer1988random}, which a is powerful tool for comparing discrepancies between Gibbs measures based on their local specifications. We further formalize the class of \textit{strongly monotone} graph properties, and show that when the temperature drops below a certain property dependent threshold, testing strongly monotone properties becomes challenging (Theorem \ref{biclques:theorem}). We also provide an analogue of Theorem \ref{ising:single:edge} specialized for strongly monotone properties (Proposition \ref{simple:lower:bound:monotone}). Our general results are applied to obtain bounds on testing connectivity, cycles and maximum clique size. 

Our second contribution is to design several correlation based tests and understand their limitations. First, we formalize and study a generic correlation screening algorithm for ferromagnets. We show that this algorithm works well at high temperature regimes (Remark \ref{remark:on:corr:test} and Corollary \ref{generic:property:test:cor}), and could be successful even beyond this regime for some properties (Section \ref{ferromagnetic:corr:screening:examples}). To analyze the algorithms at low temperature regimes we develop a novel ``no-edge'' correlation bound for graphs of bounded degree (see Proposition \ref{generic:property:test:cor}), which may be of independent interest. We apply those algorithms to testing connectivity, cycles and maximum clique size and discover that they match the derived lower bounds in certain regimes. Second, we adapt the correlation decoders of \cite{santhanam2012information} to property testing for general Ising models, and we develop a computationally tractable cycle test (Section \ref{comp:eff:cycle:test}). 

Our third contribution is to study necessary conditions for general Ising models, i.e., models including both \textit{ferromagnetic and antiferromagnetic}\footnote{Inspired by statistical physics jargon, throughout the paper we use the terms ferromagnetic and antiferromagnetic to refer to positive and negative interactions respectively.} interactions. Specifically we argue that testing strongly monotone properties over general models requires more stringent conditions than performing the same tests over ferromagnets (Theorem \ref{scaling:theorem} and Proposition \ref{connectivity:low:temp}). In order to prove this result we demonstrate that it is very difficult to detect the presence of an antiferromagnetic Curie-Weiss\footnote{i.e., an antiferromagnetic model with a complete graph.} \citep[e.g., see][]{kochmanski2013curie} model buried in Rademacher noise, which to the best of our knowledge is the first attempt to analyze this problem. The detection problem we consider is in part inspired by the works \cite{addario2010comb, arias2012detection, arias2015detectingpositive, arias2015detecting}. 

\subsection{Related Work}

Recent works on Ising models related to the Curie-Weiss model include \cite{berthet2016exact, mukherjee2016global}. An interesting paper on testing goodness-of-fit in Ising models by \cite{daskalakis2018testing}, uses tests based on minimal pairwise correlations which are similar in spirit to some of the tests we consider. In a related work \cite{gheissari2017concentration} demonstrated that sums of pairwise correlations concentrate for general Ising models. Pseudo-likelihood parameter estimation and inference of the inverse temperature for Ising models of given structures was studied by \cite{bhattacharya2015inference}. Property testing is a fundamentally different problem, and our work is in part inspired by  \cite{neykov2016combinatorial}. We show that the graph packing constructions introduced by \cite{neykov2016combinatorial} for Gaussian models, can also be used to give upper bounds on the temperature for property testing in Ising models (Theorem \ref{ising:single:edge}). Unlike \cite{neykov2016combinatorial} however, we do not restrict our study to graphs of bounded degree, and we give a more complete picture of the complicated landscape of property testing in Ising models, by distinguishing ferromagnetic from general models (see Theorems \ref{biclques:theorem} and \ref{scaling:theorem}, and Propositions \ref{simple:lower:bound:monotone} and \ref{connectivity:low:temp}). 

Structure learning is very relevant to property testing. Restricted to the class of ferromagnetic models, \cite{tandon2014information} related structural conditions of the graph with information-theoretic bounds. \cite{santhanam2012information} suggested correlation decoders, which are computationally inefficient but to the best of our knowledge have the smallest sample size requirements for general models. \cite{anandkumar2012high} gave a polynomial time neighborhood selection method for models whose graphs obey special properties. The first polynomial time algorithm which works for general Ising models was given by \cite{bresler2015efficiently} and was motivated by earlier works on structure recovery \citep{bresler2008reconstruction, bresler2014structure}. Inspired by the simplicity of the correlation algorithms studied by \cite{montanari2009graphical, santhanam2012information} we use similar ideas to develop property tests, and demonstrate that for some properties our tests work in vastly different regimes compared to graph recovery. 

\subsection{Notation}\label{notation:section}

For convenience of the reader we summarize the notation used throughout the paper. For a vector $\vb = (v_1, \ldots, [d])^T\in \RR^d$, let $\|\vb\|_q = (\sum_{i = 1}^d v_i^q)^{1/q},  1 \leq q < \infty$ with the usual extension for $q = \infty$: $\|\vb\|_{\infty} = \max_{i} |v_i|$. Moreover, for a matrix $\Ab \in \RR^{d \times d}$ we denote $\|\Ab\|_p = \max_{\|\vb\|_p = 1} \|\Ab \vb\|_p$ for $p \geq 1$. For any $n \in \NN$ we use the shorthand $[n] = \{1,\ldots, n\}$. We denote $\NN_0 = \NN \cup \{0\}$. For a set $N \subset \NN$ we define ${N \choose 2} = \{(u,v) ~|~ u<v, ~ u,v \in N\}$ to be the set of ordered pairs of numbers in $N$. 
For a graph $G = (V,E)$ we use $V(G) = V$, $E(G) = E$, $\maxdeg(G)$ to refer to the vertex set, edge set and maximum degree of $G$ respectively. For two graphs $G, G'$ we use $G' \trianglelefteq G$ if $G'$ is a \textit{spanning subgraph} of $G$, i.e., $V(G') = V(G)$ and $E(G') \subseteq E(G)$; we use $G' \subseteq G$ if $G'$ is a subgraph of $G$ but not necessarily a spanning one, i.e., $V(G') \subseteq V(G)$ and $E(G') \subseteq E(G)$. For a graph $G = (V,E)$ and an edge $e$ will write $e \in G$, $e \in E$ or $e \in E(G)$ interchangeably whenver this does not cause confusion.

For a probability measure $\PP$, the notation $\PP^{\otimes n}$ means the product measure of $n$ independent and identically distributed (i.i.d.) samples from $\PP$.
For two functions $f(x)$ and $g(x)$, we use the notation $f(x) \approx g(x)$ in the sense that $\lim_{x \downarrow 0} \frac{f(x)}{g(x)} = 1$.
Given two sequences $\{a_n\}, \{b_n\}$ we write $a_n = O(b_n)$ if for large enough $n$ there exists a constant $C < \infty$ such that $a_n \leq C b_n$; $a_n = \Omega(b_n)$ if there exists a positive constant $c > 0$ such that $a_n \geq c b_n$; $a_n = o(b_n)$ if $a_n/b_n \rightarrow 0$, and $a_n \asymp b_n$ if there exists positive constants $c$ and $C$ such that $c < a_n/b_n < C$; $a_n \gtrsim b_n$ if there exists an absolute constant $c > 0$ so that $a_n \geq c b_n$. Finally we use $\wedge$ and $\vee$ for $\min$ and $\max$ of two numbers respectively. For positive sequences $a_n$ and $b_n$ we denote $a_n \gg b_n$ if $b_n/a_n = o(1)$.

\subsection{Organization}

The remainder of the paper is structured as follows. Minimax bounds for ferromagnetic models are given in Section \ref{generic:lower:bounds:sec}. Section \ref{ferromagnetic:algorithms} is dedicated to correlation screening algorithms for  testing in ferromagnets. Section \ref{antiferromagnetic:bounds} provides minimax bounds for general models and studies correlation based algorithms for general models. The proofs of two results on strongly monotone properties --- Theorem \ref{biclques:theorem} and Proposition \ref{simple:lower:bound:monotone}, are given in Section \ref{proofs:from:section:2}. Discussion is postponed to the final Section \ref{discussion:sec}. Most proofs are relegated to the appendices. 

\section{Bounds for Ferromagnets}\label{generic:lower:bounds:sec}

This section discusses lower and upper bounds on the temperature for ferromagnetic models. We begin by formally introducing the \textit{simple} \textit{zero-field} \textit{ferromagnetic} Ising models. Given a $\theta \geq 0$, the simple zero-field ferromagnetic Ising model with signal $\theta$ is given by
\begin{align}\label{ising:model:measure}
\PP_{\theta, G}(\bX = \xb) =  \frac{1}{Z_{\theta, G}} \exp\Big( \theta \sum_{(u,v) \in E(G)}  x_u x_v\Big),
\end{align}
where the vector of spins $\xb \in \{\pm 1\}^d$ and
$$
Z_{\theta, G} = \sum_{\xb \in \{\pm 1\}^d} \exp\Big(\theta \sum_{(u,v) \in E(G)} x_u x_v\Big),
$$
denotes the normalizing constant, also known as \textit{partition function}. Model (\ref{ising:model:measure}) is equivalent to (\ref{general:ising}), where the spin-spin interactions $w_{uv}$ are either equal to $0$ or $1$; hence the term ``simple''. The term ``zero-field'' refers to the fact that all ``main-effects'' parameters of the spins $x_u$ have been set to zero, and ``ferromagnetic'' refers to the fact that all spin-spin interactions are non-negative. As discussed in the introduction, the parameter $\theta$ is the inverse temperature but will also be referred to as \textit{signal strength} interchangeably. 

\subsection{General Results}

A key concept allowing us to quantify the difficulty of testing a graph property $\cP$ under the worst possible scenario is the minimax risk. Formally, given data generated from model (\ref{ising:model:measure}) and a property $\cP$, testing (\ref{property:testing:intro:def}) is equivalent to testing $\Hb_0: G \in \cG_0(\cP)$ versus $\Hb_1: G \in \cG_1(\cP)$ where 
\begin{align}\label{null:alternative:def}
\cG_0(\cP) := \{G \in \cG_d ~|~ \cP(G) = 0 \}, ~~~~ \cG_1(\cP) := \{G \in \cG_d ~|~ \cP(G) = 1 \}. 
\end{align}
The minimax risk of testing $\cP$ is defined as
\begin{align}\label{testing:risk:def}
R_n(\cP,\theta) :=\inf_{\psi}\bigr[ \sup_{G \in \cG_0(\cP)} \PP^{\otimes n}_{\theta, G}(\psi = 1) + \sup_{G \in \cG_1(\cP)} \PP^{\otimes n}_{\theta, G}(\psi = 0)\bigr],
\end{align}
where the infimum is taken over all measurable binary valued test functions $\psi$, and recall the notation $^{\otimes n}$ for a product measure of $n$ i.i.d. observations. Criteria (\ref{testing:risk:def}) evaluates the sum of the worst possible type I and type II errors under the best possible test function $\psi$. One can always generate $\psi \sim Ber(\frac{1}{2})$ independently of the data, which yields a minimax risk equal to $1$. In the remainder of this section we derive upper and lower bounds on the temperature beyond which $R_n(\cP,\theta)$ asymptotically equals $1$, which implies that asymptotically the best test of $\cP$ would be as good as a random guess. Importantly, here and throughout the manuscript we implicitly assume the high dimensional regime $d := d(n)$, so that asymptotically $d \rightarrow \infty$ as  $n\rightarrow\infty$.

To formalize our general signal strength bound for combinatorial properties in Ising models, we need several definitions. Similar definitions were previously used by \cite{neykov2016combinatorial} to understand the limitations of combinatorial inference in Gaussian graphical models. The first definition allows us to measure a graph based pre-distance between edges.

\begin{definition}[Edge Geodesic Pre-distance] Let $G$ be a graph and $\{e, e'\}$ be a pair of edges which need not belong to $G$. The edge geodesic pre-distance is given by
$$
d_G(e, e') := \min_{u \in e, v \in e'} d_{G}(u, v),
$$
where $d_{G}(u, v)$ denotes the geodesic distance\footnotemark\, between vertices $u$ and $v$ on $G$. If such a path does not exist $d_G(e,e') = \infty$.
\end{definition}
\footnotetext{The geodesic distance between $u$ and $v$ is the number of edges on the shortest path connecting $u$ and $v$.}
Here we use the term \textit{pre-distance} since $d_G(e,e')$ does not obey the triangle inequality. Having defined a pre-distance we can define edge packing sets and packing numbers.

\begin{definition}[Packing Number]\label{packing:number} Given a graph $G = (V,E)$ and a collection of edges $\cC$ with vertices in $V$, an $r$-packing of $\cC$ is any subset of edges $S$, i.e., $ S \subseteq \cC$ such that each pair of edges $e, e' \in S$ satisfy $d_{G}(e,e') \geq r$. We define the $r$-packing number:
$$
N(\cC, d_G, r) = \max \{|S| ~|~ S \subseteq \cC, ~ S \mbox{ is } r\mbox{-packing}\}, \mbox{i.e.,}
$$
$N(\cC, d_G, r)$ is the maximum cardinality of an $r$-packing set.
\end{definition}
A large $r$-packing number implies that the set $\cC$ has a large collection of edges that are far away from each other. Hence the packing number can be understood as a complexity measure of an edge set. The final definition before we state our first result formalizes constructions of graphs belonging to the null and alternative hypothesis and differing in a single edge. 

\begin{definition}[Null-Alternative Divider]\label{single:edge:null:alt:sep} For a binary graph property $\cP$, let $G_0 = ([d], E_0) \in \cG_0(\cP)$. We refer to an edge set
$$
\cC = \{e_1, \ldots, e_m\},
$$ 
as a \textit{null-alternative divider} (or simply \textit{divider} for short) with a null base $G_0$ if for any $e \in \cC$ the graphs $G_e := ([d], E_0 \cup \{e\}) \in \cG_1(\cP)$.
\end{definition}
Intuitively, a large divider set $\cC$ implies that testing $\cP$ is difficult since there exist multiple graphs $G_e$ with which one can confuse the graph $G_0$. We make this intuition precise in
\begin{theorem}[Signal Strength General Lower Bound]\label{ising:single:edge} Given a binary graph property $\cP$, let $G_0 \in \cG_0(\cP)$, and the set $\cC$ be a divider set with a null base $G_0$. Suppose that $|\cC| \rightarrow \infty$ asymptotically. If we have
$$
\textstyle \theta \leq \frac{1}{2}\sqrt{\frac{\log N(\cC, d_{G_0}, \log \log |\cC|)}{n}} \wedge \atanh\bigr(\frac{e^{-2}}{\maxdeg(G_0) + 1}\bigr),
$$
then $\liminf_{n \rightarrow \infty} R_n(\cP, \theta) = 1.$
\end{theorem}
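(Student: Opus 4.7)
The plan is to apply Le Cam's two-point method with a mixture prior supported on the packing set, then reduce the problem to a chi-square calculation controlled by two-point function bounds. Specifically, fix $r = \log\log|\cC|$, let $S \subseteq \cC$ be an $r$-packing of size $N := N(\cC, d_{G_0}, r)$ provided by Definition \ref{packing:number}, and put the null prior as a point mass on $G_0$ and the alternative prior uniformly over $\{G_e : e \in S\}$. By Definition \ref{single:edge:null:alt:sep} both priors are supported on the correct hypothesis. Writing $\PP_0 := \PP_{\theta, G_0}$ and $\bar\PP := N^{-1}\sum_{e \in S} \PP_{\theta, G_e}^{\otimes n}$, Le Cam's inequality combined with Cauchy--Schwarz gives
$$
R_n(\cP, \theta) \geq 1 - \TV(\PP_0^{\otimes n}, \bar\PP) \geq 1 - \tfrac{1}{2}\sqrt{\chi^2(\bar\PP \,\|\, \PP_0^{\otimes n})},
$$
so it suffices to drive the chi-square to zero.

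The ratio $\PP_{\theta,G_e}(\xb)/\PP_0(\xb)$ collapses to $e^{\theta x_u x_v}/\mathbb{E}_{\PP_0}[e^{\theta X_e}]$ using $Z_{G_e}/Z_{G_0} = \mathbb{E}_{\PP_0}[e^{\theta X_e}]$ (where $X_e := X_u X_v$ for $e = (u,v)$), yielding
$$
\chi^2 + 1 = \frac{1}{N^2}\sum_{e, e' \in S}\left(\frac{\mathbb{E}_{\PP_0}[e^{\theta(X_e + X_{e'})}]}{\mathbb{E}_{\PP_0}[e^{\theta X_e}]\,\mathbb{E}_{\PP_0}[e^{\theta X_{e'}}]}\right)^n.
$$
I would split this sum into diagonal ($e = e'$) and off-diagonal ($e \neq e'$) contributions. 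The diagonal is treated by the elementary identity: writing $m_e := \mathbb{E}_{\PP_0}[X_e] \in [0,1]$ (by Griffiths' inequality for ferromagnets), the inner ratio simplifies to $1 + (1 - m_e^2)\sinh^2\theta/(\cosh\theta + m_e\sinh\theta)^2 \leq 1 + \tanh^2\theta \leq e^{\theta^2}$. Hence the diagonal contributes at most $N^{-1}e^{n\theta^2} \leq N^{-3/4}$ under the hypothesis $n\theta^2 \leq \tfrac{1}{4}\log N$, which is $o(1)$ as $N \to \infty$.

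The core difficulty is the off-diagonal bound, where I would invoke Dobrushin's comparison theorem as advertised in the keywords. The hypothesis $\tanh\theta \leq e^{-2}/(\maxdeg(G_0)+1)$ certifies that $\PP_0$ lies in the Dobrushin uniqueness regime with interaction-row-sum bounded by $e^{-2}$. Standard consequences \citep{follmer1988random} then yield exponential decay of correlations for bounded local functions: for the observables $f_e = e^{\theta X_e}$ and $f_{e'} = e^{\theta X_{e'}}$ localized on the endpoints of $e$ and $e'$, one obtains
$$
\bigl|\operatorname{Cov}_{\PP_0}(f_e, f_{e'})\bigr| \lesssim \theta^2\, e^{-2\,d_{G_0}(e,e')},
$$
where Taylor expanding $f_e = 1 + \theta X_e + O(\theta^2)$ extracts the $\theta^2$ prefactor and Dobrushin's contraction compounds the factor $e^{-2}$ per unit of geodesic distance. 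Since $\mathbb{E}_{\PP_0}[f_e]\mathbb{E}_{\PP_0}[f_{e'}] \geq 1$ (ferromagnet) and every $e \neq e'$ in $S$ satisfies $d_{G_0}(e,e') \geq r$, each off-diagonal factor raised to the $n$-th power is at most $\exp(Cn\theta^2 e^{-2r}) \leq \exp(\tfrac{C}{4}\log N\cdot (\log|\cC|)^{-2}) = 1 + o(1)$. Averaging over the off-diagonal pairs preserves this, and combining with the diagonal estimate drives $\chi^2$ to zero, yielding $R_n \to 1$.

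The principal obstacle I anticipate is making the Dobrushin step rigorous: one needs exponential decay of correlations not for single spins, but for the nonlinear edge observables $X_u X_v$ at two well-separated edges. This calls for either a direct application of Dobrushin's comparison inequality to functions with $O(\theta)$ total-variation oscillation, or a two-point function bound generalized to products of spins. The $\atanh$ threshold is calibrated precisely so that the effective contraction coefficient sits uniformly below $e^{-2}$, leaving enough slack to absorb both the $\theta^2$ prefactor and the $\log N$ budget delivered by the packing condition on $\theta$.
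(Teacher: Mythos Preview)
Your proposal is correct and the overall architecture (Le Cam plus a $\chi^2$ expansion split into diagonal and off-diagonal parts) matches the paper, but the way you handle the off-diagonal is genuinely different. The paper does not bound $\operatorname{Cov}_{\PP_0}(e^{\theta X_e},e^{\theta X_{e'}})$ directly. Instead it first rewrites the cross term via Lemma~\ref{lemma:chi:sq} as
\[
\EE_{\PP_0}\Big[\tfrac{\PP_{e}}{\PP_0}\tfrac{\PP_{e'}}{\PP_0}\Big]
\;\le\; 1+\tanh(\theta)\bigl[\EE_{G_{e}}X_{e'}-\EE_{G_0}X_{e'}\bigr],
\]
i.e.\ as a difference of two-point functions under \emph{two different} measures, and then invokes Dobrushin's \emph{comparison} theorem (Lemma~\ref{key:thm:diff:ising}) to control $\EE_{G_e}X_{e'}-\EE_{G_0}X_{e'}$ by a sum of powers of the adjacency matrix of $G_e$. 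This is why the paper needs the contraction bound on $\maxdeg(G_0)+1$ rather than $\maxdeg(G_0)$: the Dobrushin matrix is computed for $G_e$, not $G_0$.

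Your route stays entirely under $\PP_0$ and uses the decay-of-correlations consequence of Dobrushin uniqueness for the product observables $e^{\theta X_e}$; since $e^{\theta X_e}=\cosh\theta+X_e\sinh\theta$, this reduces to controlling $\operatorname{Cov}_{\PP_0}(X_uX_v,X_{u'}X_{v'})$, which the standard Dobrushin covariance estimate (summing $\delta_i(f)D_{ij}\delta_j(g)$ over the four endpoint pairs) handles with the same $\sum_{l\ge d_{G_0}(e,e')}(\tilde\Lambda\tanh\theta)^l$ geometric tail as the paper. What you gain is conceptual directness (no auxiliary measure $\PP_{G_e}$, no comparison theorem). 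What the paper's route buys is a reusable intermediate inequality (Lemma~\ref{lemma:chi:sq}), which it indeed recycles in the proofs of Theorem~\ref{biclques:theorem}, Proposition~\ref{simple:lower:bound:monotone}, and the connectivity example; your covariance approach would not feed those arguments as cleanly because there the relevant quantity really is $\EE_{G_j}X_{e_k}-\EE_{G_0}X_{e_k}$ for structured $G_0$. The ``principal obstacle'' you flag is not a real obstacle: the two-site support of $f_e$ just produces four terms in the Dobrushin covariance sum, each decaying at the claimed rate.
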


Theorem \ref{ising:single:edge} gives a strategy for obtaining lower bounds on $\theta$ using purely combinatorial constructions. Its proof utilizes Dobrushin's comparison theorem \citep{follmer1988random} to bound the $\chi^2$ divergence between Ising measures deferring in a single edge. The second inequality on $\theta$ is required to ensure that the system is in a ``high temperature regime'' which is where Dobrushin's theorem holds. If one can select a graph $G_0$ of constant maximum degree, the real obstruction on $\theta$ will be given by the entropy term. Theorem \ref{ising:single:edge} is reminiscent of Theorem 2.1 of \cite{neykov2016combinatorial}; remarkably, similar constructions can be used to give lower bounds on the signal strength in both the Gaussian and Ising models. Even though the statements of the two results are related, their proofs are vastly different. The proof in the Gaussian case heavily relies on the fact that the partition functions can be evaluated in closed form, which is generally impossible in Ising models. We demonstrate the usefulness of Theorem \ref{ising:single:edge} in Section \ref{lower:bound:examples} where we apply it to a connectivity testing example. 

We complement Theorem \ref{ising:single:edge} by an upper bound on the inverse temperature $\theta$ above which the minimax risk cannot be controlled. The need for such bounds arises due to identifiability issues in Ising models at low temperatures. In such regimes the model develops long range correlations, i.e., even spins which are not neighbors on the graph can become highly correlated. A simple implication of this fact for instance is that it is challenging to tell apart a triangle graph from a vertex with its two disconnected neighbors at low temperatures (see Figure \ref{biclique:example:cycle}). To formalize the statement we first define a class of graph properties. To this end recall the distinction between the spanning subgraph and subgraph inclusions $\trianglelefteq, \subseteq$ introduced in Section \ref{notation:section}.

\begin{definition}[Monotone and Strongly Monotone Properties] A binary graph property $\cP : \cG_d \mapsto \{0,1\}$, is called \textit{monotone} if for any two graphs $G' \trianglelefteq G$ we have $\cP(G') \leq \cP(G)$. A binary property $\cP$ is called \textit{strongly monotone} if for any two graphs $G' \subseteq G$ we have $\cP(G') \leq \cP(G)$.
\end{definition}
By definition any strongly monotone property is monotone, however the converse is not true. An example of a strongly monotone property is the size of the largest clique in a graph. On the other hand, an example of a monotone property which is not strongly monotone is graph connectivity. We now state our result giving an upper bound on $\theta$ when testing strongly monotone properties. We have

\begin{theorem}[Strongly Monotone Properties Upper Bound]\label{biclques:theorem} Let $\cP$ be a strongly monotone property, and $H_0 \in \cG_0(\cP)$. Assume there exists an $l \times r$ biclique\footnote{A complete bipartite graph.} $B$ with $r \geq 2$ such that $B \trianglelefteq H_0$. Suppose there are two vertices $u,v$ belonging to the right side of $B$, so that adding $(u,v)$ to $H_0$ gives a graph $H_1 \in \cG_1(\cP)$. Let $\theta$ satisfy $\theta \geq \frac{2}{l}$ and $\theta \geq \frac{3}{r - 2}$ when $r > 2$ or $\theta \geq \log 2$ for $r = 2$. Then if for some $\kappa > 1$ we have
\begin{align}\label{theta:scaling:upper:bound:max:monotone}
\theta \geq \frac{\log \frac{2\kappa n r}{\log \lfloor d/(l + r) \rfloor}}{l},
\end{align}
it holds that $\liminf_{n \rightarrow \infty} R_n(\cP, \theta) = 1$.
\end{theorem}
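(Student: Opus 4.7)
The plan is to apply Le Cam's testing method with a carefully chosen mixture of alternatives. First I would partition $[d]$ into $N := \lfloor d/(l+r) \rfloor$ disjoint blocks of size $l+r$ each, and in each block embed a copy of $H_0$; call the resulting graph $G_0$ and assume it lies in $\cG_0(\cP)$ (a standard convention for properties insensitive to disjoint isolated copies). For each $i \in [N]$, let $G_i := G_0 + e_i$, where $e_i$ corresponds to the distinguished edge $(u,v)$ inside the $i$-th copy of the biclique. Strong monotonicity of $\cP$ ensures $G_i \in \cG_1(\cP)$, since each $G_i$ contains $H_1$ as a subgraph.

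Setting $\bar\PP := \frac{1}{N}\sum_{i=1}^N \PP_{\theta, G_i}^{\otimes n}$, Le Cam's inequality yields $R_n(\cP, \theta) \geq 1 - \|\bar\PP - \PP_{\theta, G_0}^{\otimes n}\|_{\TV} \geq 1 - \tfrac{1}{2}\sqrt{\chi^2(\bar\PP, \PP_{\theta, G_0}^{\otimes n})}$. Because the blocks are disjoint and $\PP_{\theta, G_0}$ factorizes across them, the single-sample cross-term expectations $\mathbb{E}_{\PP_{\theta, G_0}}[(d\PP_{\theta, G_i}/d\PP_{\theta, G_0})(d\PP_{\theta, G_j}/d\PP_{\theta, G_0})]$ equal one for $i \neq j$, leading to the clean expression
\begin{equation*}
\chi^2(\bar\PP, \PP_{\theta, G_0}^{\otimes n}) = \frac{(1 + \chi_0^2)^n - 1}{N},
\end{equation*}
where $\chi_0^2 := \chi^2(\PP_{\theta, H_1}, \PP_{\theta, H_0})$. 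It thus suffices to establish $n \chi_0^2 \lesssim \log N$ to force $\liminf_{n\to\infty} R_n = 1$.

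The heart of the proof is to bound $\chi_0^2$ for a single block. Since $H_1 = H_0 + (u,v)$, the likelihood ratio $d\PP_{\theta, H_1}/d\PP_{\theta, H_0} \propto e^{\theta x_u x_v}$, and a direct algebraic computation gives
\begin{equation*}
\chi_0^2 = \frac{4 p(1-p) \sinh^2\theta}{(pe^\theta + (1-p)e^{-\theta})^2},
\end{equation*}
where $p := \PP_{\theta, H_0}(x_u x_v = 1)$. For $\theta$ large and $p$ close to one, this simplifies asymptotically to $\chi_0^2 \asymp 1 - p$. Since $H_0$ is ferromagnetic and contains the biclique $B$ as a spanning subgraph, the Griffiths--Kelly--Sherman inequality gives $p \geq p_B := \PP_{\theta, B}(x_u x_v = 1)$, and the above expression is monotonically decreasing in $p$ past its maximizer, so it suffices to bound $1 - p_B$ from above.

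On the biclique, conditioning on the left-side magnetization $S_L := \sum_{w \in L} x_w$ decouples the right-side spins, which become independent $\pm 1$'s with $\PP(x_v = 1 \mid S_L) = (1 + e^{-2\theta S_L})^{-1}$; hence $\PP(x_u x_v = -1 \mid S_L) \lesssim e^{-2\theta |S_L|}$. The biclique partition function admits the closed form $Z_B = \sum_{k=0}^l \binom{l}{k}(2\cosh(\theta(l-2k)))^r$, and a Laplace-type analysis---whose validity rests precisely on the auxiliary assumptions $\theta \geq 2/l$ and $\theta \geq 3/(r-2)$ (or $\theta \geq \log 2$ when $r = 2$), ensuring the fully aligned configurations $|S_L| = l$ genuinely dominate $Z_B$---yields an estimate of the form $1 - p_B \lesssim r \cdot e^{-2\theta l}$ up to absolute constants. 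Substituting back, the condition $n \chi_0^2 \lesssim \log N$ rearranges precisely to the stated bound on $\theta$. I expect the main obstacle to be this partition-function analysis: one must carefully control the contribution of intermediate magnetizations $|S_L| = l - 2k$ for $k \geq 1$ relative to the ground state $|S_L| = l$, and verify that the role of each auxiliary lower bound on $\theta$ is precisely to push the biclique deep enough into the ordered phase for the tail bound above to be sharp.
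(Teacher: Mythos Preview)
Your proposal follows essentially the same route as the paper's proof: build $G_0$ from $N=\lfloor d/(l+r)\rfloor$ disjoint copies of $H_0$, form the uniform mixture over the single-edge alternatives $G_i$, apply Le~Cam with the $\chi^2$-divergence, use block independence to kill the cross terms, and reduce the diagonal term to a low-temperature correlation estimate on the biclique via Griffiths. The paper packages the single-block step slightly differently---it uses its Lemma~\ref{lemma:chi:sq} to bound $\EE_{0}\bigl[\tfrac{\PP_j\PP_k}{\PP_0^2}\bigr]\le 1+\tanh(\theta)\bigl[\EE_j X_{u_k}X_{v_k}-\EE_0 X_{u_k}X_{v_k}\bigr]$, whereas you compute $\chi_0^2=\tfrac{4p(1-p)\sinh^2\theta}{(pe^{\theta}+(1-p)e^{-\theta})^2}$ directly---but both bounds are of order $1-p$ once $p$ is close to $1$, and both feed into the identical biclique lemma (the paper's Lemma~\ref{biclique:lemma}).

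One numerical slip: the paper's biclique lemma gives $1-p_B\le \tfrac{r-1}{e^{\theta l}+r-1}\lesssim r\,e^{-\theta l}$, not $r\,e^{-2\theta l}$. With $e^{-\theta l}$, the condition $n\chi_0^2\lesssim\log N$ rearranges exactly to \eqref{theta:scaling:upper:bound:max:monotone}; with your stated exponent you would obtain a bound with $2l$ in the denominator, contradicting your claim that it ``rearranges precisely.'' A sharper Laplace analysis may indeed yield $e^{-2\theta l}$ (the dominant $m=0$ terms in the ratio \eqref{biclique:identity} suggest so), but the paper proves and uses only the $e^{-\theta l}$ version, and that is what matches the stated theorem. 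Also, where you write that $G_0\in\cG_0(\cP)$ is ``a standard convention,'' the paper invokes strong monotonicity to get $\cP(G_0)=0$; this step is harmless for the intended applications (cycle presence, clique size) but you should be aware it is where strong monotonicity is actually used.
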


Theorem \ref{biclques:theorem} shows how to prove upper bounds on $\theta$ using graph constructions. One needs to find a graph $H_0$ containing a large biclique $B$, so that adding edges to $H_0$ transfers it to an alternative graph. The number of ``left'' vertices $l$ of $B$ appears in (\ref{theta:scaling:upper:bound:max:monotone}), and therefore the larger $B$ is the harder it is to test $\cP$ in the worst case. The intuition behind this is as follows. The existence of the biclique $B$ is a measure of how dense $H_0$ is. The denser $H_0$ is the harder it is to tell it apart from $H_1$ when $\theta$ is large. On the other hand the strong monotonicity of $\cP$ ensures that if a subgraph $H_1$ of $G$ satisfies $\cP(H_1) = 1$ then $\cP(G) = 1$. Therefore if $G$ contains $H_0$ as a subgraph it becomes hard to test for $\cP$ when the value of $\theta$ is large. 

We end this section with a result, which shows a simple lower bound on $\theta$ for strongly monotone properties. One may use this result in place of Theorem \ref{ising:single:edge}, when handling strongly monotone properties. 

\begin{proposition}[Strongly Monotone Properties Lower Bound]\label{simple:lower:bound:monotone} Let $\cP$ be a strongly monotone property, and the graph $H_0 = ([m], E_0) \in \cG_0(\cP)$, be such that if one adds the edge $e$ to $H_0$ the resulting graph $H_1 = ([m], E_0 \cup \{e\}) \in \cG_1(\cP)$. Suppose $\log \lfloor d/m \rfloor \leq n$. Then if 
\begin{align}\label{new:condition:prop:27}
\theta < \atanh \bigg(\sqrt{\frac{\log \lfloor d/m \rfloor}{n}}\bigg),
\end{align}
we have $\liminf_{n \rightarrow \infty} R_n(\cP, \theta) = 1$. Furthermore $\liminf_{n \rightarrow \infty} R_n(\cP, \theta) = 1$ if $\log \lfloor d/m \rfloor \gtrsim n$ for a sufficiently large constant. 
\end{proposition}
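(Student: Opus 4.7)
The plan is Le Cam's two-point method where the alternative is a uniform mixture over "block placements" of the distinguishing edge $e$, and the chi-squared is handled by the block-factorization of the Ising measure. Let $N := \lfloor d/m\rfloor$ and pick $N$ pairwise disjoint $m$-vertex subsets $V_1, \ldots, V_N$ of $[d]$. Let $G^\ast$ be the graph on $[d]$ obtained by placing a relabeled copy of $H_0$ on each $V_i$, with every other vertex isolated; for each $i\in[N]$ set $G^{(i)} := G^\ast \cup \{e_i\}$, where $e_i$ is the critical edge $e$ embedded inside $V_i$. Strong monotonicity together with $H_1\subseteq G^{(i)}$ gives $\cP(G^{(i)}) \geq \cP(H_1)=1$, so $G^{(i)} \in \cG_1(\cP)$. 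The null graph $G^\ast$ is a disjoint union of copies of $H_0$; it lies in $\cG_0(\cP)$ for the strongly monotone properties at stake in the paper (clique size, cycle presence), which is the only point that requires a check beyond strong monotonicity itself.

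Endow the alternatives with the uniform mixture prior $\pi := N^{-1}\sum_{i=1}^N \PP_{\theta,G^{(i)}}^{\otimes n}$, and keep $\PP_{\theta,G^\ast}^{\otimes n}$ as the singleton null. Le Cam yields
\[
R_n(\cP,\theta) \;\geq\; 1-\tfrac12\sqrt{\chi^2\bigl(\pi\,\big\|\,\PP_{\theta,G^\ast}^{\otimes n}\bigr)}.
\]
The per-sample likelihood ratio
$L_i(x) = \PP_{\theta,G^{(i)}}(x)/\PP_{\theta,G^\ast}(x) = (Z_{H_0}/Z_{H_1})\,e^{\theta x_{u_i}x_{v_i}}$
depends only on the two endpoints of $e_i$ inside $V_i$, and since $\PP_{\theta,G^\ast}$ factorizes as a disjoint union, $L_i$ and $L_j$ are independent under the null for $i\neq j$, each of unit expectation. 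A standard Ingster expansion therefore collapses the cross terms and leaves only the diagonal, giving
\[
\chi^2\bigl(\pi\,\big\|\,\PP_{\theta,G^\ast}^{\otimes n}\bigr) \;=\; \frac{(1+\gamma)^n-1}{N}, \qquad \gamma \;:=\; \mathbb{E}_{\PP_{\theta,G^\ast}}[L_i^2]-1.
\]

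The remaining step is to bound $\gamma$. Using the identity $Z_{H_1}/Z_{H_0} = \mathbb{E}_{P_{H_0}}[e^{\theta x_u x_v}] = \cosh\theta+\rho\sinh\theta$ with $\rho:=\mathbb{E}_{P_{H_0}}[x_u x_v]$, a short calculation gives the closed form $\gamma = \sinh^2\theta\,(1-\rho^2)/(\cosh\theta+\rho\sinh\theta)^2$; since $H_0$ is a zero-field ferromagnetic Ising model, Griffiths' first inequality forces $\rho\in[0,1]$, which yields the sharp bound $\gamma \leq \tanh^2\theta$. Under the hypothesis $\theta < \atanh(\sqrt{\log N/n})$ (interpreted with a fixed gap $\tanh^2\theta \leq c\log N/n$ for some $c<1$), we get $(1+\gamma)^n \leq e^{n\tanh^2\theta} \leq N^{c}$, hence $\chi^2 \leq N^{c-1}\to 0$ and $\liminf R_n(\cP,\theta)=1$ follows. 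For the unconditional second claim (any $\theta$, provided $\log N \gtrsim n$ with a sufficiently large constant), the universal bound $\gamma \leq 1$ — read off the closed form for every $\theta$ and every $\rho\in[0,1]$ — gives $\chi^2 \leq (2^n-1)/N$, which vanishes once $\log N$ exceeds $n\log 2$. The main technical obstacle is precisely the $|E(H_0)|$-free bound $\gamma \leq \tanh^2\theta$: a naive estimate on the single-sample chi-squared picks up a factor scaling with $|E(H_0)|$, and it is Griffiths' correlation inequality that collapses this to a quantity depending only on $m$ (through $N=\lfloor d/m\rfloor$), matching the rate in the proposition.
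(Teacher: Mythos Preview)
Your proof is correct and follows essentially the same route as the paper: the same disjoint-copies null $G^\ast$, the same uniform mixture over single-edge perturbations, and the same Le Cam/$\chi^2$ reduction collapsing cross terms by block independence. The only cosmetic difference is that you compute the diagonal term in closed form, $\gamma = \sinh^2\theta\,(1-\rho^2)/(\cosh\theta+\rho\sinh\theta)^2 \le \tanh^2\theta$ via Griffiths' inequality, whereas the paper packages the identical calculation into its Lemma~\ref{lemma:chi:sq} together with Lemma~4 of \cite{tandon2014information}; you are also more explicit than the paper in flagging that $G^\ast\in\cG_0(\cP)$ requires a property-specific check (it does not follow from strong monotonicity alone, though it holds for the subgraph-containment properties the paper targets).
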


Notice that for positive $\theta$ one has $\theta > \tanh(\theta)$, and therefore \eqref{new:condition:prop:27} implies that $\theta > \textstyle \sqrt{\frac{\log \lfloor d/m \rfloor}{n}}$ in order for cycle testing to be possible. Examples \ref{cycle:pres:ex} and \ref{cliq:size:ex} of the following section illustrate how to apply Theorem \ref{biclques:theorem} and Proposition \ref{simple:lower:bound:monotone} in practice. 

\subsection{Examples} \label{lower:bound:examples}

In this section we apply Theorems \ref{ising:single:edge}, \ref{biclques:theorem} and Proposition \ref{simple:lower:bound:monotone} to establish necessary conditions on $\theta$ for the three examples discussed in the introduction. In the first example we derive a lower bound on $\theta$ for graph connectivity testing. 

\begin{example}[Connectivity]\label{conn:example} Define ``graph connectivity'' $\cP$ as $\cP(G) = 0$ if $G$ is disconnected and $\cP(G) = 1$ otherwise. Then if 
\begin{align}\label{connectivity:lower:bound}
\theta < \kappa \sqrt{\log d/n} \wedge \atanh(1/(3e^2)),
\end{align} we have $\liminf_{n \rightarrow \infty} R_n(\cP, \theta) = 1$. Furthermore, if $\log d \gtrsim n$ for a sufficiently large absolute constant and if
\begin{align}\label{theta:bound:logd:big}
\tanh(\theta) < 1,
\end{align}
we have $\liminf_{n \rightarrow \infty} R_n(\cP, \theta) = 1$.
\end{example}

\begin{proof}[Proof of Example \ref{conn:example}]
Note that since connectivity is not a strongly monotone property we cannot apply Proposition \ref{simple:lower:bound:monotone}, and will use Theorem \ref{ising:single:edge} instead. Construct a base graph $G_0 := ([d], E_0)$ where 
$$E_0 := \{(j, j+1)_{j = 1}^{\lfloor d/2\rfloor - 1}, (\lfloor d/2\rfloor,1), (j, j +1)_{j = \lfloor d/2\rfloor + 1}^{d}, (\lfloor d/2\rfloor + 1, d)\},$$
and let 
$$\cC := \{(j, \lfloor d/2\rfloor + j)_{j = 1}^{\lfloor d/2\rfloor}\} \mbox{ (see Figure \ref{conn:graph:ex2}).}$$
Adding any edge from $\cC$ to $G_0$ results in a connected graph, so $\cC$ is a divider with a null base $G_0$. To construct a packing set of $\cC$, we collect all edges $(j, \lfloor d/2\rfloor + j)$ for $j \leq \lfloor d/2\rfloor - \lceil \log \log |\cC|\rceil$ satisfying $\lceil \log \log |\cC|\rceil$ divides $j$. This procedure results in a packing set with radius at least $\lceil \log \log |\cC|\rceil $ and cardinality of at least $\Big \lfloor \frac{|\cC|}{\lceil \log \log |\cC|\rceil}\Big\rfloor -1 $. Therefore 
$$\log N(\cC, d_{G_0}, \log \log |\cC|) \geq \log \Big[\Big \lfloor \frac{|\cC|}{\lceil \log \log |\cC|\rceil}\Big\rfloor -1\Big] \asymp \log |\cC| \asymp \log d.$$ 
By Theorem \ref{ising:single:edge} we conclude that the asymptotic risk of connectivity testing is $1$ if for some absolute constant $\kappa > 0$ we have that \eqref{connectivity:lower:bound} holds. The second conclusion of this example does not follow directly from our general results. Its proof is deferred to Appendix \ref{bounds:ferromagnets}. 

\begin{figure}[H] 
\centering
\begin{tikzpicture}[scale=.5]
\SetVertexNormal[Shape  = circle,
                  MinSize = 11pt,
                  FillColor  = white,
                  InnerSep=0pt,
                  LineWidth = .5pt]
   \SetVertexNoLabel
   \tikzset{EdgeStyle/.style= {thin,
                                double          = black,
                                double distance = .5pt}}
                                \begin{scope}[rotate=90]
                                \grEmptyCycle[prefix=a,RA=3]{5}{1}%
                                \begin{scope}\grEmptyCycle[prefix=b,RA=1.5]{5}{1}\end{scope} \end{scope}
       \tikzset{EdgeStyle/.style= {dashed,thin,
                                double          = black,
                                double distance = .5pt}}
                                                                       \tikzset{LabelStyle/.style = {below, fill = white, text = black, fill opacity=0, text opacity = 1}}
                                    \Edge[label=$e\protect\vphantom{'}$](a1)(b1)
    \Edge[label=$e'$](a4)(b4)
    \Edge(a2)(b2)
    \Edge(a3)(b3)
    \Edge(a0)(b0)
            \tikzset{EdgeStyle/.style= {thin,
                                double          = black,
                                double distance = .5pt}}
     \Edge(b0)(b1)
     \Edge(b1)(b2)
     \Edge(b2)(b3)
     \Edge(b3)(b4)
     \Edge(b0)(b4)
     \Edge(a0)(a1)
     \Edge(a1)(a2)
     \Edge(a2)(a3)
     \Edge(a3)(a4)
     \Edge(a0)(a4)
\end{tikzpicture}
\caption{The graph $G_0$ with two edges $e,e' \in \cC: d_{G_0}(e, e') = 2$, $d = 10$.}\label{conn:graph:ex2}
\end{figure}
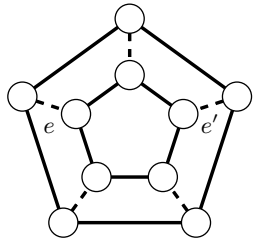

\end{proof}

\begin{example}[Cycle Presence]\label{cycle:pres:ex} Consider testing the property $\cP$ ``cycle presence'', i.e. $\cP(G) = 0$ if $G$ is a forest and $\cP(G) = 1$ otherwise. Suppose $\log \lfloor d/3 \rfloor \leq n$. If either
\vskip -.4cm
\noindent\begin{minipage}{0.6\linewidth}
\begin{align}\label{lower:cycle:testing}
\theta < \atanh(\sqrt{\log \lfloor d/3\rfloor /n}) \mbox{  	      or }
\end{align}
\end{minipage}%
\begin{minipage}{0.4\linewidth}
\begin{align}\label{upper:bound:cycle:testing}
\theta \geq 2 \vee \log\frac{4\kappa n}{\log \lfloor d/3 \rfloor},
\end{align}
\end{minipage}
for some absolute constant $\kappa > 0$, we have $\liminf_{n \rightarrow \infty} R_n(\cP, \theta ) = 1$. Furthermore, $\liminf_{n \rightarrow \infty} R_n(\cP, \theta ) = 1$ if for some sufficiently large constant we have $\log d \gtrsim n$.

\end{example}

\begin{proof}[Proof of Example \ref{cycle:pres:ex}]

By definition cycle presence is a strongly monotone property.  Figure \ref{biclique:example:cycle} shows an example of a graph $H_0$ satisfying the conditions of Theorem \ref{biclques:theorem} and Proposition \ref{simple:lower:bound:monotone}. 
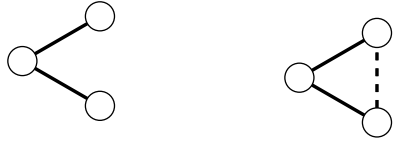
\begin{figure}[H] 
\centering
\begin{tikzpicture}[scale=.55]
\SetVertexNormal[Shape  = circle,
                  FillColor  = white,
                  MinSize = 11pt,
                  InnerSep=0pt,
                  LineWidth = .5pt]
   \SetVertexNoLabel
   \tikzset{EdgeStyle/.style= {thin,
                                double          = black,
                                double distance = .5pt}}
                                \begin{scope}[rotate=60, shift={(0cm, 0cm)}]
                                \grEmptyCycle[prefix=a,RA=1.25]{3}{1}\end{scope}
                                \begin{scope}[rotate=60, shift={(-3cm, 6cm)}]\grEmptyCycle[prefix=b,RA=1.25]{3}{1}\end{scope} 
                                                                    \Edge(a1)(a2)
                                                                    \Edge(a1)(a0)
                                                                    \Edge(b1)(b2)
                                                                    \Edge(b1)(b0)

    \tikzset{EdgeStyle/.style= {dashed,thin,
                                double          = black,
                                double distance = .5pt}}
\Edge(a2)(a0)
    \tikzset{LabelStyle/.style = {below, fill = white, text = black, fill opacity=0, text opacity = 1}}
\end{tikzpicture}
\caption{The graph $H_0$ in the left panel is a triangle with a missing edge.  $H_0$ contains a biclique with no cycles. On the other hand, if we add the dashed edge we obtain a triangle graph $H_1$, which has a cycle. In terms of the notation of Theorem \ref{biclques:theorem} we have $l = 1$ and $r = 2$. }\label{biclique:example:cycle}
\end{figure}
Concretely, $H_0$ is a $1 \times 2$ biclique which contains no cycle and has the property that adding one edge on its right side gives a graph with a cycle. By Proposition \ref{simple:lower:bound:monotone} we immediately confirm \eqref{lower:cycle:testing} and the final conclusion.  Furthermore, by a direct application of Theorem \ref{biclques:theorem} it follows that if there exists a constant $\kappa > 1$ such that if \eqref{upper:bound:cycle:testing} holds the minimax risk of cycle testing is asymptotically $1$.

\end{proof}

\begin{example}[Clique Size]\label{cliq:size:ex} In our final example we consider testing the ``maximum clique size'' property $\cP$, where $\cP$ is such that $\cP(G) = 0$ if $G$ has no $m$-clique and $\cP(G) = 1$ otherwise. Suppose that the maximum degree of $G$ satisfies $\maxdeg(G) \leq s$ where $s$ is a known integer such that $m \leq s + 1$. Let $\log \lfloor d/m \rfloor \leq n$. If either
\vskip -.4 cm
\noindent\begin{minipage}{0.5\linewidth}
\begin{align}\label{clique:lower:bound}
\theta < \atanh( \sqrt{\log \lfloor d/m \rfloor/n}), \mbox{  	    or }
\end{align}
\end{minipage}%
\begin{minipage}{0.5\linewidth}
\begin{align}\label{reader:friendly:upper:bound:clique}
 \theta \gtrsim \frac{12}{s-9} \vee \frac{\log\frac{\kappa n s}{\log \lfloor 2 d/s \rfloor}}{(s-1)/4},
\end{align}
\end{minipage}
for some absolute constant $\kappa > 0$, we have $\liminf_{n \rightarrow \infty} R_n(\cP, \theta ) = 1$. Furthermore $\liminf_{n \rightarrow \infty} R_n(\cP, \theta) = 1$ if $\log \lfloor d/m \rfloor \gtrsim n$ for a sufficiently large constant. 

  \end{example}

\begin{proof}[Proof of Example \ref{cliq:size:ex}]

Since $\cP$ is a strongly monotone property, we can apply Theorem \ref{biclques:theorem} and Proposition \ref{simple:lower:bound:monotone} to upper and lower bound $\theta$ respectively. We start first with the lower bound. Construct $H_0$ as an $m$-clique with a missing edge, as shown in Figure \ref{clique:example}. By Proposition \ref{simple:lower:bound:monotone} we immediately deduce \eqref{clique:lower:bound} and the final conclusion of the statement.

 \begin{figure}[H] 
\centering
\begin{tikzpicture}[scale=.55]
\SetVertexNormal[Shape  = circle,
                  FillColor  = white,
                  MinSize = 11pt,
                  InnerSep=0pt,
                  LineWidth = .5pt]
   \SetVertexNoLabel
   \tikzset{EdgeStyle/.style= {thin,
                                double          = black,
                                double distance = .5pt}}
                                \begin{scope}[rotate=0, shift={(0cm, 0cm)}]\grEmptyCycle[prefix=a,RA=1.5]{4}\end{scope}
                              \begin{scope}[rotate=0, shift={(6cm, 0cm)}]\grEmptyCycle[prefix=b,RA=1.5]{4}\end{scope}
                                \Edge(a1)(a2)
                                \Edge(a2)(a0)
                                \Edge(a3)(a1)
                                \Edge(a3)(a2)
                                \Edge(a3)(a0)

                                \Edge(b1)(b2)
                                \Edge(b2)(b0)
                                \Edge(b3)(b1)
                                \Edge(b3)(b2)
                                \Edge(b3)(b0)
    \tikzset{EdgeStyle/.style= {dashed,thin,
                                double          = black,
                                double distance = .5pt}}
                                                                       \tikzset{LabelStyle/.style = {above right, fill = white, text = black, fill opacity=0, text opacity = 1}}

                                                                    \Edge(b1)(b0)
                                                                    
    \tikzset{LabelStyle/.style = {below, fill = white, text = black, fill opacity=0, text opacity = 1}}
\end{tikzpicture}
\caption{For this figure let $m = 4$. In the left panel we show an example of a graph $H_0$, while on the right panel we add one edge to transfer $H_0$ to $H_1$ which satisfies the property $\cP$.}\label{clique:example}
\end{figure}
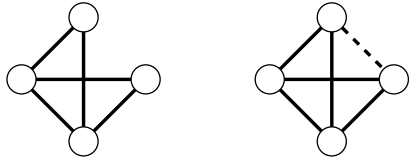

The following construction of the graph $H_0$ from the statement of Theorem \ref{biclques:theorem} is inspired by Tur\'{a}n's Theorem \citep[e.g.,][]{bollobas2004extremal}. We build $H_0$ by taking $\lfloor\frac{s-1}{m-2}\rfloor (m-1) + 1$ vertices, splitting them in $m-1$ approximately equally sized groups (one group will have 1 more vertex than the others) and connecting any two vertices belonging to different groups; see Figure \ref{clique:example2} for a visualization of $H_0$. It is simple to check that $H_0$ does not contain an $m$-clique, and adding certain edges to $H_0$, gives a graph containing an $m$-clique with maximum degree bounded by $s$.  Furthermore, $H_0$ contains a $\lfloor\frac{s-1}{m-2}\rfloor \lfloor\frac{m-1}{2}\rfloor \times \bigr(\lfloor\frac{s-1}{m-2}\rfloor \lceil\frac{m-1}{2}\rceil + 1\bigr)$ biclique; to see this split the $m-1$ vertex groups into $2$ vertex groups: one with all vertices in the first $\lfloor\frac{m-1}{2}\rfloor$ groups, and the other with all remaining vertices. 

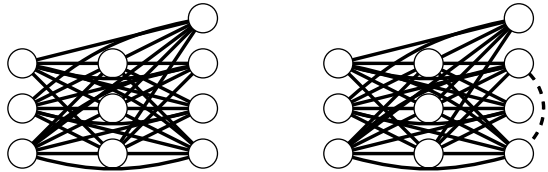
\begin{figure}[H]
\centering
\begin{tikzpicture}[scale=.6]
\SetVertexNormal[Shape      = circle,
                  FillColor  = white,
                  MinSize = 11pt,
                  InnerSep=0pt,
                  LineWidth = .5pt]
   \SetVertexNoLabel
   \tikzset{LabelStyle/.style = {below, fill = white, text = black, fill opacity=0, text opacity = 1}}
   \tikzset{EdgeStyle/.style= {thin,
                                double          = black,
                                double distance = .5pt}}
    \begin{scope}\grEmptyPath[prefix=a,RA=2]{3}\end{scope}
    \begin{scope}[shift={(0,1)}]\grEmptyPath[prefix=b,RA=2]{3}\end{scope}
    \begin{scope}[shift={(0,2)}]\grEmptyPath[prefix=c,RA=2]{3}\end{scope}
    
    \begin{scope}[shift={(7,0)}]\grEmptyPath[prefix=d,RA=2]{3}\end{scope}
    \begin{scope}[shift={(7,1)}]\grEmptyPath[prefix=e,RA=2]{3}\end{scope}
    \begin{scope}[shift={(7,2)}]\grEmptyPath[prefix=f,RA=2]{3}\end{scope}
        \begin{scope}[shift={(4,3)}]\grEmptyPath[prefix=g,RA=2]{1}\end{scope}
        \begin{scope}[shift={(11,3)}]\grEmptyPath[prefix=h,RA=2]{1}\end{scope}

\Edge(a0)(a1)
\Edge(a0)(b1)
\Edge(a0)(c1)
\Edge(a0)(b2)

\Edge(g0)(a1)
\Edge(g0)(b1)
\Edge(g0)(c1)
\Edge(g0)(a0)
\Edge(g0)(c0)

\Edge(h0)(d1)
\Edge(h0)(e1)
\Edge(h0)(f1)
\Edge(h0)(d0)
\Edge(h0)(f0)

\Edge(b0)(a1)
\Edge(b0)(b1)
\Edge(b0)(c1)
\Edge(b0)(c2)
\Edge(b0)(a2)

\Edge(c0)(c1)
\Edge(c0)(a1)
\Edge(c0)(b1)
\Edge(c0)(b2)

\Edge(a2)(a1)
\Edge(a2)(b1)
\Edge(a2)(c1)

\Edge(b2)(a1)
\Edge(b2)(b1)
\Edge(b2)(c1)

\Edge(c2)(c1)
\Edge(c2)(a1)
\Edge(c2)(b1)

\Edge(d0)(d1)
\Edge(d0)(e1)
\Edge(d0)(f1)
\Edge(d0)(e2)

\Edge(e0)(d1)
\Edge(e0)(e1)
\Edge(e0)(f1)
\Edge(e0)(f2)
\Edge(e0)(d2)

\Edge(f0)(f1)
\Edge(f0)(d1)
\Edge(f0)(e1)
\Edge(f0)(e2)

\Edge(d2)(d1)
\Edge(d2)(e1)
\Edge(d2)(f1)

\Edge(e2)(d1)
\Edge(e2)(e1)
\Edge(e2)(f1)

\Edge(f2)(f1)
\Edge(f2)(d1)
\Edge(f2)(e1)

\tikzset{EdgeStyle/.append style = {thin, bend right=15}}

\Edge(a0)(a2)
\Edge(c0)(c2)
\Edge(b0)(b2)

\Edge(d0)(d2)
\Edge(f0)(f2)
\Edge(e0)(e2)
\Edge(g0)(b0)
\Edge(h0)(e0)

\tikzset{EdgeStyle/.append style = {thin, bend left=15}}
\Edge(a0)(c2)
\Edge(c0)(a2)
\Edge(d0)(f2)
\Edge(f0)(d2)

\tikzset{EdgeStyle/.append style = {dashed, thin,bend left=45}}
\Edge(f2)(d2)
\end{tikzpicture}

\caption{In the left panel we show an example of the graph $H_0$. The concrete values of $s$ and $m$ used are $s=7$ and $m = 4$. $H_0$ contains no $4$-clique, and its maximum degree is $7$. On the other hand adding any edge on the rightmost side (such as the dashed edge in the figure on the right) to $H_0$ results in a graph $H_1$ which contains a $4$-clique, and whose maximum degree remains bounded by $7$. The graph $H_0$ contains a $3 \times 7$ biclique, whose left side consists of taking the three leftmost vertices.} \label{clique:example2}
\end{figure}

We are now in a position to apply Theorem \ref{biclques:theorem}. To render bound (\ref{theta:scaling:upper:bound:max:monotone}) in a reader friendly form, we use that the terms $\lfloor\frac{s-1}{m-2}\rfloor \lfloor\frac{m-1}{2}\rfloor \geq \frac{s-1}{4}$ and $\lfloor\frac{s-1}{m-2}\rfloor \lceil\frac{m-1}{2}\rceil + 1\geq \frac{s+3}{4}$. We have that the minimax risk is asymptotically $1$ if for any $\kappa > 1/2$ \eqref{reader:friendly:upper:bound:clique} holds.

\end{proof}
Notably, the maximum degree  $s$ of the graph appears in (\ref{reader:friendly:upper:bound:clique}) unlike in the previous two examples. The bigger the maximum degree is allowed to be, the smaller the signal $\theta$ has to be in order for meaningful clique size tests to exist. 

\section{Correlation Screening for Ferromagnets}\label{ferromagnetic:algorithms}

In this section we formulate and study the limitations of a greedy correlation screening algorithm on monotone property testing problems. We pay special attention to the examples discussed in Section \ref{lower:bound:examples}. Unlike correlation based decoders, such as the ones studied by \cite{santhanam2012information}, this algorithm is designed to directly target the graph property of interest, and also has polynomial runtime for many instances. Moreover, for different properties, the regimes in which the algorithm works differ vastly from graph recovery algorithms. For generality we expand model class (\ref{ising:model:measure}) to include all zero-field ferromagnetic models such that:
\begin{align}\label{ferromagnet:def}
\PP_{\theta, G_\wb}(\bX = \xb) \varpropto \exp\Big(\theta \sum_{(u,v) \in E(G)} w_{uv} x_u x_v\Big),
\end{align}
where $\xb \in \{\pm 1\}^d$, $\theta \geq 0$, $G_{\wb} := (G,{\wb})$ is a weighted graph, and for $(u,v) \in E(G)$: $w_{uv} > 0$. In contrast to (\ref{ising:model:measure}), in (\ref{ferromagnet:def}) the weights $\wb$ allow for the interactions to have different magnitude. 

\subsection{General Correlation Screening Algorithm}

We now define a class of graphs ``witnessing'' the alternative. For a monotone property $\cP$ define the collection of graphs
$$
\cW(\cP) := \{G \in \cG_d ~|~ \cP(G) = 1\}.\footnote{This is simply a re-definition of the set $\cG_1(\cP)$ from \eqref{null:alternative:def}. }
$$
We refer to graphs in $\cW(\cP)$ as \textit{witnesses} of $\cP$. It is clear by the monotonicity of $\cP$ for two graphs $G$ and $G'$ such that  $\cP(G) = 0$ and $G' \in \cW(\cP)$ that the set $E(G') \setminus E(G) \neq \varnothing$. Define the sets of weighted graphs
\begin{align*}
\cG_0(\cP) &:= \bigr\{G_{\wb}  ~\bigr|~ \wb \in \RR^{+{d \choose 2}}, \cP(G) = 0 \bigr\},\\  \cG_1(\cP)  &:= \bigr\{G_{\wb}  ~\bigr|~ \wb \in \RR^{+{d \choose 2}}, \cP(G) = 1, \max_{G' \in \cW(\cP), G' \trianglelefteq G} \min_{(u,v) \in E(G')}  w_{uv} \geq 1 \bigr\}.
\end{align*}
The set $\cG_0(\cP)$ imposes no signal strength restrictions, while $\cG_1(\cP)$ requires the existence of at least one witness of $\cP$, each edge of which corresponds to an interaction with magnitude at least $\theta$. For future reference we omit the dependency on $\cP$ if this does not cause confusion. In Section \ref{lower:bound:examples} we saw that some property tests, such as cycle testing and clique size testing, necessitate further restrictions on their parameters (see (\ref{upper:bound:cycle:testing}) and (\ref{reader:friendly:upper:bound:clique})). Let $\cR$ be an appropriately chosen for the property $\cP$ restriction set on the weighted graph pair $G_{\wb}$. For instance, an appropriate set $\cR$ for cycle testing could be $\cR = \{G_{\wb} ~|~ \| \wb\|_{\infty} \leq \Theta/\theta\}$ for some $\Theta \geq \theta$. 

To this end, it is useful to first define the extremal correlation
\begin{align}\label{min:corr:under:alt}
\cT := \cT(\cP,\cR,\theta) = \min_{G_{\wb} \in \cG_1 \cap \cR} \max_{G' \in \cW} \min_{(u,v) \in E(G')} \EE_{\theta, G_\wb} X_u X_v.
\end{align}
$\cT$ is the maximal smallest possible correlation between neighboring vertices in a witness graph given any model from the alternative. In the following we give a simple universal lower bound on $\cT$.

\begin{lemma}\label{universal:bound:cT:lemma} For any monotone property $\cP$ we have:
\begin{align*}
\cT \geq \tanh(\theta).
\end{align*}
\end{lemma}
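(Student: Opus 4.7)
The plan is to reduce the bound to a single-edge computation by invoking the Griffiths--Kelly--Sherman (GKS) correlation inequalities for zero-field ferromagnetic Ising models. Fix any $G_\wb \in \cG_1 \cap \cR$. By the definition of $\cG_1(\cP)$, there exists a witness $G_* \in \cW(\cP)$ with $G_* \trianglelefteq G$ such that every edge $(u,v) \in E(G_*)$ satisfies $w_{uv} \geq 1$. It will be enough to show that for \emph{this particular} witness and every $(u,v) \in E(G_*)$ we have $\EE_{\theta, G_\wb} X_u X_v \geq \tanh(\theta)$; taking $\min$ over edges of $G_*$, then $\max$ over witnesses, and finally $\min$ over $G_\wb \in \cG_1 \cap \cR$ yields the claim.

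The main workhorse is the second GKS inequality, which asserts that in a zero-field ferromagnetic Ising model with nonnegative couplings $J_{ab} = \theta w_{ab} \geq 0$, one has $\EE X_A X_B \geq \EE X_A \EE X_B$ for every $A,B \subseteq [d]$, where $X_A := \prod_{i \in A} X_i$. Differentiating $\EE_{\theta, G_\wb} X_u X_v$ with respect to an individual coupling $\theta w_{ab}$ produces exactly $\EE X_u X_v X_a X_b - \EE X_u X_v \EE X_a X_b$, which is nonnegative by GKS-II. Consequently pairwise correlations are monotone nondecreasing in each coupling; in particular, zeroing out every interaction except the single edge $(u,v)$ can only decrease $\EE X_u X_v$. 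The resulting two-spin model has partition function $2(e^{\theta w_{uv}} + e^{-\theta w_{uv}})$ and an explicit correlation $\tanh(\theta w_{uv})$.

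Combining these two observations, for every $(u,v) \in E(G_*)$,
\begin{align*}
\EE_{\theta, G_\wb} X_u X_v \;\geq\; \tanh(\theta w_{uv}) \;\geq\; \tanh(\theta),
\end{align*}
where the last step uses monotonicity of $\tanh$ on $[0,\infty)$ together with $w_{uv} \geq 1$. Taking the minimum over $(u,v) \in E(G_*)$ gives the same bound, and since $G_* \in \cW$ is one admissible witness, the inner maximum over witnesses in \eqref{min:corr:under:alt} is also at least $\tanh(\theta)$. As $G_\wb \in \cG_1 \cap \cR$ was arbitrary, the outer minimum in the definition of $\cT$ is at least $\tanh(\theta)$.

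I expect the only subtlety to be a careful invocation of GKS-II: one must check that the ambient model remains a zero-field ferromagnet throughout the monotone interpolation from $G_\wb$ down to the single-edge model, which is immediate here because all $w_{ab} \geq 0$ and we only decrease some of them to zero. No restrictions from $\cR$ are needed for the argument, so the bound holds universally.
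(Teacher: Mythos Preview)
Your proof is correct and follows essentially the same approach as the paper's: both arguments invoke Griffith's (GKS-II) inequality to show that correlations are monotone in the couplings, then reduce to the single-edge computation $\EE X_u X_v = \tanh(\theta w_{uv}) \geq \tanh(\theta)$. The paper phrases this as a two-step pruning (first down to a minimal witness, then to a single edge), whereas you go directly from the full model to the single edge in one step; your presentation is arguably cleaner, but the underlying idea is identical.
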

\begin{proof}[Proof of Lemma \ref{universal:bound:cT:lemma}] 
Observe that by Griffith's inequality (see Theorem \ref{FKG:ineq} in Appendix \ref{aux:app:sec}) deleting any edge can only reduce the correlation between a pair of vertices. Therefore one can prune the graph $G$ without increasing $\cT$, until it becomes a minimal witness $W$, i.e., if we delete any edge from $W$ the resulting graph does not satisfy $\cP$. On the graph $W$, we have $\cT \geq \min_{(u,v) \in E(W)} \EE_{\theta, W_{\wb}}X_uX_v$. Next, one can prune further edges from $W$ until only the minimum edge remains. Since the correlation of a pair of vertices with a graph consisting of the single edge between them is precisely $\tanh(\theta)$ (see Lemma \ref{chain:graph} in Appendix \ref{aux:app:sec}) the inequality follows.
\end{proof}

Since in practice $\cT$ might be hard to estimate, we assume that we have a lower bound on $\cT$: $\underline{\cT}$ in closed form (we allow for $\underline \cT = \cT$). Provided that we have sufficiently many samples, and the data is generated under an alternative model, many empirical correlations between neighboring vertices should be approximately at least $\cT$ (and hence at least $\underline{\cT}$). To formally define the empirical correlations, let $\bX^{(1)}, \bX^{(2)}, \ldots, \bX^{(n)} \sim \PP_{\theta, G_{\wb}}$ be $n$ i.i.d. samples from the ferromagnetic Ising model  (\ref{ferromagnet:def}). Define the empirical measure $\hat \PP$, so that for any Borel set $A\subset \RR^d$: $\hat \PP(A) = n^{-1}\sum_{i = 1}^n\mathbbm{1}(\bX^{(i)} \in A)$. Put $\hat \EE$ for the expectation under $\hat \PP$. To this end, for a given $\delta > 0$ define the universal threshold
\begin{align}\label{tau:univ:thresh}
\tau := \tau(n,d,\delta) = \sqrt{\frac{4 \log d + \log \delta^{-1}}{n}},
\end{align}
and consider the following correlation screening Meta-Algorithm \ref{generic:property:test} for monotone property testing in ferromagnetic Ising models. 

\begin{algorithm}
\caption{Correlation Screening Test}\label{generic:property:test}
\begin{algorithmic}
\STATE \textbf{Input:} $\{\bX^{(i)}\}_{i \in [n]}, \theta, \cR, \cP$
\STATE Set $\psi = 0$
\STATE Calculate the matrix $\Mb := \{\hat \EE X_u X_v\}_{u,v \in [d]}$
\STATE Solve 
\begin{align}\label{corr:screening:witness}\hat G = \argmax_{G' \in \cW} \min_{(u,v) \in E(G')} M_{uv}\end{align} 
\STATE Set $\psi = 1$ if $\min_{e \in \hat G} M_e > \underline{\cT} - \tau$.
\RETURN{$\psi$}
\end{algorithmic}
\end{algorithm}
The only potentially computationally intensive task in Algorithm \ref{generic:property:test} is optimization (\ref{corr:screening:witness}), which aims to find a witness whose smallest empirical correlation is the largest. However, for many properties solving (\ref{corr:screening:witness}) can be done in polynomial time via greedy procedures. We remark that step (\ref{corr:screening:witness}) treats $M_{uv}$ as a surrogate of $\theta w_{uv}$. Instead, one could opt to substitute $M_{uv}$ with an estimate of the parameter $\theta w_{uv}$, which can be obtained via a procedure such as $\ell_1$-regularized vertex-wise logistic regressions \citep{ravikumar09high}, e.g. Here we prefer to focus on correlation screening due to its simplicity, while we recognize that the estimate $M_{uv}$ may not be a good proxy of $\theta w_{uv}$ in models at low temperature regimes, which are known to develop long range correlations. To this end define the extremal quantity

$$
\cQ(\cP,\cR,\theta) := \max_{G_{\wb} \in \cG_0 \cap \cR} \max_{G' \in \cW} \min_{(u,v) \in E(G')} \EE_{\theta, G_{\wb}} X_u X_v 
$$
The term $\cQ$, selects a weighted graph $G_{\wb}$ under the null and a witness $G'$, which yields the largest possible minimal correlation on any of the edges of $G'$. The following result holds regarding the performance of Algorithm \ref{generic:property:test}. 

\begin{theorem}[Correlation Screening Sufficient Conditions]\label{generic:property:test:thm} Suppose that $(\theta, n, d)$ satisfy
\begin{align}\label{abstract:corr:algo:assumption}
\underline{\cT} - \cQ > 2\tau.
\end{align}
Then Algorithm \ref{generic:property:test} satisfies
\begin{align}\label{abstract:corr:algo}
\sup_{G_{\wb} \in \cG_0\cap \cR} \PP_{\theta, G_{\wb}}(\psi = 1) \leq \delta ~~~\mbox{and}~~~  \sup_{G_{\wb} \in \cG_1 \cap \cR} \PP_{\theta, G_{\wb}}(\psi = 0) \leq \delta.
\end{align}
\end{theorem}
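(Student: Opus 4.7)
The plan is to combine a uniform concentration argument for the empirical correlations $M_{uv}$ with the gap assumption \eqref{abstract:corr:algo:assumption}; together they make both halves of \eqref{abstract:corr:algo} hold deterministically on a single high-probability event. Because $X_u^{(i)} X_v^{(i)} \in \{-1,+1\}$ is bounded, Hoeffding's inequality gives $\PP_{\theta, G_\wb}(|M_{uv} - \EE_{\theta, G_\wb} X_u X_v| \geq \tau) \leq 2 \exp(-n\tau^2/2)$ for every pair $(u,v)$. A union bound over the $\binom{d}{2}$ pairs, together with the choice of $\tau$ in \eqref{tau:univ:thresh}, then produces an event $\cE$ of probability at least $1 - \delta$ on which $|M_{uv} - \EE_{\theta, G_\wb} X_u X_v| < \tau$ holds simultaneously for all $u < v$, uniformly in $G_\wb$.

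Everything that follows is definition chasing on $\cE$. Under the null, I would fix $G_\wb \in \cG_0 \cap \cR$ and any witness $G' \in \cW$; on $\cE$ the edge attaining $\min_{e \in E(G')} \EE_{\theta, G_\wb} X_u X_v$ also satisfies $M_e < \EE_{\theta, G_\wb} X_u X_v + \tau$, so $\min_{e \in E(G')} M_e < \min_{e \in E(G')} \EE_{\theta, G_\wb} X_u X_v + \tau$. Maximizing over $G' \in \cW$ and recalling both the definition of $\cQ$ and the optimality of $\hat G$ in \eqref{corr:screening:witness} yields $\min_{e \in \hat G} M_e < \cQ + \tau$. Assumption \eqref{abstract:corr:algo:assumption} forces $\cQ + \tau < \underline{\cT} - \tau$, so the acceptance rule of the algorithm fails and $\psi = 0$.

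Under the alternative, I would fix $G_\wb \in \cG_1 \cap \cR$. Because $\cW$ is a finite family of graphs on $[d]$, the outer maximum in \eqref{min:corr:under:alt} is attained at some $G^* \in \cW$, whence $\min_{e \in E(G^*)} \EE_{\theta, G_\wb} X_u X_v \geq \cT \geq \underline{\cT}$. On $\cE$ this yields $\min_{e \in E(G^*)} M_e > \underline{\cT} - \tau$, and the optimality of $\hat G$ in \eqref{corr:screening:witness} delivers $\min_{e \in \hat G} M_e \geq \min_{e \in E(G^*)} M_e > \underline{\cT} - \tau$, which triggers $\psi = 1$.

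The only technically nontrivial ingredient is the concentration step, and even there no Ising-specific machinery is required because the empirical correlations are simple i.i.d. averages of bounded $\pm 1$-valued variables. Past this, the proof is driven entirely by two min-max manipulations organized around $\underline{\cT}$ and $\cQ$, which is precisely why \eqref{abstract:corr:algo:assumption} has been isolated as the single driving condition of the theorem.
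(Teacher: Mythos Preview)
Your proposal is correct and follows essentially the same approach as the paper: both proofs invoke Hoeffding's inequality plus a union bound over all pairs (this is exactly the content of Lemma~\ref{simple:concentration:lemma}) to obtain the uniform concentration event, and then carry out the same min--max comparisons on that event to bound $\min_{e\in\hat G} M_e$ above by $\cQ+\tau$ under the null and below by $\underline{\cT}-\tau$ under the alternative. Your write-up is slightly more explicit in justifying the intermediate inequalities (e.g., producing the witness $G^*$ and passing through the optimality of $\hat G$), but the substance and level of difficulty are identical.
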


Condition (\ref{abstract:corr:algo:assumption}) ensures that the gap between the minimal correlations in models under the null and alternative hypothesis is sufficiently large even in worst case situations. Theorem \ref{generic:property:test:thm} is a straightforward consequence of Hoeffding's inequality, and the real difficulty when applying it is controlling the quantities $\cT$ and $\cQ$.  Recall that Lemma \ref{universal:bound:cT:lemma} showed a simple universal lower bound on $\cT$. Below we give two general upper bounds on $\cQ$. Given a sparsity level $s$ and a real number $\Theta$ define the ratio
$$
\textstyle R(s,\Theta) := \frac{\cosh(2s \Theta) + 2s e^{-2(s-1)\Theta}\cosh(2(s-1)\Theta)}{ 2s e^{-2(s-1)\Theta}\cosh(2\Theta) + 1}.
$$
The following holds

\begin{proposition}[No Edge Correlation Upper Bounds]\label{generic:property:test:cor} Assume that the graph $G_{\wb} \in \cR$, where the restriction set $\cR$ is $
\cR = \{G_{\wb}  ~|~ \maxdeg(G) \leq s, \|\wb\|_{\infty} \leq \Theta/\theta\}.$ Then the following two results hold.
\begin{itemize}
\item[i.] Let $s \geq 3$.\footnote{Similar bound on $\cQ$ holds for the case $s = 2$. For details refer to the proof.} Then 
$$\cQ \leq \frac{R(s,\Theta) - 1}{R(s,\Theta) + 1}.$$

\item[ii.] Let $(s-1) \tanh(\Theta) < 1$. Then $$\cQ \leq \frac{s \tanh^2(\Theta)}{1 - (s-1) \tanh(\Theta)}.$$
\end{itemize}
\end{proposition}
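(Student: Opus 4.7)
The first move would be to reduce both claims to an upper bound on the two-point function $\EE_{\theta, G_\wb} X_u X_v$ for a non-adjacent pair $(u,v) \notin E(G)$. For any null graph $G_\wb \in \cG_0 \cap \cR$ and any witness $G' \in \cW$, monotonicity of $\cP$ forces $E(G') \setminus E(G) \neq \varnothing$: otherwise $G' \trianglelefteq G$ would give $\cP(G') \leq \cP(G) = 0$, a contradiction. Picking any $(u,v) \in E(G') \setminus E(G)$, one has $\min_{(u',v') \in E(G')} \EE X_{u'} X_{v'} \leq \EE X_u X_v$, and taking the two suprema in the definition of $\cQ$ shows that
\begin{align*}
\cQ \leq \sup_{G_\wb \in \cR,\, (u,v) \notin E(G)} \EE_{\theta, G_\wb} X_u X_v.
\end{align*}
Both parts of the proposition then amount to bounding this non-edge correlation under $\maxdeg(G) \leq s$ and $\theta w_e \leq \Theta$.

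\textbf{Part (ii) via a walk expansion.} For part (ii) I would apply an iterated Simon-Lieb style inequality. Using the Markov property, $\EE[X_u \mid X_{N(u)}] = \tanh\bigl(\theta \sum_{w \in N(u)} w_{uw} X_w\bigr)$, from which (together with Griffiths' inequalities to control signs) one can extract a one-step bound $\EE X_u X_v \leq \sum_{w \sim u} \tanh(\theta w_{uw}) \EE X_w X_v$. Iterating this yields a bound of the form $\sum_\gamma \prod_{e \in \gamma} \tanh(\theta w_e)$, where $\gamma$ ranges over self-avoiding walks from $u$ to $v$. Each edge contribution is $\leq \tanh(\Theta)$, and the number of non-backtracking walks of length $k$ from $u$ to $v$ can be counted geometrically (one of $\leq s$ first-step choices, then $\leq s-1$ non-backtracking choices at each internal vertex, with the endpoint tied to $v$) to produce a multiplicative factor of order $s(s-1)^{k-2}$ for $k \geq 2$. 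Since $(u,v) \notin E(G)$, every walk has length $\geq 2$, and the resulting geometric series sums to $s\tanh^2(\Theta)/(1-(s-1)\tanh(\Theta))$ precisely in the stated regime $(s-1)\tanh(\Theta) < 1$.

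\textbf{Part (i) via extremal-configuration analysis, and the main obstacle.} Outside the Dobrushin regime (i.e.\ once $(s-1)\tanh(\Theta) \geq 1$) the walk expansion no longer converges, so a different route is needed for part (i). My plan is to first invoke Griffiths' second inequality to pass to the extremal case — saturating every non-zero coupling at $\Theta/\theta$ — and then to isolate the contribution of $u$'s immediate neighborhood by conditioning on $X_v$ and integrating $X_u$ out through $\EE[X_u \mid X_{N(u)}] = \tanh\bigl(\theta \sum_{w \in N(u)} w_{uw} X_w\bigr)$. Writing the target as the ratio $R = P(X_u = X_v)/P(X_u \neq X_v)$ and expanding the corresponding partition-function-like sum over the $2^s$ sign configurations of $X_{N(u)}$, I expect the $\cosh(2s\Theta)$ term to come from the fully-aligned neighborhood configuration, the $2s e^{-2(s-1)\Theta}\cosh(2(s-1)\Theta)$ correction from the $s$ single-flip configurations, and the denominator from the analogous sum with $X_u$ antialigned to $X_v$; the two-point correlation is then $(R-1)/(R+1)$. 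The restriction $s \geq 3$ would enter to ensure that these configuration classes dominate higher-order flips in the extremal bookkeeping. The hard part will be justifying the extremal reduction uniformly: because the degree bound on $v$ also constrains how $X_{N(u)}$ can correlate with $X_v$ through the rest of the graph, care is needed to show that only the local $u$-neighborhood contributions survive in the worst case and that they combine into exactly the stated closed form for $R(s,\Theta)$.
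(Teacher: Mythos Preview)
Your reduction step and part (ii) are correct and match the paper's argument: the paper also reduces $\cQ$ to a non-edge two-point function bound, and for (ii) invokes Fisher's self-avoiding walk bound $\EE X_u X_v \leq \sum_{k \geq d_G(u,v)} N_{uv}(k)\tanh^k(\Theta)$, with the same $s(s-1)^{k-2}$ count and geometric summation you describe.

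For part (i), however, your sketch has a structural gap. You propose to integrate out only $X_u$ via its conditional expectation and then analyse the $2^s$ configurations of $X_{N(u)}$, expecting the $\cosh(2s\Theta)$ term to arise from the aligned configuration of that single neighbourhood. But $R(s,\Theta)$ is not a one-neighbourhood quantity: the argument $2s\Theta$ inside the cosh comes from the \emph{joint} magnetisation $\sum_{w \in N(u)} x_w + \sum_{w \in N(v)} x_w$ of both neighbourhoods, and the $2s e^{-2(s-1)\Theta}$ prefactor encodes single flips in $N(u)\cup N(v)$ together with the energetic cost of that flip against the rest of the graph. The paper's proof integrates out both endpoints $X_u$ and $X_v$ simultaneously, writes the ratio $\PP(X_uX_v=1)/\PP(X_uX_v=-1)$ as a ratio of partition-type sums indexed by $\sum_{N(u)} x_w \pm \sum_{N(v)} x_w$, and then partitions configurations of $N(u)\cup N(v)$ into three sets according to whether $|\sum_{N(u)} x_w + \sum_{N(v)} x_w|$ equals $2s$, $2(s-1)$, or is $\leq 2(s-2)$. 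A key ingredient you do not mention is an elementary monotone-fraction inequality of the form $(a+bx)/(c+dx) \leq (a+by)/(c+dy)$ whenever $a/c \geq b/d$ and $x \geq y$, applied to squeeze the ratio after lower-bounding the weight of the single-flip and double-flip configurations relative to the fully aligned one; this requires a case split on the overlap $m = |N(u)\cap N(v)|$ and is where $s\geq 3$ enters. Your asymmetric conditioning on $X_v$ alone will not produce this structure, and the ``extremal-configuration bookkeeping'' you allude to must track both neighbourhoods and their overlap from the outset.
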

\begin{remark}\label{remark:on:corr:test} We will now argue that Proposition \ref{generic:property:test:cor} ii. and Lemma \ref{universal:bound:cT:lemma} ensure that Algorithm \ref{generic:property:test} satisfies (\ref{abstract:corr:algo}) in the high temperature regime $s \tanh(\Theta)\allowbreak \lesssim 1$ when the entries of $\wb$ are approximately equal. By Lemma \ref{universal:bound:cT:lemma} we have
$$\cT \geq \tanh(\theta).$$ 
Suppose now that $\theta = \Theta$ (equivalently $w_{uv} = 1$ for all non-zero weights). If $s \tanh(\theta) < 1/3$ and $\tanh(\theta) > 4 \tau$, by ii.
$$
\cT - \cQ \geq \tanh(\theta) - \tanh(\theta)/2 > 2 \tau.
$$
Hence when $\tanh(\theta) > 4 \tau$ we have that \eqref{abstract:corr:algo:assumption} and consequently \eqref{abstract:corr:algo} hold. More generally if $\theta \asymp \Theta$, $\tanh(\theta) \geq \Omega(\tau)$ and $s\tanh(\Theta)$ is sufficiently small, implies that Algorithm \ref{generic:property:test} controls the type I and type II errors. This fact, coupled with the results of Section \ref{generic:lower:bounds:sec} suggests that correlation screening is optimal up to scalars for many properties in the high temperature regime (where $s\tanh(\theta)$ is small). 
\end{remark}

Below we study three specific instances of Algorithm \ref{generic:property:test} to obtain better understanding of its limitations. Importantly, we observe that the correlation screening test can be constant optimal beyond the high temperature regime for some properties. 
\subsection{Examples}\label{ferromagnetic:corr:screening:examples}

We now revisit the three examples of Section \ref{lower:bound:examples}.

\begin{example}[Connectivity]

Here we implement the correlation screening algorithm for connectivity testing (see Algorithm \ref{connectivity:test}), and we take the opportunity to contrast property testing to graph recovery. We will argue that correlation screening can test graph connectivity even in graphs of unbounded degree. In contrast, correlation based algorithms fail to learn the structure even in unconnected graphs when the signal strength $\theta \geq \Omega(\frac{1}{s})$, where $s$ denotes the maximum degree of the graph, as argued by \cite{montanari2009graphical}.

\begin{algorithm}
\caption{Connectivity Test}\label{connectivity:test}
\begin{algorithmic}
\STATE \textbf{Input:} $\{\bX^{(i)}\}_{i \in [n]}$
\STATE Set $\psi = 0$
\STATE Calculate the matrix $\Mb := \{\hat \EE X_u X_v\}_{u,v \in [d]}$
\STATE Estimate $\hat T$ the maximum spanning tree (MST) on $\Mb$\footnotemark // Equivalent to solving (\ref{corr:screening:witness})
\STATE Set $\psi = 1$ if $\min_{e \in \hat T} M_e > \tanh(\theta) - \tau$
\RETURN{$\psi$}
\end{algorithmic}
\end{algorithm}
\footnotetext{Finding an MST can be done efficiently.}
It is simple to see that $\cG_0, \cG_1$ reduce to
$$
\cG_0 := \{G_{\wb} ~|~ G \mbox{ is disconnected} \},  ~~~ \cG_1 := \bigr\{G_{\wb} ~|~ \max_{\underset{\mbox{\tiny tree}}{T} \trianglelefteq G} \min_{(u,v) \in T}  w_{uv} \geq 1 \bigr \},
$$
and there are no further parameter restrictions, i.e., $\cR$ is all weighted graphs. We have

\begin{corollary}[Connectivity]\label{conn:test:consistency} Assume that $\tanh(\theta) > 2\tau$. Then Algorithm \ref{connectivity:test} satisfies (\ref{abstract:corr:algo}).
\end{corollary}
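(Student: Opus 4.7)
The plan is to simply verify that the hypotheses of Theorem \ref{generic:property:test:thm} hold in this specialization. The restriction set $\cR$ is trivially the set of all weighted graphs (no further parameter constraints are needed for connectivity), and the threshold used in Algorithm \ref{connectivity:test} is $\tanh(\theta) - \tau$, which identifies the ``known'' lower bound $\underline{\cT} = \tanh(\theta)$. Therefore the required inequality $\underline{\cT} - \cQ > 2\tau$ reduces to showing that $\cQ \leq 0$, since then $\tanh(\theta) - \cQ \geq \tanh(\theta) > 2\tau$ by assumption.

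First I would invoke Lemma \ref{universal:bound:cT:lemma} to justify $\cT \geq \tanh(\theta)$; this is immediate because connectivity is a monotone property and each weighted graph in $\cG_1$ by definition contains a spanning tree witness with all edge weights at least $1$, so the pruning argument from that lemma delivers the advertised bound on the minimum correlation along the witness edges.

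The main step is to show $\cQ = 0$. Take any weighted graph $G_{\wb}\in\cG_0$, so $G$ is disconnected into components $C_1,\ldots,C_k$ with $k\geq 2$. Fix any witness $G'\in\cW$; since $G'$ is a connected graph on $[d]$ it must contain at least one edge $(u,v)$ whose endpoints lie in distinct components of $G$. Under the ferromagnetic measure $\PP_{\theta,G_{\wb}}$ the spins of disjoint components are independent, hence $\EE_{\theta,G_{\wb}}X_uX_v = \EE X_u\,\EE X_v = 0$ (the zero-field assumption makes each marginal mean vanish). For all remaining edges of $G'$ we have $\EE_{\theta,G_{\wb}}X_uX_v\geq 0$ by Griffith's inequality (Theorem \ref{FKG:ineq}). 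Therefore $\min_{(u,v)\in E(G')}\EE_{\theta,G_{\wb}}X_uX_v = 0$, and taking the two outer suprema gives $\cQ \leq 0$ (in fact $\cQ = 0$ is achieved by, e.g., letting $G'$ be any spanning tree of the complete graph and $G$ be any disconnected base graph).

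Putting the two bounds together, the hypothesis $\tanh(\theta) > 2\tau$ yields $\underline{\cT} - \cQ \geq \tanh(\theta) > 2\tau$, and Theorem \ref{generic:property:test:thm} delivers the type I and type II error guarantees in \eqref{abstract:corr:algo}. There is no genuine obstacle here; the only conceptual point to justify carefully is the vanishing cross-component correlation, which hinges on the zero-field assumption together with the Markov structure of model \eqref{ferromagnet:def}.
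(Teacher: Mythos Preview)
Your proposal is correct and follows essentially the same route as the paper's proof: both verify the hypotheses of Theorem~\ref{generic:property:test:thm} by establishing $\cT\ge\tanh(\theta)$ (you via Lemma~\ref{universal:bound:cT:lemma}, the paper by redoing the Griffith-plus-tree calculation directly) and $\cQ=0$ (via the cross-component independence in a disconnected zero-field model). The only cosmetic difference is that the paper spells out the $\cT$ bound using Proposition~\ref{restriction:prop} and Lemma~\ref{chain:graph} on the spanning tree rather than citing the general lemma, while you give a slightly fuller justification for $\cQ=0$ than the paper does.
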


Corollary \ref{conn:test:consistency} underscores the difference between property testing and structure learning. \cite{montanari2009graphical} and \cite{santhanam2012information}, showed that one cannot recover the graph structure  in a ferromagnetic model when the parameter $\theta$ exceeds a critical threshold. We also note that the condition $\tanh(\theta) \geq 2\tau$ matches the lower bound prediction (\ref{connectivity:lower:bound}) up to constant terms when $\tau$ is sufficiently small. 

It is worth mentioning that Algorithm \ref{connectivity:test} is no longer optimal when $\log d \gtrsim n$ due to lack of concentration. If $\log d \gtrsim n$ for a sufficiently large constant, by \eqref{theta:bound:logd:big} $\tanh(\theta)$ has to equal $1$ asymptotically. It is simple to devise a test that works when $\tanh(\theta) = 1$, namely: reject the null hypothesis if all spins have the same signs through each of the $n$ trials. If the graph is connected this will happen with probability $1$; if the graph is disconnected this event happens with probability at most $1/2^n$. Finally we remark that whether one can devise a finite sample connectivity test when $\log d \gtrsim n$ and $\tanh(\theta) < 1$ remains an open question which merits further investigation.
\end{example}

\begin{example}[Cycle Presence] \label{cycle:testing:sec}
Here we revisit cycle testing. The sets $\cG_0$ and $\cG_1$ reduce to
\begin{align*}
\cG_0  := \{G_{\wb} ~|~ G \mbox{ is a forest}\}, ~~~ \cG_1 := \bigr\{G_{\wb} ~|~ \max_{\underset{\mbox{\tiny cycle}}{C} \subseteq G }\min_{(u,v) \in C} \theta_{uv} \geq 1 \bigr\}.
\end{align*}
Motivated by (\ref{upper:bound:cycle:testing}) we take the restriction set as $\cR = \{\wb ~|~ \|\wb\|_{\infty} \leq \Theta/\theta\}$. The correlation screening algorithm for cycle testing is given in Algorithm \ref{cycle:test}. We have the following corollary of Theorem \ref{generic:property:test:thm}.

\begin{algorithm}
\caption{Cycle Test}\label{cycle:test}
\begin{algorithmic}
\STATE \textbf{Input:} $\{\bX^{(i)}\}_{i \in [n]}, \theta$
\STATE Set $\psi = 0$
\STATE Calculate the matrix $\Mb := \{\hat \EE X_u X_v\}_{u,v \in [d]}$
\STATE Add edges with weights from $\Mb$ from high to low until a cycle $\hat C$ emerges\footnotemark // i.e., solve (\ref{corr:screening:witness})
\STATE Set $\psi = 1$ if $\min_{e \in \hat C} M_e > \tanh(\theta) - \tau$
\RETURN{$\psi$}
\end{algorithmic}
\end{algorithm}
\footnotetext{Finding the cycle $\hat C$ takes at most $d$ steps, and can be done efficiently.}

\begin{corollary}[Cycle Presence]\label{cycle:test:consistency} Assume that $\tanh(\theta)  - \tanh^2(\Theta)> 2\tau$. Then Algorithm \ref{cycle:test} satisfies (\ref{abstract:corr:algo}).
\end{corollary}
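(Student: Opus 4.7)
The plan is to deduce the corollary from Theorem \ref{generic:property:test:thm} by choosing $\underline{\cT} = \tanh(\theta)$ (which is a valid lower bound on $\cT$ for any monotone property by Lemma \ref{universal:bound:cT:lemma}) and establishing the matching upper bound $\cQ \leq \tanh^2(\Theta)$ on the extremal null correlation. Once these two inequalities are in hand, the hypothesis $\tanh(\theta) - \tanh^2(\Theta) > 2\tau$ immediately furnishes condition \eqref{abstract:corr:algo:assumption}, and Theorem \ref{generic:property:test:thm} delivers \eqref{abstract:corr:algo}.

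To bound $\cQ$, I fix an arbitrary weighted graph $G_\wb \in \cG_0 \cap \cR$ and an arbitrary witness cycle $G' \in \cW$, and I will locate a single edge of $G'$ whose correlation under $\PP_{\theta, G_\wb}$ is at most $\tanh^2(\Theta)$; since $\cQ$ involves a minimum over the edges of $G'$, controlling just one edge suffices. The key structural observation is that $G$ is a forest whereas $G'$ is a cycle, so $G'$ cannot be a (non-spanning) subgraph of $G$; consequently there exists at least one edge $(u^\star,v^\star) \in E(G') \setminus E(G)$. Split into two cases: if $u^\star$ and $v^\star$ lie in different connected components of $G$, then $X_{u^\star}$ and $X_{v^\star}$ are independent and $\EE_{\theta, G_\wb} X_{u^\star} X_{v^\star} = 0$ by the zero-field spin-flip symmetry. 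Otherwise, $u^\star$ and $v^\star$ are joined by a unique path in the forest, whose length is at least $2$ since $(u^\star,v^\star)\notin E(G)$.

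For the second case I invoke the standard tree two-point function identity for zero-field Ising models, which gives
$$\EE_{\theta, G_\wb} X_{u^\star} X_{v^\star} \;=\; \prod_{e \in \mathrm{path}(u^\star,v^\star)} \tanh(\theta w_e).$$
Using $\|\wb\|_\infty \leq \Theta/\theta$ and a path length of at least $2$, the product is bounded by $\tanh^2(\Theta)$. Either way the edge $(u^\star,v^\star)$ witnesses $\min_{(u,v) \in E(G')} \EE_{\theta, G_\wb} X_u X_v \leq \tanh^2(\Theta)$, and taking sup over $G_\wb \in \cG_0 \cap \cR$ and over $G' \in \cW$ yields $\cQ \leq \tanh^2(\Theta)$, completing the proof.

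The main obstacle is the tree two-point function identity in the second case; it is classical but deserves at least a pointer or a one-line derivation (iterative conditioning on an interior vertex of the path splits the tree into conditionally independent subtrees, reducing to the single-edge formula $\EE X_u X_v = \tanh(\theta w_{uv})$ of Lemma \ref{chain:graph}). Everything else—applying Lemma \ref{universal:bound:cT:lemma}, checking the forest-versus-cycle incidence, and plugging into Theorem \ref{generic:property:test:thm}—is essentially bookkeeping.
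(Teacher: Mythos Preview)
Your proof is correct and follows essentially the same route as the paper's: you lower bound $\cT$ by $\tanh(\theta)$ (the paper argues this directly via Griffith's inequality, you cite Lemma \ref{universal:bound:cT:lemma}, which is the same content), then upper bound $\cQ$ by $\tanh^2(\Theta)$ using the tree two-point product formula (the paper invokes Proposition \ref{restriction:prop} and Lemma \ref{chain:graph} for exactly this identity), and finish by invoking Theorem \ref{generic:property:test:thm}. The only cosmetic difference is that you make the two cases (disconnected versus connected by a path of length $\ge 2$) explicit, whereas the paper folds the disconnected case into the product formula implicitly.
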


Below we derive a more direct result for the special case when $\theta \equiv \Theta$.

\begin{corollary}[Cycle Presence $\theta = \Theta$]\label{cycle:test:cor:2} Suppose $\theta = \Theta$. When $\tau$ is sufficiently small, if 
\begin{align}\label{cycle:testing:bounds}
\tau \lesssim \theta \lesssim \log(1/\tau),
\end{align}
Algorithm \ref{cycle:test} satisfies \eqref{abstract:corr:algo}.
\end{corollary}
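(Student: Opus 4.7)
The plan is to reduce Corollary \ref{cycle:test:cor:2} directly to Corollary \ref{cycle:test:consistency} by specializing to $\theta = \Theta$ and then carefully examining the sufficient condition $\tanh(\theta) - \tanh^2(\theta) > 2\tau$ across the two natural regimes of $\theta$ (small versus large). Indeed, factoring yields
\[
\tanh(\theta) - \tanh^2(\theta) \;=\; \tanh(\theta)\bigl(1 - \tanh(\theta)\bigr),
\]
so it suffices to show that under the stated range $\tau \lesssim \theta \lesssim \log(1/\tau)$ (with $\tau$ small enough so that $\log(1/\tau) \gg 1$), this product strictly exceeds $2\tau$.

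In the small-signal regime, say $\theta \leq 1$, I would use the elementary bounds $\tanh(\theta) \asymp \theta$ and $1 - \tanh(\theta) \asymp 1$, which together give $\tanh(\theta)(1-\tanh(\theta)) \asymp \theta$. Thus the lower bound $\theta \gtrsim \tau$ with a sufficiently large hidden constant directly implies $\tanh(\theta)(1-\tanh(\theta)) > 2\tau$. In the large-signal regime $\theta \geq 1$, I would use $\tanh(\theta) \asymp 1$ and $1 - \tanh(\theta) \asymp e^{-2\theta}$, which gives $\tanh(\theta)(1-\tanh(\theta)) \asymp e^{-2\theta}$. Then the upper bound $\theta \leq c \log(1/\tau)$ for a sufficiently small constant $c < 1/2$ forces $e^{-2\theta} \geq \tau^{2c} \gg \tau$ for $\tau$ small, again yielding the required inequality.

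Gluing the two regimes together at $\theta \asymp 1$ is immediate since both $\tanh(\theta)$ and $1-\tanh(\theta)$ are bounded away from zero there, so $\tanh(\theta)(1-\tanh(\theta)) \asymp 1 \gg \tau$. With the condition $\tanh(\theta) - \tanh^2(\theta) > 2\tau$ thereby established throughout $\tau \lesssim \theta \lesssim \log(1/\tau)$, Corollary \ref{cycle:test:consistency} delivers the conclusion that Algorithm \ref{cycle:test} satisfies \eqref{abstract:corr:algo}.

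The proof is essentially a one-line reduction followed by routine asymptotic bookkeeping; there is no substantive obstacle. The only point requiring mild care is the matching of the hidden constants in the two $\lesssim$ symbols to the constant $2$ appearing in the sufficient condition, which is why the statement assumes $\tau$ is sufficiently small: this gives enough slack so that the absolute constants implicit in the large-$\theta$ exponential decay estimate (and in the small-$\theta$ linearization of $\tanh$) can be absorbed without loss.
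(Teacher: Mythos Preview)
Your proposal is correct and takes essentially the same approach as the paper: both reduce to Corollary~\ref{cycle:test:consistency} with $\theta=\Theta$, obtaining the condition $\tanh(\theta)-\tanh^2(\theta)>2\tau$, and then verify it asymptotically. The only cosmetic difference is that the paper solves the quadratic in $t=\tanh(\theta)$ explicitly and approximates the two roots as $2\tau$ and $1-2\tau$ (yielding $\atanh(2\tau)\approx 2\tau$ and $\atanh(1-2\tau)\approx \tfrac{1}{2}\log(1/\tau)$), whereas you split into small-$\theta$ and large-$\theta$ regimes; both are routine and equivalent.
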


\begin{proof}[Proof of Corollary \ref{cycle:test:cor:2}] In this setting, the condition of Corollary \ref{cycle:test:consistency} reduces to the quadratic inequality $t - t^2 > 2\tau$,  where we put $t := \tanh(\theta)$ for brevity. Equivalently, the feasible values of $\theta$ satisfy
$$
\textstyle \frac{1 - \sqrt{1 - 8 \tau}}{2} \leq t \leq \frac{1 + \sqrt{1 - 8 \tau}}{2}. 
$$
To make the calculation more accessible we will now use the notation $f(x) \approx g(x)$ in the sense that $\lim_{x \downarrow 0} \frac{f(x)}{g(x)} = 1$. When $\tau$ is sufficiently small, it is simple to check that $\frac{1 - \sqrt{1 - 8 \tau}}{2} \approx 2\tau$ and $\frac{1 + \sqrt{1 - 8 \tau}}{2} \approx 1 - 2\tau$. It therefore follows that Algorithm \ref{cycle:test} is successful when 
$$ 2\tau \approx \atanh(2\tau) \leq \theta \leq \atanh(1 - 2\tau) \approx \log(1/\tau)/2.$$
\end{proof}
Up to scalars \eqref{cycle:testing:bounds} agrees with bounds (\ref{lower:cycle:testing}) and (\ref{upper:bound:cycle:testing}) given in Section \ref{lower:bound:examples}. An alternative correlation based cycle test which works for general models is given in Section \ref{comp:eff:cycle:test}. 

\end{example}

\begin{example}[Clique Size] Finally we revisit clique size testing. The parameter sets reduce to
\begin{align*}
\cG_0  := \{G_{\wb} ~|~ G \mbox{ has no $m$-clique}\}, ~~~ \cG_1  := \{G_{\wb} ~|~ \max_{\underset{\mbox{\tiny $m$-clique}}{C} \subseteq G}\min_{(u,v) \in C} w_{uv} \geq 1 \},
\end{align*}
and let the restriction set $\cR = \{G_{\wb} ~|~ \|\wb\|_{\infty} \leq \Theta/\theta,  \maxdeg(G) \leq s\}$, where $2 \leq m \leq s + 1$. We summarize the correlation screening implementation for clique size testing in Algorithm \ref{clique:size:test}. To this end for a $Z \sim N(0,1)$ define 
\begin{align*}
\textstyle r(m,\theta) := \frac{e^{2\theta}\EE \cosh^{m-2}(\sqrt{\theta}Z + 2\theta)}{\EE \cosh^{m-2}(\sqrt{\theta}Z) }. 
\end{align*}
\begin{algorithm}
\caption{Clique Size Test}\label{clique:size:test}
\begin{algorithmic}
\STATE \textbf{Input:} $\{\bX^{(i)}\}_{i \in [n]}, \theta$
\STATE Set $\psi = 0$
\STATE Calculate the matrix $\Mb := \{\hat \EE X_u X_v\}_{u,v \in [d]}$
\STATE Add edges with weights from $\Mb$ from high to low until an $m$-clique $\hat C$ emerges\footnotemark // i.e., solve (\ref{corr:screening:witness})

\STATE Set $\psi = 1$ if $\min_{e \in \hat C} M_e > \frac{r(m,\theta) - 1}{r(m, \theta) + 1} - \tau$
\RETURN{$\psi$}
\end{algorithmic}
\end{algorithm}
\footnotetext{In this footnote we show an example of an algorithm for checking for $m$-clique presence. When a new edge $(u,v)$ is added, walk over common neighbors of both $u$ and $v$, and check for an $m$-clique. There are at most $\frac{ds}{2}$ steps and at each step we have to check at most $s - 1 \choose m - 2$  $m$-cliques, giving a runtime bound of $O\bigr(ds \bigr(s + m^2{s - 1 \choose m - 2}\bigr)\bigr)$.}
The following holds
\begin{corollary}[Clique Size]\label{clique:test:consistency} For $G_{\wb} \in \cG_1\cap\cR$ we have $\cT = \frac{r(m,\theta) - 1}{r(m, \theta) + 1}$. Hence if $(s,m,d,n,\theta,\Theta)$ are such that either
$$
\cT - \frac{R(s,\Theta) - 1}{R(s,\Theta) + 1} \geq 2\tau,
$$
or
$$
\cT - \frac{s \tanh^2(\Theta)}{1- (s-1)\tanh(\Theta)} \geq 2\tau, \mbox{ and } (s-1)\tanh(\Theta) < 1,
$$
Algorithm \ref{clique:size:test} satisfies (\ref{abstract:corr:algo}).
\end{corollary}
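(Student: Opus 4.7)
The plan is to first establish the identity $\cT=\tfrac{r(m,\theta)-1}{r(m,\theta)+1}$ and then plug this into Theorem \ref{generic:property:test:thm} together with the two bounds on $\cQ$ supplied by Proposition \ref{generic:property:test:cor}.

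For the identity, I would mimic the Griffith-based pruning used in the proof of Lemma \ref{universal:bound:cT:lemma}. Any $G_{\wb}\in\cG_1\cap\cR$ contains an $m$-clique $K^{*}\subseteq G$ whose edge weights are all at least $1$. By Griffith's second inequality, deleting the edges of $G$ outside $K^{*}$ and lowering the weights on $K^{*}$ down to exactly $1$ can only decrease the pairwise correlations between vertices of $K^{*}$. Hence, writing $K_m$ for the $m$-clique with unit weights,
$$\min_{(u,v)\in E(K^{*})}\EE_{\theta,G_{\wb}}X_uX_v\;\ge\;\EE_{\theta,K_m}X_1X_2.$$
Taking $G'=K^{*}$ in the inner $\max\min$ defining $\cT$ gives $\cT\ge\EE_{\theta,K_m}X_1X_2$, and equality is realized at $G_{\wb}=K_m$ (which lies in $\cG_1\cap\cR$ as $m-1\le s$ and $1\le\Theta/\theta$). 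By the symmetry of the Curie-Weiss model on $K_m$, all pairwise correlations coincide and the inner $\max\min$ reduces to this single number.

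Next I would compute $\EE_{\theta,K_m}X_1X_2$ by evaluating $r:=P(X_1=X_2)/P(X_1\ne X_2)$, since then $\EE X_1X_2=(r-1)/(r+1)$. Setting $T=\sum_{i\ge 3}X_i$ and noting $\sum_{3\le u<v}x_ux_v=(T^2-(m-2))/2$, the $x_1x_2$ coupling contributes prefactors $e^{\pm\theta}$, while the remaining $x_1$-- and $x_2$-- couplings give $e^{\pm 2\theta T}$ in the $x_1=x_2=\pm 1$ branch and $1$ in the $x_1\ne x_2$ branch. Applying the Hubbard-Stratonovich identity $\exp(\theta T^2/2)=\EE_Z\exp(\sqrt{\theta}TZ)$ with $Z\sim N(0,1)$ makes the sum over $X_3,\dots,X_m$ factorize into $(2\cosh(\sqrt{\theta}Z+2\theta))^{m-2}$ on the $x_1=x_2=1$ branch and $(2\cosh(\sqrt{\theta}Z))^{m-2}$ on the $x_1=1,x_2=-1$ branch. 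Invoking the $x\mapsto -x$ symmetry to merge $P(X_1=X_2=1)$ with $P(X_1=X_2=-1)$ (and similarly for the unequal cases), the ratio becomes precisely $r(m,\theta)$.

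Once $\cT=(r(m,\theta)-1)/(r(m,\theta)+1)$ is in place, the two sufficient conditions stated in the corollary follow at once by substituting $\underline{\cT}=\cT$ and the two upper bounds on $\cQ$ from Proposition \ref{generic:property:test:cor}(i) and (ii) into assumption \eqref{abstract:corr:algo:assumption} of Theorem \ref{generic:property:test:thm}, and then reading off the conclusion \eqref{abstract:corr:algo}. The main technical obstacle is carrying out the Hubbard-Stratonovich computation cleanly so that the Gaussian ratio collapses to the exact function $r(m,\theta)$ named in the statement; the Griffith pruning, by contrast, is a direct adaptation of Lemma \ref{universal:bound:cT:lemma}.
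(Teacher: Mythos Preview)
Your proposal is correct and follows essentially the same route as the paper: both reduce to the Curie--Weiss model on $K_m$ via Griffith's inequality and then compute the pairwise correlation by the Gaussian integral (Hubbard--Stratonovich) identity, before invoking Proposition~\ref{generic:property:test:cor} and Theorem~\ref{generic:property:test:thm}. Your write-up is in fact slightly more complete than the paper's, since you explicitly argue that the lower bound $\cT\ge\EE_{\theta,K_m}X_1X_2$ is attained at $G_{\wb}=K_m$ (the paper only shows the inequality and tacitly takes it as the value of $\cT$).
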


Below we derive a more direct result for the special case when $\theta \equiv \Theta$.

\begin{corollary}[Clique Size $\theta = \Theta$]\label{clique:test:consistency:theta:eq:theta} Suppose $\theta = \Theta$ and that $\tau$ and $\frac{1}{s}$ are sufficiently small. Then if 
\begin{align}\label{first:theta:bound}
\tau \lesssim \theta \lesssim \frac{1}{s},
\end{align}
Algorithm \ref{clique:size:test} satisfies (\ref{abstract:corr:algo}). Next, suppose that $m = s+1$. If $e^{s\theta} \gg s$ and
\begin{align}\label{low:temp:clique:testing}
\theta \leq \frac{\log{(2/\tau})}{4(s-1)},
\end{align}
Algorithm \ref{clique:size:test} satisfies (\ref{abstract:corr:algo}).
\end{corollary}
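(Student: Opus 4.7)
The plan is to apply Corollary \ref{clique:test:consistency} in each regime, using one of the $\cQ$-bounds from Proposition \ref{generic:property:test:cor}.

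For the first claim (\ref{first:theta:bound}), I would combine the universal lower bound $\cT \geq \tanh(\theta)$ from Lemma \ref{universal:bound:cT:lemma} with Proposition \ref{generic:property:test:cor} part ii. When $\theta \leq c/s$ for a sufficiently small absolute constant $c$, we have $(s-1)\tanh(\theta) \leq (s-1)\theta \leq c$, so $\cQ \leq s\tanh^2(\theta)/(1-c)$. Consequently
$$\cT - \cQ \;\geq\; \tanh(\theta)\Bigl(1 - \tfrac{s\tanh(\theta)}{1-c}\Bigr) \;\gtrsim\; \tanh(\theta) \;\asymp\; \theta,$$
since $s\tanh(\theta) \leq s\theta \lesssim c$ stays bounded away from $1-c$. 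The lower bound $\theta \gtrsim \tau$ then forces $\cT - \cQ \geq 2\tau$, so Corollary \ref{clique:test:consistency} applies.

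For the second claim (\ref{low:temp:clique:testing}), the part-ii bound is vacuous because $(s-1)\tanh(\theta) \geq 1$ when $e^{s\theta} \gg s$, so I would use part i: it suffices to show $\cT - (R(s,\theta)-1)/(R(s,\theta)+1) \geq 2\tau$. Both quantities are close to $1$ here and the argument hinges on matching leading asymptotics in the small parameter $e^{-s\theta}$. For $\cT$, I would expand the $(s+1)$-clique Curie-Weiss correlation by grouping configurations according to the number $k$ of flipped spins; the $k \in \{0,s+1\}$ (all aligned) and $k \in \{2,s-1\}$ (two flips) contributions give
$$1 - \cT \;=\; 4\,e^{-2s\theta} + O(s)\,e^{-4(s-1)\theta} + \text{h.o.t.}$$
For $(R-1)/(R+1)$, I would exploit the algebraic identity
$$1 \pm e^{-4(s-1)\theta} \mp e^{-2(s-2)\theta} \mp e^{-2s\theta} \;=\; (1 \mp e^{-2(s-2)\theta})(1 \mp e^{-2s\theta}),$$
together with $\cosh(2s\theta) \pm 1 = e^{2s\theta}(1 \pm e^{-2s\theta})^2/2$, to rewrite $R \pm 1$ in factored form. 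Expanding for $e^{s\theta} \gg s$ yields
$$1 - \tfrac{R-1}{R+1} \;=\; 4\,e^{-2s\theta} + 4s\,e^{-4(s-1)\theta} + \text{h.o.t.}$$
Subtracting, the $4e^{-2s\theta}$ pieces cancel and the coefficient $4s$ dominates the $O(s)$ correction from $\cT$, so $\cT - (R-1)/(R+1) \gtrsim e^{-4(s-1)\theta}$. Under (\ref{low:temp:clique:testing}) we have $e^{-4(s-1)\theta} \geq \tau/2$, giving, after absorbing constants, $\cT - (R-1)/(R+1) \geq 2\tau$ as required.

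The main obstacle is the delicate cancellation in the second part: the $4e^{-2s\theta}$ leading terms of $\cT$ and $(R-1)/(R+1)$ must cancel cleanly so that the surviving $O(e^{-4(s-1)\theta})$ terms control the gap. The hypothesis $e^{s\theta} \gg s$ is precisely what makes $se^{-2s\theta}$ and $se^{-2(s-2)\theta}$ both $o(1)$, legitimizing the Taylor expansions with bounded remainders. A minor book-keeping issue arises at $s = 3$, where $k = 2$ is self-symmetric in the clique sum and slightly alters the coefficient of $e^{-4(s-1)\theta}$ in $1 - \cT$; this does not affect the final inequality since $4s > 4(s-1)$ for all $s \geq 2$.
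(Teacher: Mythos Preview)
Your proposal is correct and follows essentially the same route as the paper. For the high-temperature part you invoke Lemma~\ref{universal:bound:cT:lemma} and Proposition~\ref{generic:property:test:cor}~ii exactly as in Remark~\ref{remark:on:corr:test}; for the low-temperature part you carry out the same two asymptotic expansions as in Remark~\ref{clique:testing:remark}, obtaining $1-\cT \sim 4e^{-2s\theta}+4(s-1)e^{-4(s-1)\theta}$ and $1-(R-1)/(R+1)\sim 4e^{-2s\theta}+4se^{-4(s-1)\theta}$, whose difference is $4e^{-4(s-1)\theta}$.

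Two small points of presentation. First, writing the second-order coefficient in $1-\cT$ as ``$O(s)$'' and then asserting that $4s$ ``dominates'' it is not, on its face, an argument: $4s-O(s)$ need not be positive. You evidently know the exact coefficient is $4(s-1)$ (you say so in your final sentence), so state it up front---that is precisely what makes the cancellation work. Second, your $\pm$ identity is only correct for the upper choice of signs; with the lower signs the cross term $e^{-4(s-1)\theta}$ appears with opposite sign on the two sides. This is harmless for the final expansion (the paper simply expands $2/(R+1)$ directly rather than factoring), but it is worth fixing.
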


\begin{proof}[Proof of Corollary \ref{clique:test:consistency:theta:eq:theta}] 
In Remark \ref{remark:on:corr:test} we already argued that if $4 \tau \leq \tanh(\theta) \leq \frac{1}{3 s}$, Algorithm \ref{clique:size:test} controls the type I and type II errors. Since $\tanh(x) \approx x$ when $x$ is small inequality \eqref{first:theta:bound} follows. 
To show the second part, we resort to the first bound of Corollary \ref{clique:test:consistency}. Since the proof is more involved we show it in Remark \ref{clique:testing:remark} of Appendix \ref{corr:screening:proofs}.
\end{proof}

From \eqref{first:theta:bound} it follows that in the high temperature regime Algorithm \ref{clique:size:test} matches the lower bound (\ref{clique:lower:bound}). Furthermore note that (\ref{low:temp:clique:testing}) matches bound (\ref{reader:friendly:upper:bound:clique}) up to scalars. Hence the special case of clique size testing when $m = s + 1$ is yet another example confirming that correlation screening can be useful for property testing even at low temperatures. 

\end{example}

\section{Results for General Models}\label{antiferromagnetic:bounds}

So far we studied model classes admitting only ferromagnetic interactions. The situation drastically changes if one considers the general class of zero-field Ising models, which includes models with antiferromagnetic, i.e., negative interactions. 

\subsection{Minimax Bounds} \label{antiferromagnetic:bounds:sub}The main result of this section is an impossibility theorem, which shows that testing strongly monotone properties over the general class of models requires boundedness of a certain maximum functional of the property. We also argue more specifically, that unlike in the ferromagnetic case, connectivity testing is not feasible at low temperatures over the general case unless the degree of the graph is bounded. Both of these results sharply contrast what we have seen in the previous sections of the paper. 

Concretely, we will work with the simple zero-field models specified by the parameters $\theta > 0$ and $\wb \in \{\pm 1\}^{{d \choose 2}}$ as 
\begin{align}\label{ising:model:measure:anti}
\PP_{\theta, G_{\wb}}(\bX = \xb) \varpropto \exp\Big(\theta \sum_{(u,v) \in E(G)} w_{uv} x_u x_v\Big).
\end{align}
Expression (\ref{ising:model:measure:anti}) has more degrees of freedom compared to (\ref{ising:model:measure}) since the spin-spin interactions in (\ref{ising:model:measure:anti}) are allowed to be negative. Intuitively, interactions corresponding to $w_{uv}=-1$ have a ``repelling'' effect on the corresponding spins $u$ and $v$, whereas interactions with $w_{uv}=1$ have an ``attracting'' effect. 
 
Given a monotone property $\cP$, recall definition (\ref{null:alternative:def}) of the collections of graphs $\cG_0(\cP), \cG_1(\cP)$ (below we omit the dependence on $\cP$). Let $\cR$ be a suitable restriction set on the graph $G$. 
We redefine the minimax risk to reflect the model class expansion as follows. Let
\begin{align}\label{testing:risk:def:antifer}
\textstyle R_n(\cP,\cR,\theta) := \displaystyle \inf_{\psi}\bigr[\sup_{\wb} \sup_{G \in \cG_0 \cap \cR} \PP^{\otimes n}_{\theta,  G_{\wb}}(\psi = 1) + \sup_{\wb} \sup_{G \in \cG_1 \cap \cR} \PP^{\otimes n}_{\theta, G_{\wb}}(\psi = 0)\bigr],
\end{align}
where $\PP^{\otimes n}_{\theta,G_{\wb}}$ denotes the product measure of $n$ i.i.d. observations of (\ref{ising:model:measure:anti}) and the supremum on $\wb$ is taken over the set $\{\pm 1\}^{{d \choose 2}}$. Armed with this new definition we have
\begin{theorem}[Strongly Monotone Properties General Lower Bound]\label{scaling:theorem} Assume $G_{\wb}$ belongs to the restriction set $\cR_s = \{G_{\wb}   ~|~\allowbreak \maxdeg(G) \leq s\}$, where $s = o(\sqrt{d})$. Suppose that the strongly monotone property $\cP$ satisfies $\cP(\varnothing) = 0$ and $\cP(C_s) = 1$, where $C_s$ denotes an $s$-clique graph. Then if for some small $\varepsilon > 0$,
\begin{align}\label{sufficient:conditions:clique:detection}
\frac{s \log d/s^2}{n} > 2 + \varepsilon ~~~~~~\mbox{       and       } ~~~~~~ \frac{s \log{d/(2s)}}{n \log \sqrt{2s}} \geq 1 + \varepsilon,
\end{align}
we have $
\liminf_{n \rightarrow \infty} \inf_{\theta \geq 0} R_n(\cP, \cR_s, \theta) = 1$.
\end{theorem}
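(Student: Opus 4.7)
The plan is to establish the lower bound by Le Cam's two-point method with a randomized alternative, exploiting the freedom to choose both $\theta$ and the sign pattern $\wb \in \{\pm 1\}^{\binom{d}{2}}$ adversarially. The null measure will be the uniform Rademacher law $\PP_0 = \PP_{\theta,\varnothing}$, which belongs to $\cG_0 \cap \cR_s$ because $\cP(\varnothing)=0$. For each $\sigma \in \binom{[d]}{s}$ let $\PP_\sigma$ denote the law \eqref{ising:model:measure:anti} with graph $G_\sigma$ an $s$-clique on $\sigma$ (isolated elsewhere), all edge weights $w_{uv}=-1$, and inverse temperature $\theta$. Strong monotonicity together with $\cP(C_s)=1$ yield $G_\sigma \in \cG_1$, and $\maxdeg(G_\sigma)=s-1\leq s$ gives $G_\sigma \in \cR_s$. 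Define the mixture alternative $\bar\PP_1 = \binom{d}{s}^{-1}\sum_\sigma \PP_\sigma$.

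The core is to show that for a judiciously chosen $\theta=\theta_n$ the $n$-fold $\chi^2$-divergence tends to zero, since then $\|\bar\PP_1^{\otimes n} - \PP_0^{\otimes n}\|_{\TV}\to 0$ and standard Le Cam inequalities force $R_n(\cP,\cR_s,\theta_n)\to 1$. Writing everything in terms of the uniform reference,
$$\chi^2\bigl(\bar\PP_1^{\otimes n},\PP_0^{\otimes n}\bigr)+1 \;=\; \EE_{\sigma,\sigma'}\bigl(L(\sigma,\sigma')\bigr)^n, \qquad L(\sigma,\sigma') := \EE_{\PP_0}\Bigl[\frac{d\PP_\sigma}{d\PP_0}\cdot\frac{d\PP_{\sigma'}}{d\PP_0}\Bigr],$$
the density ratio simplifies to $\frac{d\PP_\sigma}{d\PP_0}(\bx) = 2^s \exp(-\theta H(\bx_\sigma))/Z_s(\theta)$ with Hamiltonian $H(\bx_\sigma)=\tfrac{1}{2}(M_\sigma^2 - s)$, $M_\sigma = \sum_{u\in\sigma}x_u$, and $Z_s(\theta)$ the antiferromagnetic Curie--Weiss partition function on $s$ spins.

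Next I would stratify by the overlap $k=|\sigma\cap\sigma'|$. When $k=0$ the two supports decouple so $L=1$; for $k\geq 1$, writing $\sigma=A\cup B$ and $\sigma'=A\cup B'$ with $|A|=k$ and $B\cap B'=\varnothing$, the quantity $L(\sigma,\sigma')$ reduces to an expectation over independent Rademacher magnetizations $M_A,M_B,M_{B'}$ of $\exp\!\bigl(-\tfrac{\theta}{2}[(M_A+M_B)^2 + (M_A+M_{B'})^2]\bigr)$, normalized by $Z_s(\theta)^2/2^{2s}$. This can be linearized using the imaginary Hubbard--Stratonovich identity $e^{-\theta M^2/2}=\EE_Z e^{i\sqrt{\theta}ZM}$ for $Z\sim N(0,1)$, reducing everything to products of Rademacher generating functions $\EE\cos^{|\cdot|}(\sqrt{\theta}Z)$ and yielding a closed form $L_k(\theta)$ depending only on $k,s,\theta$. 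The mixture weights are hypergeometric, $\PP(|\sigma\cap\sigma'|=k)\lesssim (s^2/d)^k/k!$, producing the master estimate
$$\chi^2 + 1 \;\leq\; \sum_{k=0}^{s}\bigl(s^2/d\bigr)^k\,L_k(\theta)^n.$$

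The final step is to choose $\theta = \theta_n$ to balance these summands. The two hypotheses in \eqref{sufficient:conditions:clique:detection} correspond to the two worst indices: the bound $s\log(d/s^2)/n > 2+\varepsilon$ handles the $k=s$ term (the self-overlap second moment of a single planted clique, set by $Z_s(\theta)$ alone), while $s\log(d/(2s))/(n\log\sqrt{2s}) \geq 1+\varepsilon$ controls the intermediate overlaps where the $\sqrt{s}$-scale fluctuations of a Rademacher magnetization compete with the $\log\binom{d}{s}$ entropy of clique positions. The main obstacle is the sharp asymptotic analysis of $Z_s(\theta)$ and of $L_k(\theta)$ for the antiferromagnetic Curie--Weiss model: the measure concentrates near zero magnetization with $O(1/\sqrt{\theta})$ fluctuations and no phase transition in the ferromagnetic sense, so Laplace-type estimates must be carried out with enough precision to extract the exact logarithmic thresholds above, and the optimizing $\theta$ must be identified explicitly. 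This is the genuinely new and delicate part relative to the ferromagnetic arguments earlier in the paper.
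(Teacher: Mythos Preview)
Your setup --- Le Cam with the uniform Rademacher null, a mixture over antiferromagnetic $s$-cliques on all $\binom{d}{s}$ vertex sets, and stratification of the $\chi^2$ second moment by the overlap $k=|\sigma\cap\sigma'|$ --- matches the paper exactly. The gap is in how you handle $\theta$.

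You write that you will ``choose $\theta=\theta_n$ to balance these summands'' and that ``the optimizing $\theta$ must be identified explicitly.'' But the theorem asserts $\liminf_n \inf_{\theta\geq 0} R_n(\cP,\cR_s,\theta)=1$: the infimum is over $\theta$, so you must show the minimax risk is near $1$ \emph{for every} $\theta\geq 0$ simultaneously. Exhibiting a single favorable $\theta_n$ only lower-bounds $R_n(\theta_n)$, not $\inf_\theta R_n(\theta)$. Your plan to do sharp Laplace asymptotics of $L_k(\theta)$ at a carefully tuned $\theta$ is therefore aimed at the wrong target; what is needed is a bound on $\sup_{\theta\geq 0} L_k(\theta)$ that is uniform in $\theta$.

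The paper's key idea, which your proposal misses entirely, is a monotonicity result (their Theorem~\ref{most:important:theorem}): the partition-function ratio
\[
\theta \;\longmapsto\; \frac{Z_{C_s(V)\oplus C_s(V')}(\theta)}{Z_{C_s(V)}(\theta)\,Z_{C_s(V')}(\theta)}
\]
is non-decreasing on $[0,\infty)$. This turns the supremum over $\theta$ into the limit $\theta\to\infty$, where the ratio has an explicit combinatorial form in terms of central binomial coefficients; the two conditions in \eqref{sufficient:conditions:clique:detection} fall out of bounding that limit (one gives $\sup_\theta L_k\leq\sqrt{2s}$, the other a bound $\lesssim\exp(2k/s)$ for small overlaps), and the hypergeometric overlap distribution finishes the job. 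The monotonicity itself is the ``genuinely new and delicate part'': it is proved by interpolating $\PP_{G\oplus tG'}$ in $t\in[0,1]$, reducing to a positive-correlation inequality for squared magnetizations, and finally to a stochastic-dominance statement for pairs of shifted Rademacher sums (their Proposition~\ref{key:proposition}). None of this resembles a Hubbard--Stratonovich asymptotic expansion, and the antiferromagnetic sign makes standard correlation inequalities unavailable, which is why the argument is as elaborate as it is.
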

\begin{remark} 

We would like to contrast our result with similar known bounds such Theorem 1 of \cite{santhanam2012information} and Theorem 1 of \cite{bresler2008reconstruction}. The key differences between our result and these known bounds are that, first (\ref{sufficient:conditions:clique:detection}) is valid for property testing and even more generally for a certain detection problem (see the proof in Section \ref{bounds:with:ferromagnets} of the supplement for more details), while previous results are valid only for structure recovery; and therefore second --- the worst cases are very different. In fact both previously known bounds remain valid in the smaller class of ferromagnetic models, while as we saw in Section \ref{ferromagnetic:algorithms}, some strongly monotone property tests such as cycle presence do not exhibit such limitations.
\end{remark}

Note that for any non-constant strongly monotone $\cP$ one has $\cP(\varnothing) = 0$. Further, the requirement that $\cP$ holds true on $C_s$ is mild, since for any non-zero strongly monotone $\cP$ one can always find a sufficiently large $s$ for which $\cP$ is satisfied. The only true restriction of Theorem \ref{scaling:theorem} on $\cP$ is thus that one has to be able to find $s$ in the sparse regime $s \ll \sqrt{d}$.

Loosely speaking Theorem \ref{scaling:theorem} shows that when the quantity $\frac{s \log d/s}{n}$ (up to a log factor) is large, strongly monotone property testing over the model class (\ref{ising:model:measure:anti}) is very difficult in the sparse regime when $s \ll \sqrt{d}$. What is more, this statement remains valid regardless of the magnitude of the signal strength parameter $\theta \geq 0$. This contrasts sharply with our results in the ferromagnetic case, where we have already seen an example which did not require such a condition. Take the cycle testing example in Section \ref{cycle:testing:sec}. In this example if $s$ denotes the maximum degree of the graph, we can always take an $s$-clique $C_s$ (which certainly contains a cycle), and hence $s$ has to satisfy (\ref{sufficient:conditions:clique:detection}) in order for tests with reasonable minimax risk (\ref{testing:risk:def:antifer}) to exist. In contrast Corollary \ref{cycle:test:consistency} shows that controlling the minimax risk is possible without requirements on the maximum degree of the graph. Theorem \ref{scaling:theorem} shows that this is no longer the case over the broader model class (\ref{ising:model:measure:anti}). 

Theorem \ref{scaling:theorem} sheds some light on the complexity involved in testing within model class (\ref{ising:model:measure:anti}). However, it also leaves something to be desired, namely it does not quantify the effect $\theta$ has, and it does not address specific properties which may potentially exhibit different complexity. Moreover, it only applies to strongly monotone properties and not to all monotone properties, and thus in particular it does not apply to connectivity testing. Below we give an explicit upper bound on the parameter $\theta$ for connectivity testing within the model class (\ref{ising:model:measure:anti}). We show a particularly hard case for connectivity testing in Figure \ref{connectivity:antiferro:figure} and include a brief explanation in its caption. 
\begin{figure}
\centering
\begin{tikzpicture}[scale=.65]
\SetVertexNormal[Shape      = circle,
                  FillColor  = white,
                  MinSize = 11pt,
                  InnerSep=0pt,
                  LineWidth = .5pt]
   \SetVertexNoLabel
   \tikzset{LabelStyle/.style = {below, fill = white, text = black, fill opacity=0, text opacity = 1}}
   \tikzset{EdgeStyle/.style= {thin,
                                double          = black,
                                double distance = .5pt}}
    \begin{scope}\grComplete[RA=1.5]{7}\end{scope}
    \begin{scope}[shift={(4,0)}]\grComplete[prefix=a,RA=1.5]{7}\end{scope}
    \begin{scope}[shift={(8,0)}]\grComplete[prefix=b,RA=1.5]{7}\end{scope}
    \begin{scope}[shift={(0,0)}]\grComplete[prefix=c,RA=1.5]{7}\end{scope}
    \begin{scope}[shift={(-.25,-3)}]\grPath[Math,prefix=u,RA=1.5,RS=0]{7}\end{scope}
        \Edge(a0)(b3)
        \Edge(c0)(a3)
                                                                           \tikzset{LabelStyle/.style = {right, fill = white, text = black, fill opacity=0, text opacity = 1}}
        \tikzset{EdgeStyle/.append style = {thin,  bend right=15}}
    \Edge[label=$\theta$](u0)(c6)
    \tikzset{EdgeStyle/.append style = {thin, dashed, bend left=15}}
                                                                               \tikzset{LabelStyle/.style = {below left, fill = white, text = black, fill opacity=0, text opacity = 1}}
    \Edge[label=$-\theta$](u0)(c4)
\end{tikzpicture}\caption{Testing connectivity on a model with a connected graph as above is difficult. Solid edges correspond to positive interactions with magnitude $\theta$, while the dashed edge corresponds to a negative interaction with magnitude $-\theta$. The cliques are of size $s$ so that the total degree remains at most $s$. When the value of $\theta$ is large, the majority of the spins in each clique tend to have the same sign. Hence the two interactions of the leftmost clique with the leftmost node in the path graph are ``likely'' to cancel out, which will make it hard to tell this graph from the disconnected graph consisting of the connected cliques and the path graph. We exploit this construction in the proof of Proposition \ref{connectivity:low:temp}.}\label{connectivity:antiferro:figure}
\end{figure}
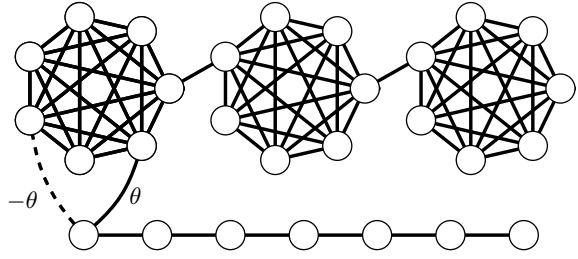

\begin{proposition}[Connectivity Testing General Upper Bound]\label{connectivity:low:temp} Let  $\cP$ be graph connectivity. Assume $G_{\wb}$ belongs to the restriction set $\cR_s = \{G_{\wb}  ~|~ \maxdeg(G) \leq s\}$. Let $s,n,d$ be sufficiently large, and suppose $\theta \geq \frac{3}{2\lfloor s/4\rfloor - 2}$ and there exists a $\kappa > 1$ so that
\begin{align}\label{conn:testing:cond:anti}
\theta >  \frac{2\log\frac{\kappa sn}{\log(ds)}}{s - 16}.
\end{align} 
Then the minimax risk (\ref{testing:risk:def:antifer}) satisfies $
\liminf_{n \rightarrow \infty} R_n(\cP, \cR_s, \theta) = 1.$
\end{proposition}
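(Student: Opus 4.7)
The plan is to mimic the biclique-packing strategy of Theorem~\ref{biclques:theorem}, but with an antiferromagnetic twist that exploits the cancellation mechanism displayed in Figure~\ref{connectivity:antiferro:figure}. Concretely, I would first build a null graph $H_0$ that (i) packs $\asymp d/s$ disjoint cliques of size $r := 2\lfloor s/4 \rfloor$, chained into a single connected component by bridge edges, and (ii) isolates an extra \textit{probe} vertex $p$ from these cliques, leaving $H_0$ disconnected and hence in $\cG_0(\cP)$. For each clique $K_j$ and each ordered pair of distinct vertices $(a,b)\in K_j$, the alternative $H^{(j,a,b)}$ is obtained by adding $(p,a)$ with weight $+1$ and $(p,b)$ with weight $-1$; this renders the graph connected, so $H^{(j,a,b)}\in\cG_1(\cP)$, while the maximum degree stays bounded by $s$ thanks to $r\le s/2$. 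The total number of alternatives is $N\asymp ds$, yielding the $\log(ds)$ factor appearing in \eqref{conn:testing:cond:anti}.

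Next I would apply the Ingster--Suslina second-moment lower bound
\begin{align*}
R_n \;\geq\; 1 - \sqrt{\tfrac{1}{2}\bigl(\mathbb{E}_{(j,a,b),(j',a',b')}[(\mathbb{E}_{P_0} L L')^{n}] - 1\bigr)},
\end{align*}
where $L = dP_{H^{(j,a,b)}}/dP_{H_0}$ and $L'$ is its analogue for $(j',a',b')$. Writing $L$ as a constant times $\exp\bigl(\theta X_p(X_a - X_b)\bigr)$, the inner expectation splits into two regimes: (A) $j\neq j'$, in which the cliques $K_j$ and $K_{j'}$ are disjoint so that $X_p(X_a-X_b)$ and $X_p(X_{a'}-X_{b'})$ are essentially independent under $P_0$; (B) $j=j'$, which has prior mass $\asymp 1/N$ and where $LL'$ can be as large as $e^{4\theta}$ but is damped by the small prior weight.

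The technical core is regime (A): I would argue that each factor $\mathbb{E}_{P_0}[\exp(\theta X_p(X_a-X_b))]$ is very close to $1$. Under $P_0$, the clique $K_j$ is a ferromagnetic Curie--Weiss model at inverse temperature $\theta$, and the side condition $\theta\geq 3/(2\lfloor s/4\rfloor - 2)$ places it in the low-temperature regime where the magnetization is concentrated near $\pm 1$ and the probability of observing any minority spin is of order $\exp(-(r-1)\theta)$. On the typical aligned event one has $X_a=X_b$, so the exponent vanishes and $L=1$; the atypical contribution is bounded by $e^{\theta}\cdot\exp(-(r-1)\theta)=\exp(-(r-2)\theta)$. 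Raising to the $n$-th power and averaging over the $N\asymp ds$ alternatives yields
\begin{align*}
\chi^2(\bar P_1^{\otimes n},P_0^{\otimes n}) \;\lesssim\; N\bigl(1+O(e^{-(r-2)\theta})\bigr)^{n}-1 \;\lesssim\; ds\cdot n\exp(-(r-2)\theta),
\end{align*}
which is $o(1)$ whenever $(r-2)\theta > \log(\kappa sn/\log(ds))$, matching \eqref{conn:testing:cond:anti} after identifying $r-2 \asymp (s-16)/2$ up to absorbed constants.

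The main obstacle will be quantifying the Curie--Weiss minority-spin probability and the partition-function ratio $Z_{H_0}/Z_{H^{(j,a,b)}}$, which involves marginalizing $p$'s effective field over the nearly polarized clique. The cleanest route, paralleling the proof of Theorem~\ref{biclques:theorem}, is to condition on the total magnetization of each clique and exploit the $\pm\theta$ symmetry of the attachments: for each fixed magnetization the two cross terms cancel in expectation, so only a residual of order $\exp(-(r-2)\theta)$ survives. Handling regime (B) and the boundary effects from the bridge edges connecting cliques requires some bookkeeping but does not affect the leading scaling.
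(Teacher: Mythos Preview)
Your construction is essentially the paper's: a chain of ferromagnetic cliques forming one component, a separate ``probe'' component, and alternatives that attach the probe to a clique via a $+\theta$/$-\theta$ pair. The paper differs only cosmetically --- it uses full $s$-cliques (later extracting a $2\lfloor s/4\rfloor\times 2\lfloor s/4\rfloor$ biclique via Lemma~\ref{biclique:lemma}) and a \emph{path} of probe vertices, assigning a distinct probe $\bar j$ to each clique $j$ rather than a single $p$. That said, your second-moment analysis has the roles of the two regimes inverted and the final display is not right.

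\textbf{Regime (A) is trivial, not the technical core.} When $j\neq j'$ the cross term is \emph{exactly} $1$, not $1+O(e^{-(r-2)\theta})$. The paper obtains this from two observations: (i) since the probe's attachments create no new simple path between $a'$ and $b'$, Proposition~\ref{restriction:prop} gives $\EE_{G_0}X_{a'}X_{b'}=\EE_{G_{j,a,b}}X_{a'}X_{b'}$; (ii) by the permutation symmetry of $K_{j'}$, $\EE_{G_{j,a,b}}X_pX_{a'}=\EE_{G_{j,a,b}}X_pX_{b'}$. Your paragraph on the Curie--Weiss minority probability is therefore aimed at the wrong place: nothing about alignment is needed to make regime (A) vanish.

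\textbf{Regime (B) is the real work, and your bookkeeping remark hides the key step.} When $j=j'$ one must bound $\EE_{G_0}[L_{j,a,b}L_{j,a',b'}]$, which the paper expands via $\exp(cx)=\cosh(c)(1+x\tanh c)$ to isolate the term $\EE_{G_{j,a,b}}[X_pX_{a'}-X_pX_{b'}]$. The delicate point is that this expectation is under the \emph{alternative} measure, which contains an antiferromagnetic edge; Griffiths/Curie--Weiss monotonicity do not apply directly. The paper handles this by conditioning on $X_p$, paying a factor $e^{4\theta}$ to strip the two attached edges, and only then invoking Lemma~\ref{biclique:lemma} on the purely ferromagnetic $G_0$ to get $1-\EE X_{a'}X_{b'}\le \frac{2(2\lfloor s/4\rfloor-1)}{\exp((2\lfloor s/4\rfloor-4)\theta)+2\lfloor s/4\rfloor-1}$. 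Your ``minority spin'' heuristic is the right intuition but skips this conditioning/peeling step, without which you cannot control the antiferromagnetic alternative.

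\textbf{The displayed $\chi^2$ bound is incorrect.} The second-moment identity gives $\chi^2+1=\frac{1}{N^2}\sum_{\alpha,\alpha'}(\EE_{P_0}L_\alpha L_{\alpha'})^n$; there is no way to arrive at $N(1+\cdots)^n-1$. With $M$ cliques and regime (A) contributing exactly $1$, the calculation collapses to $\chi^2\le \frac{(\text{regime-B bound})^n-1}{M}$, so the threshold comes from $(\text{bound})^n\ll M$, i.e.\ the denominator is governed by the number of \emph{cliques}, not $N$. Your claim that regime (B) has prior mass $1/N$ is also off by the factor $r(r-1)$ of within-clique pairs.
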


Importantly, condition (\ref{conn:testing:cond:anti}) implies that the maximum degree cannot be too large with respect to the other parameters if we hope for a connectivity test with a good control over both type I and type II errors to exist. Recall that no such conditions were needed in Corollary \ref{conn:test:consistency} when testing connectivity in ferromagnets. 

\subsection{Correlation Testing for General Models}\label{corr:testing:general:models}

Section \ref{antiferromagnetic:bounds:sub} made it apparent that property testing is more challenging in the enlarged model space (\ref{ising:model:measure:anti}). In this section we work with an even larger class of zero-field models compared to (\ref{ising:model:measure:anti}) which are specified as: 
\begin{align}\label{ising:model:measure:anti:full:generality}
\PP_{\theta, G_{\wb}}(\bX = \xb) \varpropto \exp\Big( \theta \sum_{(u,v) \in E(G)} w_{uv} x_u x_v\Big),
\end{align}
where $1 \leq |w_{uv}| \leq \Theta/\theta, (u,v) \in E(G)$ are unknown parameters and $\xb \in \{\pm 1\}^d$. For a monotone property $\cP$  define the sets of weighted graphs
\begin{align*}
\cG_0(\cP, \theta,\Theta) &:= \{G_{\wb}  ~|~ 1 \leq |w_{uv}| \leq \Theta/\theta, \cP(G) = 0\}, \\\cG_1(\cP, \theta, \Theta) &:= \{G_{\wb}  ~|~ 1 \leq |w_{uv}| \leq \Theta/\theta, \cP(G) = 1\}.
\end{align*}
Different from Section \ref{ferromagnetic:algorithms}, here we impose signal strength restrictions even in the null set $\cG_0(\cP, \Theta) $ and leave the more general setting for future work. We note that one can no longer rely on the screening algorithms of Section \ref{ferromagnetic:algorithms} to perform a property test for data generated by (\ref{ising:model:measure:anti:full:generality}); the success of correlation screening hinges on the fact that in ferromagnets deleting edges reduces correlations, which no longer holds in the model class (\ref{ising:model:measure:anti:full:generality}). One alternative would be to perform exact structure recovery, and check whether the graph property in question holds on the desired graph. Possibilities of exact graph recovery include methods developed in \citep{bresler2008reconstruction,Ravikumar2011High, santhanam2012information, anandkumar2012high, bresler2015efficiently}. 

Below we take a different route, and modify the correlation decoders of \cite{santhanam2012information} by specializing them to property testing. Specifically, we consider a score test type of approach, which only involves model fitting assuming the null hypothesis holds. Suppose there exists an algorithm $\cA$ mapping the data input as
\begin{align}\label{data:mapping}
\cA (\{\bX^{(i)}\}_{i \in [n]}, \theta, \Theta, \cP) \mapsto  \tilde G_{\tilde \wb},
\end{align}
so that the output $\tilde G_{\tilde \wb} \in \cG_0(\cP, \theta, \Theta)$ and in addition if the true underlying graph $G$ satisfies $\cP(G) = 0$ then
\begin{align}\label{expectation:bound}
\max_{u,v \in [d]}|\hat \EE X_u X_v - \EE_{\theta, \tilde G_{\tilde \wb}} X_u X_v| \leq \varepsilon(\delta),\footnotemark
\end{align}
\footnotetext{Here $\hat \EE$ is the empirical expectation defined in Section \ref{ferromagnetic:algorithms}, while $\EE_{\theta, \tilde G_{\tilde \wb}}$ is the expectation with respect to the measure $\PP_{ \theta, \tilde G_{\tilde \wb}}$ from (\ref{ising:model:measure:anti:full:generality}).}
holds with probability at least $1 - \delta$. Define the test 
\begin{align}\label{combinatorial:test:ising:antiferro}
\psi_{\rho}(\{\bX^{(i)}\}_{i \in [n]}, \theta, \tilde G_{\tilde \wb}) := \mathbbm{1}(\max_{u, v} |\hat \EE X_u X_v - \EE_{\theta, \tilde G_{\tilde \wb}} X_u X_v| \geq \rho).
\end{align}
Recall the definition of the threshold $\tau$ (\ref{tau:univ:thresh}) and let  
\begin{align}\label{new:Tau:def}
\cT := \cT(\theta, \Theta, s) = \frac{\sinh^2(\theta/4)}{2s\Theta(3 \exp(2s\Theta) + 1)}.
\end{align}
The following holds
\begin{proposition}[General Tests Sufficient Conditions] \label{mean:comb:inf:testing:general} Suppose that $\cA$  is an algorithm satisfying (\ref{expectation:bound}), and $(s,n,d,\theta, \Theta)$ are such that
\begin{align}\label{sample:size:cond}
\cT \geq \tau + \varepsilon(\delta),
\end{align}
for a small $\delta > 0$. Then the test $\psi_{\varepsilon(\delta)}$ given in (\ref{combinatorial:test:ising:antiferro}) satisfies
\begin{align}\label{typeItypeII:control}
\sup_{G_{\wb} \in \cG_0(\cP, \theta, \Theta)} \PP(\psi_{\varepsilon(\delta)} = 1) \leq \delta ~~~\mbox{and}~~~ \sup_{G_{\wb} \in \cG_1(\cP, \theta, \Theta)} \PP(\psi_{\varepsilon(\delta)} = 0) \leq \delta.
\end{align}
\end{proposition}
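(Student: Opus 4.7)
The plan is to split the two error bounds and reduce each to a combination of (i) the defining property of $\cA$, (ii) a Hoeffding/union-bound empirical concentration, and (iii) a population separation lemma which is where the specific form of $\cT$ in (\ref{new:Tau:def}) enters. The Type~I error is immediate: under $\Hb_0$ the algorithm $\cA$ guarantees $\max_{u,v}|\hat\EE X_uX_v - \EE_{\theta,\tilde G_{\tilde\wb}}X_uX_v|\leq \varepsilon(\delta)$ with probability at least $1-\delta$, and on this event the indicator $\psi_{\varepsilon(\delta)}$ vanishes.

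The main work lies in Type~II control. Assume the data is generated from some $G_{\wb}\in\cG_1(\cP,\theta,\Theta)$. Since each $X_u^{(i)}X_v^{(i)}\in\{\pm 1\}$ is an average of $n$ bounded i.i.d.\ random variables, Hoeffding's inequality combined with a union bound over the at most $\binom{d}{2}$ pairs (and over choices of $\tilde G$, which is the output of a deterministic function of the data) yields, with probability at least $1-\delta$,
\begin{equation*}
\max_{u,v\in[d]}\bigl|\hat\EE X_uX_v - \EE_{\theta,G_{\wb}}X_uX_v\bigr| \;\leq\; \tau.
\end{equation*}
On this event, by the triangle inequality,
\begin{equation*}
\max_{u,v}\bigl|\hat\EE X_uX_v - \EE_{\theta,\tilde G_{\tilde\wb}}X_uX_v\bigr| \;\geq\; \max_{u,v}\bigl|\EE_{\theta,G_{\wb}}X_uX_v - \EE_{\theta,\tilde G_{\tilde\wb}}X_uX_v\bigr| \;-\; \tau.
\end{equation*}
Because $\cA$ must return $\tilde G_{\tilde\wb}\in\cG_0$ regardless of the data, the two graphs $G$ and $\tilde G$ necessarily differ on the property, so they differ as graphs, and they come from the same bounded-weight, bounded-degree class. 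If we can show a separation
\begin{equation*}
\max_{u,v\in[d]} \bigl|\EE_{\theta,G_{\wb}}X_uX_v - \EE_{\theta,\tilde G_{\tilde\wb}}X_uX_v\bigr| \;\geq\; \cT,
\end{equation*}
then the assumption $\cT\geq \tau+\varepsilon(\delta)$ forces the observed maximum to exceed $\varepsilon(\delta)$ on the good event, hence $\psi_{\varepsilon(\delta)}=1$ with probability at least $1-\delta$.

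Thus the heart of the argument is the population separation lemma, which is the step I expect to be the main obstacle and which explains the exact form of $\cT$ in (\ref{new:Tau:def}). My plan is to adapt the Santhanam--Wainwright correlation-decoder analysis to the present setting. Pick an edge $e=(u,v)$ in the symmetric difference $E(G)\triangle E(\tilde G)$; such an edge exists because $\cP(G)\neq \cP(\tilde G)$. Condition on the remaining spins $X_{-u,-v}$ and write the conditional covariance of $(X_u,X_v)$ in closed form using the local specifications; explicitly,
\begin{equation*}
\EE[X_uX_v\mid X_{-u,-v}] \;=\; \frac{e^{\theta w_{uv}}\cosh(\theta(h_u+h_v)) - e^{-\theta w_{uv}}\cosh(\theta(h_u-h_v))}{e^{\theta w_{uv}}\cosh(\theta(h_u+h_v)) + e^{-\theta w_{uv}}\cosh(\theta(h_u-h_v))},
\end{equation*}
where $h_u,h_v$ are the local fields at $u,v$ induced by the other neighbors, and $w_{uv}=0$ if the edge is absent. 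Because $|w_{uv}|\geq 1$ when present, the difference of this expression between $G$ and $\tilde G$ on the event $\{h_u=h_v=0\}$ is at least $\sinh(\theta/2)/\cosh(\theta/2)\cdot(\cdot)$, which, after using $\sinh(a)\geq 2\sinh(a/2)^2\wedge \sinh(a/2)$-type bounds, drops out at least a factor $\sinh^2(\theta/4)$. Finally I lower-bound the probability of the favorable conditioning event $\{h_u,h_v \text{ small}\}$ using the bounded-degree assumption $\maxdeg(G)\leq s$ and $|w_{\cdot\cdot}|\leq\Theta/\theta$: since there are at most $2(s-1)$ relevant spins contributing to $h_u$ and $h_v$, a uniform-over-configurations lower bound together with the partition-function estimate produces the factor $1/[2s\Theta(3e^{2s\Theta}+1)]$. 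Multiplying the conditional gap by the probability of the good event then gives precisely $\cT$ in (\ref{new:Tau:def}), completing the separation lemma and the proof.
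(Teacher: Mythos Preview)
Your proposal is correct and follows essentially the same approach as the paper: Type~I control is immediate from the defining property of $\cA$, and Type~II control comes from Hoeffding concentration (the paper's Lemma~\ref{simple:concentration:lemma}) plus the triangle inequality, reducing to the population separation $\max_{u,v}|\EE_{\theta,G_{\wb}}X_uX_v-\EE_{\theta,\tilde G_{\tilde\wb}}X_uX_v|\geq \cT$. The only difference is cosmetic: the paper dispatches the separation step in one line by citing Lemma~6 of \cite{santhanam2012information} (since $\tilde G_{\tilde\wb}\in\cG_0$ forces $G\neq\tilde G$), whereas you sketch that lemma's proof; your parenthetical about taking a union bound ``over choices of $\tilde G$'' is unnecessary and slightly misleading, since the Hoeffding bound is on $|\hat\EE X_uX_v-\EE_{\theta,G_{\wb}}X_uX_v|$ and does not involve $\tilde G$ at all.
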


A key component for the existence of a successful test (\ref{combinatorial:test:ising:antiferro}) is the algorithm $\cA$ satisfying condition (\ref{expectation:bound}). One example how to construct such an algorithm, is to solve the following optimization problem
\begin{align}\label{combinatorial:algo:ising:antiferro}
\cA(\{\bX^{(i)}\}_{i \in [n]}, \theta, \Theta, \cP)  = \argmin_{G_{\wb} \in \cG_0(\cP, \theta, \Theta)} \max_{u,v \in [d]}|\hat \EE X_u X_v - \EE_{\theta, G_{\wb}}X_u X_v|.
\end{align}
In the following lemma we show that (\ref{expectation:bound}) indeed holds.
\begin{lemma}[Algorithm \eqref{combinatorial:algo:ising:antiferro} Sufficient Condition] \label{simple:lemma:F:scan} If the algorithm $\cA$ is defined by (\ref{combinatorial:algo:ising:antiferro}), then (\ref{expectation:bound}) holds with $\varepsilon(\delta) = \tau.$
\end{lemma}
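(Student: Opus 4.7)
The plan is to exploit the fact that when the true data-generating graph $G^{*}_{\wb^{*}}$ satisfies $\cP(G^{*}) = 0$, this pair is itself feasible for the optimization program \eqref{combinatorial:algo:ising:antiferro}, and to combine this observation with a uniform concentration bound on the empirical pairwise moments around their population counterparts.

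Concretely, I would first apply Hoeffding's inequality to each random variable $X^{(i)}_u X^{(i)}_v \in \{-1, +1\}$, noting that these are i.i.d.\ across $i \in [n]$ with common mean $\EE_{\theta, G^{*}_{\wb^{*}}} X_u X_v$. A union bound over the at most ${d \choose 2}$ pairs $(u,v) \in [d]^{2}$, combined with the explicit choice of $\tau$ from \eqref{tau:univ:thresh}, yields
$$
\PP\Bigl(\max_{u,v \in [d]} \bigl|\hat \EE X_u X_v - \EE_{\theta, G^{*}_{\wb^{*}}} X_u X_v\bigr| \leq \tau\Bigr) \geq 1 - \delta.
$$

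On this high-probability event, the hypothesis $\cP(G^{*}) = 0$ places $G^{*}_{\wb^{*}}$ in the feasible set $\cG_0(\cP, \theta, \Theta)$, so the minimizer $\tilde G_{\tilde \wb}$ returned by \eqref{combinatorial:algo:ising:antiferro} must achieve an objective value no larger than the value attained at $G^{*}_{\wb^{*}}$. Hence
$$
\max_{u,v \in [d]} \bigl|\hat \EE X_u X_v - \EE_{\theta, \tilde G_{\tilde \wb}} X_u X_v\bigr| \leq \max_{u,v \in [d]} \bigl|\hat \EE X_u X_v - \EE_{\theta, G^{*}_{\wb^{*}}} X_u X_v\bigr| \leq \tau,
$$
which is exactly \eqref{expectation:bound} with $\varepsilon(\delta) = \tau$. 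There is no deep obstacle: the argument is essentially a one-step combination of feasibility of the truth in the min-max program with standard Hoeffding concentration, and the only minor book-keeping concerns matching the constants of the concentration bound to the precise form of $\tau$ in \eqref{tau:univ:thresh}.
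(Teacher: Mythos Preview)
Your argument is correct and is precisely the reasoning the paper has in mind: the paper's own proof is the single sentence ``The proof is a direct consequence of Lemma~\ref{simple:concentration:lemma}'' (the Hoeffding/union bound lemma), and you have correctly unpacked that the feasibility of the true $G^{*}_{\wb^{*}}$ in the constraint set $\cG_0(\cP,\theta,\Theta)$ is what transfers the concentration bound to the minimizer $\tilde G_{\tilde\wb}$. The minor constant-matching caveat you flag is indeed the only bookkeeping involved.
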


The proof of Lemma \ref{simple:lemma:F:scan} is a direct consequence of Hoeffding's inequality. By combining the statements of Proposition \ref{mean:comb:inf:testing:general} and Lemma \ref{simple:lemma:F:scan}, we arrive at an abstract generic property test summarized in Algorithm \ref{generic:test}.

\begin{algorithm}
\caption{Generic Property Test}\label{generic:test}
\begin{algorithmic}
\STATE \textbf{Input:} $\{\bX^{(i)}\}_{i \in [n]}, \theta, \Theta, \cP$
\STATE Calculate the matrix $\{\hat \EE X_u X_v\}_{u,v \in [d]}$
\STATE Solve $\tilde G_{\tilde \wb} = \argmin_{G_{\wb} \in \cG_0(\cP, \theta,\Theta)} \max_{u,v \in [d]}|\hat \EE X_u X_v - \EE_{\theta, G_{\wb}}X_u X_v|$
\STATE Output $\psi_{\tau}(\{\bX^{(i)}\}_{i \in [n]}, \theta, \tilde G_{\tilde \wb})$ 
\end{algorithmic}
\end{algorithm}

A sufficient condition for Algorithm \ref{generic:test} to satisfy (\ref{typeItypeII:control}) is $
\cT \geq 2 \tau$, where $\cT$ is defined in (\ref{new:Tau:def}). One potential problem with Algorithm \ref{generic:test} is that solving (\ref{combinatorial:algo:ising:antiferro}) in general requires combinatorial optimization, which will likely result in non-polynomial runtime complexity for most properties. However, unlike the structure learning procedure of \cite{santhanam2012information} which also requires combinatorial optimization, Algorithm \ref{generic:test} has the advantage that it does not need to optimize over the entire set of graphs $\cG_d$ but only over the smaller set $\{G \in \cG_d ~|~ \cP(G) = 0\}$. We conclude this section by proposing a custom  variant of this algorithm specialized to cycle testing which uses a different algorithm $\cA$ and can be ran in polynomial time. 

\subsection{A Computationally Efficient Cycle Test} \label{comp:eff:cycle:test}

In this section we propose an efficient algorithm $\cA$ satisfying (\ref{expectation:bound}) for cycle testing. Having computationally efficient algorithms for cycle testing is beneficial in practice, since if we have enough evidence that the graph is a forest, we can recover its structure efficiently \citep{chow68approximating}. We summarize the algorithm $\cA$ called ``cycle test map'' in Algorithm \ref{cycle:test:step1}.  
\begin{algorithm}
\caption{Cycle Test Map}\label{cycle:test:step1}
\begin{algorithmic}
\STATE \textbf{Input:} $\{\bX^{(i)}\}_{i \in [n]}, \theta, \Theta$
\STATE Calculate the matrix $\Mb := \{\hat \EE X_u X_v\}_{u,v \in [d]}$
\STATE Find a MST $\tilde T$ with edge weights $|M_{uv}|$
\FOR{$1 \leq u < v \leq d$}
\IF{$(u,v) \not \in E(\tilde T)$ or $|M_{uv}| < \tanh(\theta) - \tau$} 
\STATE $\tilde w_{uv} \leftarrow 0$; $E(\tilde T) \leftarrow E(\tilde T) \setminus \{(u,v)\}$
\ELSE
\STATE $\tilde w_{uv} \leftarrow \sign(M_{uv})((\atanh(|M_{uv}|) \wedge \Theta) \vee \theta)/\theta$
\ENDIF
\ENDFOR
\RETURN $\tilde T_{\tilde \wb}$
\end{algorithmic}
\end{algorithm}

Given the output $\tilde T_{\tilde \wb}$ of Algorithm \ref{cycle:test:step1}, evaluating the expectations $\EE_{\theta, \tilde T_{\tilde \wb}} X_u X_v$ needed in (\ref{combinatorial:test:ising:antiferro}) can be done in polynomial time via the simple formula
$$\textstyle\EE_{\theta, \tilde T_{\tilde \wb}} X_u X_v = \prod_{(k,\ell) \in \cP^{\tilde T}_{u \rightarrow v}}\EE_{\theta, \tilde T_{\tilde \wb}} X_kX_\ell= \prod_{(k,\ell) \in \cP^{\tilde T}_{u \rightarrow v}} \tanh(\theta \tilde w_{k \ell}),\footnotemark$$\footnotetext{The validity of this formula follows  by Proposition \ref{restriction:prop} and Lemma \ref{chain:graph} which can be found in the supplement.}where $\cP^{\tilde T}_{u \rightarrow v}$ denotes the path between vertices $u$ and $v$ in the forest $\tilde T$. Next, we show the validity of the test in (\ref{combinatorial:test:ising:antiferro}).

\begin{proposition}[Fast Cycle Test Sufficient Conditions]\label{test:cycle:generic:proof} Suppose that $\tanh(\theta)(1- \tanh(\Theta))> 2\tau$. Then the output of Algorithm \ref{cycle:test:step1} satisfies (\ref{expectation:bound}) with
\begin{align}\label{eff:cycle:test:eps:delta}\varepsilon(\delta) = \tau\frac{2 - \tanh(\Theta)}{1 - \tanh(\Theta)}.\end{align}
\end{proposition}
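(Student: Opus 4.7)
My plan is to leverage Hoeffding concentration of the empirical correlations $M_{uv}$ on a high-probability event, then to show that under the null hypothesis $\tilde T$ exactly recovers the true forest $T$, after which a telescoping argument controls every pair. First I would invoke Hoeffding's inequality together with a union bound over the $\binom{d}{2}$ pairs to define the event $E := \{\sup_{u,v}|M_{uv} - \EE_{\theta, G_{\wb}} X_uX_v|\leq \tau\}$, which has probability at least $1-\delta$. The rest of the argument conditions on $E$ and on the null $\cP(G) = 0$, so that $G = T$ is a forest.

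The crucial structural step is to prove $\tilde T = T$. Any true edge $e = (k,\ell)\in E(T)$ satisfies $|\EE X_{k}X_{\ell}|\in[\tanh(\theta),\tanh(\Theta)]$, so $|M_e|\geq \tanh(\theta)-\tau$ stays comfortably above the pruning threshold. For any non-edge $(u',v')$ whose endpoints lie in the same $T$-component, the $T$-path has length at least $2$, and discarding all but one factor in the product representation of $\EE X_{u'}X_{v'}$ gives $|\EE X_{u'}X_{v'}|\leq \tanh(\Theta)\cdot|\EE X_{k}X_{\ell}|$ for each edge $e = (k,\ell)$ on that path. Combining with $|M_e - \EE X_{k}X_{\ell}|\leq \tau$ and $|M_e|\geq \tanh(\theta)-\tau$, the assumed inequality $\tanh(\theta)(1-\tanh(\Theta))>2\tau$ is exactly what forces the strict comparison $|M_{u'v'}|<|M_e|$. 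Running Kruskal's algorithm in decreasing order of $|M|$ therefore selects the entire $T$-path before $(u',v')$ is considered, so $(u',v')$ induces a cycle and is skipped; cross-component non-edges have $|M|\leq\tau$ and are removed during pruning. This yields $\tilde T = T$.

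With this structural equality in hand I would conclude by case analysis on $(u,v)$. The two easy cases are $(u,v)\in E(T)$, where $\EE_{\theta,\tilde T_{\tilde\wb}}X_uX_v = \tanh(\theta\tilde w_{uv})$ and inspection of the three clipping branches in Algorithm \ref{cycle:test:step1} gives an error bounded by $\tau$, and $u,v$ in distinct $T$-components, where both $\EE X_uX_v$ and $\EE_{\theta,\tilde T_{\tilde\wb}}X_uX_v$ vanish so $|M_{uv}|\leq\tau$. The nontrivial case is a pair joined by a $T$-path $u = v_0,\ldots,v_k = v$ with $k\geq 2$. Here I would first verify through the same three clipping branches the sharper bound $|p_j - a_j|\leq\tau$, where $p_j := \tanh(\theta\tilde w_{v_{j-1}v_j})$ and $a_j := \EE X_{v_{j-1}}X_{v_j}$. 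Setting $\alpha_j := \prod_{i\leq j}a_i = \EE X_{v_0}X_{v_j}$ by the tree Markov property and $\gamma_j := \prod_{i\leq j}p_i$, the identity $\gamma_j-\alpha_j = (p_j - a_j)\gamma_{j-1} + a_j(\gamma_{j-1}-\alpha_{j-1})$ combined with $|p_i|,|a_i|\leq\tanh(\Theta)$ yields the recursion $|\xi_j|\leq\tau\tanh^{j-1}(\Theta) + \tanh(\Theta)|\xi_{j-1}|$ for $\xi_j := \gamma_j-\alpha_j$, with $\xi_0 = 0$. Unrolling gives $|\xi_k|\leq k\tau\tanh^{k-1}(\Theta)$, and the elementary inequality $kt^{k-1}(1-t)\leq(1-1/k)^{k-1}\leq 1$ for $t\in(0,1)$ converts this into $|\xi_k|\leq \tau/(1-\tanh(\Theta))$; a triangle inequality against $|M_{uv}-\alpha_k|\leq\tau$ then delivers $|M_{uv} - \EE_{\theta,\tilde T_{\tilde\wb}}X_uX_v|\leq \tau + \tau/(1-\tanh(\Theta)) = \tau(2-\tanh(\Theta))/(1-\tanh(\Theta)) = \varepsilon(\delta)$.

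The main obstacle I expect is the identification $\tilde T = T$: the strict comparison $|M_{u'v'}|<|M_e|$ along the $T$-path uses the hypothesis $\tanh(\theta)(1-\tanh(\Theta))>2\tau$ with no slack, and without this structural coincidence the tree Markov factorization on which the final telescoping argument hinges would fail, since a vertex on a $\tilde T$-path need not be a $T$-separator.
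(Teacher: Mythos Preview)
Your proposal is correct and follows essentially the same approach as the paper: establish the concentration event, prove the structural identification $\tilde T = G$ under the null, then control the pairwise discrepancies via the tree factorization. The paper's proof differs only in minor technical details: it argues $\tilde T = G$ by a ``first wrong step'' contradiction in Kruskal's procedure rather than your direct MST path-optimality comparison, and it bounds the product difference via the expansion $(\tau+\tanh\Theta)^l - \tanh(\Theta)^l \leq \tau/(1-\tanh\Theta)$ rather than your telescoping recursion leading to $k\tau\tanh^{k-1}(\Theta)$; both routes reach the identical final bound.
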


By combining Propositions  \ref{mean:comb:inf:testing:general} and \ref{test:cycle:generic:proof} we immediately conclude that if $
\cT \geq \tau\frac{3 - 2\tanh(\Theta)}{1 - \tanh(\Theta)},$
and the constraints of Proposition \ref{test:cycle:generic:proof} hold, using the output $\tilde T_{\tilde \wb}$ of Algorithm \ref{cycle:test:step1} with the test $\psi_{\varepsilon(\delta)}$ of (\ref{combinatorial:test:ising:antiferro}) with $\varepsilon(\delta)$ as in (\ref{eff:cycle:test:eps:delta}), satisfy (\ref{typeItypeII:control}). 

\section{Strongly Monotone Properties Proofs from Section \ref{generic:lower:bounds:sec}}\label{proofs:from:section:2}

In this section we give the proofs of the general results on strongly monotone properties: Theorem \ref{biclques:theorem} and Proposition \ref{simple:lower:bound:monotone}. Other proofs from Section \ref{generic:lower:bounds:sec} including the proof of Theorem \ref{ising:single:edge} can be found in Appendix \ref{bounds:ferromagnets}. Since the signal strength is uniformly equal to $\theta$ in all measures that we consider in this section, we will suppress the dependency on $\theta$ whenever that does not cause confusion. For the convenience of the reader below is a definition of $\chi^2$-divergence which we use in the proofs.

\begin{definition}[$\chi^2$-divergence]\label{def:chisq:div} For two measures $\PP$ and $\QQ$ satisfying $\PP \ll \QQ$ the $\chi^2$-divergence is defined by
$$
\textstyle D_{\chi^2}(\PP, \QQ) = \EE_{\QQ} \left(\frac{d\PP}{d \QQ} - 1\right)^2.
$$
\end{definition}

Before we prove Theorem \ref{biclques:theorem} we state a key Lemma whose proof is given in Appendix \ref{bounds:ferromagnets}.

\begin{lemma}[Low Temperature Bound]\label{biclique:lemma} Let $G = (V,E)$ be a graph, and $k, \ell \in V$ be vertices such that $(k,\ell) \not \in E$. Let $r \geq 2$ and $l$ be integers such that there exists an $l \times r$ biclique $B \trianglelefteq G$, containing $k$ and $\ell$ on its right (i.e., $r$ side). Then for values of $\theta \geq \frac{2}{l}$, $\theta \geq \frac{3}{r - 2}$ for $r > 2$ and $\theta \geq \log 2$ when $r = 2$ we have:
$$
\EE_{G} X_k X_\ell \geq 1 - \frac{2(r-1)}{\exp(\theta l) + r - 1}.
$$
\end{lemma}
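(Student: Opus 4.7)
The plan is to reduce the statement to the biclique $B=K_{l,r}$ itself and then exploit the bipartite structure to obtain an explicit formula for $\EE_B X_k X_\ell$, which can be matched against the target bound by elementary inequalities whose validity is precisely what the hypotheses on $\theta$ encode.

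First, since $B\trianglelefteq G$ and all interactions are ferromagnetic, Griffiths' second inequality (Theorem \ref{FKG:ineq}) applied to the extra edges $E(G)\setminus E(B)$ gives $\EE_G X_k X_\ell \geq \EE_B X_k X_\ell$, so it suffices to prove the bound for $B$; isolated vertices of $B$ contribute trivially and can be dropped. Label the $l$ left spins by $\{Y_i\}$ and the $r$ right spins by $\{Z_j\}$ with $X_k=Z_1$, $X_\ell=Z_2$, and set $S_L=\sum_i Y_i$. Conditionally on $\{Y_i\}$, the right spins are i.i.d.\ Bernoulli with $P(Z_j=+1\mid Y)=(1+\tanh(\theta S_L))/2$, so $\EE[Z_1Z_2\mid Y]=\tanh^2(\theta S_L)$, and the marginal of $S_L$ has weight $\binom{l}{s}(2\cosh(\theta(l-2s)))^r$ at $l-2s$. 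Writing $\tanh^2=1-\sech^2$ and simplifying yields the clean ratio
$$\EE_B X_k X_\ell \;=\; 1-\frac{\sum_s \binom{l}{s}\cosh^{r-2}(\theta(l-2s))}{\sum_s \binom{l}{s}\cosh^r(\theta(l-2s))}.$$

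Next I would split into cases. For $r=2$, expanding $\cosh^k(x)=2^{-k}\sum_j\binom{k}{j}e^{(k-2j)x}$ and swapping the sums collapses the ratio to $(\cosh^l(2\theta)-1)/(\cosh^l(2\theta)+1)$; the target inequality is then equivalent to $\cosh^l(2\theta)\geq e^{\theta l}$, which reduces to $\cosh(2\theta)\geq e^\theta$, and this in turn follows from $e^{2\theta}/2\geq e^\theta$, i.e.\ $\theta\geq \log 2$. For $r>2$, the same swap of sums together with the identity $\binom{r}{k}k(r-k)=r(r-1)\binom{r-2}{k-1}$ turns the bound $\EE_B\sech^2(\theta S_L)\leq 2(r-1)/(e^{\theta l}+r-1)$ into
$$P(X_k\neq X_\ell)\;\leq\;\sum_{j=0}^{r-2}\binom{r-2}{j}\Bigl(\frac{\cosh(\theta(r-2-2j))}{\cosh(\theta r)}\Bigr)^l,$$
once the partition function is lower-bounded by its two ground-state contributions $2(2\cosh(\theta r))^l$. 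The task is then to show that the right-hand side is at most $(r-1)/(e^{\theta l}+r-1)$. Using the identity $\cosh(\theta u)/\cosh(\theta r)=e^{-\theta(r-|u|)}(1+e^{-2\theta|u|})/(1+e^{-2\theta r})$ and grouping terms by $m=\min(j,r-2-j)$, each summand is bounded by $\binom{r-2}{m}e^{-2\theta l(m+1)}(1+e^{-2\theta(r-2-2m)})^l$, which I then sum geometrically in $m$.

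The main obstacle is controlling this last sum sharply. Terms with small $m$ are governed by $e^{-2\theta l(m+1)}$, which is small thanks to $\theta l\geq 2$; the correction $(1+e^{-2\theta(r-2-2m)})^l$ is where the second hypothesis enters, since $\theta(r-2)\geq 3$ forces $e^{-2\theta(r-2-2m)}\leq e^{-6}$ for $m$ far from $(r-2)/2$, keeping $(1+e^{-6})^l$ innocuous. The dangerous middle range $m$ near $(r-2)/2$, where $(1+\cdot)^l$ can be as bad as $2^l$, is handled instead by the intrinsic smallness of $\cosh(\theta(r-2-2m))/\cosh(\theta r)\leq 1/\cosh(\theta r)$, which is exponentially small in $\theta r\geq 3r/(r-2)\geq 3$; the binomial coefficient $\binom{r-2}{m}$ in the middle is offset by the factor $e^{-2\theta l(m+1)}\leq e^{-\theta l r}$. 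Carefully matching the geometric decay in $m$ against the binomial weights — and translating the cleanup to the precise denominator $e^{\theta l}+r-1$ — is the technical heart of the argument, and is precisely where the numerical thresholds $2/l$, $3/(r-2)$ and $\log 2$ get pinned down.
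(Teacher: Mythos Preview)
Your reduction via Griffiths' inequality, the conditioning-on-left-spins derivation of
\[
\EE_B X_kX_\ell \;=\; 1-\frac{\sum_s \binom{l}{s}\cosh^{r-2}(\theta(l-2s))}{\sum_s \binom{l}{s}\cosh^{r}(\theta(l-2s))},
\]
and the treatment of the case $r=2$ are all correct and essentially coincide with the paper's proof (the paper arrives at the same ratio by summing over right-side configurations directly, and its $r=2$ calculation is identical to yours).

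For $r>2$ the two approaches diverge. After your swap of sums the numerator becomes exactly the paper's quantity $D=\sum_{m}\binom{r-2}{m}\cosh^l(\theta(r-2-2m))$, while the denominator is $N+D$ with $N=\sum_m\binom{r-2}{m}\cosh^l(\theta(r-2m))$. The paper bounds the ratio $N/D$ directly: it locates the index $m^*$ maximizing the terms of $D$, shows $m^*\le (r-2)/e$ (this is where $\theta(r-2)\ge 3$ is used), bounds $D\le (r-1)\binom{r-2}{m^*}\cosh^l(\theta(r-2m^*-2))$ and $N\ge \binom{r-2}{m^*}\cosh^l(\theta(r-2m^*))$, and then verifies the single-term ratio $\cosh(\theta(r-2m^*))/\cosh(\theta(r-2m^*-2))\ge e^{\theta}$ (this is where $\theta l\ge 2$ enters, via comparison with the binomial growth). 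This yields $N/D\ge e^{\theta l}/(r-1)$ immediately.

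Your route instead lower-bounds the full partition function $N+D$ by the two ground-state terms $2\cosh^l(\theta r)$, which reduces the problem to
\[
D \;\le\; \frac{(r-1)\,\cosh^l(\theta r)}{e^{\theta l}+r-1}.
\]
This is a strictly stronger statement than what the paper proves (since $\cosh^l(\theta r)$ is only the $m=0$ term of $N$, so $D/\cosh^l(\theta r)\ge D/N$), and you leave it as a sketch. The sketch is plausible---the extremal terms are indeed controlled by $\theta l\ge 2$ and the middle terms by $\theta r\ge 3r/(r-2)$---but the cleanup with the exact constants is nontrivial and you have not carried it out. The paper's term-matching avoids this lossy step entirely and makes the role of the two thresholds transparent; if you want to salvage your route, it is simpler to keep all of $N$ in the denominator and compare term-by-term with $D$, which is precisely the paper's argument.
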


\begin{proof}[Proof of Theorem \ref{biclques:theorem}] First we point out a simple implication of (\ref{theta:scaling:upper:bound:max:monotone}). If $\theta \geq \frac{\log \frac{2\kappa n r}{\log \lfloor d/(l + r) \rfloor}}{l},$ then certainly
\begin{align}\label{theta:condition}\theta \geq \frac{\log\Big[\frac{2\kappa n (r - 1)}{\log \lfloor d/(l + r) \rfloor} - (r - 1)\Big]}{l}.\end{align}
We will now argue that even if the above holds the asymptotic minimax risk is still $1$. Note that since $B \trianglelefteq H_0$ we have $|V(H_0)| = |V(B)| = l + r$. Consider a graph $G_0$ based on the union of $m = \lfloor d/(l + r)\rfloor$ disconnected copies of the graph $H_0$ (see Figure \ref{biclique:example:cycle:triangle}). Let $e_k= (u_k, v_k)$ be the local copy of the edge $e = (u,v)$ in the $k$\textsuperscript{th} copy of $H_0$. Since the property $\cP$ can be represented as a maximum over subgraphs of $G_0$, by the assumption of the theorem $\cP(G_0) = 0 $, while adding the edge $e_k$ for any $k \in [m]$ to the $k$\textsuperscript{th} copy of $H_0$ transfers $G_0$ to a graph $G_k$ satisfying $\cP(G_k) = 1$. For each graph in $i \in \{0,1,\ldots,m\}$ let $\PP_i$\footnote{I.e., $\PP_i$ is a shorthand for $\PP_{\theta, G_i}$ as defined in \eqref{ising:model:measure}.} be the measure corresponding to Ising models with graphs $\{G_0, G_1, \ldots, G_{m}\}$, and $\EE_k$ be the corresponding expectation under $\PP_k$. Define the mixture measure $\overline \PP^{\otimes n} = \frac{1}{m} \sum_{k \in [m]} \PP^{\otimes n}_{k}$. Using Lemma \ref{lemma:chi:sq} 
\begin{align*}
D_{\chi^2}(\overline \PP^{\otimes n}, \PP^{\otimes n}_0) + 1 & \leq \frac{1}{m^2} \sum_{j,k \in [m]} (1 + \tanh(\theta)[\EE_{{j}} X_{u_k} X_{v_k} -  \EE_{{0}} X_{u_k} X_{v_k}])^n.
\end{align*}
By Proposition \ref{restriction:prop}, since the copies of $H_0$ are disconnected, for $k \neq j$  we have
\[
\EE_{{j}} X_{u_k} X_{v_k} -  \EE_{{0}} X_{u_k} X_{v_k} = 0,
\] 
while by Lemma \ref{biclique:lemma} if $k \equiv j$
\[
\EE_{{j}} X_{u_k} X_{v_k} -  \EE_{{0}} X_{u_k} X_{v_k} \leq 1 -  \EE_{{0}} X_{u_k} X_{v_k} \leq \frac{2(r - 1)}{\exp(\theta l) + r - 1}.
\] 
We conclude that
\begin{align*}
D_{\chi^2}(\overline \PP^{\otimes n}, \PP^{\otimes n}_0) + 1 & \leq \frac{m - 1}{m} + \frac{1}{m} \Big(1 + \tanh(\theta)\frac{2(r - 1)}{\exp(\theta l) + r - 1}\Big)^n \\
& \leq \frac{m - 1}{m} + \frac{1}{m} \Big(1 + \frac{2(r - 1)}{\exp(\theta l) + r - 1}\Big)^n\\
& \leq \frac{m - 1}{m} + \frac{\exp\Big(\frac{2n(r - 1)}{\exp(\theta l) + r - 1}\Big)}{m}.
\end{align*}
Using \eqref{theta:condition} a simple calculation shows that
$$
\limsup_{n \rightarrow \infty} D_{\chi^2}(\overline \PP^{\otimes n}, \PP^{\otimes n}_0) = 0.
$$

Recall that by Le Cam's lemma we have the bound
\begin{align}\label{le:cams:ineq:main:text}
R_n(\cP, \theta) \geq \inf_{\psi} \Big[\PP^{\otimes n}_0(\psi = 1) + \overline \PP^{\otimes n}(\psi = 0)\Big] \geq 1 - \frac{1}{2}\sqrt{D_{\chi^2}(\overline \PP^{\otimes n}, \PP^{\otimes n}_0)},
\end{align}
which completes the proof.
\end{proof}

Below we prove Proposition \ref{simple:lower:bound:monotone}. The proof utilizes a similar construction to the one used in the proof of Theorem \ref{biclques:theorem}.

\begin{proof}[Proof of Proposition \ref{simple:lower:bound:monotone}] Construct a null graph $G_0$ by repeating $H_0$ $\lfloor d/m\rfloor$ times. Let $\overline \PP^{\otimes n}$ be the mixture of measures $\PP^{\otimes n}_j$, where $\PP_j$ is the measure corresponding to adding an edge to one of the ``clones'' of $H_0$, thus transferring $G_0$ to a graph $G_j$ such that $\cP(G_j) = 1$, and let $\PP^{}_0$ be the Ising measure under the uniform signal model with graph $G_0$. Let $\EE_j$ and $\EE_0$ be the expectations under $\PP_j$ and $\PP_0$ respectively. Let $e_j = (u_j, v_j) = E(G_j) \setminus E(G_0)$ be the edge that distinguishes $G_j$ from $G_0$. An example of $G_0$ and one of the alternative graphs $G_j$ is given on Figure \ref{biclique:example:cycle:triangle}. 
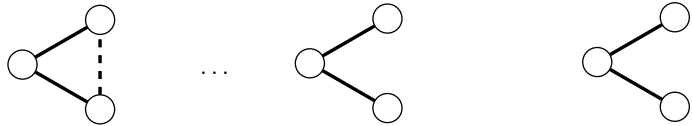
\begin{figure}[H] 
\centering
\begin{tikzpicture}[scale=.55]
\SetVertexNormal[Shape  = circle,
                  FillColor  = white,
                  MinSize = 11pt,
                  InnerSep=0pt,
                  LineWidth = .5pt]
   \SetVertexNoLabel
   \tikzset{EdgeStyle/.style= {thin,
                                double          = black,
                                double distance = .5pt}}
                                \begin{scope}[rotate=60, shift={(0cm, 0cm)}]
                                \grEmptyCycle[prefix=a,RA=1.25]{3}{1}\end{scope}
                                                                                \begin{scope}[rotate=60, shift={(-7cm, 12cm)}]                \grEmptyCycle[prefix=c,RA=1.25]{3}{1}\end{scope}
                                                                                
                                                                              \node[] at (-10.5,-.25) {$\ldots$};
                                \begin{scope}[rotate=60, shift={(-3.5cm, 6cm)}]\grEmptyCycle[prefix=b,RA=1.25]{3}{1}\end{scope} 
                                                                    \Edge(a1)(a2)
                                                                    \Edge(a1)(a0)
                                                                    \Edge(b1)(b2)
                                                                    \Edge(b1)(b0)
                                                                    \Edge(c1)(c2)
                                                                    \Edge(c1)(c0)

    \tikzset{EdgeStyle/.style= {dashed,thin,
                                double          = black,
                                double distance = .5pt}}
\Edge(c2)(c0)
    \tikzset{LabelStyle/.style = {below, fill = white, text = black, fill opacity=0, text opacity = 1}}
\end{tikzpicture}
\caption{For this figure let $\cP$ be cycle testing. The figure shows an example of $d/3$ incomplete triangle graphs ($G_0$), and takes a mixture of distributions adding one edge to complete the triangles one at a time.}\label{biclique:example:cycle:triangle}
\end{figure}
Using Lemma \ref{lemma:chi:sq} we can evaluate the divergence $D_{\chi^2}$: 
\begin{align*}
D_{\chi^2}(\overline \PP^{\otimes n}, \PP^{\otimes n}_0) & = \frac{1}{\lfloor d/m \rfloor^2} \sum_{j,k \in [\lfloor d/m \rfloor]} (1 + \tanh(\theta)[\EE_jX_{u_k} X_{v_k} - \EE_0 X_{u_k} X_{v_k} ])^n  - 1\\
& = \frac{1}{\lfloor d/m \rfloor} (1 + \tanh(\theta)[\EE_jX_{u_j} X_{v_j} - \EE_0 X_{u_j} X_{v_j} ])^n - \frac{1}{\lfloor d/m \rfloor}.
\end{align*}
By Lemma 4 of \cite{tandon2014information} we know that $\EE_jX_{u_j} X_{v_j} - \EE_0 X_{u_j} X_{v_j} \leq \tanh(\theta)$. Therefore
$$
D_{\chi^2}(\overline \PP^{\otimes n}, \PP^{\otimes n}_0) \leq \frac{1}{\lfloor d/m \rfloor} (1 + \tanh^2(\theta))^n \leq \frac{2^n}{\lfloor d/m \rfloor},
$$
whereby by the first inequality if $\tanh(\theta) < \kappa  \sqrt{\frac{\log \lfloor d/m \rfloor}{n}}$ for some $\kappa < 1$ the above $\rightarrow 0$. The second inequality proves the last implication of the Proposition after an application of Le Cam's argument \eqref{le:cams:ineq:main:text}. 
\end{proof}

\section{Discussion}\label{discussion:sec}

In this manuscript we formalized necessary and sufficient conditions for property testing in Ising models. Specifically, we showed lower and upper information-theoretic bounds on the temperature for ferromagnetic models. Furthermore, we argued that greedy correlation screening works well at high temperature regimes, and can also be useful in low  temperature regimes for certain properties. We also demonstrated that testing strongly monotone properties over the class of general Ising models is strictly more difficult than testing in ferromagnets. We discussed generic property tests based on correlation decoding, and developed a computationally efficient cycle test. 

Important problems that we plan to investigate in future work include --- searching for more sophisticated algorithms than correlation screening which will work at low temperature regimes for testing any property in ferromagnets; relating the temperature of the system to the information-theoretic limits of strongly monotone property testing in models with antiferromagnetic interactions; utilizing computationally efficient structure recovery algorithms, such as those of \cite{bresler2015efficiently}, to obtain tractable algorithms for property testing in general models. Finally several further problems merit further work: can one test for connectivity in the regime $\log d \gtrsim n$ if $\tanh(\theta) < 1$ in finite samples in ferromagnets; are there algorithms matching the information-theoretic limitations when testing over general models; is there a more general version of Theorem \ref{biclques:theorem} which works for general properties not only strongly monotone properties.

\section*{Acknowledgments} The authors would like to express their gratitude to Professors Michael Aizenman, Ramon van Handel and Allan Sly, for thought provoking discussions, their insights and encouragements. Special thanks are due to Ramon, who suggested the interpolating measure $\PP_{G\oplus t G'}$ towards the proof of Theorem \ref{most:important:theorem}. Further thanks are due to the Editor, Associate Editor and two referees whose comments and constructive suggestions led to significant improvements of the manuscript. 

\bibliographystyle{ims}
\bibliography{combinatorial_inference_ising}
\newpage
\appendix

\title{Supplementary Material to ``Property Testing in High Dimensional Ising Models''}

The supplementary material is organized as follows:

\begin{itemize}

\item Appendix \ref{aux:app:sec} collects several auxiliary results which we utilize in the later sections.

\item Appendix \ref{bounds:ferromagnets} contains the main minimax risk lower bound proofs for ferromagnets. This includes the proof of Theorem \ref{ising:single:edge}. %and \ref{biclques:theorem} and Proposition \ref{simple:lower:bound:monotone}. 

\item Appendix \ref{corr:screening:proofs} contains all proofs for the correlation screening algorithm in ferromagnets. 

\item Appendix \ref{bounds:with:ferromagnets} contains proofs for the limitations in general models. 

\item Appendix \ref{correlation:testing:general:models} contains proofs for the correlation testing algorithms for general models, including the proof of the computationally efficient cycle test. 

\end{itemize}

\section{Auxiliary Results}\label{aux:app:sec}

\begin{lemma}[\cite{fisher1967critical}]\label{fisher:bound} Let $G$ be a graph and $\PP_{\theta, G}$ be the measure given by \eqref{ising:model:measure}. Denote by $N_{uv}(k)$ the number of self avoiding walks of length $k$ between $u$ and $v$ on $G$. Then we have the bound:
$$
\EE_{\theta, G} X_u X_v \leq \sum_{k = d_G(u,v)}^{\infty} \tanh(\theta)^k N_{uv}(k).
$$
\end{lemma}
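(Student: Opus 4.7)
The plan is to use the standard high-temperature expansion. Because $x_i x_j \in \{\pm 1\}$, I would rewrite each edge factor as
$$
e^{\theta x_i x_j} = \cosh(\theta)\bigl(1 + t\, x_i x_j\bigr), \quad t := \tanh(\theta),
$$
and expand the product $\prod_{(i,j) \in E} (1 + t x_i x_j) = \sum_{S \subseteq E} t^{|S|} \prod_{(i,j) \in S} x_i x_j$. Summing over $\xb \in \{\pm 1\}^d$, the only surviving terms are those $S$ in which every vertex has degree of the correct parity. Introducing $\cE := \{S \subseteq E(G) : \partial S = \varnothing\}$ and $\cO_{uv} := \{S \subseteq E(G) : \partial S = \{u,v\}\}$, where $\partial S$ denotes the odd-degree vertex set of $S$, the $\cosh(\theta)^{|E|}$ factors and the $2^d$ factors cancel between numerator and denominator, yielding the identity
$$
\EE_{\theta,G} X_u X_v = \frac{\sum_{S \in \cO_{uv}} t^{|S|}}{\sum_{S \in \cE} t^{|S|}}.
$$

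Next I would build an injective map $\Phi : \cO_{uv} \hookrightarrow \{\text{SAWs in } G \text{ from } u \text{ to } v\} \times \cE$. Fix an arbitrary total order on $E(G)$. Given $S \in \cO_{uv}$, extract a canonical self-avoiding walk $\gamma(S) \subseteq S$ from $u$ to $v$ by a greedy, backtracking procedure (e.g., at each vertex pick the smallest unused incident edge in $S$, backtracking if the walk becomes trapped); since $\partial S = \{u,v\}$ such a SAW exists. Set $\Phi(S) := (\gamma(S),\, S \setminus \gamma(S))$. Because $\partial(S \setminus \gamma(S)) = \partial S \triangle \partial \gamma(S) = \varnothing$, the second coordinate lies in $\cE$, and since $S = \gamma(S) \sqcup (S \setminus \gamma(S))$ is a disjoint union the map is injective and size-preserving: $|S| = |\gamma(S)| + |S \setminus \gamma(S)|$.

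Combining the two steps gives
$$
\sum_{S \in \cO_{uv}} t^{|S|} \;\leq\; \Bigl(\sum_{\gamma : u \to v \text{ SAW}} t^{|\gamma|}\Bigr)\Bigl(\sum_{S' \in \cE} t^{|S'|}\Bigr),
$$
and dividing by $\sum_{S' \in \cE} t^{|S'|}$ cancels the partition-function-like factor, leaving $\EE_{\theta,G} X_u X_v \leq \sum_{\gamma} t^{|\gamma|}$. Organizing SAWs from $u$ to $v$ by length and noting that any such SAW has at least $d_G(u,v)$ edges produces the claimed bound $\sum_{k \geq d_G(u,v)} N_{uv}(k) \tanh(\theta)^k$.

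The main obstacle is ensuring that the SAW-extraction map $\gamma(\cdot)$ is well-defined and that $\Phi$ is genuinely injective; once one fixes a deterministic rule like ``lexicographically smallest SAW contained in $S$'' (or an explicit DFS-with-backtracking using the edge ordering), injectivity is automatic from the disjoint decomposition. The rest is straightforward bookkeeping of the high-temperature expansion.
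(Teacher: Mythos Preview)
The paper does not give its own proof of this lemma; it is stated in Appendix~\ref{aux:app:sec} as a known result attributed to \cite{fisher1967critical} and is then used as a black box (in the proof of Lemma~\ref{hight:no:edge:corr}). So there is no ``paper's proof'' to compare against.

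Your argument is correct and is essentially the classical one. The identity
\[
\EE_{\theta,G} X_u X_v \;=\; \frac{\sum_{S \in \cO_{uv}} t^{|S|}}{\sum_{S \in \cE} t^{|S|}}
\]
is exactly the high-temperature (random-current) expansion, and the injection $\Phi(S) = (\gamma(S), S\setminus\gamma(S))$ is the standard ``peeling off a path'' step. Two minor remarks: (i) the existence of a self-avoiding walk from $u$ to $v$ inside $S$ follows because $\partial S = \{u,v\}$ forces $u$ and $v$ to lie in the same connected component of $S$; once you know they are connected, any shortest path in $S$ is automatically self-avoiding, so you can dispense with the DFS/backtracking description and simply take $\gamma(S)$ to be the lexicographically smallest such SAW. (ii) Injectivity is immediate from $S = \gamma(S) \cup (S\setminus\gamma(S))$ with the union disjoint; you do not need (and correctly do not claim) that every pair $(\gamma,S')$ arises this way. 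The rest is exactly as you say.
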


%\begin{theorem}[FKG inequality, Theorem 6.2.1 \cite{alon2004probabilistic}]\label{FKG:ineq} Let $L$ be a finite distributive lattice and let $\mu : L \mapsto \RR^+$ be log-supermodular on $L$, i.e.,
%$$
%\mu(x \wedge y) \mu (x \vee y) \geq \mu(x)\mu(y),
%$$ 
%for all $x,y\in L$. Then, for any two increasing functions $f,g: L \mapsto \RR^+$ we have
%$$
%\bigg(\sum_{x \in L} \mu(x) f(x) \bigg)\bigg(\sum_{x \in L} \mu(x) g(x) \bigg) \leq \bigg(\sum_{x \in L} \mu(x) f(x) g(x) \bigg) \bigg(\sum_{x \in L} \mu(x) \bigg).
%$$
%\end{theorem}

\begin{theorem}[Griffith's inequality, \cite{lebowitz1974ghs}]\label{FKG:ineq} For a zero-field ferromagnetic model, and two sets $A, B \subset [d]$ we have 
$$
\EE \prod_{i \in A} X_i \prod_{i \in B} X_i \geq \EE \prod_{i \in A} X_i \EE \prod_{i \in B} X_i.
$$

\end{theorem}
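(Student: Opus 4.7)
The classical route is Ginibre's duplicated-variable trick. I would introduce an independent copy $(Y_1, \ldots, Y_d)$ of the spin vector $(X_1, \ldots, X_d)$, so that the joint law on $\{\pm 1\}^{2d}$ is a product of two identical zero-field ferromagnetic Ising measures. Writing $\sigma_A = \prod_{i \in A} X_i$ and $\sigma'_A = \prod_{i \in A} Y_i$, and using that $\EE \sigma_A \EE \sigma_B$ equals the expectation of $\sigma_A \sigma'_B$ under the product law, the target covariance rewrites as $\EE[\sigma_A(\sigma_B - \sigma'_B)]$; symmetrizing in $(X,Y)$ gives the identity
$$
2\bigl(\EE \sigma_A \sigma_B - \EE \sigma_A \EE \sigma_B\bigr) = \EE\bigl[(\sigma_A - \sigma'_A)(\sigma_B - \sigma'_B)\bigr].
$$

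Next I would perform the rotation $a_i = X_i + Y_i$, $b_i = X_i - Y_i$. Since $X_u X_v + Y_u Y_v = \tfrac{1}{2}(a_u a_v + b_u b_v)$, the duplicated Hamiltonian $\theta\sum_{(u,v)\in E}(X_u X_v + Y_u Y_v)$ becomes $\tfrac{\theta}{2}\sum_{(u,v)\in E}(a_u a_v + b_u b_v)$, a sum of ferromagnetic couplings in the $a$'s and $b$'s separately, and the counting measure on the four possible values of $(a_i,b_i)$ is invariant under the sign flip $(a_i,b_i)\mapsto(-a_i,-b_i)$ and the swap $(a_i,b_i)\mapsto(b_i,a_i)$. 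Expanding $\sigma_A - \sigma'_A = \prod_{i\in A}\tfrac{a_i+b_i}{2} - \prod_{i\in A}\tfrac{a_i-b_i}{2}$ as a polynomial yields
$$
\sigma_A - \sigma'_A = \frac{1}{2^{|A|-1}} \sum_{\substack{S \subseteq A \\ |S| \text{ odd}}} \prod_{i \in S} b_i \prod_{i \in A \setminus S} a_i,
$$
with an analogous formula for $\sigma_B - \sigma'_B$. The right-hand side of the symmetrized identity is therefore a non-negative linear combination of mixed monomial expectations $\EE\bigl[\prod_i a_i^{\alpha_i} b_i^{\beta_i}\bigr]$.

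The crux is Ginibre's inequality itself: under a Hamiltonian $H = \tfrac{\theta}{2}\sum_{(u,v)\in E}(a_u a_v + b_u b_v)$ together with the rotated base measure (which is a product measure symmetric under both $(a_i,b_i)\mapsto(-a_i,-b_i)$ and $(a_i,b_i)\mapsto(b_i,a_i)$), every monomial expectation of the form $\EE\bigl[\prod_i a_i^{\alpha_i} b_i^{\beta_i}\bigr]$ is non-negative. I would justify this by expanding $e^{-H}$ as a power series in $\theta$, noting that each term produces a polynomial in $(a,b)$ with non-negative coefficients, and then using the two invariances of the base measure to check that every resulting monomial integral over $\{\pm 1\}^{2d}$ is non-negative (any monomial in which some $a_i^{\alpha_i} b_i^{\beta_i}$ has total odd sign-parity vanishes by the first symmetry, and the surviving monomials can be paired with their images under the swap to yield sums of squares). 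Combining this with the previous paragraph expresses the covariance as a non-negative combination of non-negative quantities, giving the inequality.

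The main obstacle I expect is not any single step but the careful bookkeeping in the combinatorial expansion and in checking that Ginibre's monomial-positivity lemma genuinely applies to the rotated setup; in particular, one must verify that the coefficients produced in the expansion of $\sigma_A - \sigma'_A$ and $\sigma_B - \sigma'_B$ are non-negative (they are $+1/2^{|A|-1}$ on the odd-parity subsets) and that the base product measure on $(a_i,b_i)$ really does possess both symmetries invoked in Ginibre's lemma. Once those two clean checks are in place the proof concludes immediately.
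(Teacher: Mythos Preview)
The paper does not prove this statement at all; it is quoted as a classical result with a citation to \cite{lebowitz1974ghs} and simply invoked wherever needed. So there is no ``paper's approach'' to compare against.

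Your Ginibre duplication argument is correct and is in fact the standard modern proof of the second Griffiths inequality. The symmetrization identity, the rotation $a_i=X_i+Y_i$, $b_i=X_i-Y_i$, and the expansion of $\sigma_A-\sigma'_A$ over odd subsets are all right, with the claimed non-negative coefficients. One small imprecision: the last step does not really need the swap symmetry or any ``pairing into sums of squares.'' After expanding the Boltzmann factor $\exp\bigl(\tfrac{\theta}{2}\sum_{(u,v)\in E} w_{uv}(a_ua_v+b_ub_v)\bigr)$ as a power series with non-negative coefficients, you are reduced to showing that the \emph{base} (uniform counting) expectation of any monomial $\prod_i a_i^{\alpha_i}b_i^{\beta_i}$ is non-negative. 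But that base measure factorizes over sites, and at a single site $(a_i,b_i)\in\{(2,0),(0,2),(-2,0),(0,-2)\}$ uniformly, so the single-site moment is zero unless $\alpha_i+\beta_i$ is even and $\min(\alpha_i,\beta_i)=0$, in which case it equals $2^{\alpha_i+\beta_i-1}>0$. That already gives non-negativity term by term, and the normalization $Z>0$ finishes it. If you tighten that last paragraph to this direct factorization check (rather than invoking a swap-and-pair mechanism), the write-up is clean.
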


In order to formalize the next result we need some notation. Let $\PP_1$ and $\PP_2$ be two Gibbs measures. Dobrushin's interaction matrix $\Cb$ is defined element-wise as
$$
C_{r m} = \sup_{\xb = \by \mbox{ \tiny off }r } \{D_{\TV}(\PP_{1}(\cdot | \bX_{-m} = \xb), \PP_{1}(\cdot | \bX_{-m} = \yb))\}\footnotemark,
$$
\footnotetext{Recall that for two probability measures $\PP$ and $\QQ$ on a probability space $(\Omega,\Sigma)$, we have $D_{\TV}(\PP, \QQ) := \sup_{A \in \Sigma} |\PP(A) - \QQ(A)|$.}
where equality off $r$ means except on the $r$th position. Let $\Db$ be
$$
\Db := \sum_{i = 0}^\infty \Cb^i.
$$
Define the vector $\bb$ element-wise as
$$
b_k = \EE D_{\TV}(\PP_1(\cdot | \bX_{-m}), \PP_2(\cdot | \bX_{-m})).
$$
Suppose $f$ is a function whose expectation one wants to control.  Define
$$
\delta_i(f) := \sup \{|f(\xb) - f(\yb)| : \xb = \yb \mbox{ off } i \}.
$$
We have
\begin{theorem}[Dobrushin's Comparison Theorem \citep{follmer1988random}]\label{dobrushins:comparison:thm} Suppose that $\lim_{l\rightarrow \infty}\allowbreak \|\Cb^l\|_1 \allowbreak = 0$. Then
$$
|\EE_{\PP_1} f -  \EE_{\PP_2} f | \leq \sum_{i} [\bb^{\top}\Db]_i\delta_i(f).
$$ 
\end{theorem}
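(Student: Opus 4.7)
The plan is to establish a self-consistent vector inequality $a_i \leq b_i + \sum_{r} C_{ri}\, a_r$ for a suitable scalar $a_i$ measuring the local discrepancy between $\PP_1$ and $\PP_2$ at site $i$, invert it via the Neumann series $\Db = \sum_{l \geq 0} \Cb^l$ (which converges entrywise by the hypothesis $\|\Cb^l\|_1 \to 0$), and combine with the oscillation weights $\delta_i(f)$ to obtain the stated bound.

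First I would define
$$a_i := \sup \bigl\{ |\EE_{\PP_1} g - \EE_{\PP_2} g| \;:\; g = g(X_i),\; \delta_i(g) \leq 1 \bigr\}$$
and establish the preliminary reduction
$$|\EE_{\PP_1} f - \EE_{\PP_2} f| \leq \sum_{i} \delta_i(f)\, a_i$$
by a coordinate-by-coordinate telescoping: realize the difference as a sum of single-coordinate ``updates'' and, at each step, bound the incremental contribution to the discrepancy by $\delta_i(f)$ times the one-coordinate discrepancy, using the dual characterization of total-variation distance.

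The heart of the proof is bounding $a_i$ itself. For any $g$ depending only on $X_i$ with $\delta_i(g)\leq 1$, split
$$\EE_{\PP_1} g - \EE_{\PP_2} g \;=\; \EE_{\PP_2}\bigl[\EE_{\PP_1}(g \mid \bX_{-i}) - g(X_i)\bigr] \;+\; (\EE_{\PP_1} - \EE_{\PP_2})\, \EE_{\PP_1}(g \mid \bX_{-i}).$$
The first term is bounded by $b_i$ directly from the definition of $\bb$ and the coupling characterization of total variation. The second term involves a function of $\bX_{-i}$ alone, and the key observation is that this function has oscillation at coordinate $r\neq i$ at most $C_{ri}$ by the very definition of Dobrushin's interaction matrix. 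Recursively applying the reduction of the previous paragraph to this function of $\bX_{-i}$ contributes at most $\sum_{r\neq i} C_{ri}\, a_r$. Combining yields the advertised recursion $a_i \leq b_i + \sum_{r} C_{ri}\, a_r$.

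Iterating in matrix form gives $\mathbf{a} \leq \sum_{l=0}^{L}(\Cb^{\top})^l \bb + (\Cb^{\top})^{L+1}\mathbf{a}$; sending $L\to\infty$, the remainder is controlled by $\|\Cb^{L+1}\|_1 \cdot \|\mathbf{a}\|_\infty$ and vanishes by hypothesis, so $a_i \leq [\bb^\top \Db]_i$ and the theorem follows from the reduction. The main obstacle is justifying the recursion without circularity: one must a priori know that the $a_i$ are finite in order to iterate the inequality and control the tail. This is handled by a standard approximation through finite subsystems and bounded test functions, and the assumption $\|\Cb^l\|_1 \to 0$ is precisely what is needed to tame the propagation of discrepancies and kill the remainder in the limit.
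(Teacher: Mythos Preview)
The paper does not give its own proof of this statement; it is quoted as an auxiliary result from F\"ollmer (1988). So there is nothing in the paper to compare your argument against.

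That said, your sketch has a real gap. The ``preliminary reduction''
\[
|\EE_{\PP_1} f - \EE_{\PP_2} f| \;\leq\; \sum_{i} \delta_i(f)\, a_i,
\qquad a_i := \sup_{g=g(X_i),\,\delta_i(g)\le 1}|\EE_{\PP_1}g-\EE_{\PP_2}g|,
\]
is \emph{false} in general when $a_i$ is the total-variation distance between the \emph{site-$i$ marginals}. Take two spins with $\PP_1$ putting mass $\tfrac12$ on $(+1,+1)$ and $(-1,-1)$, and $\PP_2$ putting mass $\tfrac12$ on $(+1,-1)$ and $(-1,+1)$. Both have identical (uniform) marginals, so $a_1=a_2=0$, yet for $f(x_1,x_2)=x_1x_2$ one has $|\EE_{\PP_1}f-\EE_{\PP_2}f|=2$. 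There is no coordinate-by-coordinate telescoping that yields this inequality from marginal discrepancies alone. Since your recursion step explicitly invokes this reduction (applied to $\gamma_i g=\EE_{\PP_1}(g\mid \bX_{-i})$), the circularity is fatal as written.

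In F\"ollmer's actual argument the two ingredients are fused rather than separated. One writes, for any $f$,
\[
\EE_{\PP_1}f-\EE_{\PP_2}f=(\EE_{\PP_1}-\EE_{\PP_2})(\gamma_i f)+\EE_{\PP_2}(\gamma_i f-f),
\]
bounds the second term by $\delta_i(f)\,b_i$, and observes that $\gamma_i f$ has $\delta_i(\gamma_i f)=0$ and $\delta_r(\gamma_i f)\le \delta_r(f)+C_{ri}\delta_i(f)$. One then \emph{iterates} this identity over all sites (repeatedly), tracking the oscillation vector; each sweep multiplies the residual oscillations by a matrix dominated by $\Cb^\top$ and spits out one more $\bb^\top$-term, so the Neumann series $\sum_l (\Cb^\top)^l \bb=\Db^\top\bb$ emerges directly and the hypothesis $\|\Cb^l\|_1\to 0$ kills the remainder. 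An equivalent route defines $a_i$ through an \emph{optimal coupling} (Wasserstein with Hamming cost), in which case your reduction $|\EE_{\PP_1}f-\EE_{\PP_2}f|\le\sum_i\delta_i(f)a_i$ \emph{is} valid by construction; the recursion $a_i\le b_i+\sum_r C_{ri}a_r$ then follows from a one-step Glauber contraction. Either fix rescues your outline, but the marginal-TV version of $a_i$ does not.
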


In this section we will further spell out the details of some auxiliary results. For the next result assume we have $m$ distinct graphs $G_j$ for $j \in [m]$ which differ from a graph $G_0$ in a single edge, i.e. $G_j = (V,E_j)$, where $|E_j\setminus E_0| = 1$. For each $j$, denote the only edge in the set $E_j \setminus E_0$ by $e_j = (u_j, v_j)$. Define for brevity $\PP_{j} = \PP_{\theta, G_j}$ and $\PP_0 = \PP_{\theta, G_0}$ where the measures $\PP_{\theta, G_0}, \PP_{\theta, G_j}$ are defined in (\ref{ising:model:measure}). Denote the corresponding expectations with $\EE_j$ and $\EE_0$. 

\begin{lemma}[Edge Addition]\label{lemma:chi:sq} Suppose one has ferromagnetic models as in \eqref{ising:model:measure}. The following bound holds
\begin{align*}
\EE_0 \frac{\PP_j}{\PP_0}\frac{\PP_k}{\PP_0} \leq 1 + \tanh(\theta)[\EE_{j} X_{u_k} X_{v_k} -  \EE_{0} X_{u_k} X_{v_k}].\footnotemark
%& = \frac{\EE_{\PP} \exp(\theta (X_{u_j} X_{v_j} + X_{u_k} X_{v_k}))}{\EE_{\PP} \exp(\theta X_{u_j} X_{v_j})\EE_{\PP} \exp(\theta X_{u_k} X_{v_k})}=\frac{\EE_{j} \exp(\theta X_{u_k} X_{v_k})}{\EE_{\PP} \exp(\theta X_{u_k} X_{v_k})}\\
%& = 1 + \frac{\sinh(\theta) [\EE_{j} X_{u_k} X_{v_k} -  \EE_{\PP} X_{u_k} X_{v_k}]}{\sinh(\theta) \EE_{\PP} X_{u_k} X_{v_k} + \cosh(\theta)}\\
%& = 1 + \frac{\EE_{j} X_{u_k} X_{v_k} -  \EE_{\PP} X_{u_k} X_{v_k}}{\EE_{\PP} X_{u_k} X_{v_k} + \coth(\theta)}\\
%& \leq 1 + \tanh(\theta)[\EE_{j} X_{u_k} X_{v_k} -  \EE_{\PP} X_{u_k} X_{v_k}]
\end{align*}
\end{lemma}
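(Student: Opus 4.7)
The plan is to compute $\EE_0 \frac{\PP_j}{\PP_0}\frac{\PP_k}{\PP_0}$ explicitly and then exploit Griffith's inequality (Theorem \ref{FKG:ineq}) to get the desired upper bound. Since $G_j$ differs from $G_0$ only in the edge $e_j=(u_j,v_j)$, the Radon–Nikodym derivative factors as
$$\frac{\PP_j}{\PP_0}(\xb) \;=\; \frac{Z_0}{Z_j}\,\exp(\theta x_{u_j} x_{v_j}),$$
and similarly for $k$. Since $x_{u_j}x_{v_j}\in\{\pm 1\}$, I will use the identity $\exp(\theta x_{u_j}x_{v_j}) = \cosh(\theta)(1+\tanh(\theta)\,x_{u_j}x_{v_j})$ to linearize, and I will write $Z_j/Z_0 = \cosh(\theta)(1+\tanh(\theta)\,\EE_0 X_{u_j}X_{v_j})$ (and similarly for $k$).

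Introduce the shorthand $t=\tanh(\theta)$, $a=\EE_0 X_{u_j}X_{v_j}$, $b=\EE_0 X_{u_k}X_{v_k}$, and $c=\EE_0 X_{u_j}X_{v_j}X_{u_k}X_{v_k}$. Expanding the product of the two linearized exponentials, taking $\EE_0$, and dividing by $Z_jZ_k/Z_0^2$, a direct computation yields
$$\EE_0\frac{\PP_j}{\PP_0}\frac{\PP_k}{\PP_0} \;=\; \frac{1+ta+tb+t^2 c}{(1+ta)(1+tb)} \;=\; 1 + \frac{t^2(c-ab)}{(1+ta)(1+tb)}.$$
By the same linearization trick, one also gets
$$\EE_j X_{u_k}X_{v_k} - \EE_0 X_{u_k}X_{v_k} \;=\; \frac{b+tc}{1+ta} - b \;=\; \frac{t(c-ab)}{1+ta},$$
so the claimed bound reduces to
$$\frac{t^2(c-ab)}{(1+ta)(1+tb)} \;\leq\; \frac{t^2(c-ab)}{1+ta},$$
i.e.\ to showing that $\frac{1}{1+tb}\leq 1$ and that $c-ab\geq 0$.

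The main (but routine) step is invoking Griffith's inequality in two places: first, applied to the sets $A=\{u_j,v_j\}$ and $B=\{u_k,v_k\}$ in the zero-field ferromagnet $\PP_0$, it gives $c\geq ab$, which in particular makes $c-ab$ nonnegative so the preceding inequality has the right sign; second, applied to the singletons or to the empty set paired with $\{u_k,v_k\}$ (together with the zero-field symmetry $\bX\stackrel{d}{=}-\bX$ forcing $\EE_0 X_{u_k}=\EE_0 X_{v_k}=0$), it yields $b=\EE_0 X_{u_k}X_{v_k}\geq 0$. Combined with $t=\tanh(\theta)\geq 0$ for $\theta\geq 0$, this forces $tb\geq 0$, hence $1/(1+tb)\leq 1$, which closes the argument. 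I expect the only subtlety to be keeping track of signs to ensure each inequality points in the correct direction after dividing through by $t^2(c-ab)$; if $c=ab$ the bound is trivial, so the division is harmless.
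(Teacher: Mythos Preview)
Your proof is correct and uses the same ingredients as the paper's: the Radon--Nikodym derivative $\PP_j/\PP_0 \propto \exp(\theta x_{u_j}x_{v_j})$, the linearization $\exp(\theta x)=\cosh(\theta)(1+\tanh(\theta)x)$, and Griffith's inequality. The only organizational difference is that the paper first applies the change-of-measure identity $\EE_0\bigl[\tfrac{\PP_j}{\PP_0}\tfrac{\PP_k}{\PP_0}\bigr]=\EE_j\bigl[\tfrac{\PP_k}{\PP_0}\bigr]=\tfrac{\EE_j\exp(\theta X_{u_k}X_{v_k})}{\EE_0\exp(\theta X_{u_k}X_{v_k})}$ and then linearizes once, so the target quantity $\EE_j X_{u_k}X_{v_k}-\EE_0 X_{u_k}X_{v_k}$ appears directly without introducing the four-point correlation $c$; your $c\geq ab$ step is then absorbed into the (equivalent) observation that this difference is nonnegative.
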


\footnotetext{We omit writing the argument $\bX$ in the ratio $\frac{\PP_j(\bX)}{\PP_0(\bX)}\frac{\PP_k(\bX)}{\PP_0(\bX)}$ for brevity.}

\begin{remark} By Griffith's inequality (Theorem \ref{FKG:ineq}) we always have $\EE_{j} X_{u_k} X_{v_k} -  \EE_{0} X_{u_k} X_{v_k} \allowbreak \geq 0$, and therefore $\EE_{0} \frac{\PP_j}{\PP_0}\frac{\PP_k}{\PP_0}  \geq 1$. Furthermore, if we are interested in product measures over $n$ i.i.d. observations, i.e, $\PP_j^{\otimes n}$ and $\PP_0^{\otimes n}$ Lemma \ref{lemma:chi:sq} immediately to yields the following bound
$$
\EE_{\PP^{\otimes n}_0}\frac{\PP_j^{\otimes n}}{\PP_0^{\otimes n}}\frac{\PP_k^{\otimes n}}{\PP_0^{\otimes n}} = \bigg[\EE_0 \frac{\PP_j}{\PP_0}\frac{\PP_k}{\PP_0}\bigg]^n \leq (1 + \tanh(\theta)[\EE_{j} X_{u_k} X_{v_k} -  \EE_{0} X_{u_k} X_{v_k}])^n.
$$
\end{remark}

\begin{proof}[Proof of Lemma \ref{lemma:chi:sq}] Note that by definition we have:
$$
\frac{\PP_k(\xb)}{\PP_0(\xb)} = \frac{Z_{\theta, G}}{Z_{\theta, G_k}} \exp(\theta x_{u_k} x_{v_k}).
$$
Hence 
$$
\frac{Z_{\theta, G_k}}{Z_{\theta, G}} = \sum_{\xb \in\{\pm 1\}^d} \frac{Z_{\theta, G_k}}{Z_{\theta, G}} \PP_k(\xb) = \sum_{\xb \in\{\pm 1\}^d} \exp(\theta x_{u_k} x_{v_k}) \PP_0(\xb) = \EE_0\exp(\theta X_{u_k} X_{v_k}).
$$
%Next, using the following simple identity
%$$
%\frac{\PP_j(X_{u_j}X_{v_j} = 1)}{\PP_j(X_{u_j}X_{v_j} = -1)} = \frac{\exp(\theta) \PP(X_{u_j}X_{v_j} = 1)}{\exp(-\theta)\PP(X_{u_j}X_{v_j} = -1)},
%$$
%we conclude that:
%$$
%\frac{Z_{G_j}}{Z_G} = \exp(\theta) \PP(X_{u_j}X_{v_j} = 1) +  \exp(-\theta) \PP(X_{u_j}X_{v_j} = -1)  = \EE_{\PP}\exp(\theta X_{u_j}X_{v_j}).
%$$
Hence $\PP_0(\xb) \frac{\exp(\theta x_{u_k} x_{v_k})}{\EE_{0} \exp(\theta X_{u_k} X_{v_k})} = \PP_k(\xb)$. This shows that:
$$
\EE_{0} \frac{\PP_j}{\PP_0}\frac{\PP_k}{\PP_0} = \EE_{j} \frac{\PP_k}{\PP_0} = \frac{\EE_{j} \exp(\theta X_{u_k} X_{v_k})}{\EE_{0} \exp(\theta X_{u_k} X_{v_k})}.
$$
%To see the second identity note that $\PP \frac{\exp(\theta X_{u_j} X_{v_j})}{\EE_{\PP} \exp(\theta X_{u_j} X_{v_j})}$ is a probability measure which tilts $\PP$ so that the probability of observing $\bX$ is proportional to $\exp\Big(\theta \sum_{(u,v) \in E_j} X_u X_v\Big)$. Hence $\PP \frac{\exp(\theta X_{u_j} X_{v_j})}{\EE_{\PP} \exp(\theta X_{u_j} X_{v_j})} = \PP_j$, which proves the second identity. 
Using that for any $c \in \RR$ and $x \in \{\pm 1\}$ we have the identity
\begin{align}\label{cosh:exp:identity}
\exp(c x) = \cosh(c)(1 + x \tanh(c)).
\end{align}
implies that for $\QQ = \PP_0, \PP_j$
$$
\EE_{\QQ} \exp(\theta X_{u_k} X_{v_k}) = \sinh(\theta)\EE_{\QQ} X_{u_k} X_{v_k}  + \cosh(\theta),
$$
Consequently
\begin{align*}
\EE_{0} \frac{\PP_j}{\PP_0}\frac{\PP_k}{\PP_0} & =\frac{\EE_{j} \exp(\theta X_{u_k} X_{v_k})}{\EE_{0} \exp(\theta X_{u_k} X_{v_k})} = 1 + \frac{\sinh(\theta) [\EE_{j} X_{u_k} X_{v_k} -  \EE_{0} X_{u_k} X_{v_k}]}{\sinh(\theta) \EE_{0} X_{u_k} X_{v_k} + \cosh(\theta)}\\
& = 1 + \frac{\EE_{j} X_{u_k} X_{v_k} -  \EE_{0} X_{u_k} X_{v_k}}{\EE_{0} X_{u_k} X_{v_k} + \coth(\theta)} \\
& \leq 1 + \tanh(\theta)[\EE_{j} X_{u_k} X_{v_k} -  \EE_{0} X_{u_k} X_{v_k}],
\end{align*}
where for the last inequality we note that by Griffith's inequality (Theorem \ref{FKG:ineq}) $\EE_{0} X_{u_k} X_{v_k} \geq 0$. With this the proof is complete. 
\end{proof}
%\begin{lemma}[Edge Deletion]\label{lemma:removal:chi:sq} Suppose one has ferromagnetic models as in \eqref{ising:model:measure}. Under the same assumptions as in Lemma \ref{lemma:chi:sq}, but the graphs $G_j$ contain one edge less compared to $G$, we have:
%\begin{align*}
%\EE_{} \frac{\PP_j}{\PP}\frac{\PP_k}{\PP} & = \frac{\EE_{} \exp(-\theta (X_{u_j} X_{v_j} + X_{u_k} X_{v_k}))}{\EE_{} \exp(-\theta X_{u_j} X_{v_j})\EE_{} \exp(-\theta X_{u_k} X_{v_k})}= \frac{\EE_{j} \exp(-\theta X_{u_k} X_{v_k})}{\EE_{} \exp(-\theta X_{u_k} X_{v_k})}\\
%& = 1 + \frac{\sinh(\theta) [\EE_{} X_{u_k} X_{v_k} - \EE_{j} X_{u_k} X_{v_k}]}{\cosh(\theta) - \sinh(\theta) \EE_{} X_{u_k} X_{v_k}}.
%\end{align*}
%\end{lemma}
%\begin{proof}[Proof of Lemma \ref{lemma:removal:chi:sq}] The proof is very similar to that of Lemma \ref{lemma:chi:sq} hence we omit the details.
%\end{proof}
%\begin{remark}If we are interested in product measure over $n$ i.i.d. observations the above Lemma can obviously be adjusted immediately to yield
%$$
%\bigg(1 + \frac{\tanh(\theta) [\EE_{\PP} X_{u_k} X_{v_k} - \EE_{j} X_{u_k} X_{v_k}]}{1 - \tanh(\theta) \EE_{\PP} X_{u_k} X_{v_k}}\bigg)^n.
%$$
%\end{remark}}

For a given graph $G = (V,E)$ and vertex set $W$ let $G \vert_{W} = (W, E \vert_{W})$, where $E \vert_{W} = \{(u,v) \in E ~|~ u,v \in W \}$ is the restriction of $G$ on the vertex set $W$. The next proposition states that when studying correlations it suffices to consider a potentially smaller graph. 

\begin{proposition}\label{restriction:prop} Let $G = (V,E)$ be any graph, and $\PP_{\theta, G_{\wb}}$ be the measure of an Ising model with graph $G$ as in (\ref{ising:model:measure:anti:full:generality}). Let $u, v \in V$ be two fixed vertices. Denote by $\cP^{G}_{u \rightarrow v}$ the set of all simple paths on $G$ connecting $u$ and $v$, and let $\cV_{G}$ be the set of all vertices in $\cP^{G}_{u \rightarrow v}$. Let $\tilde G = G \vert_{\cV_{G}}$ and $\PP_{\theta, \tilde G_{\wb}}$ be the measure of the Ising model with weights $\wb$ restricted to $\tilde G$. Then 
$$
\EE_{\theta, G_{\wb}}X_u X_v =  \EE_{\theta, \tilde G_{\wb}}X_u X_v.
$$
\end{proposition}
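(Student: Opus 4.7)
The plan is to prove the proposition by marginalizing out the spins on $W := V \setminus \cV_G$ component-by-component, showing that each such marginalization contributes only a constant factor, so that the induced law of $(X_i)_{i \in \cV_G}$ is precisely the Ising model on $\tilde G = G|_{\cV_G}$ with the same weights. The key structural input is a purely graph-theoretic fact about how the ``off-path'' region $W$ attaches to $\cV_G$. We may assume $u\neq v$ lie in the same connected component of $G$, since otherwise both two-point functions vanish trivially.

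The graph-theoretic lemma I will prove first is this: for every connected component $C$ of the induced subgraph $G[W]$, the boundary set $B_C := \{a \in \cV_G : (a,w) \in E \text{ for some } w \in C\}$ has $|B_C| \leq 1$. The clean way to see this is to form the augmented graph $G' = G + (u,v)$ and consider the block (maximal 2-connected subgraph) $B^*$ of $G'$ containing the edge $(u,v)$. A vertex $w$ lies in $B^*$ iff it lies on a cycle through $(u,v)$ in $G'$, iff it lies on a simple $u$-$v$ path in $G$; hence $V(B^*) = \cV_G$. If $a \neq b$ both lay in $B_C$, then connectivity of $C$ produces an $a$-$b$ path in $G$ whose interior vertices lie in $C \subseteq W$, while 2-connectivity of $B^*$ yields an $a$-$b$ path inside $B^*$. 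Concatenating these gives a cycle in $G'$ containing the edge $(a,w_1)$ (for some $w_1 \in C$) together with an edge of $B^*$, forcing $(a, w_1)$ into $B^*$ and hence $w_1 \in \cV_G$; this contradicts $w_1 \in W$.

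Given this singleton-boundary lemma, the marginalization step is almost mechanical. Write the Gibbs factor as a product over edges within $\cV_G$, edges within a single component $C$ of $G[W]$, and edges from $\cV_G$ to $C$ (all of which are incident to the unique boundary vertex $a \in B_C$). The partial sum
\[
\sum_{\xb_C \in \{\pm 1\}^{C}} \exp\Bigl( \theta \sum_{(j,k) \in E(C)} w_{jk} x_j x_k + \theta \sum_{w \in C,\,(a,w)\in E} w_{aw} x_a x_w \Bigr)
\]
is independent of $x_a$: changing variables $y_w = x_a x_w$ for $w \in C$ uses $x_a^2 = 1$ to convert $x_a x_w$ into $y_w$ and leaves $x_j x_k = y_j y_k$ invariant, so the sum reduces to the analogous expression with $x_a$ removed. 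Doing this for every component of $G[W]$ turns the joint distribution, after marginalizing out $\xb_W$, into a measure on $\xb_{\cV_G}$ proportional to $\exp\bigl(\theta \sum_{e \in E(\tilde G)} w_e x_e\bigr)$, which is exactly $\PP_{\theta, \tilde G_{\wb}}$. The identity $\EE_{\theta, G_{\wb}} X_u X_v = \EE_{\theta, \tilde G_{\wb}} X_u X_v$ then follows immediately since $u, v \in \cV_G$.

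The main obstacle is the graph-theoretic lemma. A naive attempt, in which one tries to concatenate a $u$-$a$ subpath of one simple $u$-$v$ path, the $a$-$b$ path through $C$, and a $b$-$v$ subpath of another simple $u$-$v$ path, can fail because the two $\cV_G$-subpaths may share vertices, and pruning the resulting walk to a simple path can kill the passage through $C$. The block-theoretic argument above sidesteps this by promoting the combinatorial statement to a statement about 2-connected components of $G + (u,v)$, where it becomes an immediate consequence of the ``edges in the same cycle share a block'' property. Every other step reduces to the $\pm 1$ symmetry of zero-field Ising and is routine.
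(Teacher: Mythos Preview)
Your proof is correct and follows the same two-step strategy as the paper: first a graph-theoretic lemma saying each connected piece of $W = V\setminus\cV_G$ touches $\cV_G$ in at most one vertex, then the sign-flip change of variables $x_w \mapsto x_a x_w$ (the paper writes this as $X_\ell \odot (x_k)_{k\in V(\cC_\ell)\setminus\{\ell\}}$) to show that marginalizing out $W$ leaves precisely the Ising law on $\tilde G$. The paper phrases the structural lemma equivalently as ``distinct $v_1,v_2\in\cV_G$ lie in different components of $\tilde G^c=(V,E\setminus E|_{\cV_G})$.'' Where your argument differs is in the proof of this lemma: the paper dispatches it in a single line (``such a path does not belong to $\cP^G_{u\to v}$''), which really hides a nontrivial path-splicing step, whereas your identification $\cV_G = V(B^*)$ with $B^*$ the block of $G+(u,v)$ containing the new edge makes the claim an immediate consequence of the edges-on-a-common-cycle characterization of blocks and cleanly avoids the splicing pitfall you correctly flag.
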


\begin{proof}[Proof of Proposition \ref{restriction:prop}] Construct the graph $\tilde G^c = (V, E \setminus E\vert_{\cV_{G}})$. Take any two distinct vertices $v_1, v_2 \in \cV_{G}$. Note that $v_1$ and $v_2$ belong to two distinct connected components in $\tilde G^c$. To see this, assume the contrary. This implies that there exists a path on $\tilde G^c$ connecting $v_1$ and $v_2$ which is a contradiction since such a path does not belong to the set $\cP^G_{u \rightarrow v}$. For each vertex $\ell \in \cV_{G}$ let $\cC_{\ell}$ denote the connected component of $\tilde G^c$ containing $\ell$, and let $V(\cC_{\ell})$ be its vertex set. Let $V^c = V \setminus \cup_{\ell \in \cV_{G}} (V(\cC_\ell) \setminus \{\ell\})$ denote the vertex set which is completely disconnected from the graph $\tilde G$. In terms of this notation we have the decomposition:
$$
V = \cup_{\ell \in \cV_{G}} (V(\cC_\ell) \setminus \{\ell\}) \cup \cV_{G} \cup V^c.
$$

Let $(X_\ell)_{\ell \in \cV_{G}}$ and  $(Y_\ell)_{\ell \in \cV_{G}}$ be two fixed configurations of spins in the vertex set $\cV_{G}$. Take any configuration of spins $(x_\ell)_{\ell \in V \setminus \cV_{G}}$, and consider the following two states
\begin{align*}
\bS_X &= ((X_\ell)_{\ell \in \cV_{G}}, (X_\ell \odot (x_k)_{k \in V(\cC_\ell) \setminus \{\ell\}})_{\ell \in \cV_{G}}, (x_\ell)_{\ell \in V^c})\\
\bS_Y &= ((Y_\ell)_{\ell \in \cV_{G}}, (Y_\ell \odot (x_k)_{k \in V(\cC_\ell) \setminus \{\ell\}})_{\ell \in \cV_{G}}, (x_\ell)_{\ell \in V^c}),
\end{align*}
where by $\odot$ we mean element-wise multiplication. Due to the construction we conclude:
\begin{align}\label{prob:ration:equality}
\frac{\PP_{\theta, G_{\wb}}(\bS_X)}{\PP_{\theta, G_{\wb}}(\bS_Y)} = \exp\Bigr(\sum_{(k,\ell) \in E\vert_{\cV_{G}}} \theta w_{k\ell}(X_kX_\ell - Y_kY_\ell)\Bigr) = \frac{\PP_{\theta, {\tilde G}_{\wb}}((X_\ell)_{\ell \in \cV_{G}})}{\PP_{\theta, {\tilde G}_{\wb}}((Y_\ell)_{\ell \in \cV_{G}})}.
\end{align}
Note that as we vary the spins $(x_\ell)_{\ell \in V \setminus \cV_{G}}$ while holding $(X_\ell)_{\ell \in \cV_{G}}$ and $(Y_\ell)_{\ell \in \cV_{G}}$ fixed, the values $(X_\ell\odot (x_k)_{k \in V(\cC_\ell) \setminus \{\ell\}})_{\ell \in \cV_{G}}, (x_\ell)_{\ell \in V^c})$ and $(Y_\ell\odot (x_k)_{k \in V(\cC_\ell) \setminus \{\ell\}})_{\ell \in \cV_{G}}, (x_\ell)_{\ell \in V^c})$ take all possible values. The last observation and (\ref{prob:ration:equality}) complete the proof.
\end{proof}

\begin{lemma}[Path Graph Correlations]\label{chain:graph} Let $G=([\ell], \{(j,j+1)\}_{j \in [\ell-1]})$ be a path graph, and $\PP_{\theta, G_{\wb}}$ be the measure of a general Ising model with the graph $G$ as in (\ref{ising:model:measure:anti:full:generality}). For any $k \leq j \in [\ell]$ we have:
$$
\EE_{\theta, G_{\wb}} X_{k + 1} X_{j} = \prod_{u = k + 1}^{j-1}\tanh(\theta w_{u,u+1}).
$$
\end{lemma}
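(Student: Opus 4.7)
The plan is to reduce to a one-dimensional chain and then iterate the Markov property. First, I would apply Proposition~\ref{restriction:prop} to $G_{\wb}$ with the pair $(k+1, j)$: on the path graph $G$ there is a unique simple path between $k+1$ and $j$, namely the sub-path $k+1, k+2, \ldots, j$, so $\cV_G = \{k+1, \ldots, j\}$ and $\tilde G$ is the sub-path on these vertices with edge weights $w_{u,u+1}$ inherited from $\wb$. This yields
\begin{equation*}
\EE_{\theta, G_{\wb}} X_{k+1} X_j \;=\; \EE_{\theta, \tilde G_{\wb}} X_{k+1} X_j,
\end{equation*}
and the key structural gain is that $j$ is now a leaf of $\tilde G$, with unique neighbor $j-1$.

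Next, I would exploit the Markov property of $\PP_{\theta, \tilde G_{\wb}}$. Since $j$ is a leaf, the conditional distribution of $X_j$ given all other spins depends only on $X_{j-1}$, with local specification
\begin{equation*}
\PP_{\theta, \tilde G_{\wb}}(X_j = x \mid X_{j-1}) \;\varpropto\; \exp(\theta w_{j-1,j}\, x\, X_{j-1}), \qquad x \in \{\pm 1\}.
\end{equation*}
A direct calculation then gives $\EE[X_j \mid X_{j-1}] = \tanh(\theta w_{j-1,j}\, X_{j-1}) = X_{j-1} \tanh(\theta w_{j-1,j})$, where the second equality uses that $X_{j-1} \in \{\pm 1\}$ and $\tanh$ is odd. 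Applying the tower property and using $X_{k+1}^2 = 1$,
\begin{equation*}
\EE_{\theta, \tilde G_{\wb}} X_{k+1} X_j \;=\; \tanh(\theta w_{j-1,j})\, \EE_{\theta, \tilde G_{\wb}} X_{k+1} X_{j-1}.
\end{equation*}
An obvious induction on $j - (k+1)$, with base case $j = k+1$ yielding $\EE X_{k+1}^2 = 1$ (empty product), delivers the claimed formula. Strictly speaking one should note that restricting the Ising measure to the sub-path $\{k+1, \ldots, j-1\}$ after conditioning out $X_j$ preserves the form of the measure on the shorter chain, which is immediate from the product structure of the Gibbs factor.

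There is no real obstacle; the only subtlety is being careful that after the reduction via Proposition~\ref{restriction:prop}, $j$ really is a leaf so that the conditional distribution of $X_j$ involves only $w_{j-1,j}$ and not any further interaction. As an alternative one could give a one-shot combinatorial proof using the identity $\exp(c x y) = \cosh(c)(1 + xy \tanh(c))$ from \eqref{cosh:exp:identity}: after expanding $\prod_{u} \exp(\theta w_{u,u+1} x_u x_{u+1})$ into a sum over edge subsets $S$ of the sub-path, summation over $\xb \in \{\pm 1\}^{j-k}$ kills every term except those for which each spin appears with even parity, which forces $S$ to equal the full set of edges between $k+1$ and $j$ in the numerator and $S = \varnothing$ in the denominator, leaving precisely $\prod_{u=k+1}^{j-1} \tanh(\theta w_{u,u+1})$.
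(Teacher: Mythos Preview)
Your proof is correct and is precisely the ``standard induction argument'' the paper alludes to (the paper omits the details entirely). Both the leaf-peeling induction and the alternative expansion via \eqref{cosh:exp:identity} are valid routes to the same product formula; nothing further is needed.
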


\begin{proof}[Proof of Lemma \ref{chain:graph}] The proof follows by a standard induction argument, and is omitted. \end{proof}

\section{Bounds with Ferromagnets}\label{bounds:ferromagnets}

Since in all measures the signal strength is $\theta$, in this section we will suppress the dependency on $\theta$ whenever that does not cause confusion.

Below we restate and prove Theorem \ref{ising:single:edge} slightly more generally than in the main text.

\begin{theorem}[Theorem \ref{ising:single:edge} Restated]\label{ising:single:edge:restated} Given a binary graph property $\cP$, let $G_0 \in \cG_0(\cP)$, and the set $\cC$ be a divider set with a null base $G_0$. Suppose that $|\cC| \rightarrow \infty$ asymptotically. If for some $c < 1$ and $0 < \varepsilon < 1$ we have
$$
\textstyle \theta \leq (1 - \varepsilon)\sqrt{\frac{\log N(\cC, d_{G_0}, ((-\log c)^{-1} + \varepsilon)\log \log |\cC|)}{n}} \wedge \atanh\bigr(\frac{c}{\maxdeg(G_0) + 1}\bigr),
$$
then $\liminf_{n \rightarrow \infty} R_n(\cP, \theta) = 1.$ The original setting is given by $\varepsilon = \frac{1}{2}$ and $c = e^{-2}$.
\end{theorem}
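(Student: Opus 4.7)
The plan is to establish the bound via Le Cam's two-point style method, where the ``two points'' are $\PP_0^{\otimes n} := \PP_{\theta, G_0}^{\otimes n}$ and a mixture over an appropriate packing of alternatives. Let $S \subseteq \cC$ realize the packing number, $|S| = N(\cC, d_{G_0}, r)$, with packing radius $r := ((-\log c)^{-1} + \varepsilon)\log\log|\cC|$. Since $\cC$ is a divider, $G_e \in \cG_1(\cP)$ for every $e \in S$, so the mixture $\overline{\PP}^{\otimes n} := |S|^{-1}\sum_{e \in S}\PP_{\theta, G_e}^{\otimes n}$ lies entirely in the alternative and Le Cam's inequality (used repeatedly in the paper, e.g.\ \eqref{le:cams:ineq:main:text}) reduces the problem to showing $D_{\chi^2}(\overline{\PP}^{\otimes n}, \PP_0^{\otimes n}) \to 0$.

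Expanding the $\chi^2$ divergence pairwise and invoking Lemma \ref{lemma:chi:sq}, I obtain
\begin{align*}
D_{\chi^2}(\overline{\PP}^{\otimes n}, \PP_0^{\otimes n}) + 1 \leq \frac{1}{|S|^2}\sum_{e, e' \in S} (1 + \tanh(\theta)[\EE_{G_e} X_{u_{e'}}X_{v_{e'}} - \EE_0 X_{u_{e'}}X_{v_{e'}}])^n.
\end{align*}
Split the double sum into diagonal ($e = e'$) and off-diagonal ($e \neq e'$) terms. For the diagonal, use the single-edge bound $\EE_{G_e}X_{u_e}X_{v_e} - \EE_0 X_{u_e}X_{v_e} \leq \tanh(\theta)$ (Lemma 4 of \cite{tandon2014information}, cited already in the proof of Proposition \ref{simple:lower:bound:monotone}); the diagonal contribution is then at most $\exp(n\tanh^2(\theta))/|S| \leq |S|^{(1-\varepsilon)^2 - 1}$, which vanishes by the assumed signal bound $\theta \leq (1-\varepsilon)\sqrt{\log|S|/n}$.

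The core technical step, and the main obstacle, is controlling the off-diagonal differences $\EE_{G_e}X_{u_{e'}}X_{v_{e'}} - \EE_0 X_{u_{e'}}X_{v_{e'}}$ whenever $d_{G_0}(e, e') \geq r$, which is where Dobrushin's comparison theorem enters. The measures $\PP_{G_e}$ and $\PP_0$ have identical single-site conditional distributions at every vertex outside $\{u_e, v_e\}$, and a direct $\tanh$ manipulation shows that at the two endpoints of $e$ the local specifications differ in total variation by $O(\tanh(\theta))$; hence the vector $\bb$ from Theorem \ref{dobrushins:comparison:thm} has support of size $2$ and magnitude $O(\tanh(\theta))$. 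The Dobrushin interaction matrix for $\PP_{G_e}$ satisfies $C_{rm} \leq \tanh(\theta)\mathbf{1}(m \sim_{G_e} r)$, and since $\maxdeg(G_e) \leq \maxdeg(G_0) + 1$, the assumption $\theta \leq \atanh(c/(\maxdeg(G_0) + 1))$ yields $\|\Cb\|_1 \leq c < 1$. Bounding walks in $G_e$ then gives $D_{m,i} = \sum_k (\Cb^k)_{m,i} \leq (1-c)^{-1} c^{d_{G_e}(m,i)}$ for the resolvent, and applying Theorem \ref{dobrushins:comparison:thm} to $f(\xb) = x_{u_{e'}}x_{v_{e'}}$ (whose oscillations vanish off $\{u_{e'}, v_{e'}\}$) produces
\begin{align*}
|\EE_{G_e}X_{u_{e'}}X_{v_{e'}} - \EE_0 X_{u_{e'}}X_{v_{e'}}| \lesssim \tanh(\theta)\, c^{d_{G_0}(e,e')} \leq \tanh(\theta)\, c^r.
\end{align*}
Here the extra factor of $\tanh(\theta)$, coming from $\|\bb\|_\infty$, is what turns the off-diagonal term into $(1 + C \tanh^2(\theta)\, c^r)^n \leq \exp(C(1-\varepsilon)^2 \log|\cC|\cdot c^r)$. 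The choice of $r$ is tuned precisely so that $\log|\cC|\cdot c^r = (\log|\cC|)^{-\varepsilon\log(1/c)} \to 0$, hence the off-diagonal contribution tends to $1$. Combining with the diagonal estimate gives $D_{\chi^2} \to 0$ and finishes the proof via Le Cam. The most delicate bookkeeping is verifying the $\tanh(\theta)$ magnitude of $\bb$ and checking that the walk-counting bound on $\Db$ correctly respects the edge geodesic pre-distance $d_{G_0}(e,e')$ (rather than merely some vertex-to-vertex distance), so that the packing construction can be leveraged.
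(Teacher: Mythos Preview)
Your proposal is correct and follows essentially the same route as the paper: Le Cam's inequality reduces to a $\chi^2$ bound, which is expanded pairwise via Lemma~\ref{lemma:chi:sq}; the diagonal is handled by Lemma~4 of \cite{tandon2014information}, and the off-diagonal by Dobrushin's comparison theorem (packaged in the paper as Lemma~\ref{key:thm:diff:ising}) together with the walk-counting bound $[\Ab_{G_e}^l]_{mi} \leq (\maxdeg(G_0)+1)^l\,\mathbbm{1}(l \geq d_{G_e}(m,i))$, followed by the same packing-radius calculation showing $\log|\cC|\cdot c^r \to 0$. Your closing remark about verifying that $d_{G_e}(\text{endpoint of }e,\text{endpoint of }e') \geq d_{G_0}(e,e')$ is exactly the bookkeeping step the paper handles implicitly when it starts the sum at $l = l_{ij}$.
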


\begin{proof}[Proof of Theorem \ref{ising:single:edge}] 
Let $S\subseteq \cC$ denote the $r$-packing set of $\cC$ with cardinality $m := |S| = N(\cC, d_{G_0}, r)$, where we have set for brevity $r = ([-\log c]^{-1} + \varepsilon)\log \log |\cC|$. To ease of notation we will use $\PP_0$ for the measure $\PP_{G_0}$ and for an edge $e_j \in S$ we let $\PP_j = \PP_{G_j}$ where $G_j = G_0 \cup \{e_j\}$. Recall the notation $^{\otimes n}$ for a product measure of $n$ i.i.d. observations. Let $\overline \PP^{\otimes n} = \frac{1}{m} \sum_{j \in [m]} \PP^{\otimes n}_j$ denote the mixture density of the graphs from the alternative. By Definition \ref{def:chisq:div} we have
\begin{align}\label{chi:square:identity}
D_{\chi^2}(\overline \PP^{\otimes n}, \PP^{\otimes n}_0) = \EE_{\PP^{\otimes n}_0} \Big (\frac{\overline \PP^{\otimes n}}{\PP^{\otimes n}_0} - 1\Big)^2 = \frac{1}{m^2} \sum_{i,j \in [m]}\EE_{\PP^{\otimes n}_0} \frac{\PP^{\otimes n}_i}{\PP^{\otimes n}_0}\frac{\PP^{\otimes n}_j}{\PP^{\otimes n}_0} - 1. 
\end{align}
We now handle the terms $\EE_{\PP^{\otimes n}_0} \frac{\PP^{\otimes n}_i}{\PP^{\otimes n}_0}\frac{\PP^{\otimes n}_j}{\PP^{\otimes n}_0}$ one by one, and we distinguish two cases. \\

\noindent{\bf Case I.} First assume  that $i \neq j$. We will show that the following bound holds.
\begin{lemma}[High Temperature Bound]\label{key:thm:diff:ising} Let $G = (V,E)$ and $G' = (V,E')$ be two graphs such that $E \setminus E' = \{e\}$, where $e = (u,v)$. Then if $\Ab_G$ denotes the adjacency matrix of $G$, and $\|\Ab_G\|_1 \tanh(\theta) \leq c < 1$ for some small $c > 0$ the following holds 
$$
\EE_G X_k X_\ell - \EE_{G'} X_k X_\ell \leq 2 \sum_{l = 0}^{\infty} \bigr(\tanh(\theta)\bigr)^{l + 1} \bigr([\Ab_G^l]_{uk} + [\Ab_G^l]_{vk}  + [\Ab_G^l]_{u\ell} + [\Ab_G^l]_{v\ell}\bigr),
$$
where the expectations $\EE_G, \EE_{G'}$ are taken with respect to distributions as specified by (\ref{ising:model:measure}).
\end{lemma}

Let $\Lambda := \maxdeg(G_0) = \|\Ab_{G_0}\|_1$, where $\Ab_{G_0}$ is the adjacency matrix of $G_0$. If $\|\Ab_{G_0}\|_2$ denotes the operator norm of $\Ab_{G_0}$, recall that $\|\Ab_{G_0}\|_2 \leq \Lambda$ by Gershgorin's circle Theorem \citep{golub2012matrix}. By the triangle inequality $\|\Ab_{G_j}\|_2 \leq \|\Ab_{G_0}\|_2 + 1 \leq \Lambda + 1$ for all $j$. It immediately follows by our assumption that
$$
\|\Ab_{G_j}\|_1 \tanh(\theta) \leq c < 1,
$$
and hence the condition of Lemma \ref{key:thm:diff:ising} is satisfied.

Recall that for any two vertices $r$ and $t$ in $[d]$, $[\Ab_{G_j}^l]_{rt}$ equals the number of paths of length $l$ between vertex $r$ and vertex $t$. Since $[\Ab_{G_j}^l]_{rt} \leq \|\Ab_{G_j}\|^l_2\leq (\Lambda + 1)^l$, it follows that $(\Lambda + 1)^l$ is an upper bound on the number of paths on the graph $G_j$ between vertices $r$ and $t$ of length $l$. Take the two distinct edges $e_i = (k, \ell), e_j = (u, v)  \in S$. Put $\tilde \Lambda := (\Lambda + 1)$. Let $d_{G_0}(e_i, e_j) = l_{ij}$.  Applying Lemma \ref{key:thm:diff:ising} we have:
\begin{align*}
\MoveEqLeft \EE_{j} X_{k} X_{\ell} - \EE_{{0}}  X_{k} X_{\ell} \\
& \leq 2 \sum_{l = 0}^{\infty} \bigr(\tanh(\theta)\bigr)^{l + 1} \bigr([\Ab_{G_j}^l]_{uk} + [\Ab_{G_j}^l]_{vk}  + [\Ab_{G_j}^l]_{u\ell} + [\Ab_{G_j}^l]_{v\ell}\bigr)\\
&\leq \sum_{l \geq l_{ij}} 8 \tilde \Lambda^l\tanh^{l + 1}(\theta)  \leq \frac{8 \tanh(\theta)(\tilde \Lambda\tanh(\theta))^{l_{ij}}}{1 - \tilde \Lambda \tanh(\theta)}
\end{align*}
\noindent {\bf Case II.} When $i = j$, Lemma 4 of \cite{tandon2014information} gives the following bound
$$
\EE_{j}  X_{k} X_{\ell}  - \EE_{0} X_{k} X_{\ell}  \leq \tanh(\theta).
$$
Applying Lemma \ref{lemma:chi:sq}
\begin{align*}
\MoveEqLeft[0.75] D_{\chi^2}(\overline \PP^{\otimes n}, \PP^{\otimes n}_0) + 1 \\
& \leq \frac{1}{m^2} \sum_{\substack{e_i, e_j \in S\\ i \neq j}} (1 + \tanh(\theta)[\EE_{{j}} X_{k} X_{\ell} -  \EE_{0} X_{k} X_{\ell}])^n + \frac{1}{m^2} \sum_{e_j \in S} (1 + \tanh(\theta)^2)^n\\
& \leq  \frac{1}{m^2} \sum_{\substack{e_i, e_j \in S\\ i \neq j}} \bigg[1 +  \frac{8\tanh^2(\theta)(\tilde \Lambda \tanh(\theta))^{l_{ij}}}{1 - \tilde\Lambda \tanh(\theta)}\bigg]^n  + \frac{(1 + \tanh(\theta)^2)^n}{m}\\
& \leq  \frac{1}{m^2} \sum_{\substack{e_i, e_j \in S\\ i \neq j}} \bigg[1 +  \frac{8}{(1 - c)\tilde \Lambda^2} (\tilde \Lambda \tanh(\theta))^{l_{ij} + 2}\bigg]^n + \frac{(1 + \tanh(\theta)^2)^n}{m} \\
& \leq  \frac{1}{m^2} \sum_{\substack{e_i, e_j \in S\\ i \neq j}}  \exp\bigg(n \frac{8}{(1-c)\tilde\Lambda^2}  (\tilde \Lambda\tanh(\theta))^{l_{ij} + 2}\bigg) + \frac{\exp(n\tanh(\theta)^2)}{m} ,
\end{align*}
where the next to last inequality uses $\theta \leq \atanh(c\tilde \Lambda^{-1})$ and $c < 1$, and the last one uses $x + 1 \leq \exp(x)$. Since $S$ is an $r$-packing we have that $l_{ij} + 2 \geq r + 2 > 2$. Under the assumption that $\tanh(\theta) \leq \theta \leq (1 - \varepsilon)\sqrt{\frac{\log m}{n}}$, we have
\begin{align*}
\MoveEqLeft D_{\chi^2}(\overline \PP^{\otimes n}, \PP^{\otimes n}_0) + 1 \\
& \leq \frac{m- 1}{m} \exp\bigg(n \frac{8}{(1-c)\tilde \Lambda^2}  (\tilde\Lambda\tanh(\theta))^{r + 2}\bigg) + \frac{\exp(n\tanh(\theta)^2)}{m}\\
& \leq \frac{m - 1}{m} \bigg(1 + (e-1) n \frac{8}{(1-c)\tilde \Lambda^2}  (\tilde \Lambda\tanh(\theta))^{r + 2}\bigg) + o(1)\\
& \leq 1 + (e-1) n \frac{8}{(1 - c)\tilde\Lambda^2}  (\tilde\Lambda\tanh(\theta))^{r + 2} + o(1),
\end{align*}
where the next to last inequality holds when $\frac{8 n}{(1-c)\tilde\Lambda^2}  (\tilde\Lambda\tanh(\theta))^{r + 2} \leq 1$ since by convexity $e^x \leq 1 + (e - 1)x$ when $x \leq 1$. In what follows we argue that in fact $\frac{n}{(1-c)\tilde\Lambda^2}  (\tilde\Lambda\tanh(\theta))^{r + 2} = o(1)$, hence the inequality is always true in an asymptotic sense. We have
$$
\textstyle \frac{n}{(1-c)\tilde\Lambda^2}  (\tilde\Lambda\tanh(\theta))^{r + 2}  \leq  \frac{(1 - \varepsilon)^2}{1 -c } \log(m)(\tilde \Lambda\tanh(\theta))^{r} \leq  \frac{(1 - \varepsilon)^2}{1 -c } \log(|\cC|)(\tilde \Lambda\tanh(\theta))^{r}
$$
Taking a $\log$ results in
$$
\log \log |\cC| + \log((1 -\varepsilon)^2/(1-c)) + r \log(\tilde\Lambda\tanh(\theta)) \rightarrow -\infty,
$$
where the limit holds since $r = ([-\log c]^{-1} + \varepsilon) \log \log(|\cC|)$ and hence $\allowbreak r  \log(\tilde\Lambda\tanh(\theta)) \leq r \log(c) \leq -\log\log |\cC| + \varepsilon \log c \log\log |\cC|$, and $\varepsilon \log c < 0$ while $|\cC| \rightarrow \infty$ asymptotically. We have established that under the condition of the theorem 
$$
\limsup_{n \rightarrow \infty} D_{\chi^2}(\overline \PP^{\otimes n}, \PP^{\otimes n}_0) = 0.
$$

Recall that by Le Cam's lemma we have the bound
\begin{align*}%\label{le:cams:ineq}
R_n(\cP, \theta) \geq \inf_{\psi} \Big[\PP^{\otimes n}_0(\psi = 1) + \overline \PP^{\otimes n}(\psi = 0)\Big] \geq 1 - \frac{1}{2}\sqrt{D_{\chi^2}(\overline \PP^{\otimes n}, \PP^{\otimes n}_0)}.
\end{align*}

Hence since the LHS of the inequality above is a lower bound on the risk, the proof is completed.
\end{proof}

\begin{proof}[Proof of Lemma \ref{key:thm:diff:ising}] 

To prove this result we make usage of Dobrushin's comparison technique (Theorem \ref{dobrushins:comparison:thm} or see Theorem (2.8) \cite[][]{follmer1988random}) which a is powerful tool for comparing discrepancies between Gibbs measures based on their local specifications. 

We first estimate Dobrushin's interaction matrix $\Cb$ which measures the influence of a vertex $r$ on vertex $m$, and is defined element-wise as
$$
C_{r m} = \sup_{\xb = \by \mbox{ \tiny off }r } \{D_{\TV}(\PP_{G,m}(\cdot | \bX_{-m} = \xb), \PP_{G,m}(\cdot | \bX_{-m} = \yb))\}.
$$
In the above, $\xb$ and $\yb$ are $d-1$ dimensional vectors, $\xb = \by \mbox{ off }r $ means that $\xb$ and $\yb$ coincide except the entries $x_r$ and $y_r$, $\PP_{G,m}$ denotes the conditional distribution of $X_m$ given the values of all remaining spins $\bX_{-m}$, and we have denoted the total variation between two measures by $D_{\TV}$. For a vertex $m$ let $\cN_{m}$ denote the set neighbors of $m$ in the graph $G$. We have
$$
\PP_{G,m}(X = x | \bX_{-m} = \xb) = g\bigg(\theta x \sum_{j \in \cN_{m} }x_j\bigg) 
$$
for $x \in \{\pm 1\}$ and $g(z) = e^z/(e^{-z} + e^z)$. It therefore immediately follows that $C_{r m} = 0$ if $r \not \in \cN_m$. Suppose $r \in \cN_m$. Let $S_{r} = \sum_{j \in \cN_{m}\setminus\{r\}} x_j = \sum_{j \in \cN_{m}\setminus\{r\}} y_j$. We have
\begin{align*}
C_{rm} & = \sup_{S_r}|g\bigr(\theta(S_r + 1)\bigr) - g\bigr(\theta(S_r - 1)\bigr) | = \sup_{S_r} \bigg|\frac{e^{2\theta} - e^{-2\theta}}{e^{-2 S_{r}\theta} + e^{2 S_{r}\theta} + e^{2\theta} + e^{-2\theta}}\bigg| \\
& \leq \frac{e^{2\theta} - e^{-2\theta}}{2+ e^{2\theta} + e^{-2\theta}} = \tanh(\theta).
%\frac{e^{\theta x \sum_{j \in \cN_{m} }x_j}}{e^{-\theta x \sum_{j \in \cN_{m}}x_j} + e^{\theta x \sum_{j \in \cN_{m} x_j}} } - \frac{e^{\theta x \sum_{j \in \cN_{m} }x_j}}{e^{-\theta x \sum_{j \in \cN_{m}}x_j} + e^{\theta x \sum_{j \in \cN_{m} x_j}} }
\end{align*}
We conclude that Dobrushin's interaction matrix is $\Cb = \Ab_G \tanh(\theta)$, where $\Ab_G$ is the adjacency matrix of the graph $G$. Notice that the matrix $\Cb$ satisfies Dobrushin's condition
$$
\lim_{l \rightarrow \infty}\|\Cb^l\|_1 = 0,
$$
since $\|\Cb\|_1 = \|\Ab_G\|_1 \tanh(\theta) < 1$, and therefore $\displaystyle \lim_{l \rightarrow \infty} \|\Cb^l\|_1 \leq \lim_{l \rightarrow \infty}\|\Cb\|^l_1 = 0$. Next we will find the discrepancy between the averaged marginal conditional measures $\PP_G$ and $\PP_{G'}$. We define
$$
b_m := \sum_{\xb \in \{\pm 1\}^{d-1}} D_{\TV}(\PP_{G,m}(\cdot | \bX_{-m} = \xb), \PP_{G',m}(\cdot | \bX_{-m} = \xb)) \PP_{G'}(\bX_{-m} = \xb).
$$
Since $G$ and $G'$ differ only on the edge $(u,v)$ we have that $b_m = 0$ unless $m = u$ or $m = v$. Let $m = u$. Put $S = \sum_{j \in \cN_m} x_j$. We have
\begin{align*}
\MoveEqLeft |\PP_{G,u}(X=x | \bX_{-u} = \xb) - \PP_{G',u}(X =x | \bX_{-u} = \xb)| \\
& = |g\bigr(\theta x S\bigr) - g\bigr(\theta x(S - x_v)\bigr)|\\
& = \frac{e^{\theta } - e^{-\theta }}{e^{2\theta S - \theta x_v }  + e^{-2\theta S + \theta x_v }  +e^{-\theta } + e^{\theta } }\\
& \leq \frac{\tanh(\theta)}{1/\cosh(\theta) + 1} \leq \tanh(\theta).
\end{align*}
Therefore we conclude that $b_u \leq \tanh(\theta)$ and hence by symmetry the same inequality also holds for $b_v$. 
Since we are interested in bounded the correlation of $X_k$ and $X_\ell$ under the two measures we define the comparison function $f : \{\pm 1\}^d \mapsto \RR$ 
$$
f(\bX) := X_k X_\ell.
$$
and the oscillation of $f$ at site $r$:
$$
\delta_r(f) := \sup \{|f(\xb) - f(\yb)| : \xb = \yb \mbox{ off } r\}.
$$
Note that $\delta_{r}(f) = 0$ if $r \not \in \{k,\ell\}$ and $\delta_k(f) = \delta_\ell(f) = 2$. Define the matrix
$$
\Db := \sum_{l = 0}^{\infty} \bigr(\tanh(\theta)\Ab_G\bigr)^l,
$$
and let $\bb$ be the vector with entries $b_m$. According to Dobrushin's comparison theorem we have
\begin{align}\label{dobrushin}
\EE_G X_k X_\ell - \EE_{G'} X_k X_\ell \leq 2 ([\bb^\top \Db]_{k} + [\bb^\top\Db]_{\ell}) .
\end{align}
%Recall that the matrix $\Ab_G$ has the property that $[\Ab_G^l]_{rm}$ encodes all paths from vertex $r$ to vertex $m$ of length $l$. Let $P^l_{G;rm}$ denote the number of paths on $G$ of length $l$ between $r$ and $m$. 
Hence (\ref{dobrushin}) is equivalent to
$$
%\EE_G X_k X_\ell - \EE_{G'} X_k X_\ell \leq 2 \sum_{l = 0}^{\infty} \bigr(\frac{\tanh(\theta)}{2}\bigr)^{l + 1} [P^l_{G; uk} + P^l_{G; vk} + P^l_{G; u\ell} + P^l_{G; v\ell}],
\EE_G X_k X_\ell - \EE_{G'} X_k X_\ell \leq 2 \sum_{l = 0}^{\infty} \bigr(\tanh(\theta)\bigr)^{l + 1} \bigr([\Ab_G^l]_{uk} + [\Ab_G^l]_{vk}  + [\Ab_G^l]_{u\ell} + [\Ab_G^l]_{v\ell}\bigr),
$$
which is what we wanted to show. 
\end{proof}

%\begin{proof}[Proof of Proposition \ref{high:temp:thm}] Let us first define some shorthand notation. We will refer to the edge set $\cN_{k, \ell}$ as $V$ and the set $E\vert_{\cN_{k,\ell}}$ as $E$ locally.
%\end{proof}
%
%\begin{lemma}[high temperature Bound]\label{biclique:lemma} Let $G$ be a graph containing a $2r$-biclique for some $r \geq 1$. Then for values of $\theta \geq \frac{3}{2r - 2}$ and any two vertices $(k,\ell)$ lying one side of the $2r$-biclique we have:
%$$
%\EE_{G} X_k X_\ell \geq 1 - \frac{2(2r-1)}{\exp(\theta 2r) + 2r - 1}.
%$$
%\end{lemma}

%\begin{lemma}[Low Temperature Bound]\label{biclique:lemma} Let $G = (V,E)$ be a graph, and $k, \ell \in V$ be vertices such that $(k,\ell) \not \in E$. Let $r \geq 2$ and $l$ are integers such that there exists an $l \times r$ biclique containing $k$ and $\ell$ on its right, i.e., $r$ side. Then for values of $\theta \geq \frac{2}{l}$, $\theta \geq \frac{3}{r - 2}$ for $r > 2$ and $\theta \geq \log 2$ when $r = 2$ we have:
%$$
%\EE_{G} X_k X_\ell \geq 1 - \frac{2(r-1)}{\exp(\theta l) + r - 1}.
%$$
%\end{lemma}

\begin{proof}[Proof of Lemma \ref{biclique:lemma}] By Griffith's inequality (Theorem \ref{FKG:ineq}) pruning edges reduces the correlations. Hence we may assume without loss of generality that $G$ is an $l \times r$-biclique. We first show the case when $r \geq 3$. By a direct calculation we have
$$
\frac{\PP_G(X_k X_\ell = 1)}{\PP_G(X_k X_\ell = -1)} = \frac{\sum_{m = 0}^{r-2} \sum_{j = 0}^{l} {r-2 \choose m} {l \choose j} \exp[\theta(r - 2m)(l - 2j)]}{\sum_{m = 0}^{r-2} \sum_{j = 0}^{l} {r-2 \choose m} {l \choose j} \exp[\theta(r - 2m - 2)(l - 2j)]}.
$$
To this end note that
\begin{align*}
\textstyle \sum_{j = 0}^{l} {l \choose j} \exp[\theta(r - 2m)(l - 2j)]  = 2^{l} \cosh(\theta(r - 2m))^{l}.
\end{align*}
Similarly,
\begin{align*}
\textstyle \sum_{j = 0}^{l} {l \choose j} \exp[\theta(r - 2m - 2)(l - 2j)] = 2^{l} \cosh(\theta(r - 2m - 2))^{l}.
\end{align*}
Hence we obtain the identity
\begin{align}\label{biclique:identity}
\frac{\PP_G(X_k X_\ell = 1)}{\PP_G(X_k X_\ell = -1)} = \frac{\sum_{m= 0 }^{r-2} {r-2 \choose m} \cosh(\theta(r - 2m))^{l}}{\sum_{m= 0 }^{r-2} {r-2 \choose m} \cosh(\theta(r - 2m - 2))^{l}}.
\end{align}
To obtain a bound on (\ref{biclique:identity}) we will search for the $\argmax$ of the denominator. We will first argue that if $m^*$ is the $\argmax$ of the denominator them $m^*$ satisfies $0 \leq m^* \leq \frac{r-2}{e}$. Recall that $\cosh$ is an even function and hence by symmetry ($m \leftrightarrow r- 2 - m$):
$$
{r-2 \choose m} = {r-2 \choose r - 2 - m}, ~~~ \cosh(\theta(r - 2m - 2)) = \cosh(\theta(r - 2(r - 2 - m) - 2)).
$$ 
Therefore, to find a maximum in the denominator we only need to focus on terms satisfying $m \leq  \lfloor (r - 2)/2\rfloor$. To show that $0 \leq m^* \leq \frac{r-2}{e}$, we begin by comparing the term at $m = 0$ with any term at $m$ satisfying $(r-2)/e < m \leq \lceil(r - 2)/2\rceil$, i.e., we will compare $\cosh(\theta(r - 2))^{l}$ to ${r-2 \choose m} \cosh(\theta(r - 2m - 2))^{l}$. First we record a well known bound on the binomial coefficients
$$
{r-2 \choose m} \leq \Big(\frac{(r-2)e}{m}\Big)^m. 
$$
We will now argue that
\begin{align}\label{technical:ineq}
\frac{\cosh(\theta(r - 2))}{\cosh(\theta(r - 2m - 2))} \geq \exp(m \theta ).
\end{align}
Direct calculation shows that the above is equivalent to
\begin{align}\label{second:technical:cond}
\exp(\theta(2r - 4 - 4m)) \geq \frac{1 - \exp(-3m \theta)}{\exp(m \theta ) - 1}.
\end{align}
Note that the function $\frac{1 - e^{-3x}}{e^x - 1}$ is decreasing (its derivative is $-e^{-3x}(2 e^x + e^{2x} + 3) < 0$), and hence reaches its maximum at small values of $\theta m$. We have:
$$
\theta m > \frac{3}{r-2}\frac{r-2}{e} = \frac{3}{e}. %\fbox{PROBLEM}
$$
It is simple to check that $\frac{1 - e^{-9/e}}{e^{3/e} - 1} < 1$. Meanwhile since $2r - 4 - 4m \geq 0$ the left hand side of (\ref{second:technical:cond}) is at least $1$ and consequently (\ref{technical:ineq}) holds. Therefore when 
$$
 \Big(\frac{(r-2)e}{m}\Big)^m < \exp(\theta m l),
$$
i.e., when $m > \frac{r-2}{e}$ (using $\theta l > 2$) we have 
$$\cosh(\theta(r - 2))^{l} \geq {r-2 \choose m} \cosh(\theta(r - 2m - 2))^{l}.$$
Hence the maximum is reached at $m^*$ such that $0 \leq m^* \leq \frac{r-2}{e}$. We therefore have the bound:
$$
\textstyle \frac{\PP_G(X_k X_\ell = 1)}{\PP_G(X_k X_\ell = -1)} \geq \frac{ {r-2 \choose m^*} \cosh(\theta(r - 2m^*))^{l}}{(r -1){r-2 \choose m^*} \cosh(\theta(r- 2m^* - 2))^{l}} = \frac{ \cosh(\theta(r - 2m^*))^{l}}{(r -1) \cosh(\theta(r - 2m^* - 2))^{l}}.
$$
Finally we will argue that
$$
\frac{ \cosh(\theta(r - 2m^*))}{\cosh(\theta(r - 2m^* - 2))}\geq \exp(\theta). 
$$
Similarly to before we need to verify:
\begin{align}\label{third:technical:cond}
\exp(\theta(2 r - 4 - 4m^*)) \geq \frac{1 - \exp(-3 \theta)}{\exp( \theta ) - 1},
\end{align}
We have $\frac{1 - \exp(-3 \theta)}{\exp( \theta ) - 1} \leq \lim_{x \rightarrow 0} \frac{1 - \exp(-3 x)}{\exp( x ) - 1} = 3$, and thus (\ref{third:technical:cond}) is implied when $\exp(\theta(2 r - 4 - 4m^*)) \geq 3$. Using the fact that $m^* \leq \frac{r-2}{e}$ we have:
$$
\exp(\theta(2r - 4 - 4m^*)) \geq \exp(\theta(2 - 4/e)(r - 2)) \geq \exp(6 - 12/e) > 3,
$$
where we used the fact that $\theta (r-2) \geq 3$. Compiling these results yields:
\begin{align}\label{prob:ratio:ineq}
\frac{\PP_G(X_k X_\ell = 1)}{\PP_G(X_k X_\ell = -1)} \geq \frac{\exp(\theta l)}{r-1}
\end{align}
Hence
$$
\EE_{G} X_k X_\ell = \PP_G(X_k X_\ell = 1) - \PP_G(X_k X_\ell = -1) \geq 1 - \frac{2(r-1)}{\exp(\theta l) + r - 1},
$$
as we claimed. This completes the proof when $r > 2$. In the case when $r = 2$, a direct calculation shows
$$
\frac{\PP_G(X_k X_\ell = 1)}{\PP_G(X_k X_\ell = -1)} = \cosh(2 \theta)^{l}.
$$
Clearly then when $\theta \geq \log 2$, we have $\cosh(2 \theta) \geq \exp(\theta)$, and the proof can proceed in the same way as before. With this the result is complete. 
\end{proof}

\begin{remark} Theorem \ref{biclques:theorem} demonstrates that some strongly monotone properties have upper bound signal strength limitations. In fact it is evident from the proof that the scaling on $\theta$ in (\ref{theta:scaling:upper:bound:max:monotone}) can be slightly improved. It turns out that under the same conditions, it suffices that if 
$$
\frac{2 \tanh(\theta)(r - 1)}{\exp(\theta l) + r - 1} = o\bigg(\log \frac{d}{|l + r|}\bigg),
$$
holds instead of (\ref{theta:scaling:upper:bound:max:monotone}) we still have that the minimax risk goes to $1$ asymptotically.
\end{remark}

\begin{proof}[Proof of Example \ref{conn:example} cont'd] 

Fix $0 < \alpha < 1$ such that $\log d \geq n (\log 2)/\alpha $. For simplicity suppose that the quantities $\tilde d = d/2$, $\tilde d^{\alpha}$ and $\tilde d^{1-\alpha}$ are integer. If they are not, one just simply needs to round them and the proof goes through.  We will construct a graph $G_0$ under the null hypothesis consisting of two equal paths with $\tilde d$ vertices each (see Figure \ref{path:graph:conn:under:h0}). Label the vertices on the upper path as $1,2,\ldots, \tilde d$ and on the lower path as $\tilde d + 1, \tilde d + 2, \ldots, 2 \tilde d$. To construct alternative graphs take the set of edges $\{(1,\tilde d + 1), (\tilde d^{1 - \alpha} + 1, \tilde d + \tilde d^{1 - \alpha} + 1),  (2\tilde d^{1-\alpha} + 1, \tilde d + 2\tilde d^{1-\alpha} + 1), \ldots, ((\tilde d^{\alpha} - 1)\tilde d^{1-\alpha} + 1, \tilde d + (\tilde d^{\alpha} - 1)\tilde d^{1-\alpha} + 1)\}$, which consists of $\tilde d^{\alpha}$ edges. Let $\PP_0$ be the mixture of the uniform $\theta$ signal Ising model with graph $G_0$ and $\overline \PP$ be the uniform mixture of adding any of the aforementioned edges to $G_0$. By Lemma \ref{lemma:chi:sq} we have the following bounds:

%In this example it is more convenient to use an ``edge deletion'' rather than ``edge addition'' strategy in order to obtain a bound. The mixture will be created under the null hypothesis. The graph under the alternative is set to be the fixed path graph on Figure \ref{path:graph:conn:under:h1}, while under the null there are $d-1$ graphs each of which deletes one edge from the graph under the alternative as shown in Figure \ref{path:graph:conn:under:h0}. Let $\PP$ denote the measure under the alternative, and $\PP_j$ be the Ising measures of uniform signal on each of the $d-1$ graphs under the null, and let $\overline \PP$ be their uniform mixture. 
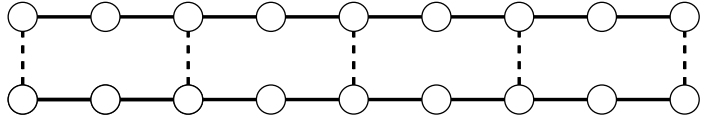
\begin{figure}[H] \centering
\begin{tikzpicture}[scale=.55]
\SetVertexNormal[Shape      = circle,
                  FillColor  = white,
                  MinSize = 11pt,
                  InnerSep=0pt,
                  LineWidth = .5pt]
   \SetVertexNoLabel
   \tikzset{LabelStyle/.style = {below, fill = white, text = black, fill opacity=0, text opacity = 1}}
   \tikzset{EdgeStyle/.style= {thin,
                                double          = black,
                                double distance = .5pt}}
    \begin{scope}\grPath[prefix=a,RA=2]{3}\end{scope}
%                                        \Edge(a2)(a3)
   \tikzset{EdgeStyle/.style= { thin,
                                double          = black,
                                double distance = .5pt}}
     \begin{scope}[shift={(0,2)}]\grPath[prefix=b,RA=2]{9}\end{scope}
         \begin{scope}\grPath[,RA=2]{9}\end{scope}
         
            \tikzset{EdgeStyle/.style= {dashed,thin,
                                double          = black,
                                double distance = .5pt}}

%     \Edge[label=$e\protect\vphantom{'}$](a3)(b1)
          \Edge(a0)(b0)
%          \Edge(a1)(b1)
          \Edge(a2)(b2)
%          \Edge(a3)(b3)          
          \Edge(a4)(b4)
%          \Edge(a5)(b5)
          \Edge(a6)(b6)
%          \Edge(a7)(b7)          
          \Edge(a8)(b8)
\end{tikzpicture}\caption{A graph under the null hypothesis with solid edges; graphs under the alternative hypothesis are produced by adding any of the dashed edges to the solid edges.}\label{path:graph:conn:under:h0}
\end{figure}
\vspace{-1.2cm}
\begin{align*}
D_{\chi^2}(\overline \PP^{\otimes n}, \PP^{\otimes n}_0) &= \frac{1}{\tilde d^{2\alpha}}\sum_{k,j = 1}^{\tilde d^\alpha}\bigg(1 + \tanh(\theta) [\EE_{j} X_{u_k} X_{v_k} - \EE_{0} X_{u_k} X_{v_k}]\bigg)^n - 1\\
& \leq \frac{(1 + \tanh^2(\theta))^n}{\tilde d^\alpha} + (1 + [\tanh(\theta)]^{2\tilde d^{1-\alpha} + 2})^{n} - 1
\end{align*}
where the last inequality follows by Lemma \ref{chain:graph} and Proposition \ref{restriction:prop}, and the fact that any two non-coinciding edges are at least $2\tilde d^{1-\alpha} + 1$ apart. Using $\log d \geq C n$ (for $C \geq (\log 2)/\alpha$) we continue the bound
$$
D_{\chi^2}(\overline \PP^{\otimes n}, \PP^{\otimes n}_0) \leq \frac{2^n}{\tilde d^\alpha} + (1 + [\tanh(\theta)]^{2\exp((1-\alpha)Cn)})^{n} - 1.
$$
Supposing that $C \geq \log2/\alpha$ we have $\frac{2^n}{\tilde d^\alpha} \rightarrow 0$. Next we show that the second term is $1 + o(1)$ unless $\tanh(\theta) = 1$ asymptotically. Suppose that for any $\varepsilon > 0$ we have $\tanh(\theta) < 1 - \varepsilon$ asymptotically. We have:
$$
(1 + [\tanh(\theta)]^{2\exp((1-\alpha)Cn)})^{n} - 1 \leq \exp(n[\tanh(\theta)]^{2\exp((1-\alpha)Cn)}) - 1 = o(1).
$$
Thus we complete the proof. 
\end{proof}

\section{Correlation Screening for Ferromagnets}\label{corr:screening:proofs}

\begin{lemma}\label{simple:concentration:lemma} Given $n$ samples from an Ising model satisfying (\ref{ising:model:measure:anti:full:generality}). We have that
\begin{align}\label{union:bound:cov:conc}
\PP_{\theta, G_{\wb}}\bigg(\max_{u,v\in [d]}|\hat \EE X_u X_v - \EE_{\theta, G_{\wb}} X_u X_v| \geq \varepsilon\bigg) \leq 2 {d \choose 2} e^{-n\varepsilon^2/2}.
\end{align}
Note that since (\ref{ferromagnet:def}) is a special case of (\ref{ising:model:measure:anti:full:generality}), the same conclusion is also valid for ferromagnets satisfying (\ref{ferromagnet:def}).
\end{lemma}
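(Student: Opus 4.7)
The plan is to combine a standard Hoeffding concentration bound for bounded i.i.d.\ random variables with a union bound over all pairs of vertices. Fix a pair $(u,v) \in \binom{[d]}{2}$. Under $\PP_{\theta, G_{\wb}}^{\otimes n}$, the quantities $X_u^{(i)} X_v^{(i)}$ for $i = 1, \ldots, n$ are i.i.d.\ random variables taking values in $\{-1, +1\}$ (hence almost surely bounded in an interval of length $2$), with common mean $\EE_{\theta, G_{\wb}} X_u X_v$.

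Next I would apply Hoeffding's inequality to the average $\hat\EE X_u X_v = n^{-1} \sum_{i=1}^n X_u^{(i)} X_v^{(i)}$: since each summand has range at most $2$, we obtain
$$
\PP_{\theta, G_{\wb}}\bigl(|\hat \EE X_u X_v - \EE_{\theta, G_{\wb}} X_u X_v| \geq \varepsilon\bigr) \leq 2 \exp\!\left(-\frac{2 n^2 \varepsilon^2}{n \cdot 2^2}\right) = 2 \exp(-n\varepsilon^2/2).
$$

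Finally I would take a union bound over all $\binom{d}{2}$ distinct pairs $(u,v)$ to conclude
$$
\PP_{\theta, G_{\wb}}\!\left(\max_{u,v\in [d]}|\hat \EE X_u X_v - \EE_{\theta, G_{\wb}} X_u X_v| \geq \varepsilon\right) \leq 2 \binom{d}{2} e^{-n\varepsilon^2/2},
$$
which is exactly \eqref{union:bound:cov:conc}. Since the claim only uses that $X_u X_v \in [-1,1]$ and the samples are i.i.d., and nothing about the sign structure of the weights, the argument applies verbatim to the ferromagnetic submodel \eqref{ferromagnet:def} as well. There is no genuine obstacle here --- the only point worth double-checking is the constant in Hoeffding's bound, which gives $\exp(-2 n \varepsilon^2 / (b-a)^2)$ with $b-a = 2$, yielding the stated $\exp(-n\varepsilon^2/2)$.
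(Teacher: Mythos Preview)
Your proposal is correct and matches the paper's own proof, which simply states that the lemma ``is a direct consequence of Hoeffding's inequality and the union bound.'' You have in fact written out more detail than the paper does, and your check of the Hoeffding constant ($b-a=2$ giving the exponent $-n\varepsilon^2/2$) is exactly right.
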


\begin{proof}[Proof of Lemma \ref{simple:concentration:lemma}] This is a direct consequence of Hoeffding's inequality \citep{boucheron2013concentration} and the union bound.
\end{proof}

\begin{proof}[Proof of Theorem \ref{generic:property:test:thm}] Recall the definitions of $\cT$ (\ref{min:corr:under:alt}) and $\tau$ (\ref{tau:univ:thresh}). Using Lemma \ref{simple:concentration:lemma} we can guarantee that if $G_{\wb} \in \cG_1 \cap \cR$
$$
\min_{e \in \hat G} M_e \geq \cT - \tau \geq \underline{\cT} - \tau,
$$
with probability at least $1 - \delta$. This immediately shows that
$$
\sup_{G_{\wb} \in \cG_1 \cap \cR} \PP_{\theta, G_{\wb}}(\psi = 0) \leq \delta.
$$
Next since $\hat G \in \cW$, we have that if $\cP(G) = 0$
$$
%\min_{e \in \hat G} M_e \leq \max_{G_{\wb} \in \cG_0 \cap \cR} \max_{G' \in \cW} \min_{(u,v) \in E(G')} \EE_{\theta, G_{\wb}} X_u X_v + \tau,
\min_{e \in \hat G} M_e \leq \cQ + \tau,
$$
on an event of probability at least $1- \delta$. When (\ref{abstract:corr:algo:assumption}) holds we have
$$
\min_{e \in \hat G} M_e < \underline{\cT} - \tau,
$$
and thus Algorithm \ref{generic:property:test} will return $\psi = 0$. This completes the proof. 
\end{proof}

\begin{lemma}[Generic No Edge Correlation Upper Bound]\label{generic:upper:bound:lemma} Assume we have a ferromagnetic Ising model as specified by (\ref{ferromagnet:def}) where all weights $w_{uv}$ satisfy $0 < w_{uv} \leq \Theta/\theta$. Then if the maximum degree of $G$ is bounded by $s \geq 3$, for any two vertices $k$ and $\ell$ such that $(k,\ell) \not \in E(G)$ we have
\begin{align}\label{generic:no:edge:bound}
\textstyle \EE_{\theta, G_{\wb}} X_k X_\ell \leq \frac{\cosh(2s\Theta) + 2se^{-2(s-1)\Theta}(\cosh(2(s-1)\Theta) - \cosh(2\Theta)) - 1}{\cosh(2s\Theta) + 2se^{-2(s-1)\Theta}(\cosh(2(s-1)\Theta) + \cosh(2\Theta)) + 1} = \frac{R(s,\Theta) - 1}{R(s,\Theta) + 1}.
%\tanh^2(s\Theta)\bigg[1 -  \frac{\exp(-2s\Theta)}{2}\bigg(1 - \frac{\tanh((s-2)\Theta)}{\tanh(s\Theta)}\bigg)\bigg]. % \frac{\cosh(2s\Theta) - 1}{\cosh(2s\Theta) + 1}.
\end{align}
If $s = 2$ we have 
$$
\textstyle \EE_{\theta, G_{\wb}} X_k X_\ell \leq \frac{\cosh(4\Theta) - 1}{\cosh(4\Theta) + 3}.
$$
Finally if $s = 1$, $\EE_{\theta, G_{\wb}} X_k X_\ell  = 0$.
\end{lemma}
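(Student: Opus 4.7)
Since $t \mapsto (t-1)/(t+1)$ is increasing and $\EE X_k X_\ell = (R'-1)/(R'+1)$ for
\[
R' \;:=\; \frac{\PP_{\theta, G_{\wb}}(X_k X_\ell = 1)}{\PP_{\theta, G_{\wb}}(X_k X_\ell = -1)},
\]
it suffices to show $R' \le R(s,\Theta)$ (with the appropriate specialisation for $s=2$). The case $s = 1$ is immediate: when $\maxdeg(G) \le 1$ and $(k,\ell) \notin E(G)$, the vertices $k$ and $\ell$ lie in disjoint connected components, so $X_k$ and $X_\ell$ are independent and each has mean zero by the spin-flip symmetry of the zero-field model.

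For $s \ge 2$, I plan to use two ingredients. First, Griffith's inequality (Theorem \ref{FKG:ineq}) implies that $\EE X_k X_\ell$ is monotone non-decreasing under (a) increases of any positive weight and (b) addition of any edge other than $(k,\ell)$. Hence, without loss of generality, we may assume every edge has weight exactly $\Theta/\theta$ and every vertex (including $k$ and $\ell$) has degree $s$. Second, since $(k,\ell) \notin E(G)$, conditioning on $X_{V \setminus \{k,\ell\}}$ makes $(X_k, X_\ell)$ independent with local fields $\alpha_k := \Theta \sum_{j \in N(k)} X_j$ and $\alpha_\ell := \Theta \sum_{j \in N(\ell)} X_j$. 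Summing $(X_k, X_\ell)$ out of the Gibbs weights gives the exact identity
\[
R' \;=\; \frac{\sum_{X_{V \setminus \{k,\ell\}}} \cosh(\alpha_k + \alpha_\ell)\, e^{-H_{\mathrm{rest}}}}{\sum_{X_{V \setminus \{k,\ell\}}} \cosh(\alpha_k - \alpha_\ell)\, e^{-H_{\mathrm{rest}}}},
\]
where $H_{\mathrm{rest}}$ collects the Hamiltonian terms not incident to $\{k,\ell\}$.

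The remaining step is to reduce this ratio to the combinatorial form $R(s,\Theta)$. Both $\alpha_k$ and $\alpha_\ell$ take values in $\Theta \cdot \{-s,-s+2,\ldots,s\}$, so each summation is effectively a weighted sum over configurations of the at most $2s$ spins in $N(k) \cup N(\ell)$. I would upper-bound the numerator and lower-bound the denominator by the contributions of two extremal classes. The \emph{fully aligned} class (all spins in $N(k)\cup N(\ell)$ equal) has $|\alpha_k + \alpha_\ell| = 2s\Theta$ and $\alpha_k - \alpha_\ell = 0$, producing $\cosh(2s\Theta)$ in the numerator and $1$ in the denominator of $R$. The $2s$ \emph{singly flipped} configurations each carry a Boltzmann penalty $e^{-2(s-1)\Theta}$ from breaking $s-1$ aligned bonds, with $|\alpha_k + \alpha_\ell| = 2(s-1)\Theta$ and $|\alpha_k - \alpha_\ell| = 2\Theta$, contributing the $2s\,e^{-2(s-1)\Theta}\cosh(2(s-1)\Theta)$ and $2s\,e^{-2(s-1)\Theta}\cosh(2\Theta)$ terms respectively. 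The $s=2$ case is handled by the same enumeration, but since $\cosh(2(s-1)\Theta) = \cosh(2\Theta)$ collapses, one obtains the cleaner denominator $\cosh(4\Theta) + 3$ directly.

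The main obstacle will be the last step: the effective measure on $X_{V \setminus \{k,\ell\}}$ induced by integrating out $X_k, X_\ell$ carries extra $\cosh(\alpha_k)\cosh(\alpha_\ell)$ factors which are not manifestly ferromagnetic on the remaining spins, and one must rule out the possibility that intermediate (neither aligned nor singly flipped) configurations dominate either sum. I expect to control this by a pointwise comparison argument, showing that $\cosh(\alpha_k + \alpha_\ell) \le R(s,\Theta) \cdot \cosh(\alpha_k - \alpha_\ell)$ on every configuration of $N(k) \cup N(\ell)$, so that the inequality $R' \le R(s,\Theta)$ follows from the identity for $R'$ without needing any further information about $H_{\mathrm{rest}}$; the $2s$ combinatorial multiplicity then enters only as the degeneracy of the single-flip class used to produce the tight constants in the comparison.
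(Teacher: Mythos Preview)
Your reduction via Griffith's inequality and the identity
\[
R' \;=\; \frac{\sum_{X_{V \setminus \{k,\ell\}}} \cosh(\alpha_k + \alpha_\ell)\, e^{-H_{\mathrm{rest}}}}{\sum_{X_{V \setminus \{k,\ell\}}} \cosh(\alpha_k - \alpha_\ell)\, e^{-H_{\mathrm{rest}}}}
\]
are both correct and match the paper's starting point. The gap is in the final step: the pointwise inequality $\cosh(\alpha_k+\alpha_\ell)\le R(s,\Theta)\cosh(\alpha_k-\alpha_\ell)$ is \emph{false}. At the fully aligned configuration you yourself identified, $\alpha_k-\alpha_\ell=0$ and $|\alpha_k+\alpha_\ell|=2s\Theta$, so the pointwise ratio equals $\cosh(2s\Theta)$. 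But a quick cross-multiplication shows $\cosh(2s\Theta)\le R(s,\Theta)$ is equivalent to $\cosh(2s\Theta)\cosh(2\Theta)\le\cosh(2(s-1)\Theta)$, i.e.\ $\cosh(2(s+1)\Theta)\le\cosh(2(s-1)\Theta)$, which fails for every $\Theta>0$. So you cannot discard $H_{\mathrm{rest}}$: the bound $R'\le R(s,\Theta)$ holds only because the aligned configurations, where the pointwise ratio is worst, carry relatively \emph{more} weight under $e^{-H_{\mathrm{rest}}}$ than the single-flip configurations---and it is precisely this weight ratio that produces the $e^{-2(s-1)\Theta}$ factor in $R$.

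The paper's argument makes this explicit. It partitions configurations of $N(k)\cup N(\ell)$ into three classes according to whether $|\alpha_k+\alpha_\ell|$ equals $2s\Theta$, $2(s-1)\Theta$, or is at most $2(s-2)\Theta$; writes $\Sigma_i=\sum_{S_i}e^{-H_{\mathrm{rest}}}$; and bounds
\[
R' \;\le\; \frac{\cosh(2s\Theta)\,\Sigma_1+\cosh(2(s-2)\Theta)\,\Sigma_2+\cosh(2(s-1)\Theta)\,\Sigma_3}{\Sigma_1+\Sigma_2+\cosh(2\Theta)\,\Sigma_3}.
\]
The crucial step is then to lower-bound $\Sigma_2/\Sigma_1$ and $\Sigma_3/\Sigma_1$ by comparing a single-flip configuration to the aligned one: flipping a spin in $N(k)\cap N(\ell)$ (resp.\ $N(k)\triangle N(\ell)$) changes at most $s-2$ (resp.\ $s-1$) bonds of $H_{\mathrm{rest}}$, giving $\Sigma_2/\Sigma_1\ge m\,e^{-2(s-2)\Theta}$ and $\Sigma_3/\Sigma_1\ge 2(s-m)\,e^{-2(s-1)\Theta}$ where $m=|N(k)\cap N(\ell)|$. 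Substituting these ratios and using an elementary monotonicity of $(a+bx)/(c+dx)$, then maximising over $m$, yields $R(s,\Theta)$. Your second paragraph was actually on the right track---the $e^{-2(s-1)\Theta}$ really is the cost of flipping one neighbor as seen by $H_{\mathrm{rest}}$---but you need to feed it into a ratio-of-sums argument, not a pointwise one.
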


\begin{proof}[Proof of Lemma \ref{generic:upper:bound:lemma}] First, by Griffith's inequality (Theorem \ref{FKG:ineq}) it follows that since all $0\leq w_{uv} \leq\Theta /\theta$ the correlation $\EE_{\theta, G_{\wb}} X_k X_\ell$ will only increase if we were to set all $w_{uv} = \Theta/\theta$. Hence we may assume that all $w_{uv} = \Theta/\theta$ for all $u,v$. 

Let $\cN_k$ and $\cN_\ell$ denote the neighbors of vertices $k$ and $\ell$ respectively. Again, due to Griffith's inequality adding edges increases correlations so we can assume that $|\cN_k| = |\cN_\ell| = s$. 
%For any $\xb \in \{\pm 1\}^{d}$ the following formula of conditional correlation can be easily verified:
%$$
%\EE_{\btheta} [X_k X_\ell | \bX_{\cN} = \xb_{\cN}] = \tanh\bigr(\Theta\sum_{j \in \cN_k} x_j \bigr)\tanh\bigr(\Theta \sum_{j \in \cN_{\ell}} x_j \bigr),
%$$
%where by sub-indexing a vector with a set we mean taking the restriction of the vector to coordinates in the set. Take an arbitrary verticex $u \in \cN_k$. For any $\xb \in \{\pm 1\}^{d}$ define a vectors $\xb^k, \xb^\ell \in \{\pm 1\}^{d}$ by $x^k_j = x_j, x^\ell_j = x_j$ for all $j \neq u$ and $x^k_u = 1, x^\ell_u = \mathbbm{1}(u \in \cN_\ell)$. By the rule of total expectation
%\begin{align*}
%\MoveEqLeft  \EE_{\btheta} [X_k X_\ell | \bX_{\cN\setminus \{u\}} = \xb_{\cN\setminus \{u\}}] =  \\
%& \sum_{\xi \in \{\pm 1\}}\prod_{m \in \{k,\ell\}} \tanh\bigr(\Theta(\sum_{j \in \cN_m \setminus \{u\}} x^m_j +\xi x^m_u)\bigr) \PP_{\btheta}(X_u = \xi | \bX_{\cN\setminus \{u\}} = \xb_{\cN\setminus \{u\}}). \end{align*}
%By Lemma 2.1 of \cite{bresler2015efficiently} we have that for any set $\cS$, $u \not \in \cS$ and $\xb \in \{\pm 1\}^{d}$
%$$
%\min \bigr[ \PP_{\btheta}(X_u = 1 | \bX_{\cS} = \xb_{\cS}),  \PP_{\btheta}(X_u = -1 | \bX_{\cS} = \xb_{\cS}) \bigr] \geq \frac{1}{2} \exp(-2s\Theta) := p.
%$$
%Since $|\cN_k| \leq s$, it is now simple to verify that for all values $\xb_{\cN\setminus u}$
%$$
%\EE_{\btheta} [X_k X_\ell | \bX_{\cN\setminus u} ] \leq \tanh^2(s\Theta)(1 - \frac{1}{2}\exp(-2s\Theta)) + \frac{1}{2}\exp(-2s\Theta)\tanh(s\Theta)\tanh((s-2)\Theta).
%$$
%The rule of total expectation completes the proof.
We have
\begin{align}\label{master:prob:ration:id}
& \textstyle \frac{ \PP_{\theta, G_{\wb}}(X_k X_\ell = 1)}{\PP_{\theta, G_{\wb}}(X_k X_\ell = -1)}=  \\
& \textstyle \frac{\sum_{\xb \in \{\pm 1\}^d}\sum_{\xi \in \{\pm 1\}} \exp(\xi\Theta (\sum_{u \in \cN_k} x_u + \sum_{u \in \cN_\ell} x_u) + \Theta\sum_{(u,v) \in E(G),  k,\ell \not\in \{u, v\}} x_ux_v)}{\sum_{\xb \in \{\pm 1\}^d}\sum_{\xi \in \{\pm 1\}} \exp(\xi\Theta (\sum_{u \in \cN_k} x_u - \sum_{u \in \cN_\ell} x_u) + \Theta \sum_{(u,v) \in E(G),  k,\ell \not\in \{u, v\}} x_ux_v)}. \nonumber
\end{align}
Let $\cN = \cN_k \cap \cN_\ell$ and suppose $|\cN| = m$ for $0 \leq m \leq s$. Put $\overline \cN_k := \cN_k \setminus \cN$ and similarly let $\overline \cN_\ell := \cN_\ell \setminus \cN$. For a vector $\xb \in \RR^d$ and a set $S \subseteq [d]$ let $\xb_{S} = (x_i)_{i \in S}$. For $\xi \in \{\pm 1\}$ define the sets 
\begin{align*}
%S^{\xi}_1 & := \bigr\{\xb_{[d]} \in \{\pm 1\}^{d} ~|~ \xb_{\cN_k \cup \cN_\ell} = \xi \mathbf{1}_{\cN_k \cup \cN_\ell}\bigr\},\\ 
%S^{\xi}_2 & := \bigr\{\xb_{[d]} \in \{\pm 1\}^{d} ~|~ \exists u \in \cN: \xb_{\cN_k \cup \cN_\ell \setminus \{u\}} = \xi \mathbf{1}_{\cN_k \cup \cN_\ell \setminus \{u\}}, x_u = -\xi \bigr\},
 S_1 & := \textstyle \{\xb \in \{\pm 1\}^{d} ~|~ |\sum_{u \in \cN_k} x_u + \sum_{u \in \cN_\ell} x_u| = 2s\},\\ 
  S_2 & := \textstyle \{\xb \in \{\pm 1\}^{d} ~|~ |\sum_{u \in \cN_k} x_u + \sum_{u \in \cN_\ell} x_u| \leq 2(s-2) \},
\end{align*} 
and let $S_3 := \{\pm 1\}^{d} \setminus (S_1 \cup S_2)$. We now record several identities for the three sets. For any $\xb \in S_1$ we have
\begin{align*}
\textstyle \sum\limits_{\xi \in \{\pm 1\}}\exp(\xi\Theta (\sum\limits_{u \in \cN_k} x_u + \sum\limits_{u \in \cN_\ell} x_u)) & = 2 \cosh(2s\Theta), \\
\textstyle \sum\limits_{\xi \in \{\pm 1\}}\exp(\xi\Theta (\sum\limits_{u \in \cN_k} x_u - \sum\limits_{u \in \cN_\ell} x_u)) & = 2.
\end{align*}
The first identity holds by the definition of $S_1$, while the second holds since if $\xb \in S_1$ it follows that $\sum_{u \in \cN_k} x_u - \sum_{u \in \cN_\ell} x_u = 0$. For any $\xb \in S_2$:
\begin{align*}
\textstyle \sum\limits_{\xi \in \{\pm 1\}}\exp(\xi\Theta (\sum\limits_{u \in \cN_k} x_u + \sum\limits_{u \in \cN_\ell} x_u)) & \leq 2 \cosh(2(s-2)\Theta),\\
\textstyle  \sum\limits_{\xi \in \{\pm 1\}}\exp(\xi\Theta (\sum\limits_{u \in \cN_k} x_u - \sum\limits_{u \in \cN_\ell} x_u)) & \geq 2.
\end{align*}
The first inequality follows by the definition of $S_2$, and the second is true since since for any $x \in \RR$ $e^{x} + e^{-x} \geq 2$. Finally for any $\xb \in S_3$:
\begin{align*}
\textstyle \sum\limits_{\xi \in \{\pm 1\}}\exp(\xi\Theta (\sum\limits_{u \in \cN_k} x_u + \sum\limits_{u \in \cN_\ell} x_u)) & = 2 \cosh(2(s-1)\Theta), \\
\textstyle \sum\limits_{\xi \in \{\pm 1\}}\exp(\xi\Theta (\sum\limits_{u \in \cN_k} x_u - \sum\limits_{u \in \cN_\ell} x_u)) & \geq 2\cosh(2\Theta).
\end{align*}
To see why the first identity holds, note that the sum $\sum_{u \in \cN_k} x_u + \sum_{u \in \cN_\ell} x_u$ contains $2s$ odd terms (equal to $\pm 1$), and is therefore even. Hence since $2(s-2) < \bigr|\sum_{u \in \cN_k} x_u + \sum_{u \in \cN_\ell} x_u\bigr| < 2s$ we must have $\bigr|\sum_{u \in \cN_k} x_u + \sum_{u \in \cN_\ell} x_u\bigr| = 2(s-1)$. To show why the inequality holds suppose the contrary, i.e., suppose $\bigr|\sum_{u \in \cN_k} x_u - \sum_{u \in \cN_\ell} x_u\bigr| \leq 1$. Since $\sum_{u \in \cN_k} x_u - \sum_{u \in \cN_\ell} x_u = \sum_{u \in \overline \cN_k} x_u - \sum_{u \in \overline \cN_\ell} x_u$ and $|\overline \cN_k \cup \overline \cN_\ell| = 2(s - m)$, the sum $\sum_{u \in \overline \cN_k} x_u - \sum_{u \in \overline \cN_\ell} x_u$ is even. Hence the only possibility is $\sum_{u \in \overline \cN_k} x_u = \sum_{u \in \overline \cN_\ell} x_u$, and therefore $2 \bigr| \sum_{\cN_k} x_u\bigr| = 2(s-1)$. The latter is impossible since $\sum_{\cN_k} x_u \equiv s~ (\operatorname{mod} 2)$. 

We first show the result when $s \geq 3$. Let $\mathbf{1} \in \RR^d$ denote a vector of $1$'s. Observe the following inclusions
\begin{align*}
\textstyle \overline S_1:= & \{\xb \in \{\pm 1\}^{d} ~|~ \exists\xi \in \{\pm 1\}, \xb_{\cN_k \cup \cN_\ell} = \xi \mathbf{1}_{\cN_k \cup \cN_\ell}\} = S_1 ,\\ 
\textstyle \overline S_2 := & \{\xb \in \{\pm 1\}^{d} ~|~  \exists u \in \cN: \xb_{\cN_k \cup \cN_\ell \setminus \{u\}} = -x_u \mathbf{1}_{\cN_k \cup \cN_\ell \setminus \{u\}}\} \subseteq S_2,\\
\textstyle \overline S_3 := & \{\xb \in \{\pm 1\}^{d} ~|~ \exists u \in \overline \cN_k \cup \overline \cN_\ell: \xb_{\cN_k \cup \cN_\ell \setminus \{u\}} = -x_u \mathbf{1}_{\cN_k \cup \cN_\ell \setminus \{u\}} \} \subseteq S_3.
\end{align*}
Notice that when $s \geq 3$: $|\overline S_2| \geq m|S_1|$ and $|\overline S_3| \geq 2(s - m)|S_1|$. For $i \in [3]$ define the three sums $\Sigma_i :=\sum_{\xb \in S_i} \exp(\Theta\sum_{(u,v) \in E(G),  k,\ell \not\in \{u, v\}} x_ux_v)$.
Using (\ref{master:prob:ration:id}) and the identities we recorded above we conclude 
\begin{align}\label{first:ineq:prob:ratio}
\textstyle \frac{ \PP_{\theta, G_{\wb}}(X_k X_\ell = 1)}{\PP_{\theta, G_{\wb}}(X_k X_\ell = -1)} \leq \frac{\cosh(2s\Theta) \Sigma_1 + \cosh(2(s-2)\Theta)\Sigma_2 + \cosh(2(s-1)\Theta)\Sigma_3}{\Sigma_1 + \Sigma_2 + \cosh(2\Theta)\Sigma_3}.
\end{align}
We now use the following elementary inequality which can be checked via cross multiplication: for positive real numbers $a,b,c,d,x,y >0 $ such that $\frac{a}{c} \geq \frac{b}{d}$ and $x \geq y$ it holds that
\begin{align}\label{elementary:fraq:ineq}
\frac{a + b x}{c + d x} \leq \frac{a + b y}{c + d y}.
\end{align}
Taking $x = \infty, y = 1$ in (\ref{elementary:fraq:ineq}) implies that
$$
\textstyle \frac{\cosh(2s\Theta) \Sigma_1 + \cosh(2(s-1)\Theta)\Sigma_3}{\Sigma_1 +  \cosh(2\Theta)\Sigma_3} \geq \cosh(2s\Theta) \wedge \frac{\cosh(2(s-1)\Theta)}{\cosh(2\Theta)} \geq \cosh(2(s-2)\Theta),
$$
where the last inequality follows by simple algebra and the fact that $\cosh(x)$ is increasing for $x \geq 0$. We now record the inequality
$$
\textstyle \frac{\Sigma_2}{\Sigma_1} \geq \frac{\sum_{\xb \in \overline S_2} \exp(\Theta\sum_{(u,v) \in E(G),  k,\ell \not\in \{u, v\}} x_ux_v)}{ \sum_{\xb \in S_1} \exp(\Theta\sum_{(u,v) \in E(G),  k,\ell \not\in \{u, v\}} x_ux_v)} \geq m e^{-2(s-2)\Theta},
$$
which follows by the fact that $|\overline S_2| \geq m|S_1|$ and that all vertices $u \in \cN$ are connected to at most $s-2$ vertices in the set $[d] \setminus \{k, \ell\}$. Combining the last two inequalities with (\ref{first:ineq:prob:ratio}) and (\ref{elementary:fraq:ineq}) we obtain 
\begin{align}\label{second:ineq:prob:ratio}
\textstyle \frac{ \PP_{\theta, G_{\wb}}(X_k X_\ell = 1)}{\PP_{\theta, G_{\wb}}(X_k X_\ell = -1)} \leq \frac{ \cosh(2s\Theta)  + \cosh(2(s-2)\Theta)m \exp(-2(s-2)\Theta) + \cosh(2(s-1)\Theta)\frac{\Sigma_3}{\Sigma_1} }{1 + m \exp(-2(s-2)\Theta) + \cosh(2\Theta)\frac{\Sigma_3}{\Sigma_1} }.
\end{align}
We now consider two cases.\\
\noindent {\bf Case I.}  Suppose
\begin{align}\label{caseI:working:assump}
\frac{\cosh(2s\Theta)  + \cosh(2(s-2)\Theta)m e^{-2(s-2)\Theta} }{1 + m e^{-2(s-2)\Theta} } \leq \frac{\cosh(2(s-1)\Theta) }{ \cosh(2\Theta)}.
\end{align}
Taking $y = 0$ in (\ref{elementary:fraq:ineq}), and using (\ref{second:ineq:prob:ratio}) yields
\begin{align}\label{caseI:ineq:no:edge:corr}
\frac{ \PP_{\theta, G_{\wb}}(X_k X_\ell = 1)}{\PP_{\theta, G_{\wb}}(X_k X_\ell = -1)} \leq \frac{\cosh(2(s-1)\Theta) }{ \cosh(2\Theta)}.
\end{align}
\noindent {\bf Case II.} Assume that (\ref{caseI:working:assump}) holds in the opposite direction. We have
$$
\textstyle \frac{\Sigma_3}{\Sigma_1} \geq \frac{\sum_{\xb \in \overline S_3} \exp(\Theta\sum_{(u,v) \in E(G),  k,\ell \not\in \{u, v\}} x_ux_v)}{\sum_{\xb \in S_1} \exp(\Theta\sum_{(u,v) \in E(G),  k,\ell \not\in \{u, v\}} x_ux_v)} \geq 2(s-m) e^{-2(s-1)\Theta},
$$
where we used that $|\overline S_3| \geq 2(s-m)|S_1|$ and that for any $\xb \in \overline S_3$ the vertex $u \in \overline \cN_k \cup \overline \cN_\ell$ is connected to at most $s-1$ vertices in the sed $[d] \setminus \{k, \ell\}$. (\ref{elementary:fraq:ineq}) and (\ref{second:ineq:prob:ratio}) imply
$$
\textstyle \frac{ \PP_{\theta, G_{\wb}}(X_k X_\ell = 1)}{\PP_{\theta, G_{\wb}}(X_k X_\ell = -1)} \leq \frac{\cosh(2s\Theta)  + \cosh(2(s-2)\Theta)m e^{-2(s-2)\Theta} + \cosh(2(s-1)\Theta)2(s-m) e^{-2(s-1)\Theta}}{1 + me^{-2(s-2)\Theta} + \cosh(2\Theta)2(s-m) e^{-2(s-1)\Theta}}.
$$
Taking the supremum over $0 \leq m \leq s$ on the RHS shows that the maximum is reached at $m \equiv 0$. The latter can be verified via comparing consecutive values of $m$ and arguing that the RHS is decreasing in $m$; we omit this lengthly calculation. Finally a simple comparison between the RHS of the last bound evaluated at $m = 0$ and the RHS of (\ref{caseI:ineq:no:edge:corr}) shows that the last bound is larger. Putting everything together we conclude that for $s \geq 3$
$$
\textstyle \frac{ \PP_{\theta, G_{\wb}}(X_k X_\ell = 1)}{\PP_{\theta, G_{\wb}}(X_k X_\ell = -1)} \leq \frac{\cosh(2s\Theta) + 2se^{-2(s-1)\Theta} \cosh(2(s-1)\Theta)}{1 + 2se^{-2(s-1)\Theta}\cosh(2\Theta)}.
$$

For the special case $s = 2$, the major difference is that when $m = 2$, $|\overline S_2| \geq |S_1|$ and not $|\overline S_2| \geq 2|S_1|$ as before. Using the same ideas as in the proof of the case $s \geq 3$ one can show that when $s \equiv 2$:
$$
\textstyle \frac{ \PP_{\theta, G_{\wb}}(X_k X_\ell = 1)}{\PP_{\theta, G_{\wb}}(X_k X_\ell = -1)} \leq  \frac{\cosh(4\Theta) + 1}{2}.
$$
The last two inequalities combined with the fact $$\EE_{\theta, G_{\wb}} X_k X_\ell = 1 - 2 \PP_{\theta, G_{\wb}}(X_k X_\ell = -1)$$ complete the proof.
\end{proof}

The following simple lemma gives a sharper correlation bound when no edge is present, in a regime where the maximum parameter $\Theta$ is sufficiently small. 

\begin{lemma}[High Temperature No Edge Correlation Upper Bound] \label{hight:no:edge:corr} Let the assumptions of Lemma \ref{generic:upper:bound:lemma} hold, and let additionally $(s-1) \tanh(\Theta) < 1$. Then we have
$$
\textstyle \EE_{\theta, G_{\wb}} X_k X_\ell \leq \frac{s\tanh^2(\Theta)}{1 - (s-1) \tanh(\Theta)}.
$$
\end{lemma}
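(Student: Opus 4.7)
The plan is to chain two standard monotonicity tools---the second Griffiths inequality to collapse the weighted ferromagnet to a simple one with uniform coupling $\Theta$, and Fisher's self-avoiding walk bound (Lemma \ref{fisher:bound}) to express the remaining correlation as a geometric sum over walks---and then control the self-avoiding walk count using the maximum-degree constraint.

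First, I would invoke the monotonicity of two-point functions in the coupling constants for ferromagnets (the same reduction used at the start of the proof of Lemma \ref{generic:upper:bound:lemma}) to upper-bound $\EE_{\theta, G_{\wb}} X_k X_\ell$ by the correlation in the model where every weight is raised to $\Theta/\theta$. The reduced model is of the form (\ref{ising:model:measure}) with inverse temperature $\Theta$ on a graph $G$ that still has $\maxdeg(G)\leq s$ and no edge between $k$ and $\ell$. Fisher's bound (Lemma \ref{fisher:bound}) then yields
$$\EE_{\theta, G_{\wb}} X_k X_\ell \;\leq\; \sum_{m\geq d_G(k,\ell)} \tanh(\Theta)^{m}\, N_{k\ell}(m),$$
where $N_{k\ell}(m)$ counts self-avoiding walks of length $m$ from $k$ to $\ell$ on $G$. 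Since $(k,\ell)\notin E(G)$, the summation begins at $m=2$.

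The main technical step---and the source of the tighter-than-obvious numerator $s$---is the refined walk count
$$N_{k\ell}(m) \;\leq\; s(s-1)^{m-2}, \qquad m\geq 2.$$
To justify this I would enumerate such a walk sequentially: the first vertex after $k$ admits at most $s$ choices (any neighbor of $k$); each of the $m-2$ intermediate vertices admits at most $s-1$ choices because the immediately preceding vertex is forbidden by self-avoidance; and the final step is \emph{forced}, since the walk must land at the prescribed endpoint $\ell$. Dropping this last observation gives only $s(s-1)^{m-1}$ and would lose a factor of $s-1$, yielding the weaker bound $\frac{s(s-1)\tanh^2(\Theta)}{1-(s-1)\tanh(\Theta)}$; recovering the claimed constant is precisely what this argument does.

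Substituting the walk bound into Fisher's estimate and summing the geometric series (which converges by the hypothesis $(s-1)\tanh(\Theta)<1$) gives
$$\sum_{m=2}^{\infty} \tanh(\Theta)^{m}\, s(s-1)^{m-2} \;=\; \frac{s\tanh^2(\Theta)}{1-(s-1)\tanh(\Theta)},$$
as required. The main obstacle is recognizing and carefully proving the ``last step is forced'' refinement of the walk count; everything else is a routine assembly of existing appendix lemmas. A minor subtlety is that Lemma \ref{fisher:bound} is stated for the uniform-weight model (\ref{ising:model:measure}), which is exactly why the Griffiths reduction must be carried out before applying it.
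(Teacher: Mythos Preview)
Your proposal is correct and follows essentially the same approach as the paper: reduce to uniform coupling $\Theta$ via Griffiths, apply Fisher's self-avoiding walk bound, count walks by $N_{k\ell}(m)\le s(s-1)^{m-2}$, and sum the geometric series. The paper's proof is terser---it simply asserts the walk count ``due to the fact that each vertex has at most $s$ neighbors''---whereas you spell out the ``last step is forced'' argument explicitly, but the route is the same.
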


\begin{proof}[Proof of Lemma \ref{hight:no:edge:corr}] As in the proof of Lemma \ref{generic:upper:bound:lemma}, we may assume that all $w_{uv} = \Theta/\theta$. Next, notice that each path $P$ connecting $k$ and $\ell$ has length $m \geq 2$. Let $G_P$ be the restriction of the graph $G$ to the path graph $P$. By Lemma \ref{chain:graph} we know that $E_{\theta, G_P}X_iX_j = \tanh^m(\Theta)$, and furthermore there are at most $s (s-1)^{m-2}$ such paths, due to the fact that each vertex has at most $s$ neighbors. The conclusion now follows from a result by \cite{fisher1967critical} (stated under Lemma \ref{fisher:bound}) which bounds the correlation between two nodes with a sum over the correlations of all path graphs between the two vertices, and the formula for summing converging geometric progressions.
\end{proof}

\begin{proof}[Proof of Proposition \ref{generic:property:test:cor}] Note that if $G_{\wb} \in \cG_0 \cap \cR$ for any  $G' \in \cW$ there exists an edge $e \in E(G')$ with a zero entry in $\wb$.
Fact i. now follows since by Lemma \ref{generic:upper:bound:lemma} and the definition of $R(s,\Theta)$ and we have for $s \geq 3$
 $$
\cQ \leq \frac{R(s,\Theta) - 1}{R(s,\Theta) + 1},
$$
and fact ii. follows by noting that when $(s-1) \tanh(\Theta) < 1$ by Lemma \ref{hight:no:edge:corr}
$$
\cQ \leq  \frac{s\tanh^2(\Theta)}{1 - (s-1) \tanh(\Theta)}.
$$
\end{proof}

\begin{proof}[Proof of Corollary \ref{conn:test:consistency}] We will now argue that if $G$ is under the alternative, i.e., if $G$ is connected, then for any spanning tree $T \subseteq G$ we have
\begin{align}\label{cov:inequality}
\min_{(u,v) \in T} \EE_{\theta, G_{\wb}} X_u X_v \geq \tanh(\theta).
\end{align}
By Griffith's inequality (Theorem \ref{FKG:ineq}) we have 
$$\min_{(u,v) \in T} \EE_{\theta, G_{\wb}} X_u X_v \geq \min_{(u,v) \in T} \EE_{\theta, T_{\wb}} X_u X_v,$$ 
where $T_{\wb}$ is the restriction of $G_\wb$ on $T$, i.e., setting all entries of $\wb$ outside of $E(T)$ to $0$ while retaining the values of all entries belonging to $T$. Using Proposition \ref{restriction:prop}, Lemma \ref{chain:graph} and the fact that on a tree $T$ the only path between vertices $u$ and $v$ connected by an edge $(u,v)$ is the edge we conclude that
$$
\EE_{\theta, T_{\wb}} X_u X_v = \tanh(\theta w_{uv}).
$$
Hence
$$
\min_{(u,v) \in T} \EE_{\theta, G_{\wb}} X_u X_v \geq \min_{(u,v) \in T} \EE_{\theta, T_{\wb}} X_u X_v = \min_{(u,v) \in T} \tanh(\theta w_{uv}) \geq \tanh(\theta),
$$
as we claimed. 
%Hence for a connected $G$ with $\hat T$ being a maximum spanning tree based on the matrix $\Mb$, by combining (\ref{union:bound:cov:conc}) and (\ref{cov:inequality}) we know that
%$$
%\min_{(u,v) \in \hat T} \hat \EE X_u X_v \geq \tanh(\theta) - \tau,
%$$
%with probability at least $1 - \delta$. 
On the other hand, when $G$ is under the null for any tree $T$, we have
$$
\min_{(u,v) \in T} \EE_{\theta, G_{\wb}} X_u X_v = 0. 
$$
Hence an application of Theorem \ref{generic:property:test:thm} completes the proof. 
%Hence by (\ref{union:bound:cov:conc})
%$$
%\min_{(u,v) \in \hat T} \hat \EE X_u X_v \leq \tau, 
%$$
%with probability at least $1 - \delta$. Thus provided that $\tanh(\theta) > 2\tau$ the two hypothesis are distinguishable. 
\end{proof}

\begin{proof}[Proof of Corollary \ref{cycle:test:consistency}] Similarly to the proof of Corollary \ref{conn:test:consistency} we can convince ourselves with the help of Griffith's inequality (see Theorem \ref{FKG:ineq}), Proposition \ref{restriction:prop} and Lemma \ref{chain:graph} that under the alternative there exists at least one cycle $C \subseteq G$ so that
$$
\min_{(u,v) \in C} \EE_{\theta, G_{\wb}} X_uX_v \geq \min_{(u,v) \in C} \EE_{\theta, C_{\wb}} X_uX_v \geq \min_{(u,v) \in C} \tanh(\theta w_{uv}) \geq \tanh(\theta),
$$
where $C_{\wb}$ is the restriction of $G_\wb$ on $C$. %Hence using (\ref{union:bound:cov:conc}) we conclude 
%$$
%\min_{(u,v) \in \hat C} \hat \EE X_u X_v \geq \tanh(\theta) - \tau,
%$$
Next, note that under the null hypothesis for any given cycle $C$ we will have at least one edge $(u,v) \in C$ such that $w_{uv} = 0$. Since under the null the graph is a forest by Proposition \ref{restriction:prop} and Lemma \ref{chain:graph} we know that
$$
\min_{(u,v) \in C}\EE_{\theta, G_{\wb}} X_uX_v \leq \prod_{(k,l) \in \cP^{G}_{u \rightarrow v}} \tanh(\theta w_{kl}) \leq \tanh^2(\Theta), 
$$
where $\cP^G_{u \rightarrow v}$ is the direct path that connects $u$ to $v$. Applying Theorem \ref{generic:property:test:thm} completes the proof. 
%The latter combined with (\ref{union:bound:cov:conc}) yields that with probability at least $1 - \delta$
%$$
%\min_{(u,v) \in \hat C} \hat \EE X_uX_v \leq \tanh^2(\Theta) + \tau,
%$$
%Hence under the assumption $\tanh(\theta) - \tanh^2(\Theta) \geq 2 \tau$ the theorem holds. 
\end{proof}

\begin{proof}[Proof of Corollary \ref{clique:test:consistency}] Take any model with a graph $G$ containing an $m$-clique. By Griffith's inequality (Theorem \ref{FKG:ineq}) we know that the correlation for any edge $(u,v) \in E(G)$: $\EE_{\theta, G_{\wb}} X_u X_v$ will only decrease if we set all parameters to their lower bound $\theta$, and if we remove edges so that we reduce the graph $G$ to an $m$-clique. We recognize that under such manipulation we obtain a Curie-Weiss model\footnotemark \footnotetext{i.e., a fully connected $m$-clique with inverse temperature $\theta$.} (albeit under non-standard parametrization) for $m$ of the variables, and remaining $d-m$ variables are independent and have Rademacher distributions. Hence we only need to calculate $\PP_{\theta, G}(X_kX_\ell = \xi)$\footnote{Here we omit the weights since they are all equal to $1$ on the graph $G$.} for $\xi \in \{\pm 1\}$ in an $m$-dimensional Curie-Weiss model.  This is a standard calculation which we nevertheless include for completeness.  We have
\begin{align}
 \PP_{\theta, G}(X_kX_\ell = 1) &=\textstyle \frac{ e^{\theta -  \theta (m-2)/2}}{Z_{m}(\theta)}S(2\theta, \frac{\theta}{2}),  \PP_{\theta,G}(X_kX_\ell = -1) = \frac{ e^{-\theta -  \theta (m-2)/2}}{Z_{m}(\theta)}S(0,\frac{\theta}{2}), \label{probs:def}
\end{align}
where $Z_m(\theta)$ is the partition constant of an $m$-dimensional Curie-Weiss magnet with inverse temperature $\theta$ and no external magnetic field, i.e.,
\begin{align}\label{Curie:Weiss:Magnet:Part}
Z_m(\theta) = \sum_{\xb \in \{\pm 1\}^m} \exp\bigr (\theta \sum_{u < v} x_u x_v\bigr),
\end{align}  
and the function $S(h,\mu) = S(-h,\mu)$ given by
$$
S(h, \mu) := \sum_{\xb \in \{\pm 1\}^{m-2}} \exp\bigg( h \sum_{u \in [m-2]} x_u + \mu \bigg(\sum_{u \in [m-2]} x_u\bigg)^2 \bigg).
$$
Clearly since $\PP_{\theta, G}(X_kX_\ell = 1) + \PP_{\theta, G}(X_kX_\ell = -1) = 1$, one can express $Z_{m}(\theta)$ in terms of  $S(2\theta, \theta/2)$ and $S(0, \theta/2)$. We set out to find a closed form expression of the more general quantity $S(h,\mu)$.
Using the identity
$$
e^{x^2} = \frac{1}{\sqrt{2\pi}} \int_{-\infty}^\infty e^{-y^2/2 + \sqrt{2}xy} d y,
$$
we can rewrite $S(h, \mu)$ as 
\begin{align}
S(h,\mu) & = \frac{1}{\sqrt{2\pi}}\sum_{\xb \in \{\pm 1\}^{m-2}} \int_{-\infty}^{\infty} e^{-y^2/2 + (y \sqrt{2 \mu} +h) \sum_{u \in [m-2]} x_u } dy \nonumber \\
& = 2^{m-2} \EE_{Z} \cosh^{m-2}(\sqrt{2\mu} Z + h).\label{Shmu:calc}
\end{align}
Putting (\ref{probs:def}) and (\ref{Shmu:calc}) together we conclude
$$
\EE_{\theta, G} X_k X_\ell = \PP_{\theta, G}(X_k X_\ell = 1) - \PP_{\theta, G}(X_k X_\ell = -1) = \frac{r(m,\theta) - 1}{r(m, \theta) + 1}.
$$
%where recall the definition of $q(m, \theta)$ (\ref{pq:defs}).
This implies that under the alternative
$$
\min_{(u,v) \in C} \EE_{\theta, G_{\wb}} X_u X_v \geq \frac{r(m,\theta) - 1}{r(m, \theta) + 1},
$$
for any $m$-clique $C \subseteq G$, which is what we wanted to show.  The remaining part of the Corollary is a direct consequence of Proposition \ref{generic:property:test:cor} and Theorem \ref{generic:property:test:thm}. 
%We now first show i. By Lemma \ref{generic:upper:bound:lemma} gives an upper bound on the maximum correlation for any possible model which includes all models not containing an $r$-clique. It implies that for any graph $G$ not containing an $r$-clique of bounded by $s$ degree we have
%$$
%\min_{(u,v) \in C} \EE_G X_u X_v \leq \tanh^2(s\Theta),
%$$
%for any $r$-clique $C \subset G$. 
%
%Using Lemma \ref{simple:concentration:lemma}, we conclude that if 
%\begin{align*}
%q(r,\theta) - \tanh^2(s\Theta) > 2\tau,
%\end{align*}
%we will have (\ref{cycle:test:consistency:display}), which is what we wanted to show.

%To see why ii. holds, in the regime $(s - 1)\tanh(\Theta) < 1$ we may use Lemma \ref{hight:no:edge:corr} instead of Lemma \ref{generic:upper:bound:lemma} and the proof follows.
\end{proof}

\begin{remark}\label{clique:testing:remark} In this remark we will first show a low temperature expansion of $\frac{r(m,\theta) - 1}{r(m, \theta) + 1}$. Note the equivalent formulation 
$$
\frac{r(m,\theta) - 1}{r(m, \theta) + 1} = \frac{d \log Z_m(\theta)/d \theta}{{m \choose 2}},
$$
where $Z_m(\theta)$ is defined in (\ref{Curie:Weiss:Magnet:Part}). Now observe that 
$$
Z_m(\theta) = 2 e^{{m \choose 2} \theta}\bigr(1 + m e^{-2(m-1)\theta} + {m \choose 2} e^{-4(m-2)\theta} + O(m^3 e^{-6(m-3) \theta})\bigr),
$$
where the first term corresponds to all spins being $1$ or $-1$, the second term corresponds to the $m$ terms where one spin is $-1$ and all remaining  spins are $1$ and vice versa, and so on. Hence
\begin{align*}
& \log Z_m(\theta) \\
& = \log 2 + {m \choose 2} \theta + \log\bigr(1 + m e^{-2(m-1)\theta} + {m \choose 2} e^{-4(m-2)\theta} + O(m^3 e^{-6(m-3) \theta})\bigr)\\
& =  \log 2 + {m \choose 2} \theta + m e^{-2(m-1)\theta} + {m \choose 2} e^{-4(m-2)\theta} + O(m^3 e^{-6(m-3) \theta})
\end{align*}
Under the assumption $m^2e^{-2m\theta} = o(1)$, we have the approximation
$$
\cT = \frac{r(m,\theta) - 1}{r(m, \theta) + 1} = 1 - 4 e^{-2(m-1)\theta} - 4 (m-2) e^{-4(m-2)\theta} + o(1).
$$
Now recalling bound (\ref{generic:no:edge:bound}), we can equivalently rewrite its RHS as
$$
\cQ \leq 1 - \frac{4s e^{-2(s-2)\Theta} + 4s e^{-2s\Theta} + 4}{2 + e^{2s\Theta} + e^{-2s\Theta} + 2s(1 + e^{-4(s-1)\Theta} + e^{-2(s-2)\Theta} + e^{-2s\Theta})}.
$$
Under the assumption $e^{2s\Theta} \gg s$ the above expression is asymptotically equivalent to
$$
1 - 4 e^{-2s\Theta}- 4s e^{-4(s-1)\Theta} - 4s e^{-4s\Theta} + o(1).
$$
Therefore, when $m = s + 1$, and $\theta = \Theta$ the asymptotic difference is of magnitude at least $4e^{-4(s-1)\theta}$. Hence $\cT - \cQ$ is asymptotically $4e^{-4(s-1)\theta}$ and thus if 
$$
2e^{-4(s-1)\theta} \geq  \sqrt{\frac{4 \log d + \log \delta^{-1}}{n}},
$$
the test is successful. This is equivalent to $\theta \leq \frac{\log{\frac{n}{ \log d + \log \delta^{-1}/4}}}{8(s-1)}$, which matches bound (\ref{reader:friendly:upper:bound:clique}) up to scalars.
\end{remark}

\section{Bounds for General Models}\label{bounds:with:ferromagnets}

The next two results are dedicated to the proof of the main result of Section \ref{antiferromagnetic:bounds}. To ease the presentation we define the following notation. For a set $S \subset \RR$, a set $V \subset \NN$ by $\xb \in S^V$ we understand $\xb = (x_k)_{k \in V}$ and $x_k \in S$. For a vector $\xb \in \RR^W$, and a set $V$ where $V \subseteq W \subset \NN$ let
\begin{align}\label{shorthands:sq:sums}
S^V_{\xb} := \sum_{k \in V} x_k, \mbox{ and } SS_{\xb}^V = \sum_{(k,\ell) \in {V\choose 2} } x_k x_\ell,
\end{align}
where we remind the reader that ${V\choose 2} = \{(k,\ell) ~|~ k < \ell, ~ k,\ell \in V\}$. To relieve the sub-indeces, we further introduce a slight abuse of notation. For a fixed $\theta > 0$ and a \textit{multigraph} $G$ (i.e., a graph allowed to have more than one edge joining two vertices) we denote with $\PP_G$ the Ising measure
$$
\PP_G(\bX = \xb) \varpropto \exp\bigr(-\theta \sum_{(u,v) \in E(G)} x_u x_v\bigr),
$$
for $\theta \geq 0$. %In this way, when $G$ is a simple graph, this notation is consistent with $\PP_{\btheta, -\mathbf{1}}$ where $\btheta \in \cS(\theta), G = G(\btheta)$ and $-\mathbf{1}$ is the vector whose entries are all equal to $-1$. 
We proceed with the
\begin{proof}[Proof of Theorem \ref{scaling:theorem}]  The proof is concerned with a more fundamental problem than testing the strongly monotone property $\cP$. Namely we consider the problem of detecting an antiferromagnetic $s$-clique. To elaborate in detail, for a set $V$ with cardinality $|V| = s$, let $C_s(V)$ denote the $s$-clique with vertices in the set $V$. For a vertex set $V \subset [d]$, put $G_V := ([d], E(C_s(V)))$. The pdf of the Ising model $\PP_{G_V}$ is given by
$$
\PP_{G_V}(\bX = \xb) \varpropto \exp\Big(-\theta SS_{\xb}^V \Big),
$$
where $\xb \in \RR^d$ and $\theta \geq 0$. Let $\PP_{\varnothing}$ denote a $d$-dimensional Rademacher vector corresponding to the null hypothesis, i.e., the hypothesis with an empty graph, or equivalently the null hypothesis when $\theta = 0$. Given a sample of $n$ observations from the measure $\PP$, the antiferromagnetic clique detection problem aims to test
\begin{align}\label{antiferro:clique:detection}
\Hb_0: \PP = \PP_{\varnothing} \mbox{ vs } \Hb_1 : \PP = \PP_{G_V} \mbox{ for some } V\subset [d], |V| = s.
\end{align}
It is clear that any test of $\cP$, can test (\ref{antiferro:clique:detection}) with the same type I and type II error control. Let $\pi$ denote the distribution uniformly sampling the vertex set $V \subset [d]$ of an $s$-clique. Define the mixture measure
$$
\overline \PP^{\otimes n}(\bX_1 = \xb_1, \ldots, \bX_n = \xb_n) := \EE_{V \sim \pi} \PP^{\otimes n}_{G_V}(\bX_1 = \xb_1, \ldots, \bX_n = \xb_n).
$$
Using Le Cam's Lemma \citep{yu1997assouad}, we have
$$
R_n(\cP, \cR_s, \theta) \geq 1 - \frac{1}{2}\sqrt{D_{\chi^2}(\overline \PP^{\otimes n}, \PP_{\varnothing}^{\otimes n})}.
$$
Therefore it suffices to control the divergence $D_{\chi^2}(\overline \PP^{\otimes n}, \PP_{\varnothing}^{\otimes n})$ (recall Definition \ref{def:chisq:div}). 

To write the expression $D_{\chi^2}(\overline \PP^{\otimes n}, \PP_{\varnothing}^{\otimes n})$ in a convenient form, we need to introduce several quantities. To this end take the $s$-clique graph $C_s(V)$ and for a $\theta > 0$ define the partition function
$$
Z_{C_s(V)}(\theta) := \sum_{\xb \in \{\pm 1\}^V} \exp\bigr(-\theta SS_{\xb}^V\bigr). %= 2^s T(\theta, \varnothing).
$$
Taking two $s$-clique graphs $C_s(V)$ and $C_s(V')$ define the partition function based on the multigraph $C_s(V) \oplus C_s(V')$ (see Figure \ref{multigraph:depiction}):
$$
Z_{C_s(V) \oplus C_s(V')}(\theta) := \sum_{\xb \in \{\pm 1\}^{V \cup V'}} \exp\bigr(-\theta \bigr[SS_{\xb}^{V} +SS_{\xb}^{V'}\bigr]\bigr).
$$
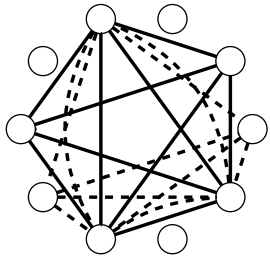
\begin{figure}
\centering
\begin{tikzpicture}[scale=.7]
\SetVertexNormal[Shape      = circle,
                  FillColor  = white,
                  MinSize = 11pt,
                  InnerSep=0pt,
                  LineWidth = .5pt]
   \SetVertexNoLabel
   \tikzset{LabelStyle/.style = {above, fill = white, text = black, fill opacity=0, text opacity = 1}}
         \tikzset{EdgeStyle/.style= {thin,dashed,
                                double          = black,
                                double distance = .5pt}}
  \begin{scope}[shift={(0cm, 0cm)}]
\grEmptyCycle[prefix=c,RA=2.2]{5}%
 \begin{scope}[rotate=36]
         \tikzset{EdgeStyle/.style= {thin,
                                double          = black,
                                double distance = .5pt}}
\grComplete[prefix=d,RA=2.2]{5}%
  \end{scope}
  \Edge(d1)(c3)
    \Edge(d1)(c0)
    \Edge(c3)(c0)
        \Edge(c3)(d4)
        \Edge(c0)(d4)
                \Edge(c3)(d3)
        \Edge(c0)(d3)
                     \tikzset{EdgeStyle/.style= {thin,dashed, bend right = 30,
                                double          = black,
                                double distance = .5pt}}
                                \Edge(d1)(d3)
                                \Edge(d4)(d1)
                         \tikzset{EdgeStyle/.style= {thin,dashed, bend left = 15,
                                double          = black,
                                double distance = .5pt}}
                                                                \Edge(d3)(d4)
%                                    \Edge(d2)(d0)
\end{scope}
\end{tikzpicture}
\caption{The multigraph $C_s(V) \oplus C_s(V')$ is the graph whose adjacency matrix is the sum of the adjacency matrices of $C_s(V)$ and $C_s(V')$, i.e., $\Ab_{C_s(V) \oplus C_s(V')} = \Ab_{C_s(V)} + \Ab_{C_s(V')}$. Above is a depiction of $C_s(V) \oplus C_s(V')$, where $s = 5, d = 10$ and $|V \cap V'| = 3$; $C_s(V)$ is plotted with solid edges, while $C_s(V')$ is plotted with dashed edges.} \label{multigraph:depiction}
\end{figure}
Define the standardized version of the partition function above by 
$$\textstyle T(\theta,V \cap V') := \frac{Z_{C_s(V) \oplus C_s(V')}(\theta)}{2^{2 s - |V \cap V'|}}.$$
Note that when $V \cap V' = \varnothing$, $T(\theta,\varnothing) = \frac{Z_{C_s(V)}(\theta)Z_{C_s(V')}(\theta)}{2^{2s}}$.
%$$
%T(\theta, \cI) := \frac{1}{2^{2s-|\cI|}}\sum_{\xb \in \{\pm 1\}^{2s - |\cI|}} \exp\bigr(-\theta \bigr[SS_{\xb}^{V} +SS_{\xb}^{V'}\bigr]\bigr).
%$$ 
%$$
%T(\theta, \cI) := \frac{1}{2^{2s-|\cI|}}\sum_{\xb \in \{\pm 1\}^{2s - |\cI|}} \exp\bigg(-\theta \bigg[\sum_{(u,v) \in E(C_s(V')) }x_u x_v + \sum_{(u,v) \in E(C_s(V)) }x_u x_v\bigg]\bigg).
%$$ 
A bit of algebra shows that one can write
\begin{align}\label{chi:sq:divergence:mixture}
\textstyle D_{\chi^2}(\overline \PP^{\otimes n}, \PP_{\varnothing}^{\otimes n}) = \EE_{V, V' \sim \pi}\frac{T^n(\theta, V \cap V')}{T^n(\theta, \varnothing)} - 1,
\end{align}
%$$
%Z_{C_s(V) \oplus C_s(V')}(\theta) = \sum_{\xb \in \{\pm 1\}^{2s}} \exp\bigg(-\theta \bigg[\sum_{(i,j) \in E(C_s(V))} x_i x_j + \sum_{(i,j) \in E(C_s(V'))} x_i x_j\bigg]\bigg) = 2^{s - |\cI|}T(\theta,\cI).
%$$
where $V,V' \sim \pi$ indicates a two sample i.i.d. draw from $\pi$. We now state the key result enabling us to prove Theorem \ref{scaling:theorem}, and defer its proof to after we complete this proof.
\begin{theorem} \label{most:important:theorem} For any pair of vertex sets $V, V'$ of cardinality $s$, the ratio
$$
\textstyle \frac{Z_{C_s(V) \oplus C_s(V')}(\theta)}{Z_{C_s(V)}(\theta)Z_{C_s(V')}(\theta)},
$$
is non-decreasing in $\theta$ for $\theta \geq 0$. 
\end{theorem}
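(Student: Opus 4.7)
The plan is to show $(d/d\theta)\log R(\theta) \geq 0$, where $R(\theta)$ denotes the ratio in the statement. Writing $G_V := C_s(V)$ and $G_{VV'} := G_V \oplus G_{V'}$ for brevity, the standard identity $\partial_\theta \log Z = -\EE[H]$ reduces the monotonicity to
\begin{align*}
\EE_{\PP_{G_V}} SS^V_\bX + \EE_{\PP_{G_{V'}}} SS^{V'}_\bX \geq \EE_{\PP_{G_{VV'}}}\bigl[SS^V_\bX + SS^{V'}_\bX\bigr].
\end{align*}
I would deploy the interpolating measure from the acknowledged suggestion, $\PP_t(\bx) \propto \exp(-\theta SS^V_\bx - t\theta SS^{V'}_\bx)$, which connects $\PP_0$ (matching $\PP_{G_V}$ on $V$ and independent Rademacher on $V' \setminus V$) to $\PP_1 = \PP_{G_{VV'}}$. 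Since $(d/dt)\EE_{\PP_t} SS^V_\bX = -\theta\,\mathrm{Cov}_{\PP_t}(SS^V_\bX, SS^{V'}_\bX)$, an analogous $V \leftrightarrow V'$ interpolation for the other term shows that the required inequality follows from
\begin{align*}
\mathrm{Cov}_{\PP}(SS^V_\bX, SS^{V'}_\bX) \geq 0 \text{ for every } \PP \propto \exp(-\alpha SS^V_\bx - \beta SS^{V'}_\bx),\ \alpha,\beta \geq 0.
\end{align*}

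The second step is to reduce this two-dimensional covariance inequality to a one-dimensional statement via the Curie-Weiss structure. Set $W = V \cap V'$, $V_1 = V \setminus W$, $V_2 = V' \setminus W$, so that $S^V_\bX = S^W_\bX + S^{V_1}_\bX$ and $S^{V'}_\bX = S^W_\bX + S^{V_2}_\bX$. Because the Hamiltonian depends on $\bx$ only through $(S^V_\bx)^2$ and $(S^{V'}_\bx)^2$, after conditioning on $S^W_\bX$ it splits into independent contributions on $V_1$ and $V_2$, rendering $S^{V_1}_\bX$ and $S^{V_2}_\bX$ conditionally independent. Using $SS^V_\bX = \tfrac12((S^V_\bX)^2 - s)$ and the law of total covariance,
\begin{align*}
\mathrm{Cov}(SS^V_\bX, SS^{V'}_\bX) = \tfrac14\,\mathrm{Cov}\bigl(\phi_\alpha(S^W_\bX), \phi_\beta(S^W_\bX)\bigr),
\end{align*}
where $\phi_\lambda(a) := \EE[(a + S^{V_1}_\bX)^2 \mid S^W_\bX = a]$ under the $\lambda$-tilted conditional measure.

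Both $\phi_\alpha, \phi_\beta$ and the marginal law of $S^W_\bX$ are even in their arguments by the $\bx \leftrightarrow -\bx$ symmetry of the Curie-Weiss Hamiltonian. Consequently, by Chebyshev's covariance inequality applied to the induced law of $|S^W_\bX|$, the claim follows once one verifies that $a \mapsto \phi_\lambda(a)$ is non-decreasing in $|a|$ for every $\lambda \geq 0$. This monotonicity claim is the main obstacle. Intuitively, as $|a|$ grows, the effective field tilting $S^{V_1}_\bX$ toward $-a$ saturates against its bounded support $\{-(s-k),\ldots,s-k\}$, which drives $\EE[(a + S^{V_1}_\bX)^2 \mid a]$ upward; the non-monotonicity of $y \mapsto y^2$ near $y = 0$ is, however, the obstruction to a one-line proof. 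My rigorous plan is to first verify that the family $q_a(m) \propto \binom{s-k}{(s-k+m)/2}\exp(-\lambda(a+m)^2/2)$ has monotone likelihood ratio in $a$ (a direct binomial calculation gives $q_{a+2}(m)/q_a(m) = (s-k+m)/(s-k-m+2)$, which is increasing in $m$), so that $Y_a := a + M_a$ is stochastically increasing in $a$; then to split $\EE Y_a^2$ into contributions from $\{Y_a \geq 0\}$ (handled directly by the MLR coupling) and $\{Y_a < 0\}$ (controlled via the symmetry $q_a(y) = q_{-a}(-y)$ together with the log-concavity in $a$ of $F(a;\lambda) := \sum_m \binom{s-k}{(s-k+m)/2}e^{-\lambda(a+m)^2/2}$, which follows from Prékopa--Leindler applied to the discrete convolution of the log-concave binomial weights with the log-concave Gaussian kernel).
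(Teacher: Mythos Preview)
Your reduction is the paper's: take the $\theta$-log-derivative, interpolate via $\PP_{G \oplus tG'}$, reduce to the covariance inequality $\mathrm{Cov}((S^V_\bX)^2,(S^{V'}_\bX)^2)\ge 0$, condition on $S^W_\bX$ to get conditional independence of $S^{V_1}_\bX$ and $S^{V_2}_\bX$, and land on the key claim that $a\mapsto \phi_\lambda(a)=\EE[(a+S^{V_1}_\bX)^2\mid S^W_\bX=a]$ is non-decreasing in $|a|$. This is exactly the paper's inequality (\ref{expectation:difference}), and your use of evenness plus Chebyshev matches the paper's ``same sign'' argument for $\cH_G,\cH_{G'}$.

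The gap is in your proof of the monotonicity of $\phi_\lambda$. First, the MLR formula is misstated: with $q_a(m)$ as you defined it, the binomial factor does not depend on $a$ and cancels in $q_{a+2}(m)/q_a(m)$, leaving $\exp(-2\lambda(a+m+1))$, which is \emph{decreasing} in $m$. The ratio you wrote is the likelihood ratio for the law of $Y_a:=a+M_a$ (parametrized by $y=a+m$), so your conclusion ``$Y_a$ stochastically increasing in $a$'' is correct but for a different reason. Second, and more seriously, the splitting yields $\EE Y_a^2 = g(a)+g(-a)$ with $g(b):=\EE[Y_b^2\mathbbm{1}(Y_b>0)]$ increasing; to get $\EE Y_{a+2}^2\ge \EE Y_a^2$ for $a\ge 0$ you need $g(a+2)-g(a)\ge g(-a)-g(-a-2)$, a convexity-type comparison. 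You invoke log-concavity in $a$ of $F(a;\lambda)=Z_a$, but (i) Pr\'ekopa--Leindler is a statement about Lebesgue integrals and its discrete-sum analogues are not automatic, and (ii) even if granted, $a$-log-concavity of $Z_a$ controls $Z_{a+2}/Z_a$, not the second differences of $g$; the two are not linked in your sketch. Equivalently, what is needed is $\partial_\lambda\log(Z_a/Z_{a+2})\ge 0$ for $a\ge 0$, a mixed $(a,\lambda)$ condition, whereas $a$-log-concavity is $\partial_a^2\log Z_a\le 0$. The paper confronts exactly this obstacle and proves it by a different, hands-on route: it rewrites $\phi_\lambda(h+2)\ge\phi_\lambda(h)$ as $\theta$-monotonicity of $Z_{H,h}(\theta)/Z_{H,h+2}(\theta)$, then as the two-dimensional stochastic dominance (\ref{stoch:dominance}) for independent Rademacher sums, and establishes the latter by partitioning the integer lattice along the two lines through the common center of the ellipses and exhibiting an explicit four-point probability-dominating matching between the symmetric differences (Proposition~\ref{key:proposition}).
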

Although the statement of Theorem \ref{most:important:theorem} is elementary, its proof is involved. The main difficulty stems from the fact that we are considering an antiferromagnetic model, also does not in general obey correlation inequalities in contrast to the ferromagnetic case . 

Fix two vertex sets $V,V'$ of cardinality $s$, and let $\cI = V\cap V'$ for brevity. By virtue of Theorem \ref{most:important:theorem}, we have that $\frac{T(\theta, \cI)}{T(\theta, \varnothing)}$ is increasing in $\theta$. Hence to upper bound this ratio it suffices to understand its behavior in the limit $\theta \rightarrow \infty$. %Define  $U := \sum_{u \in V \setminus \cI} x_u$ and $U' := \sum_{u \in V' \setminus \cI} x_u$, and rewrite $T(\theta, \cI)$ as
Define  $U := S^{V \setminus \cI}_{\xb}$ and $I := S^{\cI}_{\xb}$, and rewrite $T(\theta, \cI)$ as
$$
\textstyle T(\theta, \cI) = e^{s \theta} \!\!\!\!\!\! \sum\limits_{I = -|\cI|, ~ 2 | [I + |\cI|]}^{|\cI|} \frac{{|\cI| \choose \frac{I + |\cI|}{2}}}{2^{|\cI|}}\bigg[\sum\limits_{U = -(s - |\cI|), ~ 2 | [U + (s - |\cI|)]}^{s - |\cI|} \frac{{s - |\cI| \choose \frac{U + s - |\cI|}{2}}}{2^{s-|\cI|}}e^{-\frac{\theta}{2} (U + I)^2}\bigg]^2.
$$
After a bit of algebra, depending on the parity of $s$ we have the following identity
$$
\lim_{\theta \rightarrow \infty} \frac{T(\theta, \cI)}{T(\theta, \varnothing)} = \begin{cases}\frac{2^{|\cI|}}{{s \choose \frac{s}{2}}^2}\sum_{j = 0, 2|j}^{2|\cI|}{|\cI| \choose \frac{j}{2}} {s - |\cI| \choose \frac{s - j}{2}}^2, \mbox{ if } s \mbox{ is even},\\
\frac{2^{|\cI|}}{4 {s \choose \frac{s-1}{2}}^2}\sum_{j = 0, 2|j}^{2|\cI|}{|\cI| \choose \frac{j}{2}} \bigr[{s - |\cI| \choose \frac{s -1 - j}{2}} + {s - |\cI| \choose \frac{s + 1 - j}{2}}\bigr]^2, \mbox{ if } s \mbox{ is odd.}\end{cases}
$$
We now distinguish two cases.\\\\
{\bf Case I.} First consider the case when $s$ is even. We have
$$
\textstyle \sum_{j = 0, 2|j}^{2|\cI|}{|\cI| \choose \frac{j}{2}} {s - |\cI| \choose \frac{s - j}{2}}^2 \leq \sum_{j = 0, 2|j}^{2|\cI|}{|\cI| \choose \frac{j}{2}} {s - |\cI| \choose \frac{s - j}{2}}\max_{j} {s - |\cI| \choose \frac{s - j}{2}} = {s \choose \frac{s}{2}} {s - |\cI| \choose \frac{s}{2} - \lfloor\frac{|\cI|}{2}\rfloor},
$$
where in the last equality we used Vandermonde's identity. We conclude that
\begin{align}\label{binom:bound}
\textstyle \sup_{\theta \geq 0}\frac{T(\theta, \cI)}{T(\theta, \varnothing)}\leq \frac{2^{|\cI|}{s - |\cI| \choose s/2 - \lfloor|\cI|/2\rfloor}}{{s \choose s/2}} =: F(|\cI|).  
\end{align}
A bit of algebra shows that the ratio $\frac{F(|\cI|)}{F(|\cI| + 2)} \leq 1$ for all values of $|\cI|$. Hence the maximum is achieved at $|\cI| \in \{s, s-1\}$ (and a direct check shows that it is achieved at $|\cI| = s$). Therefore 
$$
\textstyle \sup_{\theta \geq 0}\frac{T(\theta, \cI)}{T(\theta, \varnothing)}\leq \frac{2^s}{{s \choose s/2}} \leq \sqrt{2s},
$$
where the last inequality follows by Lemma 17.5.1 of \cite{cover2012elements}. Moreover, by expanding the binomials on the right hand side (RHS) of (\ref{binom:bound}) and some algebra we have
$$
\textstyle \sup_{\theta \geq 0}\frac{T(\theta, \cI)}{T(\theta, \varnothing)} \leq (1 + \frac{1}{s - 2\lfloor |\cI|/2\rfloor + 1})^{\lfloor |\cI|/2\rfloor}.
$$
%{\color{red} DOUBLE CHECK $2\lfloor |\cI|/2\rfloor$ and $\lfloor |\cI|/2\rfloor$}
{\bf Case II.}  Now consider the case when $s$ is odd. Using the same ideas as in case I. shows that the following bound holds
$$
\textstyle \sup_{\theta \geq 0}\frac{T(\theta, \cI)}{T(\theta, \varnothing)} \leq (1 + \frac{1}{s - 2\lceil |\cI|/2\rceil + 2})^{\lceil |\cI|/2\rceil} \wedge \sqrt{2s}.
$$
Compiling all inequalities yields
\begin{align*}
\MoveEqLeft \textstyle \sup_{\theta \geq 0}\frac{T^n(\theta, \cI)}{T^n(\theta, \varnothing)} \leq \textstyle (1 + \frac{1}{s - 2\lfloor |\cI|/2\rfloor + 1})^{n\lceil |\cI|/2\rceil} \wedge (\sqrt{2s})^{n} \\
& \leq \textstyle \exp(\frac{n\lceil |\cI|/2\rceil}{s - 2\lfloor |\cI|/2\rfloor + 1}) \wedge (\sqrt{2s})^{n} \leq \exp(\frac{2n|\cI|}{s}) + \mathbbm{1}(|\cI| \geq s/2)(\sqrt{2s})^{n}.
\end{align*}
The final step of our proof is to control (\ref{chi:sq:divergence:mixture}). Taking expectation with respect to $V, V' \sim \pi$ and subtracting $1$ from the preceding display yields:
\begin{align}
\textstyle \MoveEqLeft D_{\chi^2}(\overline \PP^{\otimes n}, \PP_0^{\otimes n}) + 1 \nonumber \\
& \textstyle \leq \EE_{V, V' \sim \pi} \exp(\frac{2n|V \cap V'|}{s}) +  \EE_{V, V' \sim \pi} \mathbbm{1}(|V \cap V'| \geq \frac{s}{2})(\sqrt{2s})^{n} \nonumber \\ 
& \textstyle = \EE_{V' \sim \pi}[\exp(\frac{2n|V \cap V'|}{s}) | V = [s]] +  \EE_{V' \sim \pi}[ \mathbbm{1}(|V \cap V'| \geq \frac{s}{2}) | V = [s]](\sqrt{2s})^{n},\label{chi:sq:div:bound}
\end{align}
where in the final identity we used the symmetry of the problem to condition on the event $V = [s]$. In order to handle (\ref{chi:sq:div:bound}) we first recognize that given $V = [s]$, the distribution of $|V \cap V'|$ is hypergeometric: the number of red balls out of $s$ balls drawn without replacement from an urn consisting of $s$ red balls and $d-s$ blue balls. Using a standard bound of the tail probability of a hypergeometric distribution \citep{chvatal1979tail} we have
$$
\textstyle \EE_{V' \sim \pi}[ \mathbbm{1}(|V \cap V'| \geq \frac{s}{2}) | V = [s]] \leq 2^s \bigr(\frac{s}{d}\bigr)^{s} \bigr(\frac{d-s}{d}\bigr)^{s} \leq 2^s \bigr(\frac{s}{d}\bigr)^{s}.
$$
To deal with the first term of the RHS of (\ref{chi:sq:div:bound}), we use the representation $|V \cap V'| = \sum_{v \in V'} \mathbbm{1}(v \in V)$, and the fact that $\{\mathbbm{1}(v \in V)\}_{v \in V'}$ are negatively associated \citep[see, e.g., for a proper definition]{joag1983negative}. We have
\begin{align*}
\textstyle \EE_{V' \sim \pi}[\exp(\frac{2n|V \cap V'|}{s}) | V = [s]] - 1 & \leq \prod_{v \in V'} \bigr(\PP(v \in V) e^{2n/s} + 1 -\PP(v \in V)\bigr) \\
& \leq \bigr(1 + e^{2n/s}\frac{s}{d}\bigr)^s \leq \exp\bigr(\frac{s^2}{d} e^{2n/s}\bigr).
\end{align*}
Hence, continuing the bounds in (\ref{chi:sq:div:bound}) we finally have 
$$
D_{\chi^2}(\overline \PP^{\otimes n}, \PP_{\varnothing}^{\otimes n}) \leq \exp\bigr(\frac{s^2}{d} e^{2n/s}\bigr) - 1 +  \bigr(\frac{2s}{d}\bigr)^{s}(\sqrt{2s})^{n}.
$$
It can be checked that under the sufficient conditions (\ref{sufficient:conditions:clique:detection}) the above expression goes to $0$ asymptotically. With this our proof is complete.
%Therefore the last term of the RHS of (\ref{chi:sq:div:bound}) is controlled if $\frac{n \log{2s}}{2} - s \log \frac{d}{2s} < c$ for some small $c$. 
\end{proof}

\begin{proof}[Proof of Theorem \ref{most:important:theorem}]
For brevity we will use the shorthand notation $G := C_s(V)$ and $G' := C_s(V')$. Recall the definitions of $S_{\xb}^V$ and $SS_{\xb}^V$ in (\ref{shorthands:sq:sums}). The statement is equivalent to showing that 
%\begin{align*}
%\MoveEqLeft \frac{d}{d \theta} \log \frac{Z_{G \oplus G'}(\theta)}{Z_{G}(\theta)Z_{G'}(\theta)} \\
%& =\EE_{G}\bigg[ \sum_{(i,j) \in E(G)} X_i X_j \bigg] +\EE_{G'}\bigg[ \sum_{(i,j) \in E(G')} X_i X_j \bigg] -\EE_{G \oplus G'}\bigg[ \sum_{(i,j) \in E(G)} X_i X_j + \sum_{(i,j) \in E(G')} X_i X_j \bigg] \geq 0.
%\end{align*}
\begin{align*}
\textstyle  \frac{d}{d \theta} \log \frac{Z_{G \oplus G'}(\theta)}{Z_{G}(\theta)Z_{G'}(\theta)}  =\EE_{G}\bigr[ SS^{V}_{\bX}\bigr] + \EE_{G'}\bigr[ SS^{V'}_{\bX} \bigr] -\EE_{G \oplus G'}\bigr[ SS^{V}_{\bX}+SS^{V'}_{\bX} \bigr] \geq 0.
\end{align*}
The latter is implied if we show that
\begin{align}\label{expectation:inequality}
\textstyle \EE_{G}\bigr[ SS^{V}_{\bX} \bigr] \geq \EE_{G \oplus G'}\bigr[ SS^{V}_{\bX} \bigr].
\end{align}
To this end, for any $0\leq t \leq 1$ define the model
$$
\PP_{G\oplus t G'}(\bX = \xb) \varpropto \exp\bigr(-\theta \bigr[SS^{V}_{\xb} + t SS^{V'}_{\xb} \bigr]\bigr).%\frac{\exp\bigr(-\theta \bigr[SS^{V}_{\xb} + t SS^{V'}_{\xb} \bigr]\bigr)}{Z_{G\oplus t G'}(\theta)},
$$
%where
%$$
%Z_{G \oplus tG'}(\theta) = \sum_{\xb \in \{\pm 1\}^n} \exp\bigr(-\theta \bigr[SS^{V}_{\xb} + t SS^{V'}_{\xb}\bigr]\bigr).
%$$
Clearly, $\PP_{G \oplus t G'}$ interpolates between the measures $\PP_{G}$ and $\PP_{G \oplus  G'}$, and (\ref{expectation:inequality}) will be implied if we showed that $\frac{d}{dt}\EE_{G\oplus tG'}\bigr[ SS^{V}_{\xb} \bigr] \leq 0.$
A direct calculation yields 
\begin{align*}
\MoveEqLeft \frac{d}{dt}\EE_{G\oplus tG'}\bigr[ SS^{V}_{\bX} \bigr] = -\theta \bigr(\EE_{G\oplus tG'}\bigr[SS^{V}_{\bX}SS^{V'}_{\bX} \bigr] - \EE_{G\oplus tG'}\bigr[SS^{V}_{\bX}\bigr] \EE_{G\oplus tG'}\bigr[SS^{V'}_{\bX}\bigr] \bigr).
\end{align*}
Thus it suffices to show the following positive correlation inequality, for all $0 \leq t \leq 1$: $
\EE_{G\oplus tG'}\bigr[SS^{V}_{\bX}SS^{V'}_{\bX} \bigr] \geq \EE_{G\oplus tG'}\bigr[SS^{V}_{\bX}\bigr] \EE_{G\oplus tG'}\bigr[SS^{V'}_{\bX}\bigr].$
Notice that for any $V$ and $\xb \in \{\pm 1\}^V$ we have the identity $(S_{\xb}^V)^2 = 2SS_{\xb}^V + |V|$. Hence the above is precisely equivalent to
\begin{align}\label{square:corr:ineq}
\EE_{G\oplus tG'}\bigr[\bigr(S^{V}_{\bX}\bigr)^2\bigr(S^{V'}_{\bX}\bigr)^2\bigr] \geq  \EE_{G\oplus tG'}\bigr(S^{V}_{\bX}\bigr)^2 \EE_{G\oplus tG'}\bigr(S^{V'}_{\bX}\bigr)^2.
\end{align}
If we were in the ferromagnetic case (\ref{square:corr:ineq}) would have followed by Griffiths inequality \citep{griffiths1967correlations}, which would have completed the proof. However, in antiferromagnetic cases the situation is  more challenging, as such positive correlation inequalities are not expected to hold in general. Therefore below we utilize the special structure of our problem. 

According to the definition of $\PP_{G \oplus t G'}$, the random vectors $\bX_{V \setminus (V \cap V')} = (X_i)_{i \in V \setminus (V \cap V')}$ and $\bX_{V' \setminus (V \cap V')} = (X_i)_{i \in V' \setminus (V \cap V')}$ are conditionally independent given $S^{V \cap V'}_{\bX}$. Let $\tilde \bX$ be an independent copy of $\bX$. Put for brevity
\begin{align*}
\cH_G(\bX, \tilde \bX) &:= \EE_{G\oplus tG'}\bigr[\bigr(S^{V}_{\bX}\bigr)^2 \bigr | S^{V \cap V'}_{\bX}\bigr] - \EE_{G\oplus tG'}\bigr[\bigr(S^{V}_{\tilde \bX}\bigr)^2 \bigr | S^{V \cap V'}_{\tilde \bX}\bigr]\\
\cH_{G'}(\bX, \tilde \bX) &:= \EE_{G\oplus tG'}\bigr[\bigr(S^{V'}_{\bX}\bigr)^2 \bigr | S^{V \cap V'}_{\bX}\bigr] - \EE_{G\oplus tG'}\bigr[\bigr(S^{V'}_{\tilde \bX}\bigr)^2 \bigr | S^{V \cap V'}_{\tilde \bX}\bigr],
\end{align*}
and note that (\ref{square:corr:ineq}) can be equivalently expressed as
$$
\EE_{G \oplus t G'} \cH_G(\bX, \tilde \bX) \cH_{G'}(\bX, \tilde \bX) \geq 0.
$$
The above will be implied if for any feasible $h, \tilde h$ we showed that the following two numbers
\begin{align*}
\EE_{G\oplus tG'}\bigr[\bigr(S^{V}_{\bX}\bigr)^2 \bigr | S^{V \cap V'}_{\bX} = h\bigr] - \EE_{G\oplus tG'}\bigr[\bigr(S^{V}_{\tilde \bX}\bigr)^2 \bigr | S^{V \cap V'}_{\tilde \bX} = \tilde h\bigr], \\
\EE_{G\oplus tG'}\bigr[\bigr(S^{V'}_{\bX}\bigr)^2 \bigr | S^{V \cap V'}_{\bX} = h\bigr] - \EE_{G\oplus tG'}\bigr[\bigr(S^{V'}_{\tilde \bX}\bigr)^2 \bigr | S^{V \cap V'}_{\tilde \bX} = \tilde h\bigr],
\end{align*}
have the same sign. On the other hand, due to sign symmetry in $\bX$, the above is implied if the following functions are increasing in $h$ on the set $\NN_0 \cap \{-|V \cap V'|, -|V \cap V'| + 2, \ldots, |V \cap V'|-2,|V \cap V'|\}$, 
$$
\EE_{G\oplus tG'}\bigr[\bigr(S^{V}_{\bX}\bigr)^2 \bigr | S^{V \cap V'}_{\bX} = h\bigr], ~~~ \EE_{G\oplus tG'}\bigr[\bigr(S^{V'}_{\bX}\bigr)^2 \bigr | S^{V \cap V'}_{\bX} = h\bigr].
$$
By a simple calculation one can check that for any $\xb \in \{\pm 1\}^{|V \setminus (V \cap V')|}, $
$$
\PP_{G \oplus t G'} \bigr(\bX_{V \setminus (V \cap V') } = \xb|  S^{V \cap V'}_{\bX} = h\bigr)\varpropto \exp\bigg(-\frac{\theta}{2} \bigr(S^{V\setminus(V \cap V')}_{\xb}+ h\bigr)^2 \bigg),
$$
and similarly for any $\xb \in \{\pm 1\}^{|V' \setminus (V \cap V')|}$
$$
\PP_{G \oplus t G'} \bigr(\bX_{V' \setminus (V \cap V') } = \xb|  S^{V \cap V'}_{\bX} = h\bigr)\varpropto \exp\bigg(-\frac{\theta t}{2} \bigr(S^{V'\setminus(V \cap V')}_{\xb}+ h\bigr)^2 \bigg).
$$
Hence, due to this symmetry and the form of the distribution, it is sufficient to show that for any $k$-clique graph $H$ with $|V(H)| = k$, $E = {V(H) \choose 2}$, and any $h \in \NN_0$
\begin{align}\label{expectation:difference}
\EE_{H,  h + 2}\bigr(S^{V(H)}_{\bX} + h + 2\bigr)^2 - \EE_{H, h}\bigr(S^{V(H)}_{\bX} + h\bigr)^2 \geq 0,
\end{align}
where
$$
\PP_{H,h}(\bX = \xb) \varpropto \exp\bigg(-\frac{\theta}{2}\bigr( S^{V(H)}_{\xb}+ h \bigr)^2\bigg).
$$
If $Z_{H,h}(\theta)$ denotes the partition function of $\PP_{H,h}$, one can verify that the sign of the LHS of (\ref{expectation:difference}) coincides with the sign of the derivative
$$
\frac{d}{d \theta} \log \frac{Z_{H, h}(\theta)}{Z_{H, h + 2}(\theta)},
$$
and therefore if suffices to show that for all $\theta \geq \theta' \geq 0$ and $h \in \NN_0$ we have
\begin{align}\label{part:func:ratio:ineq}
\frac{Z_{H, h}(2\theta')}{Z_{H, h + 2}(2\theta')} \leq \frac{Z_{H, h}(2\theta)}{Z_{H, h + 2}(2\theta)},
\end{align}
where we scaled the parameters $\theta$ and $\theta'$ by $2$ for convenience. For two vectors $\bX$ and $\bY$ put for brevity $S_X = S^{V(H)}_{\bX} = \sum_{i \in V(H)} X_i$ and $S_Y = S^{V(H)}_{\bY}  = \sum_{i \in V(H)} Y_i$, and let $V(H) = \{v_1,\ldots, v_k\}$. (\ref{part:func:ratio:ineq}) is equivalent to 
$$
\EE_{\varnothing} \exp(-\theta (S_Y + h + 2)^2 - \theta'(S_X + h)^2) \leq \EE_{\varnothing} \exp(-  \theta'(S_Y + h + 2)^2 -\theta(S_X + h)^2),
$$
where $\EE_\varnothing$ denotes the expectation with respect to uniformly drawing $(X_{v_1},\ldots,\allowbreak X_{v_k}, \allowbreak Y_{v_1}, \ldots, Y_{v_k})$ from the set $\{\pm 1\}^{2k}$. The random variables inside the expectations are discrete and non-negative and therefore the inequality from the preceding display can be written as
\begin{align*}
\MoveEqLeft \int_{0}^{\infty} \PP_{\varnothing}\bigg(\exp\bigg(-\theta (S_Y + h + 2)^2 - \theta'(S_X + h)^2\bigg) > t\bigg) dt \\
& \leq \int_{0}^{\infty} \PP_{\varnothing}\bigg(\exp\bigg(-\theta' (S_Y + h + 2)^2 - \theta(S_X + h)^2\bigg) > t\bigg) dt,
\end{align*}
where $\PP_{\varnothing}$ is the uniform measure on $\{\pm 1\}^{2k}$. Hence to prove (\ref{expectation:difference}) it suffices to show that for all $\theta \geq \theta' \geq 0$ the following stochastic dominance holds
%Hence it will suffice to show the following stochastic dominance
%\begin{align*}
%\MoveEqLeft \PP_{\varnothing}\bigg(\exp\bigg(-\theta (\sum_{i \in V(H)} X_i + h + 2)^2 - \theta'(\sum_{i \in V(H)} Y_i + h)^2\bigg) > x\bigg) \\ &\leq \PP_{\varnothing}\bigg(\exp\bigg(-\theta'(\sum_{i \in V(H)} X_i + h + 2)^2 - \theta(\sum_{i \in V(H)} Y_i + h)^2\bigg) > x\bigg),
%\end{align*}
%which is in turn equivalent to
\begin{align}\label{stoch:dominance}
\textstyle \PP_{\varnothing}(\theta (S_Y + h + 2)^2 + \theta' (S_X + h)^2 < t ) & \leq \PP_{\varnothing}(\theta'(S_Y + h + 2)^2 + \theta(S_X + h)^2 < t)
\end{align}
where $t \geq 0$. We show this in the following
\begin{proposition}\label{key:proposition} Let $\{X_i\}_{i \in [k]}$ and $\{Y_i\}_{i \in [k]}$ are i.i.d. Rademacher random variables, $h \in \NN_0$ and $\theta \geq \theta' \geq 0$. Then (\ref{stoch:dominance}) holds. 
\end{proposition}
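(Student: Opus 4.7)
The plan is to reduce the stochastic dominance to a monotone likelihood ratio (MLR) comparison between the pmfs of $A := (S_Y + h+2)^2$ and $B := (S_X + h)^2$, and then to establish that MLR by induction on $k$.

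First, I would recast the inequality. Since $A$ and $B$ are independent, relabeling the summation variables $(a,b)\leftrightarrow(b,a)$ in one of the two probabilities yields
\[
\mathbb{P}(\theta' A + \theta B < t) - \mathbb{P}(\theta A + \theta' B < t) = \sum_{a,b} \mathbf{1}\{\theta a + \theta' b < t\}\bigl[f_A(b)f_B(a) - f_A(a)f_B(b)\bigr].
\]
Pairing $(a,b)$ with $(b,a)$ and using $\theta a + \theta' b \geq \theta b + \theta' a$ whenever $a > b$ (since $\theta \geq \theta'$), each unordered pair contributes $[f_A(a)f_B(b)-f_A(b)f_B(a)]\cdot [\mathbf{1}\{\theta b + \theta' a < t\}-\mathbf{1}\{\theta a + \theta' b < t\}]$, in which the indicator bracket is non-negative; hence non-negativity of the total reduces to $f_A(a)f_B(b) \geq f_A(b)f_B(a)$ for $a>b$, i.e., $f_A/f_B$ non-decreasing. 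Writing $\mu_h^{(k)}(j) := \mathbb{P}(|S_k+h|=j)$ with $S_k = \sum_{i=1}^k \varepsilon_i$, one has $f_B(j^2) = \mu_h^{(k)}(j)$ and $f_A(j^2) = \mu_{h+2}^{(k)}(j)$, so the task becomes showing that $\mu_{h+2}^{(k)}(j)/\mu_h^{(k)}(j)$ is non-decreasing in $j \geq 0$.

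Second, I would prove this MLR by induction on $k$. Conditioning on $\varepsilon_k$ and invoking the reflection symmetry $\mu_{-h}^{(k-1)} = \mu_h^{(k-1)}$ yields the recursion
\[
\mu_h^{(k)}(j) = \tfrac{1}{2}\bigl[\mu_{|h-1|}^{(k-1)}(j) + \mu_{h+1}^{(k-1)}(j)\bigr].
\]
The inductive step rests on two elementary facts: a sum lemma (if every cross-ratio $f_i/g_l$ is non-decreasing then so is $(\sum_i f_i)/(\sum_l g_l)$) and chaining (products of non-decreasing non-negative functions are non-decreasing, which upgrades gap-two MLR at level $k-1$ to any even-gap MLR at the same level). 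Writing the level-$k$ ratio $\mu_{h+2}^{(k)}/\mu_h^{(k)}$ as a ratio of two-term sums via the recursion, the four cross-ratios that appear are, for $h\geq 1$: one constant (the shared $\mu_{h+1}^{(k-1)}$), two gap-two ratios $\mu_{h+1}/\mu_{h-1}$ and $\mu_{h+3}/\mu_{h+1}$, and one chained gap-four ratio $\mu_{h+3}/\mu_{h-1}$, all monotone by IH. The boundary $h=0$ simplifies directly since $\mu_0^{(k)} = \mu_1^{(k-1)}$ and $\mu_2^{(k)} = \tfrac{1}{2}(\mu_1^{(k-1)} + \mu_3^{(k-1)})$, giving $\mu_2^{(k)}/\mu_0^{(k)} = \tfrac{1}{2}+\tfrac{1}{2}(\mu_3^{(k-1)}/\mu_1^{(k-1)})$, monotone by IH; the base case $k=0$ is trivial.

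The main obstacle is the MLR claim in the second step; the reduction in the first step is routine once MLR is identified as the correct target. What makes the induction go through is the log-concavity of the Rademacher-sum pmf, encoded cleanly through the recursion together with the sum lemma and chaining. A delicate point is the boundary case $h=0$ arising from the symmetry convention $\mu_{-1}^{(k-1)} = \mu_1^{(k-1)}$, which collapses two of the four terms in the recursion but fortunately admits a direct verification.
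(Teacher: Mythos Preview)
Your proof is correct and takes a genuinely different route from the paper's. The paper works directly in the two–dimensional lattice: it introduces the ellipses $E_1=\{\theta(x+h)^2+\theta'(y+h+2)^2<t\}$ and $E_2=\{\theta'(x+h)^2+\theta(y+h+2)^2<t\}$, observes that their boundaries intersect on the two lines $x-y-2=0$ and $x+y+2h+2=0$, and then constructs an explicit four–point matching between $(E_1\setminus E_2)\cap S$ and $(E_2\setminus E_1)\cap S$ using the reflections $(i,j)\mapsto(j+2,i-2)$ and $(i,j)\mapsto(-j-2h-2,-i-2h-2)$. The probability comparison for each matched quadruple is then checked by hand via binomial–coefficient identities, with five separate boundary cases when some of the four points fall outside the lattice $S$. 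Your approach instead collapses the problem to one dimension: once you exploit independence of $A=(S_Y+h+2)^2$ and $B=(S_X+h)^2$ and pair $(a,b)$ with $(b,a)$, the stochastic dominance reduces to the single monotone–likelihood–ratio statement that $\mu_{h+2}^{(k)}/\mu_h^{(k)}$ is nondecreasing, which you then prove by a short induction on $k$ via the recursion $\mu_h^{(k)}=\tfrac12(\mu_{|h-1|}^{(k-1)}+\mu_{h+1}^{(k-1)})$. This is considerably more modular and avoids the paper's lengthy case analysis; in effect it identifies the log–concavity of the binomial as the underlying mechanism and packages it cleanly through MLR. One small point to tighten: the ``chaining'' step (upgrading gap–two MLR to gap–four) is stated as a product of nondecreasing ratios, but the ratios can be $0/0$ outside the support of $\mu_{h+1}^{(k-1)}$; the conclusion still holds because the supports here are intervals with the same parity (so whenever $\mu_{h-1}(a)>0$ and $\mu_{h+3}(b)>0$ with $a>b$ one can check the cross–product inequality directly from the interval structure), but this deserves one explicit sentence.
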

The proof of Proposition \ref{key:proposition} is technical and we prove it below. With this our proof is complete.
\end{proof}

\begin{proof}[Proof of Proposition \ref{key:proposition}] Define the two ellipses
\begin{align*}
E_1 &= \{(x,y) \in \RR^2 : \theta (x + h)^2 + \theta'(y + h + 2)^2 < t\}, \mbox{ and } \\
E_2 &= \{(x,y) \in \RR^2 : \theta' (x + h)^2 + \theta(y + h + 2)^2 < t\}.
\end{align*}
Clearly both ellipses are centered at the point $(-h, - (h + 2))$. Let us begin by calculating the intersection points of the contours of the two ellipses in the case when $\theta \neq \theta'$ (the case $\theta = \theta'$ is trivial, as (\ref{stoch:dominance}) holds with an equality). The contours intersect when 
$$
\theta (x + h)^2 + \theta'(y + h + 2)^2 = \theta' (x + h)^2 + \theta(y + h + 2)^2 = t.
$$
Hence at the intersection, we necessarily have $(x - y - 2)(x + y + 2h + 2) = 0$. Note that the equations $x - y - 2 = 0$, and $x + y + 2h + 2 = 0$ define two lines, which pass through the center $(-h,-(h+2))$, and partition the $x,y$-plane into four regions. Denote those regions as 
\begin{align*}
\cE = \{(x,y) \in \RR^2: x - y - 2 \geq 0, x + y + 2h + 2 \geq 0\},\\
\cN = \{(x,y) \in \RR^2: x - y - 2 \leq 0, x + y + 2h + 2 \geq 0\},\\
\cW =  \{(x,y) \in \RR^2: x - y - 2 \leq 0, x + y + 2h + 2 \leq 0\},\\
\cS =  \{(x,y) \in \RR^2 : x - y - 2 \geq 0, x + y + 2h + 2 \leq 0\}.
\end{align*}
To facilitate the readability of the proof, we plot two examples of $E_1$ and $E_2$, along with the two lines and four regions in Figure \ref{fig:two:ellipses}.
\begin{figure}[t]
\centering
\begin{subfigure}{.5\textwidth}
  \centering
  \includegraphics[width=1\linewidth]{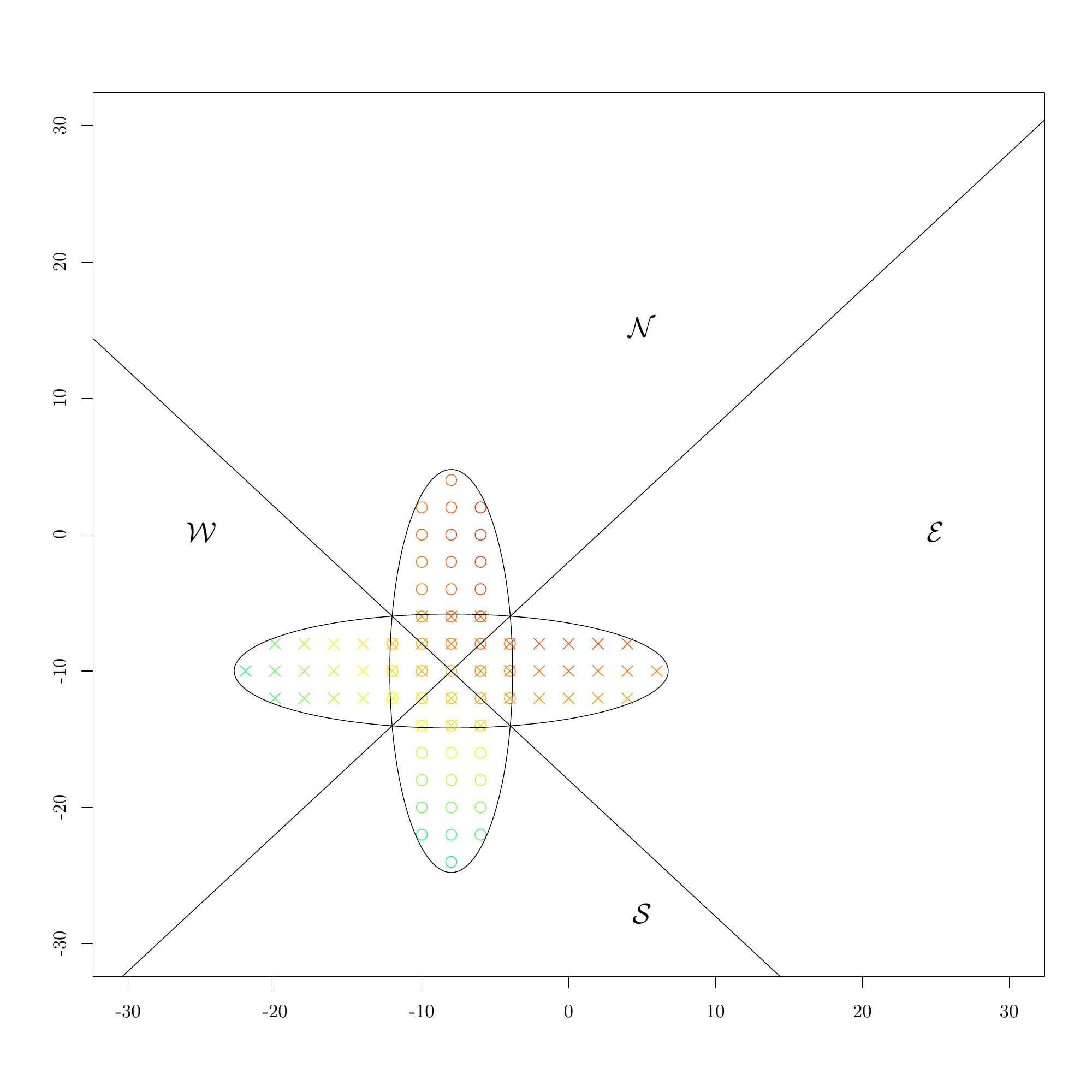}
%  \caption{A subfigure}
%  \label{fig:sub1}
\end{subfigure}%
\begin{subfigure}{.5\textwidth}
  \centering
  \includegraphics[width=1\linewidth]{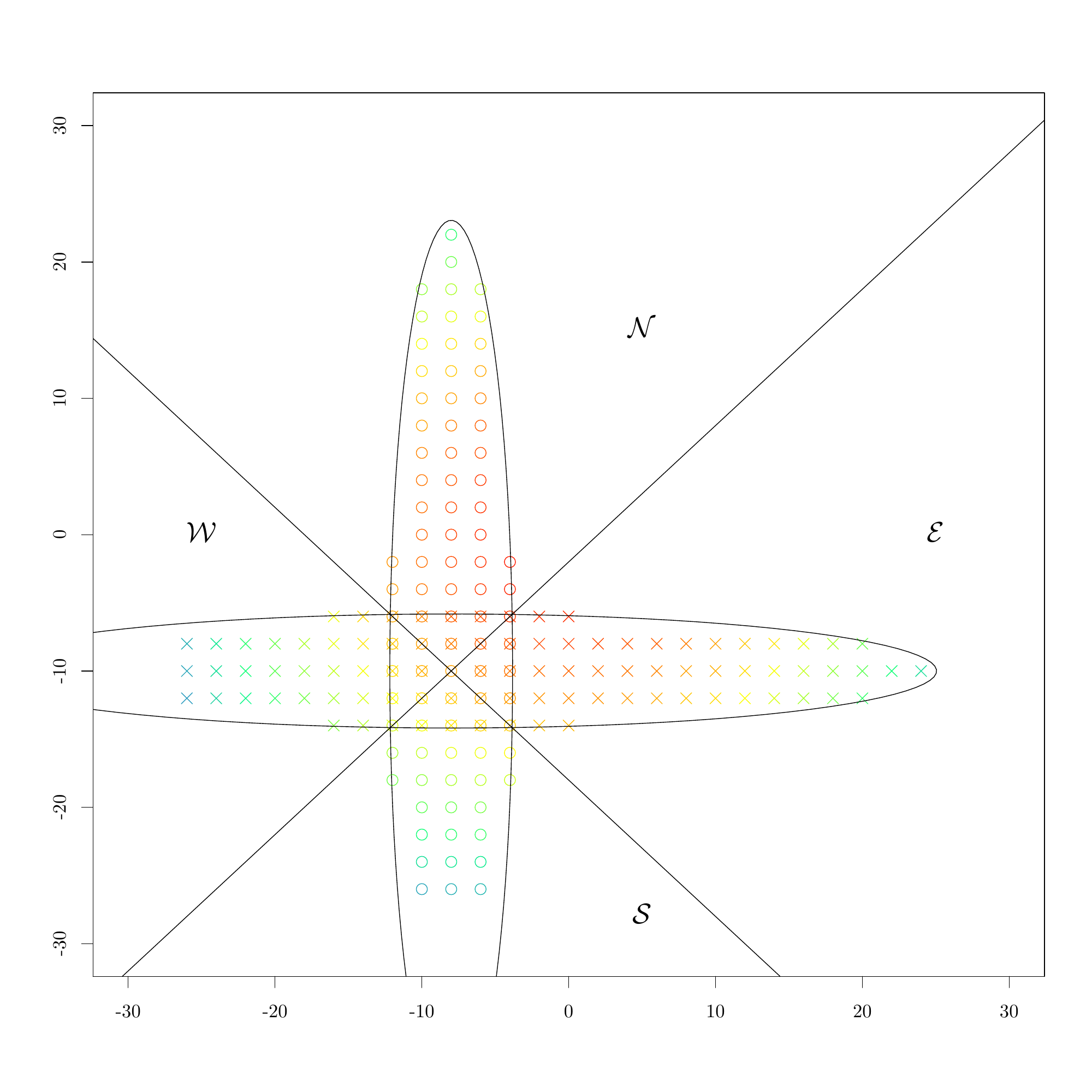}
%  \caption{A subfigure}
%  \label{fig:sub2}
\end{subfigure}
\caption{An example of the two ellipses $E_1$ and $E_2$ with the same values of $\theta$ but two different values of $\theta'$. The points inside the ellipses represent points from the set $S$. The point of intersection of the two lines is the center $(-h,-h-2)$, where for this specific example we have set $h = 8$, $k = 26$. Points ``o'' form the set $E_1\cap S$, and points ``x" form the set $E_2\cap S$. The four regions $\cE, \cN, \cW, \cS$ are also depicted. Points of warmer colors are more likely to occur than points with cooler colors.}
\label{fig:two:ellipses}
\end{figure}
By the above definitions we know that $E_1 \setminus E_2 \subset \cN \cup \cS$, and $E_2 \setminus E_1 \subset \cE \cup \cW$. Note that the pairs $(S_X, S_Y)$ can only take integer values, and in such a way so that $-k \leq S_X \leq k$, $-k \leq S_Y \leq k$, and $S_X  \equiv S_Y  \equiv k ~(\operatorname{mod} 2)$. Define
$$
S =  \{(x,y) \in  (\NN_0)^2 | -k \leq x \leq k, -k \leq y \leq k, x  \equiv y  \equiv k ~(\operatorname{mod} 2)\}
$$ 
In terms of this notation (\ref{stoch:dominance}) can be restated as
$$
\PP_{\varnothing}((S_X, S_Y) \in E_2 \cap S) \leq \PP_{\varnothing}((S_X, S_Y) \in E_1 \cap S),
$$
or equivalently
\begin{align}\label{prob:ineq}
\PP_{\varnothing}((S_X, S_Y) \in (E_2 \setminus E_1)\cap S) \leq \PP_{\varnothing}((S_X, S_Y) \in (E_1\setminus E_2) \cap S).
\end{align}

In the following, our proof plan is to show (\ref{prob:ineq}) by taking pairs of points from the set $(E_1\setminus E_2) \cap S$ and matching them in a 1-1 manner with pairs of points from the set $(E_2 \setminus E_1)\cap S$, while ensuring that the former pairs are more likely to occur than the latter. Sometimes one or more of the points in the two pairs will lie outside of the set $S$ (see, e.g., the right panel of Figure \ref{fig:two:ellipses}), but we will make sure that this does not alter the conclusion. 

Take a point $(i,j) \in (E_1 \setminus E_2) \cap S \cap \cN$. Define the transformation $(\hat i, \hat j) = (-j - 2h-2, -i - 2h -2)$, and consider the four points $(i,j), (j + 2, i - 2), (\hat j + 2, \hat i - 2), (\hat i, \hat j)$. Since $(i,j) \in (E_1 \setminus E_2) \cap \cN$ it is simple to check that $(j + 2, i - 2) \in (E_2\setminus E_1) \cap \cE$, $(\hat i, \hat j) \in (E_2\setminus E_1) \cap \cW$ and $(\hat j + 2, \hat i - 2) \in (E_1\setminus E_2) \cap \cS$. We will now argue that 
\begin{align}\label{very:important:ineq}
\PP_\varnothing \bigr((S_X, S_Y) \in \{(i,j), (\hat j + 2, \hat i -2)\}\bigr) \geq \PP_\varnothing \bigr((S_X, S_Y) \in \{(j + 2,i - 2), (\hat i, \hat j)\}\bigr). 
\end{align}
Note that for any point $(m,\ell) \in S$
$$
\PP_\varnothing((S_X, S_Y) = (m,\ell)) = {k \choose \frac{k + m}{2}}{k \choose \frac{k + \ell}{2}}2^{-2k}.
$$
To this end we consider several cases. 
\begin{itemize}
\item[i.] First assume that all of the above four points belong to $S$. Therefore, we need to show that
$$
{k \choose \frac{k + i}{2}}{k \choose \frac{k + j}{2}} - {k \choose \frac{k +  i - 2}{2}}{k \choose \frac{k +  j + 2}{2}} \geq {k \choose \frac{k + \hat i}{2}}{k \choose \frac{k + \hat j}{2}} - {k \choose \frac{k +  \hat i - 2}{2}}{k \choose \frac{k +  \hat j + 2}{2}}.
$$
A direct calculation yields that the above is equivalent to
$$
\frac{\bigr(\frac{j - i}{2}\bigr)(k + 1) + (k + 1)}{\bigr(\frac{k + i}{2}\bigr)!\bigr(\frac{k - i}{2} + 1\bigr)! \bigr(\frac{k + j}{2} + 1\bigr)! \bigr(\frac{k - j}{2}\bigr)!} \geq \frac{\bigr(\frac{\hat j - \hat i}{2}\bigr)(k + 1) + (k + 1)}{\bigr(\frac{k + \hat i}{2}\bigr)!\bigr(\frac{k -\hat i}{2} + 1\bigr)! \bigr(\frac{k +\hat j}{2} + 1\bigr)! \bigr(\frac{k -\hat j}{2}\bigr)!}.
$$
We note that $\hat j - \hat i = j - i$, and furthermore the numerator $\bigr(\frac{j - i}{2}\bigr)(k + 1) + (k + 1) \geq 0$, since $j - i \geq -2$ by the fact that $(i,j) \in \cN$. Rearaging and cancelling terms shows that the above is equivalent to showing that
%\begin{align*}
%\MoveEqLeft \bigg(\frac{k - j - 2h -2}{2}\bigg)!\bigg(\frac{k + j + 2h + 2}{2} + 1\bigg)! \bigg(\frac{k - i - 2h -2}{2} + 1\bigg)! \bigg(\frac{k + i + 2h + 2}{2}\bigg)! \geq\\
%& \bigg(\frac{k + i}{2}\bigg)!\bigg(\frac{k - i}{2} + 1\bigg)! \bigg(\frac{k + j}{2} + 1\bigg)! \bigg(\frac{k - j}{2}\bigg)!.
%%\frac{\bigr(\frac{k + \hat i}{2}\bigr)!\bigr(\frac{k -\hat i}{2} + 1\bigr)!}{\bigr(\frac{k + j}{2} + 1\bigr)! \bigr(\frac{k - j}{2}\bigr)!} \geq \frac{\bigr(\frac{k + i}{2}\bigr)!\bigr(\frac{k - i}{2} + 1\bigr)!}{\bigr(\frac{k +\hat j}{2} + 1\bigr)! \bigr(\frac{k -\hat j}{2}\bigr)!}.
%\end{align*}
%This is equivalent to
\begin{align*}
 \MoveEqLeft \textstyle \bigr(\frac{k + j}{2} + 2\bigr)\ldots \bigr(\frac{k + j + 2h + 2}{2} + 1\bigr)  \bigr(\frac{k + i}{2} + 1\bigr) \ldots \bigr(\frac{k + i + 2h + 2}{2}\bigr) \geq\\
& \textstyle \bigr(\frac{k - i - 2h -2}{2} + 2\bigr) \ldots \bigr(\frac{k - i}{2} + 1\bigr) \bigr(\frac{k - j - 2h -2}{2} + 1\bigr)\ldots \bigr(\frac{k - j}{2}\bigr).
\end{align*}
This inequality holds true since $(i,j) \in \cN$ and hence $i + j + 2h + 2 \geq 0$.
\item[ii.] In the case when $(\hat j + 2, \hat i - 2) \in S$ but either or both of the points $(\hat i, \hat j) \not \in S$ or $(j + 2, i - 2) \not \in S$ then (\ref{very:important:ineq}) continues to hold by i.
\item[iii.]  Assume $(\hat j + 2, \hat i - 2) \not \in S$ but $(\hat i, \hat j) \in S$ and $(j + 2, i - 2) \in S$. By the fact that $(\hat j + 2, \hat i - 2) \not \in S$ but $(\hat i, \hat j) \in S$ we have that either $\hat j = k$ or $\hat i = -k$. The former is impossible since otherwise $i = -(k + 2h + 2) < -k$. Hence the only viable option is to have $\hat i = -k$, in which case $j = k - (2h + 2)$. We need to show that
$$
\frac{k! \bigr[\bigr(\frac{j - i}{2}\bigr)(k + 1) + (k + 1)\bigr]}{\bigr(\frac{k + i}{2}\bigr)!\bigr(\frac{k - i}{2} + 1\bigr)! \bigr(\frac{k + j}{2} + 1\bigr)! \bigr(\frac{k - j}{2}\bigr)!} \geq \frac{1}{\bigr(\frac{k + \hat j}{2}\bigr)!\bigr(\frac{k - \hat j}{2}\bigr)!}.
$$
Since $(i,j) \in E_1 \setminus E_2$ we cannot have $j = i - 2$ (which can only happen in the intersection of the two ellipses), and thus $j \geq i$. Therefore it suffices to show 
\begin{align*}
\MoveEqLeft \textstyle \bigr(\frac{j - i}{2} + 1\bigr)(k + 1)!\bigr(\frac{k + i}{2} + 1\bigr)\ldots \bigr(\frac{k + i + 2h + 2}{2}\bigr) \\
& \textstyle \geq \bigr(\frac{k -i-2h -2}{2} + 1\bigr)\ldots \bigr(\frac{k - i}{2} + 1\bigr) \bigr(\frac{k + j}{2} + 1\bigr)! \bigr(\frac{k - j}{2}\bigr)!.
\end{align*}
We first observe that the following holds
$$
\textstyle \bigr(\frac{k + i}{2} + 1\bigr)\ldots \bigr(\frac{k + i + 2h + 2}{2}\bigr) \geq \bigr(\frac{k - j}{2}\bigr)!
$$
This is true since: first $j = k - (2 h + 2)$  and hence the RHS factorial contains $h + 1$ terms which is the same number of terms contained on the LHS; and second by the fact that $(i,j) \in \cN$ we have $i + j + 2h + 2 \geq 0$ and therefore each of the terms in the product on the LHS is greater than or equal to a corresponding term in the product of the RHS. 
Next, note that $\frac{j - i}{2} + 1 = \frac{k -i-2h -2}{2} + 1$ since $j = k - (2h + 2)$. Hence it is enough to show 
$$
\textstyle (k + 1)! \geq \bigr(\frac{k -i-2h -2}{2} + 2\bigr)\ldots \bigr(\frac{k - i}{2} + 1\bigr) \bigr(\frac{k + j}{2} + 1\bigr)!
$$
The latter is equivalent to
$$
\textstyle \bigr(\frac{k + j}{2} + 2\bigr)\ldots (k + 1) \geq \bigr(\frac{k -i-2h -2}{2} + 2\bigr)\ldots \bigr(\frac{k - i}{2} + 1\bigr) ,
$$
which holds true since the number of terms on both sides is the same, and $i \geq -k$.
\item[iv.] Assume $(\hat j + 2, \hat i - 2) \not \in S$ and $(\hat i, \hat j) \in S$, but $(j + 2, i -2) \not \in S$. This case follows by iii.
 \item[v.] Assume  $(\hat j + 2, \hat i - 2) \not \in S$ and  $(j + 2, i -2) \in S$ but $(\hat i, \hat j) \not \in S$. This case follows by case i. since its calculation implies 
 $$
{k \choose \frac{k + i}{2}}{k \choose \frac{k + j}{2}} - {k \choose \frac{k +  i - 2}{2}}{k \choose \frac{k +  j + 2}{2}} \geq 0.
 $$
 \end{itemize}

We have now shown that (\ref{very:important:ineq}) holds. The proof will be completed if we showed that by iterating over points in the set $(E_1 \setminus E_2) \cap S \cap \cN$  exhausts all points from the remaining three sets ---  $(E_1 \setminus E_2) \cap S \cap \cS$, ~ $(E_2 \setminus E_1) \cap S \cap \cW$ and $(E_2 \setminus E_1) \cap S \cap \cE$. We show this below.

First assume there is a point $(i,j) \in (E_2 \setminus E_1) \cap S \cap \cE$ so that $(j + 2, i - 2) \not \in (E_1 \setminus E_2) \cap S \cap \cN$. Since $(i,j) \in (E_2 \setminus E_1) \cap \cE$ we necessarily have $(j + 2, i - 2) \in (E_1 \setminus E_2) \cap \cN$. Therefore by our assumption $(j + 2, i - 2) \not \in S$. By the fact that $(i,j) \in \cE$ we know that $i \geq j + 2$ and therefore $k \geq j + 2 \geq j \geq -k$. Moreover $i = -k$ implies that $j \leq -k - 2$ contradicting $(i,j) \in S$, and therefore $i \geq -k + 2$. This implies that $(j + 2, i - 2) \in S$ which is a contradiction.

Next, let $(i,j) \in (E_2 \setminus E_1) \cap S \cap \cW$ so that $(-j - 2h -2, -i - 2h - 2) \not \in (E_1 \setminus E_2) \cap S \cap \cN$. The fact that $(i,j) \in (E_2 \setminus E_1) \cap \cW$ ensures $(-j - 2h -2, -i - 2h - 2) \in (E_1 \setminus E_2) \cap \cN$ and thus $(-j - 2h -2, -i - 2h - 2) \not \in S$. Since $(i, j) \in \cW$ we have that $i \leq -j -2h -2$ and $j \leq -i - 2h -2$ hence since $(i, j) \in S$ we also have  $-i - 2h -2 \geq -k$ and $-j - 2h -2 \geq -k$. Also $-i - 2h - 2 \leq k$, as otherwise we would have $i \leq -k -2h - 2 < -k$ which would be a contradiction. Similarly $-j - 2h - 2 \leq k$. Thus $(-j - 2h -2, -i - 2h - 2) \in S$ which is a contradiction. 

Finally assume that $(i,j) \in (E_1 \setminus E_2) \cap S \cap \cS$ but $(-i - 2h, -j - 2h - 4) \not \in (E_1 \setminus E_2) \cap S \cap \cN$. Since $(i,j) \in (E_1 \setminus E_2) \cap \cS$ it can be checked that $(-i - 2h, -j - 2h - 4) \in (E_1 \setminus E_2) \cap \cN$ and thus by assumption $(-i - 2h, -j - 2h - 4) \not \in S$. Note that $-i - 2h \leq k$ and $-j - 2h - 4\leq k$ or otherwise $i < -k - 2h \leq k$ and $j \leq -k -2h-4 < -k$ which contradicts our assumption that $(i,j) \in S$. We also have that $- k + 2 \leq j+ 2 \leq -i - 2h$ by the assumption that $(i,j) \in \cS\cap S$. Moreover $-i- 2h - (-j - 2h - 4) = 4 + j - i \leq 2$ and therefore $(-j - 2h - 4) \geq -k + 2 - 2 = -k$. Hence $(-i - 2h, -j- 2h - 4) \in S$, which is a contradiction.

With this the proof is complete.
\end{proof}

\begin{proof}[Proof of Proposition \ref{connectivity:low:temp}] To compare the null and alternative hypothesis we construct the following graphs. Let $G_0 = ([d],E_0)$, where 
\begin{align*}
 E_0 := \textstyle &\bigcup_{j = 0}^{\lfloor \frac{d}{s}\rfloor - 2}\{(u,v)\}_{s j + 1 \leq u < v \leq s (j +1)} \bigcup \bigcup_{j = 1}^{\lfloor \frac{d}{s}\rfloor - 2} \{(s j, s j + 1)\} \bigcup\\
& \textstyle \bigcup_{s(\lfloor \frac{d}{s}\rfloor - 1) + 1 \leq j \leq d-1} \{(j,j+1)\}.
\end{align*}
which is a union of $\lfloor \frac{d}{s}\rfloor - 2$, $s$-cliques connected via a path, and a disconnected path graph. See Figure \ref{proof:figure:conn} for a visualization.
\begin{figure}%[H]
\centering
\begin{tikzpicture}[scale=.65]
\SetVertexNormal[Shape      = circle,
                  FillColor  = white,
                  MinSize = 11pt,
                  InnerSep=0pt,
                  LineWidth = .5pt]
   \SetVertexNoLabel
   \tikzset{LabelStyle/.style = {below, fill = white, text = black, fill opacity=0, text opacity = 1}}
   \tikzset{EdgeStyle/.style= {thin,
                                double          = black,
                                double distance = .5pt}}
    \begin{scope}\grComplete[RA=1.5]{7}\end{scope}
    \begin{scope}[shift={(4,0)}]\grComplete[prefix=a,RA=1.5]{7}\end{scope}
    \begin{scope}[shift={(8,0)}]\grComplete[prefix=b,RA=1.5]{7}\end{scope}
    \begin{scope}[shift={(0,0)}]\grComplete[prefix=c,RA=1.5]{7}\end{scope}
    \begin{scope}[shift={(-.25,-3)}]\grPath[Math,prefix=u,RA=1.5,RS=0]{7}\end{scope}
        \Edge(a0)(b3)
        \Edge(c0)(a3)
                                                                           \tikzset{LabelStyle/.style = {right, fill = white, text = black, fill opacity=0, text opacity = 1}}
        \tikzset{EdgeStyle/.append style = {dashed, thin,  bend right=15}}
    \Edge[label=$\theta$](u0)(c6)
    \tikzset{EdgeStyle/.append style = {thin, dashed, bend left=15}}
                                                                               \tikzset{LabelStyle/.style = {below left, fill = white, text = black, fill opacity=0, text opacity = 1}}
    \Edge[label=$-\theta$](u0)(c4)
\end{tikzpicture}\caption{The graph $G_0$ is depicted with solid edges. A particular $G_{j,k}$ can be constructed if we add the two dashed edges to $G_0$.}\label{proof:figure:conn}
\end{figure}
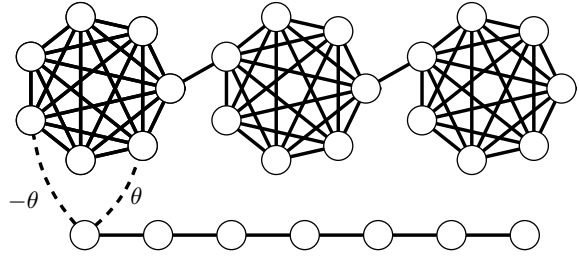
	
Based on $G_0$, we construct the ferromagnetic Ising measure $\PP_{G_0}$ which attaches an interaction of magnitude $\theta > 0$ to each edge in $G_0$ so that:
\begin{align}\label{conn:null:measure}
\PP_{G_0}(\bX) \varpropto \exp\biggr(\theta \sum_{(u,v) \in E_0} X_u X_v\biggr).
\end{align}
Next construct the connected graphs $G_{j,k} = ([d], E_{j,k})$ for $1 \leq j \leq \lfloor \frac{d}{s}\rfloor - 2$ and $3 \leq k \leq s - 1$, where
$$
E_{j,k} := E_0 \bigcup \bigg\{\bigg(s(\lfloor \frac{d}{s}\rfloor - 1) + j, sj + 2\bigg), \bigg(s(\lfloor \frac{d}{s}\rfloor - 1) + j, sj + k\bigg)\bigg\}.
$$
We define the Ising measure associated with $\PP_{G_{j,k}}$ by
\begin{align}\label{conn:alt:measures}
\PP_{G_{j,k}}(\bX) \varpropto \exp\biggr(\theta \sum_{(u,v) \in E_0} X_u X_v + \theta X_{s(\lfloor \frac{d}{s}\rfloor - 1) + j} ( X_{sj + 2} - X_{sj + k})\biggr),
\end{align}
which clearly belongs to model class (\ref{ising:model:measure:anti}) but not to (\ref{ferromagnet:def}). For two alternative graphs $G_{j_1, k_1}$ and $G_{j_2, k_2}$ we evaluate the ratio:
$$
\EE_{{G_0}} \frac{\PP_{G_{j_1, k_1}}}{\PP_{G_0}}\frac{\PP_{G_{j_2, k_2}}}{\PP_{G_0}} =  \frac{\EE_{{G_{j_1, k_1}}} \exp( \theta X_{\bar j_2}(X_{sj_2 + 2} - X_{sj_2 + k_2}))}{\EE_{{G_0}}\exp( \theta X_{\bar j_2}( X_{sj_2 + 2} -  X_{sj_2 + k_2}))}
$$
where for brevity we set $\bar j_2 := s(\lfloor \frac{d}{s}\rfloor - 1) + j_2$. Using identity (\ref{cosh:exp:identity}) we have:
\begin{align*}
\MoveEqLeft \EE_{{G_0}} \frac{\PP_{G_{j_1, k_1}}}{\PP_{G_0}}\frac{\PP_{G_{j_2, k_2}}}{\PP_{G_0}}  = \frac{\EE_{{G_{j_1, k_1}}} (1 + \tanh(\theta) X_{\bar j_2}X_{sj_2 + 2})(1 - \tanh(\theta) X_{\bar j_2}X_{sj_2 + k_2})}{\EE_{{G_0}}(1 + \tanh(\theta) X_{\bar j_2}X_{sj_2 + 2})(1 - \tanh(\theta) X_{\bar j_2}X_{sj_2 + k_2})} \\
& = 1 + \frac{\tanh(\theta)\EE_{{G_{j_1, k_1}}}[X_{\bar j_2}X_{sj_2 + 2} - X_{\bar j_2}X_{sj_2 + k_2}]}{1 - \tanh^2(\theta) \EE_{{G_{0}}} X_{sj_2 + 2}X_{sj_2 + k_2}}\\
& + \frac{\tanh^2(\theta)[\EE_{{G_{0}}} X_{sj_2 + 2}X_{sj_2 + k_2} - \EE_{{G_{j_1, k_1}}} X_{sj_2 + 2}X_{sj_2 + k_2}]}{1 - \tanh^2(\theta) \EE_{{G_{0}}} X_{sj_2 + 2}X_{sj_2 + k_2}}.
\end{align*}
where we used $\EE_{{G_{0}}}X_{\bar j_2}X_{sj_2 + k_2} = \EE_{{G_{0}}}X_{\bar j_2}X_{sj_2 + 2}= 0$. We distinguish two cases. \\

\noindent {\bf Case I.} First assume that $j_1 \neq j_2$. Since there are no simple paths connecting $sj_2 + 2$ with $sj_2 + k_2$ which pass through the vertex $\bar j_1 = s(\lfloor \frac{d}{s}\rfloor - 1) + j_1$, by Proposition \ref{restriction:prop} we have that:
$$
\EE_{{G_{0}}} X_{sj_2 + 2}X_{sj_2 + k_2} = \EE_{{G_{j_1, k_1}}} X_{sj_2 + 2}X_{sj_2 + k_2}.
$$
What is more, due to the symmetry of the construction we also have that:
$$
\EE_{{G_{j_1, k_1}}}X_{\bar j_2}X_{sj_2 + 2} = \EE_{{G_{j_1, k_1}}}X_{\bar j_2}X_{sj_2 + k_2}.
$$
Hence when $j_1 \neq j_2$ we have:
$$
\EE_{{G_0}} \frac{\PP_{G_{j_1, k_1}}}{\PP_{G_0}}\frac{\PP_{G_{j_2, k_2}}}{\PP_{G_0}} =  1.
$$
\noindent {\bf Case II.} Now consider the case $j_1 = j_2 =: j$. Below we control the terms $\EE_{{G_{j, k_1}}}[X_{\bar j}X_{sj + 2} - X_{\bar j}X_{sj + k_2}]$ (here $\bar j = s(\lfloor \frac{d}{s}\rfloor - 1) + j$) and $\EE_{{G_{0}}} X_{sj + 2}X_{sj + k_2} - \EE_{{G_{j, k_1}}}X_{sj + 2}X_{sj + k_2}$, starting from the former. The simple identity $|a -b| = 1 - ab$ for any $a,b \in \{\pm 1\}$ yields the estimate
$$
\EE_{{G_{j, k_1}}}[X_{\bar j}X_{sj + 2} - X_{\bar j}X_{sj + k_2}] \leq 1 -  \EE_{{G_{j, k_1}}}X_{sj + 2}X_{sj + k_2}.
$$
We have
\begin{align*}
\frac{\PP_{G_{j, k_1}}(X_{sj + 2}X_{sj+ k_2} = 1)}{\PP_{G_{j, k_1}}(X_{sj + 2}X_{sj + k_2} = -1)} & \geq \min_{\xi \in \{\pm 1\}}\frac{\PP_{G_{j, k_1}}(X_{sj + 2}X_{sj + k_2} = 1 | X_{\bar j} = \xi)}{\PP_{G_{j, k_1}}(X_{sj + 2}X_{sj + k_2} = -1| X_{\bar j} = \xi)} \\
& \geq \frac{\PP_{G_{0}}(X_{sj + 2}X_{sj + k_2} = 1)}{\exp(4\theta)\PP_{G_{0}}(X_{sj + 2}X_{sj + k_2} = -1)} \\
& \geq \frac{\exp(2\theta \lfloor s/4 \rfloor )}{\exp(4\theta)(2 \lfloor s/4 \rfloor - 1)},
\end{align*}
where the last inequality follows by (\ref{prob:ratio:ineq}), since the vertices $sj + 2$ and $sj + k_2$ belong to the same $s$-clique in the graph $G_0$, and hence they are on the same side of a $2 \lfloor s/4 \rfloor \times 2 \lfloor s/4 \rfloor$-biclique. Putting the last two inequalities together with the identity $ \EE_{{G_{j, k_1}}}X_{sj + 2}X_{sj + k_2}  = \PP_{G_{j, k_1}}(X_{sj + 2}X_{sj + k_2} = 1) - \PP_{G_{j, k_1}}(X_{sj + 2}X_{sj + k_2} = -1)$ gives:
\begin{align*}
\MoveEqLeft \EE_{{G_{j, k_1}}}[X_{\bar j}X_{sj + 2} - X_{\bar j}X_{sj + k_2}] \leq 1 -  \EE_{{G_{j, k_1}}}X_{sj + 2}X_{sj + k_2} \\
& \leq \frac{2(2 \lfloor s/4 \rfloor - 1)}{\exp((2 \lfloor s/4 \rfloor - 4)\theta ) + (2 \lfloor s/4 \rfloor - 1)}.
\end{align*}
Now we focus on the term $\EE_{{G_{0}}} X_{sj + 2}X_{sj + k_2} - \EE_{{G_{j, k_1}}}X_{sj + 2}X_{sj + k_2}$. Note that
$$
\EE_{{G_{0}}} X_{sj + 2}X_{sj + k_2} - \EE_{{G_{j, k_1}}}X_{sj + 2}X_{sj + k_2} \leq 1 - \EE_{{G_{j, k_1}}}X_{sj + 2}X_{sj + k_2},
$$
and therefore the previous bound applies. We have established
\begin{align*}
\EE_{{G_0}} \frac{\PP_{G_{j, k_1}}}{\PP_{G_0}}\frac{\PP_{G_{j, k_2}}}{\PP_{G_0}} & \leq 1+ \frac{\tanh(\theta) + \tanh^2(\theta)}{1 - \tanh^2(\theta)} \frac{2(2 \lfloor s/4 \rfloor - 1)}{\exp((2 \lfloor s/4 \rfloor - 4)\theta ) + (2 \lfloor s/4 \rfloor - 1)} \\
& \leq 1+ \frac{4(2 \lfloor s/4 \rfloor - 1)}{(1 - \tanh^2(\theta))(\exp((2 \lfloor s/4 \rfloor - 4)\theta ) + (2 \lfloor s/4 \rfloor - 1))}.
\end{align*}
Let $\overline \PP^{\otimes n}$ be the measure such that $\overline \PP^{\otimes n} = \frac{1}{(\lfloor\frac{d}{s}\rfloor - 2)(s-3)}\sum_{j, k} \PP^{\otimes n}_{G_{j,k}}$. We have:
\begin{align*}
\MoveEqLeft D_{\chi^2}(\overline \PP^{\otimes n}, \PP^{\otimes n}_{G_0})  = \frac{1}{[(\lfloor\frac{d}{s}\rfloor - 2)(s-3)]^2}\sum_{j_1, j_2, k_1,k_2}\EE_{\PP^{\otimes n}_{G_0}} \frac{\PP^{\otimes n}_{G_{j_1,k_1}}}{\PP^{\otimes n}_{G_0}}\frac{\PP^{\otimes n}_{G_{j_2,k_2}}}{\PP^{\otimes n}_{G_0}}\\
& \leq \frac{(\lfloor\frac{d}{s}\rfloor - 2)^2 - (\lfloor\frac{d}{s}\rfloor - 2)}{(\lfloor\frac{d}{s}\rfloor - 2)^2} + \frac{\bigg[1 + \frac{4(2 \lfloor s/4 \rfloor - 1)}{(1 - \tanh^2(\theta))(\exp((2 \lfloor s/4 \rfloor - 4)\theta ) + (2 \lfloor s/4 \rfloor - 1))}\bigg]^n}{(\lfloor\frac{d}{s}\rfloor - 2)(s-3)^2}\\
& \leq 1 + \frac{\exp\bigg[n \frac{4(2 \lfloor s/4 \rfloor - 1)}{(1 - \tanh^2(\theta))(\exp((2 \lfloor s/4 \rfloor - 4)\theta ) + (2 \lfloor s/4 \rfloor - 1))}\bigg] - (s-3)^2}{(\lfloor\frac{d}{s}\rfloor - 2)(s-3)^2}.
\end{align*}
Hence if the last quantity is $1 + o(1)$ the risk is $1$. The latter is ensured under condition (\ref{conn:testing:cond:anti}) from the statement, and hence the proof is complete. 

\end{proof}

%We proceed as in Lemma 1 of \cite{santhanam2012information} and identify an upper and lower bound of the numerator and denominator, starting with the latter. Consider the ratios:
%$$
%\frac{{d-2 \choose m} \cosh(\theta(d - 2m - 2))^{d}}{{d-2 \choose m - 1} \cosh(\theta(d - 2m))^{d}} = \frac{(d - m -1)\cosh(\theta(d - 2m - 2))^{d}}{m \cosh(\theta(d - 2m))^{d}}.
%$$
%Clearly the ratio $\frac{d - m -1}{m}$ is decreasing in $m \in [1,d-2]$. Furthermore the following holds
%$$
%\frac{d}{dx} \frac{\cosh(x)}{\cosh(x + y)} = \sinh(y)(- \operatorname{sech}^2(x+ y)) < 0,
%$$
%for $y > 0$, implying that the ratio $r(m) := \frac{\cosh(\theta(d - 2m - 2))}{\cosh(\theta(d - 2m))}$ is increasing for $m \in [0,d-2]$. The function $\cosh(x)$ is even, and is decreasing on $(-\infty,0)$ and increasing on $[0,\infty)$. This implies that the ratio $r(m) < 1$ when $m < d/2$, and $r > 1$ when $m \geq d/2$. 

%{\color{red} BELOW NEEDS TO CHANGE TO THE NEW NOTATION!}

\section{Correlation Testing for General Models}\label{correlation:testing:general:models}

\begin{proof}[Proof of Proposition \ref{mean:comb:inf:testing:general}] 
%First assume that the true graph $G \in \cG_0$. It immediately follows by Lemma \ref{simple:concentration:lemma} that 
%$$
%\min_{G \in \cG_0} \min_{\btheta \in \cS(\theta,\Theta,G)} \max_{k,\ell \in [d]}|\hat \EE X_k X_\ell - \EE_{G, \btheta}X_k X_\ell| = \min_{\btheta \in \cS(\theta,\Theta,G)} \max_{k,\ell \in [d]}|\hat \EE X_k X_\ell - \EE_{G, \btheta}X_k X_\ell| \leq \tau,
%$$
%with probability at least $1 - \delta$. 
First consider the case when the true graph $G_{\wb} \in \cG_0(\cP, \theta, \Theta)$. Then by (\ref{expectation:bound}) we have that 
$$
\max_{u, v} |\hat \EE X_u X_v - \EE_{\theta, \tilde G_{\tilde \wb}} X_u X_v| \leq \varepsilon(\delta),
$$
with probability at least $1 - \delta$. Next, suppose that $G_{\wb} \in \cG_1(\cP, \theta, \Theta)$. %Let
%$$
%(\tilde G, \tilde \btheta, \tilde k, \tilde \ell) = \argmin_{G \in \cG_0} \argmin_{\btheta \in \cS(\theta,\Theta,G)} \argmax_{k,\ell \in [d]}|\hat \EE X_k X_\ell - \EE_{G, \btheta}X_k X_\ell|.
%$$
Let 
$$
(u^*, v^*) =  \argmax_{u,v} |\EE_{\theta, \tilde G_{\tilde \wb}}X_{u} X_{v}- \EE_{\theta,  G_{\wb}}X_u X_v|
$$
We have
\begin{align*}
\MoveEqLeft \max_{u, v} |\hat \EE X_{u} X_{v} - \EE_{ \theta, \tilde G_{\tilde \wb}}X_{u} X_{ v}|  \geq |\hat \EE X_{u^*} X_{v^*} - \EE_{ \theta, \tilde G_{\tilde \wb}} X_{u^*} X_{v^*} | \\
& \geq  |\EE_{ \theta, \tilde G_{\tilde \wb}} X_{u^*} X_{v^*} - \EE_{ \theta, G_{\wb}} X_{u^*} X_{v^*}| - |\hat \EE X_{u^*} X_{v^*} - \EE_{ \theta, G_{\wb}}X_{u^*} X_{v^*}| \\
& \geq   |\EE_{ \theta, \tilde G_{\tilde \wb}} X_{u^*} X_{v^*} - \EE_{ \theta, G_{\wb}} X_{u^*} X_{v^*}| - \tau\\
& \geq \frac{\sinh^2(\theta/4)}{2s\Theta(3 \exp(2s\Theta) + 1)} -  \tau,
\end{align*}
where the next to last inequality holds with probability at least $1 - \delta$ by Lemma \ref{simple:concentration:lemma}, and the last inequality holds by Lemma 6 of \cite{santhanam2012information} since $\tilde G_{\tilde \wb} \in \cG_0(\cP, \theta, \Theta)$ and hence $G \neq \tilde G$. This completes the proof. 
\end{proof}

\begin{proof}[Proof of Lemma \ref{simple:lemma:F:scan}] The proof is a direct consequence of Lemma \ref{simple:concentration:lemma}.
\end{proof}

\begin{proof}[Proof of Proposition \ref{test:cycle:generic:proof}] Clearly the output of Algorithm \ref{cycle:test:step1} satisfies $\tilde T_{\tilde \wb} \in \cG_0(\cP, \theta, \Theta)$ and thus for $\tilde T$ we have $\cP(\tilde T) = 0$ by design. Hence it remains to argue that (\ref{expectation:bound}) holds with the specified $\varepsilon(\delta)$.  Using Lemma \ref{simple:concentration:lemma} we have that 
\begin{align}\label{expectation:difference:appendix}
\max_{u,v} |\hat \EE X_u X_v - \EE_{\theta, G_{\wb}} X_u X_v | \leq \tau.
\end{align}
with probability at least $1 - \delta$. Below we assume this event holds, so that all statements we make should be understood to hold with probability at least $1 - \delta$.

We begin by arguing that $\tilde T = G$. First we will show that the set $E(\tilde T) \setminus E(G) = \varnothing$. Suppose the contrary, and consider the first step, say $m$, at which an edge $e = (u,v)$ from the set $E(\tilde T) \setminus E(G)$ was added to $\tilde T$. Let $P$ (possibly $P = \varnothing$) denote the path connecting $u$ with $v$ in the graph $G$. First suppose $P = \varnothing$. Then $\EE_{\theta, G_{\wb}}X_u X_v = 0$, so $|\hat\EE X_u X_v| \leq \tau$ and since $\tanh \theta - \tau \geq \tau$ this edge is pruned at the end of the procedure which is a contradiction. So $P \neq \varnothing$ and thus $|P| \geq 2$ (if $P$ was a single edge it follows that $e \in E(G)$ which is a contradiction). By Proposition \ref{restriction:prop} and Lemma \ref{chain:graph} we know
$$
\EE_{\theta, G_{\wb}} X_u X_v = \prod_{(k, \ell) \in P} \EE_{\theta, G_{\wb}} X_k X_\ell,
$$
and therefore $|\hat \EE X_u X_v| \leq  \prod_{(k, \ell) \in P} |\EE_{\theta, G_{\wb}} X_k X_\ell| + \tau$. Since we have added $e$ at the $m$\textsuperscript{th} step of building the MST, and $e \not \in P$, then there exists an edge from $e' \in P$, say $e' = (u',v')$, which does not belong to $\tilde T$, since otherwise adding $e$ would make a cycle in $\tilde T$. Adding edge $e$ over $e'$ means that the inequality $|\hat \EE X_{u'}X_{v'}| \leq |\hat \EE X_u X_v|$ holds. Hence
\begin{align*}
|\EE_{\theta, G_{\wb}} X_{u'}X_{v'}| - \tau & \leq |\hat \EE X_{u'}X_{v'}| \leq |\hat \EE X_u X_v| \leq  \prod_{(k, \ell) \in P} |\EE_{\theta, G_{\wb}} X_k X_\ell| + \tau \\
& \leq |\EE_{\theta, G_{\wb}} X_{u'} X_{v'}|\tanh(\Theta) + \tau.
\end{align*}
However, $|\EE_{\theta, G_{\wb}} X_{u'}X_{v'}| \geq \tanh(\theta)$ and since $\tanh(\theta)(1 - \tanh(\Theta)) > 2\tau$ this contradicts the inequality in the preceding display. We conclude that $E(\tilde T) \subseteq E(G)$. If there exists an edge $e \in  E(G) \setminus E(\tilde T)$, then when building the MST this edge was either pruned at the end of the procedure, or a no-edge was preferred to it. However these two scenarios are also impossible using the same logic as before. Hence $\tilde T = G$.

By our construction we have that for $(u,v) \in E(G)$ we either have $\EE_{\theta, \tilde T_{\tilde \wb}} X_u X_v = \hat \EE X_u X_v$, or $|\hat \EE X_u X_v| > \tanh(\Theta)$ or $|\hat \EE X_u X_v| < \tanh(\theta)$ in which case $|\EE_{\theta, \tilde T_{\tilde \wb}} X_u X_v| = \tanh (\Theta)$ or $|\EE_{\theta, \tilde T_{\tilde \wb}} X_u X_v| = \tanh(\theta)$ respectively. Putting these two cases together we conclude that
$$
\max_{(u,v) \in E(G)} |\hat \EE X_u X_v - \EE_{\theta, \tilde T_{\tilde \wb}} X_u X_v | \leq \tau.
$$
%on an event of probability at least $1-\delta$. 

Clearly if two vertices $u$ and $v$ are disconnected in $G$, we also have $|\hat \EE X_u X_v - \EE_{\theta, \tilde T_{\tilde \wb}} X_u X_v | \leq \tau$. Next take two connected vertices $u, v$ and let $P$ denote the (unique) path between them. Consider
\begin{align*}
\MoveEqLeft |\hat \EE X_u X_v - \EE_{\theta, \tilde T_{\tilde \wb}} X_u X_v|  =  \bigr|\hat \EE X_u X_v - \prod_{(k,\ell) \in P} \EE_{\theta, \tilde T_{\tilde \wb}} X_k X_\ell\bigr| \\
& \leq \bigr|\hat \EE X_u X_v - \EE_{\theta, G_{\wb}} X_u X_v | + |\prod_{(k,\ell) \in P}  \EE_{\theta, G_{\wb}} X_k X_\ell - \prod_{(k,\ell) \in P}  \EE_{\theta, \tilde T_{\tilde \wb}} X_k X_\ell\bigr| \\
& \leq  \tau +  |\prod_{(k,\ell) \in P}  \EE_{\theta, G_{\wb}} X_k X_\ell - \prod_{(k,\ell) \in P}  \EE_{\theta, \tilde T_{\tilde \wb}} X_k X_\ell\bigr|.
\end{align*}
Suppose that the length of $P$ is $l$.  For brevity put $\tau_{uv} := \EE_{\theta, G_{\wb}} X_u X_v - \EE_{\theta, \tilde T_{\tilde \wb}} X_u X_v$.  By (\ref{expectation:difference:appendix}), and the fact that our corrections of $\hat \EE X_uX_v$ to $\EE_{\theta, \tilde T_{\tilde \wb}} X_u X_v$ will only result in better estimation of $\EE_{\theta,  G_{\wb}} X_u X_v$ we know that $|\tau_{uv}| \leq \tau$ for all $(u,v) \in E(G) = E(\tilde T)$. 

Note that condition $\tanh(\theta)(1 - \tanh(\Theta)) > 2\tau$ implies that $\tanh(\Theta) < 1$ and $1 > 2\tau + \tanh(\Theta)$. A simple expansion gives 
\begin{align*}
\MoveEqLeft |\prod_{(k,\ell) \in P}  \EE_{\theta,G_{\wb}} X_k X_\ell - \prod_{(k,\ell) \in P}  \EE_{\theta, \tilde T_{\tilde \wb}} X_k X_\ell\bigr|\\
&= \bigr|\prod_{(k,\ell) \in P}  (\EE_{\theta, \tilde T_{\tilde \wb}} X_k X_\ell + \tau_{k\ell}) - \prod_{(k,\ell) \in P}  \EE_{\theta, \tilde T_{\tilde \wb}} X_k X_\ell\bigr| \\
& \leq \sum_{\substack{S \cup R = P\\ S \cap R = \varnothing, |S| < l}} \prod_{(k,\ell) \in S} |\EE_{\theta, \tilde T_{\tilde \wb}}  X_{k}X_{\ell}|\prod_{(k,\ell) \in R} |\tau_{k\ell}|\\
& \leq (\tau + \tanh(\Theta))^l - \tanh(\Theta)^l \\
& \leq \sum_{j = 1}^l \tau (\tau + \tanh(\Theta))^{l - j}\tanh(\Theta)^{j - 1}\\
& \leq \frac{\tau }{1 - \tanh(\Theta)},
\end{align*}
where the next to last inequality follows easily by induction on $l$, and the last inequality is due to the fact that $\tau + \tanh(\Theta) \leq 1$ and $\tanh(\Theta) < 1$. Hence we have successfully argued that
$$
\max_{u,v \in [d]} |\hat \EE X_u X_v - \EE_{\theta, \tilde T_{\tilde \wb}} X_u X_v| \leq \tau \frac{2 - \tanh(\Theta)}{1 - \tanh(\Theta)},
$$
which completes the proof. 
\end{proof}

\end{document}